\newcommand{\wt}[1]{\widetilde{#1}}
\newcommand{\mb}[1]{\mathbb{#1}}
\newcommand{\ove}[1]{\overline{#1}}
\newcommand{\mtc}[1]{\mathcal{#1}}
\newcommand{\mtf}[1]{\mathfrak{#1}}
\newcommand{\mts}[1]{\mathscr{#1}}
\newcommand{\cQ}{\mathcal{Q}}
\DeclareMathOperator{\tw}{tw}
\DeclareMathOperator{\st}{st}
\DeclareMathOperator{\PGL}{PGL}
\DeclareMathOperator{\Spec}{Spec}
\DeclareMathOperator{\univ}{univ}
\DeclareMathOperator{\Proj}{Proj}
\DeclareMathOperator{\mult}{mult}
\DeclareMathOperator{\CY}{CY}
\DeclareMathOperator{\Supp}{Supp}
\DeclareMathOperator{\ord}{ord}
\DeclareMathOperator{\lct}{lct}
\DeclareMathOperator{\sst}{ss}
\DeclareMathOperator{\Id}{Id}
\DeclareMathOperator{\NS}{NS}
\DeclareMathOperator{\GIT}{GIT}
\DeclareMathOperator{\Aut}{Aut}
\DeclareMathOperator{\sh}{sh}
\DeclareMathOperator{\sm}{sm}
\DeclareMathOperator{\Hodge}{Hodge}
\newcommand{\bF}{\mathbb{F}}
\newcommand{\bG}{\mathbb{G}}
\newcommand{\sF}{\mathscr{F}}
\newcommand{\sslash}{\mathbin{\mkern-3mu/\mkern-6mu/\mkern-3mu}}
\newcommand{\sheafHom}{\mathscr{H}\text{\kern -3pt {\calligra\large om}}\,}
\DeclareMathOperator{\KSBA}{\mathrm{KSBA}}
\newcommand{\bV}{{\mathbb V}}
\newcommand{\bA}{{\mathbb A}}
\newcommand{\bZ}{{\mathbb Z}}
\newcommand{\bC}{{\mathbb C}}
\newcommand{\bT}{{\mathbb T}}
\newcommand{\bQ}{{\mathbb Q}}
\newcommand{\bP}{{\mathbb P}}
\newcommand{\bR}{{\mathbb R}}
\newcommand{\sY}{{\mathscr{Y}}}
\newcommand{\sU}{{\mathscr{U}}}
\newcommand{\sP}{{\mathscr{P}}}
\newcommand{\sD}{{\mathscr{D}}}
\newcommand{\sZ}{{\mathscr{Z}}}
\newcommand{\sX}{{\mathscr{X}}}
\newcommand{\sC}{{\mathscr{C}}}
\newcommand{\sV}{{\mathscr{V}}}
\newcommand{\sB}{{\mathscr{B}}}
\newcommand{\cX}{{\mathcal{X}}}
\newcommand{\cA}{{\mathcal{A}}}
\newcommand{\cB}{{\mathcal{B}}}
\newcommand{\cY}{{\mathcal{Y}}}
\newcommand{\cG}{{\mathcal{G}}}
\newcommand{\cN}{{\mathcal{N}}}
\newcommand{\cO}{{\mathcal{O}}}
\newcommand{\cR}{{\mathcal{R}}}
\newcommand{\cU}{{\mathcal{U}}}
\newcommand{\cV}{{\mathcal{V}}}
\newcommand{\bmu}{\bm{\mu}}
\newcommand{\oH}{{\operatorname{H}}}
\definecolor{darkgreen}{rgb}{0.0, 0.5, 0.0}
\DeclareMathOperator{\can}{can}
\DeclareMathOperator{\norm}{n}
\DeclareMathOperator{\Exc}{Ex}
\DeclareMathOperator{\bfM}{{\bf{M}}}
\DeclareMathOperator{\sn}{sn}
\DeclareMathOperator{\Gal}{Gal}
\newcommand\spec{\text{\rm Spec}}
\newcommand\cD{{\mathcal{D}}}
\newcommand\cS{{\mathcal{S}}}
\newcommand\cE{{\mathcal{E}}}
\newcommand\cC{{\mathcal{C}}}
\newcommand\cM{{\mathcal{M}}}
\newtheorem{theorem}{Theorem}[section]
\newtheorem{lemma}[theorem]{Lemma}
\newtheorem{corollary}[theorem]{Corollary}
\newtheorem{prop}[theorem]{Proposition}
\newtheorem*{notation}{Notation}
\newtheorem{remark}[theorem]{Remark}
\newtheorem{setup}[theorem]{Setup}
\newcommand{\dm}{{\operatorname{DM}}}
\theoremstyle{definition}
\newtheorem{defn}[theorem]{Definition}
\newtheorem{example}[theorem]{Example}
\theoremstyle{remark}
\title{Moduli of surfaces fibered in log Calabi-Yau pairs}
\date{\today}
\author{Giovanni Inchiostro}
\address{C-138 Padelford, Box 354350, Seattle, WA 98195, USA}
\email{ginchios@uw.edu}
\author{Roberto Svaldi}
\address{Dipartimento di Matematica ``F. Enriques'', Via Saldini 50, Milano (MI) 20133, Italy}
\email{roberto.svaldi@unimi.it}
\author{Junyan Zhao}
\address{William E. Kirwan Hall, 4176 Campus Dr, College Park, MD 20742, USA}
\email{jzhao81@umd.edu}
\begin{document}

\begin{abstract}
We study the moduli spaces of surface pairs $(X,D)$ admitting a log Calabi--Yau fibration \((X,D) \rightarrow C.\) We develop a series of results on stable reduction and apply them to give an explicit description of the boundary of the KSBA compactification. Three interesting cases where our results apply are:
\begin{enumerate}
    \item divisors on \(\mathbb{P}^1 \times \mathbb{P}^1\) of bidegree \((2n,m)\); 
    \item K3 surfaces which map $2:1$ to $\bF_n$, with $X=\bF_n$ and $D$ the ramification locus;
    \item elliptic surfaces with either a section or a bisection.
\end{enumerate}
The main tools employed are stable quasimaps, the canonical bundle formula, and the minimal model program.
\end{abstract}

\maketitle

\tableofcontents

\section{Introduction}

The study of moduli spaces and their compactifications is a central theme in modern algebraic geometry. Compactifying the moduli of varieties fibered in log Calabi–Yau (CY) pairs not only sheds light on the geometry of degenerations, but also connects with a range of moduli theories. Our goal in this paper is to describe the boundary of certain KSBA compactifications of surface pairs fibered in log Calabi–Yau curves, using methods that avoid running explicit steps of the minimal model program (MMP).

\medskip
While our results apply to a broader context, one can first focus on surface pairs
\[
\pi : \bigl(X, \tfrac{1}{n}D\bigr) \longrightarrow C
\]
fibered in log Calabi–Yau pairs of the form
\((\mathbb{P}^1, \tfrac{1}{n}(p_1+\cdots+p_{2n}))\),
under the following mild assumptions:
\begin{enumerate}
    \item the divisor $D$ restricts to $2n$ distinct points on a general fiber of $\pi$;
    \item the pair $\bigl(X, (\tfrac{1}{n}+\epsilon_1)D+\epsilon_2F\bigr)$ is KSBA-stable for every
    $0 < \epsilon_1 \ll \epsilon_2 \ll 1$, where $F$ denotes a fiber of $\pi$ meeting $D$ in fewer than $2n$ points.
\end{enumerate}
Three notable examples falling within this framework are:
\begin{itemize}
    \item divisors of type $(2n,m)$ on $\mathbb{P}^1 \times \mathbb{P}^1$ with $m \geq 4$;
    \item K3 surfaces that admit a $2\!:\!1$ map to a Hirzebruch surface $\bF_n$ for $n \leq 4$, with $D$ the ramification divisor; and
    \item elliptic surfaces with a section or a bisection.
\end{itemize}

\medskip
By the general KSBA moduli theory (cf.~\cite{Kol23}), there exists a \emph{proper} Deligne--Mumford stack $\mtc{M}^{\KSBA}_n(\epsilon_1,\epsilon_2)$ parametrizing KSBA-stable pairs, where a general member is of the form $\bigl(X, (\tfrac{1}{n}+\epsilon_1)D + \epsilon_2 F\bigr),$ as described in the preceding paragraph. Our first main result gives an explicit description of the KSBA-stable pairs parametrized by $\mtc{M}^{\KSBA}_n(\epsilon_1,\epsilon_2)$, with particular emphasis on those lying on the boundary.

\begin{theorem}\label{thm_intro_description_boundary}
    Let $(X_0,(\frac{1}{n} +\epsilon_1)D_0 + \epsilon_2F_0)$ be a KSBA-stable pair parametrized by $\mtc{M}^{\KSBA}_n(\epsilon_1,\epsilon_2)$. Then there exists a nodal curve $C_0$ such that the fibration structure $(X,(\frac{1}{n} +\epsilon_1)D + \epsilon_2F)\to C$ of a general pair in $\mtc{M}^{\KSBA}_n(\epsilon_1,\epsilon_2)$ extends to a fibration $X_0\to C_0$ with pure one-dimensional fibers. Moreover, for any irreducible component $G$ of $C_0$, the reduced structure of $X_0|_G$ is either:
    \begin{enumerate}
        \item \textup{(Proposition \ref{prop: geometry of S1})} the coarse space of a projective bundle $\bP_\cG(\cV)$ over a smooth orbifold curve $\cG$ with coarse space $G$.
        \item \textup{(Proposition \ref{prop: geometry of S2})} the coarse space of a weighted blow-up of a surface $\bP_\cG(\cV)$ as above.
        \item \textup{(Proposition \ref{prop: geometry of S3})} the coarse space of gluing a surface $\bP_\Sigma(\cV)$ ruled over a (possibly disconnected) smooth curve $\Sigma$, glued along an involution of $\Sigma\to\Sigma$; see Figure \ref{fig:Push-out diagram}.
    \end{enumerate}
    When $n=2$, the fibers of $(X_0,D_0)\to C_0$ are classified in Section \S\ref{sec:classification of fibers}.
    \end{theorem}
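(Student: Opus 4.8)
The plan is to prove Theorem~\ref{thm_intro_description_boundary} by reducing the geometry of a KSBA-stable limit to the combinatorics of its canonical bundle formula, running a relative MMP in family, and then analyzing the possible outputs fiber-by-fiber over each component of the base. Since the general pair in $\mtc{M}^{\KSBA}_n(\epsilon_1,\epsilon_2)$ comes with a log CY fibration $(X,\tfrac1n D)\to C$ whose generic fiber is $(\bP^1,\tfrac1n(p_1+\cdots+p_{2n}))$, the strategy will be:
\begin{enumerate}
    \item \textbf{Extend the fibration to the limit.} Given a one-parameter degeneration with central fiber the stable pair $(X_0,(\tfrac1n+\epsilon_1)D_0+\epsilon_2 F_0)$, one uses the properness of the moduli of stable maps/curves to extend the map to the base: the composition $X\to C$ spreads out over the punctured disk, and after a finite base change and normalization the relative curve $C\to \Delta$ acquires a semistable (nodal) central fiber $C_0$. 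The key point is that the stability hypothesis (2) forces every fiber of $X_0\to C_0$ to be pure one-dimensional — no divisorial contraction can happen over a point of $C_0$ because the boundary term $\epsilon_2 F_0$ obstructs contracting a fiber, exactly as in the setup of condition~(2). This uses the canonical bundle formula to control $K_{X_0}+(\tfrac1n+\epsilon_1)D_0$ in terms of $K_{C_0}$, the moduli part, and the discriminant.
    \item \textbf{Normalize $C_0$ and work one component at a time.} Over an irreducible component $G\subseteq C_0$, the restriction $X_0|_G$ (with its induced boundary) is a (not necessarily normal) surface equipped with a generically $\bP^1$-fibration whose generic fiber is still the log CY pair $(\bP^1,\tfrac1n\sum p_i)$. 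Normalizing and running a relative MMP over $G$, one reaches a relative log canonical model; the classification of log CY degenerations of $(\bP^1,\tfrac1n\sum p_i)$ (standard for $\bP^1$ with weighted points) feeds into identifying the possible special fibers. This is where the three cases in the statement arise: the minimal model is a $\bP^1$-bundle over an orbifold curve (case~1), or one must extract a divisor via a weighted blow-up to separate boundary components that have collided (case~2), or the stable limit is genuinely non-normal and is obtained by gluing a ruled surface to itself along an involution of a double curve (case~3, the push-out picture in Figure~\ref{fig:Push-out diagram}).
    \item \textbf{Assemble and invoke the cited propositions.} Each of the three geometric descriptions is the content of Propositions~\ref{prop: geometry of S1}, \ref{prop: geometry of S2}, and~\ref{prop: geometry of S3}; the theorem is then the statement that these exhaust the possibilities, which follows from the MMP dichotomy (the output of a relative MMP on a normal component is either a Mori fiber space over the base — cases 1--2 depending on whether a blow-up is needed — while non-normality is accounted for by Koll\'ar's gluing theory, giving case~3). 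Finally, for $n=2$ the sharper classification of fibers is deferred to \S\ref{sec:classification of fibers}.
\end{enumerate}

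\textbf{The hard part} will be Step~(1): showing that the fibration \emph{extends with pure one-dimensional fibers}, i.e.\ that no component of $X_0$ maps to a point of $C_0$. Controlling this requires combining the explicit form of the canonical bundle formula for $(X,\tfrac1n D)\to C$ with hypothesis~(2), and ruling out ``vertical'' components via a discrepancy/volume computation on the degeneration — a genuine global argument, not a local one. A secondary difficulty is the non-normal case~(3): one must verify that the gluing data (an involution of the normalization of the double locus) is compatible with being log canonical and semi-log-canonical, which is exactly where Koll\'ar's theory of sources and gluing for slc pairs enters. Once these are in place, matching each surviving configuration to one of the three model surfaces is a finite case check governed by the combinatorics of how the $2n$ marked points degenerate, and the detailed verification is precisely what Propositions~\ref{prop: geometry of S1}--\ref{prop: geometry of S3} carry out.
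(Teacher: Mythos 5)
Your outline reproduces the overall shape of the argument (extend the fibration to the limit, then classify components), but it misses the two technical devices that actually make the paper's proof work, and in their absence Step~(1) and Step~(2) as you describe them would not go through.

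First, you propose to extend the fibration using ``the properness of the moduli of stable maps/curves,'' i.e.\ the Abramovich--Vistoli twisted stable maps approach. That is essentially the strategy of the prior works (\cite{la2002explicit,ab_twisted,AB_del_pezzo,AB_k3}) which this paper explicitly sets out to avoid, because running an MMP directly on the threefold total space then requires identifying every flip (the ``La Nave flip'' technicalities). The paper instead takes the limit in the moduli space of \emph{stable quasimaps} $\cQ_{g,m;n,\beta}$ to the enlarged stack $\sP_n$. This choice is not cosmetic: the canonical bundle formula does \emph{not} commute with base change for an arbitrary filling, so the moduli part of the limit need not equal the limit of the moduli part --- the quasimap limit is precisely what forces this to hold. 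Without it, your plan to ``control $K_{X_0}+(\tfrac1n+\epsilon_1)D_0$ in terms of $K_{C_0}$, the moduli part, and the discriminant'' breaks at the start, because you do not know that the discriminant and moduli divisors of the degenerate fibration are the ones you computed generically.

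Second, in Step~(2) you propose to normalize $C_0$, restrict to a component $G$, and ``run a relative MMP over $G$ to reach a relative log canonical model.'' But $X_0$ is already a KSBA-stable pair, so $X_0|_G$ is already log canonical relative to $G$ with no further contractions to perform; there is no MMP left to run componentwise, and the classification must instead be a \emph{description} of what the earlier MMP produced. The paper handles this by (a) running the MMP not on the threefold but on the surface generalized pair $(\cC,\bfM)$ --- much simpler, no flips --- and following it upstairs via \Cref{prop_can_run_mmp_upstairs_to_follow_the_one_downstairs}, and (b) introducing the \emph{ruled model} (\Cref{defn:ruled model}, existence in \Cref{thm_you_can_take_ruled_model_in_ksba_moduli}), a crepant birational family of genuine $\bP^1$-bundles, which it then compares with $X_0|_G$ by an explicit sequence of extractions and contractions. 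Pure one-dimensionality of the fibers is a consequence of \Cref{rmk_during_MMP_pull_back_of_lc_divisor_is_lc_divispr} (following the base MMP upstairs preserves relative dimension one), not of a discrepancy/volume computation as you suggest. Finally, case~(3) is not handled via Koll\'ar's sources-and-springs gluing theory; the paper constructs the push-out directly (\cite{alper2024artin}) and identifies it with $S$ by a seminormality argument (\Cref{lemma_gms_of_seminormal_is_seminormal}). The ruled model is what makes the three geometric types in the statement visible; without it, Propositions~\ref{prop: geometry of S1}--\ref{prop: geometry of S3} cannot be reached.
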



\medskip
A key difference from previous approaches (e.g. \cite{DH,ascher_invariance_pluri,alexeev2023explicit}) is that our method does not rely on describing the explicit steps of an MMP for the surface pair. Instead, our proofs rest on three main tools:
\begin{enumerate}
    \item a general MMP result for log Calabi–Yau fibrations in arbitrary dimension;
    \item the theory of stable quasimaps to the moduli of points on $\mathbb{P}^1$ developed in \cite{twisted_map_2};
    \item the notion of a \emph{ruled model}, which provides a convenient birational model for controlling singular fibers.
\end{enumerate}
Together, these ingredients allow us to track KSBA-stable limits through quasimaps compactifications and to describe them explicitly via ruled models.

\begin{theorem}[MMP for log Calabi-Yau fibrations]\label{thm_intrp_MMP} Let $R$ be a DVR, and $\pi\colon \left(X,D\right)\to (Y,B_Y+\bfM)\rightarrow \Spec R$ be a morphism of klt generalized pairs. Assume that $$\textstyle \pi^*(K_Y+B_Y+\bfM) \ \sim_\bQ \ K_X +D$$ and that $D$ is $\pi$-ample. Then given an MMP with scaling for the generalized pair $(Y,B_Y+\bfM)$ one can make it follow by a sequence of birational contractions
    for $(X,D)$: 
    $$\begin{tikzcd}
	{\big(X,D\big)} & {\big(X^{(1)},D^{(1)}\big)} & \cdots & {\big(X^{(k)},D^{(k)}\big)} \\
	{\big(Y,B_Y+\bfM)} & {\big(Y^{(1)},B_Y^{(1)}+\bfM^{(1)}\big)} & \cdots & {\big(Y^{(k)},B_Y^{(k)}+\bfM^{(k)}\big)}
	\arrow["{q^{(0)}}", dotted, from=1-1, to=1-2]
	\arrow["{\pi}", from=1-1, to=2-1]
	\arrow["{q^{(1)}}", dotted, from=1-2, to=1-3]
	\arrow["{\pi^{(1)}}", dotted, from=1-2, to=2-2]
	\arrow["{q^{(k-1)}}", dotted, from=1-3, to=1-4]
	\arrow[dotted, from=1-3, to=2-3]
	\arrow["{\pi^{(k)}}", dotted, from=1-4, to=2-4]
	\arrow["{p^{(0)}}", dotted,from=2-1, to=2-2]
	\arrow["{p^{(1)}}", dotted,from=2-2, to=2-3]
	\arrow["{p^{(k-1)}}", dotted,from=2-3, to=2-4]
\end{tikzcd}.$$ Moreover, the following holds:
\begin{enumerate}
    \item the proper transform $D^{(i)}$ of $D$
    remains ample over $Y^{(i)}$;
    \item if $\pi$ has relative dimension one, then $\pi^{(k)}$ has relative dimension one and \[(\pi^{(k)})^*(K_{Y^{(k)}}+B_Y^{(k)}+\bfM^{(k)})\sim_\bQ K_{X^{(k)}} +D^{(k)};\]
    \item if $p^{(i)}$ are morphisms, then $\big(X^{(k)},(1+\epsilon)D^{(k)}\big)$ is a weak canonical model for $\big(X,(1+\epsilon)D\big)$ over $Y^{(k)}$, for $0<\epsilon \ll 1$. 
\end{enumerate}
In particular, if $K_{Y^{(k)}} +  B_Y^{(k)}+\bfM^{(i)}$ is ample and $(X,D)\to \spec R$ is locally stable with stable generic fiber, then the KSBA-stable limit of $\big(X_\eta,(1 +\epsilon)D_\eta\big)$ is independent of $0<\epsilon\ll 1$.
\end{theorem}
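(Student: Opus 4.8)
The plan is to produce the lift one step at a time: by induction it is enough, given a single step $p\colon(Y,B_Y+\bfM)\dashrightarrow(Y',B_Y'+\bfM')$ of the MMP with scaling --- a flip or a divisorial contraction, with underlying extremal contraction $h\colon Y\to W$ (so $W=Y'$ in the divisorial case, and $Y\to W\leftarrow Y'$ is the flipping diagram otherwise) --- to build a birational contraction $q\colon(X,D)\dashrightarrow(X',D')$ which is a composite of flips and divisorial contractions for $\bigl(X,(1+\epsilon)D\bigr)$, together with $\pi'\colon X'\to Y'$, such that $(X',D')$ is klt, $(\pi')^*(K_{Y'}+B_Y'+\bfM')\sim_\bQ K_{X'}+D'$, the divisor $D'$ is $\pi'$-ample, and $\pi'$ has relative dimension one whenever $\pi$ does. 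Granting the single step, the listed properties follow formally: $(1)$ is the inductively maintained $\pi^{(i)}$-ampleness of $D^{(i)}$; $(2)$ holds because, as we will see, each $q^{(i)}$ is an isomorphism over the dense open locus where $p^{(i)}$ is, so the generic fibre of $\pi^{(i)}$ --- hence its relative dimension and the crepant identity --- is unchanged; and for $(3)$, when all the $p^{(i)}$ are morphisms the composite $X\dashrightarrow X^{(k)}$ is, by construction, a $\bigl(K_X+(1+\epsilon)D\bigr)$-MMP over $Y^{(k)}$ which extracts no divisor and terminates with $K_{X^{(k)}}+(1+\epsilon)D^{(k)}$ nef over $Y^{(k)}$, i.e. a weak canonical model of $\bigl(X,(1+\epsilon)D\bigr)$ over $Y^{(k)}$, for $0<\epsilon\ll1$.

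For the single step I would fix a small rational $\epsilon>0$ and let $X'$ be the relative canonical model of the klt pair $\bigl(X,(1+\epsilon)D\bigr)$ over $W$. This exists, and the natural map $q\colon X\dashrightarrow X'$ is a composite of $\bigl(K_X+(1+\epsilon)D\bigr)$-flips and divisorial contractions, by BCHM for generalized klt pairs together with the observation that $D$ is big over $W$: since $D$ is $\pi$-ample and $Y\to W$ is birational, $D$ restricts to an ample divisor on the general fibre of $X\to W$ (which is the general fibre of $\pi$), while $K_X+D\sim_\bQ\pi^*(K_Y+B_Y+\bfM)$ is $\pi$-trivial, so $K_X+(1+\epsilon)D$ is big over $W$; hence the MMP over $W$ ends at the canonical model and no Mori fibre space occurs. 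The same computation shows that for $\epsilon$ small $K_X+(1+\epsilon)D$ is negative precisely on curves lying over the $h$-contracted ray: a $\bigl(K_X+(1+\epsilon)D\bigr)$-negative curve $C$ cannot be $\pi$-vertical, since then $(K_X+D)\cdot C=0$ would force $D\cdot C<0$, contradicting $\pi$-ampleness of $D$; so $\pi(C)$ is a curve proportional to the contracted ray, $q$ mirrors $p$, and $q$ is an isomorphism over the locus where $p$ is.

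It then remains to equip $X'$ with $\pi'$ and the crepant identity, and this is the step I expect to be the main obstacle. For $\epsilon$ in a fixed interval $(0,\epsilon_0)$ the model $X'$ and its $W$-contracted curves are independent of $\epsilon$, by finiteness of weak log canonical models on the Shokurov polytope spanned by $K_X+D$ and $D$; letting $\epsilon\to0^+$ then shows $K_{X'}+D'$ is nef over $W$, and it is numerically trivial on the general fibre of $X'\to W$, a log Calabi--Yau fibre. Thus $K_{X'}+D'$ is relatively semiample over $W$ --- this is the crucial input; in relative dimension one it is the classical canonical bundle formula for fibrations of relative dimension one (b-semiampleness of the moduli part, after Prokhorov--Shokurov), and in general it would require known cases of b-semiampleness. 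Letting $\pi'\colon X'\to Y''$ be the ample model of $K_{X'}+D'$ over $W$, one sees that a curve is $\pi'$-contracted exactly when it meets $K_{X'}+D'$ in degree zero; comparing with the base, where $K_{Y'}+B_Y'+\bfM'$ is ample over $W$, and invoking uniqueness of canonical models together with the compatibility of the canonical bundle formula (the moduli $b$-divisor $\bfM$) with steps of the MMP, one identifies $Y''$ with $Y'$ and the polarization with $K_{Y'}+B_Y'+\bfM'$, which is the sought identity. Then $\epsilon D'=\bigl(K_{X'}+(1+\epsilon)D'\bigr)-\bigl(K_{X'}+D'\bigr)$ is the difference of a divisor ample over $W$ (hence over $Y'$) and one trivial over $Y'$, so $D'$ is $\pi'$-ample; and relative dimension one is preserved because $q$ is an isomorphism over a dense open of $Y'$, where the fibre of $X'\to Y'$ agrees with that of $\pi$.

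Finally, for the last assertion I would run the whole construction over $\Spec R$, assuming $A^{(k)}:=K_{Y^{(k)}}+B_Y^{(k)}+\bfM^{(k)}$ ample over $\Spec R$ and $(X,D)\to\Spec R$ locally stable with stable generic fibre. Local stability of $\bigl(X,(1+\epsilon)D\bigr)\to\Spec R$ is preserved by each $q^{(i)}$ --- the contractions take place in the special fibre and are steps of a log MMP --- while the generic fibre is untouched because $(X_\eta,(1+\epsilon)D_\eta)$, being stable, is already its own canonical model over $\eta$. Since $D^{(k)}$ is $\pi^{(k)}$-ample and $A^{(k)}$ is ample over $\Spec R$, the divisor $K_{X^{(k)}}+(1+\epsilon)D^{(k)}\sim_\bQ(\pi^{(k)})^*A^{(k)}+\epsilon D^{(k)}$ is ample over $\Spec R$ for $0<\epsilon\ll1$, so $\bigl(X^{(k)},(1+\epsilon)D^{(k)}\bigr)\to\Spec R$ is a KSBA-stable family restricting to $(X_\eta,(1+\epsilon)D_\eta)$ over $\eta$; by separatedness of the KSBA moduli functor it is therefore the KSBA-stable limit. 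As the base MMP does not involve $\epsilon$ and the lifted models are $\epsilon$-independent for $\epsilon\in(0,\epsilon_0)$, the underlying pair $\bigl(X^{(k)},D^{(k)}\bigr)$ of this limit does not depend on $0<\epsilon\ll1$.
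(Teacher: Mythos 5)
Your proposal lands on the same central move as the paper---lift a step of the base MMP by passing to the relative canonical model of $(X,(1+\epsilon)D)$ over the extremal contraction's target, and use the Shokurov polytope to make this $\epsilon$-independent---but there are two places where it deviates in ways that create genuine gaps for the theorem as stated.

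The first and most serious gap is the construction of $\pi'\colon X'\to Y'$ (equivalently $Y^+$). You invoke relative semiampleness of $K_{X'}+D'$ over the contraction base $W$, which as you acknowledge requires the canonical bundle formula in relative dimension one and ``known cases of $b$-semiampleness'' in general. That is conjectural (Prokhorov--Shokurov) in arbitrary relative dimension, while the theorem is stated with no dimension restriction except in item (2). The paper sidesteps this entirely: since $\cO_X(md(K_X+D))\cong\pi^*\cO_Y(md(K_Y+B_Y+\bfM))$ for $d$ divisible enough and $\pi_*\cO_X=\cO_Y$, the projection formula gives $\pi_*\cO_X(md(K_X+D))\cong\cO_Y(md(K_Y+B_Y+\bfM))$, so
\[
\Proj_{Y'}\bigoplus_{m}\,p_*\pi_*\cO_X\bigl(md(K_X+D)\bigr)\;\cong\;\Proj_{Y'}\bigoplus_{m}\,p_*\cO_Y\bigl(md(K_Y+B_Y+\bfM)\bigr),
\]
and the right-hand side \emph{is} the relative canonical model $Y^+$ (resp.\ $Y'$) by definition of the flip (resp.\ divisorial contraction). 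This identifies the relative canonical model of $(X,D)$ over the contraction base with $Y^+$ directly, with no semiampleness input and in any relative dimension. Recovering $\pi'$-ampleness of $D'$ is then immediate, as in the paper, from $K_{X'}+(1+\epsilon)D'\sim_{\bQ,Y^+}\epsilon D'$.

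The second gap concerns item (2). You argue that $q^{(i)}$ is an isomorphism over a dense open where $p^{(i)}$ is, which controls the \emph{general} fiber; but ``relative dimension one'' in the statement means pure relative dimension one (cf.\ \Cref{thm_intro_description_boundary} and \Cref{rmk_during_MMP_pull_back_of_lc_divisor_is_lc_divispr}), and the crepant identity $(\pi^{(k)})^*(K_{Y^{(k)}}+B_Y^{(k)}+\bfM^{(k)})\sim_\bQ K_{X^{(k)}}+D^{(k)}$ can fail exactly along extra divisors contracted by $\pi^{(k)}$ over special points. What is needed is that every divisor $\Xi\subseteq X$ dominating $\Exc(p)$ is contracted by $X\dashrightarrow X'$, which is the content of \Cref{lemma_ext_contraction_downstairs_induces_one_upstairs}: on a common resolution, a curve in the proper transform of $\Xi$ not in the $q'$-exceptional locus and mapping finitely to $\Exc(p)$ and to $X'$ pairs negatively with $q^*(K_X+D)$ (since $K_Y+B_Y+\bfM$ is negative on the ray) but nonnegatively with $q'^*(K_{X'}+D')+E$ (nefness of the weak canonical model), a contradiction. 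Your observation that $K_X+(1+\epsilon)D$ is negative on curves lying over the contracted ray is in the right direction but does not by itself guarantee that whole divisors over $\Exc(p)$ disappear rather than merely get flipped, so this lemma (or an equivalent) is needed to complete item (2).
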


\subsection*{Stable quasimaps}

Our second key ingredient is the moduli space of \emph{stable quasimaps} introduced in \cite{twisted_map_2}. 

Recall that a point 
\(
[F] \in \bP\bigl(\oH^0(\bP^1,\cO_{\bP^1}(2n))\bigr)
\)
is GIT-semistable for the $\PGL_2$-action if and only if the corresponding pair 
\(
\big(\bP^1,\tfrac{1}{2n}(p_1+\cdots+p_{2n})\big)
\)
is semi log-canonical, where the $p_i$ are the zeros of $F$. Thus, specifying a fibration 
\(
(X,\tfrac{1}{n}D)\to C
\)
as above is equivalent to giving a morphism 
\[
\phi\colon C \ \longrightarrow \ \sP^{\GIT}_{n-1},
\] 
where $\sP^{\GIT}_{n-1}$ denotes the GIT moduli space of $2n$ points on $\bP^1$. 

An application of the main results of \cite{twisted_map_2} is the construction of a natural compactification of the space of such morphisms. In this compactification, the boundary objects parametrize morphisms 
\[
\cC \ \longrightarrow\  \sP_{n-1},
\]
where $\cC$ is a twisted curve and $\sP_{n-1}$ is a suitable enlargement of $\sP^{\GIT}_{n-1}$. 

\medskip
This has an important consequence for us. Given a family 
\[
\pi_\eta\colon (X_{\eta},\tfrac{1}{n}D_{\eta}) \ \longrightarrow \  C_{\eta}
\]
over the generic point $\eta$ of the spectrum of a DVR $R$, one can (after possibly replacing $\Spec R$ by a ramified cover) extend it to a diagram
\[
\xymatrix{
(X_{\eta},\tfrac{1}{n}D_{\eta}) \ar[r] \ar[d]_{\pi_\eta} & (\cX^{\tw},\tfrac{1}{n}\cD^{\tw}) \ar[d]_\pi \\
C_{\eta} \ar[r] \ar[d] & \cC^{\tw} \ar[d] \\
\{\eta\} \ar[r] & \Spec R
}
\]
satisfying:
\begin{enumerate}
    \item $\cC^{\tw}$ is a family of nodal curves,
    \item $\pi$ has relative dimension one,
    \item $K_{\cX^{\tw}} + \tfrac{1}{n}\cD^{\tw}$ is $\pi$-trivial and $\cD^{\tw}$ is $\pi$-ample, and
    \item the fibers of $\pi$ over smooth points of $\cC^{\tw}\to \Spec R$ are slc.
\end{enumerate}

\noindent In other words, if $\bfM$ denotes the moduli part of $\pi$, then the family
\[
\big(\cX^{\tw},\tfrac{1}{n}\cD^{\tw}\big) \ \longrightarrow \ (\cC^{\tw},\bfM^{\tw})
\]
is of the type covered by \Cref{thm_intrp_MMP}. Hence, to run an MMP for $(\cX^{\tw},\tfrac{1}{n}\cD^{\tw})$ it suffices to run an MMP for $(\cC^{\tw},\bfM^{\tw})$, which is much simpler since the latter is a surface. By \Cref{thm_intrp_MMP}, this produces a sequence of birational transformations of $(\cX^{\tw},\tfrac{1}{n}\cD^{\tw})$ ending in a weak canonical model.

\medskip
To control explicitly how this MMP modifies $(\cX^{\tw},\tfrac{1}{n}\cD^{\tw})$, we introduce the notion of a \emph{ruled model} (\Cref{defn:ruled model}). Roughly speaking, a locally stable family $(\cX,\tfrac{1}{n}\cD)\to B$ with $\cX$ normal admits a ruled model if it is crepant birational to a locally stable family
\[
(\cY,\tfrac{1}{n}\cD_\cY) \ \stackrel{\pi_\cY}{\longrightarrow} \ \cC \ \longrightarrow \ B
\]
such that the fibers of $\pi_\cY$ over smooth points of $\cC\to \Spec R$ are slc pairs 
\(
\big(\bP^1,\tfrac{1}{n}(p_1+\cdots+p_{2n})\big),
\)
and $\cC\to B$ is a family of nodal curves.

\medskip
As an illustration, consider $B=\Spec k$ and $Z=\bP^1\times \bP^1$. Let $X$ be the $(1,2k)$-weighted blow-up of $Z$ at a point, so that the exceptional divisor appears with multiplicity one along a fiber of the projection $\pi_1\colon \bP^1\times\bP^1\to \bP^1$. Then $X$ has an $A_{2k-1}$-singularity at a point $p$. If $D\subseteq X$ is a divisor such that $(X,\tfrac{1}{2}D)\to \bP^1$ is fibered in slc log Calabi–Yau pairs, a ruled model can be obtained by first extracting a divisor $Z\to X$ over $p$ so that $(Z,\tfrac{1}{2}D_Z)\to \bP^1$ still fibers in slc log Calabi–Yau pairs and $Z$ has two $A_{k-1}$ singularities; then contracting the proper transforms of the two irreducible components of the singular fiber of $X\to \bP^1$. The last three pictures in \Cref{fig:nodes of X} illustrate these birational transformations.
 \begin{figure}
    \centering
    \includegraphics[width=1.0\linewidth]{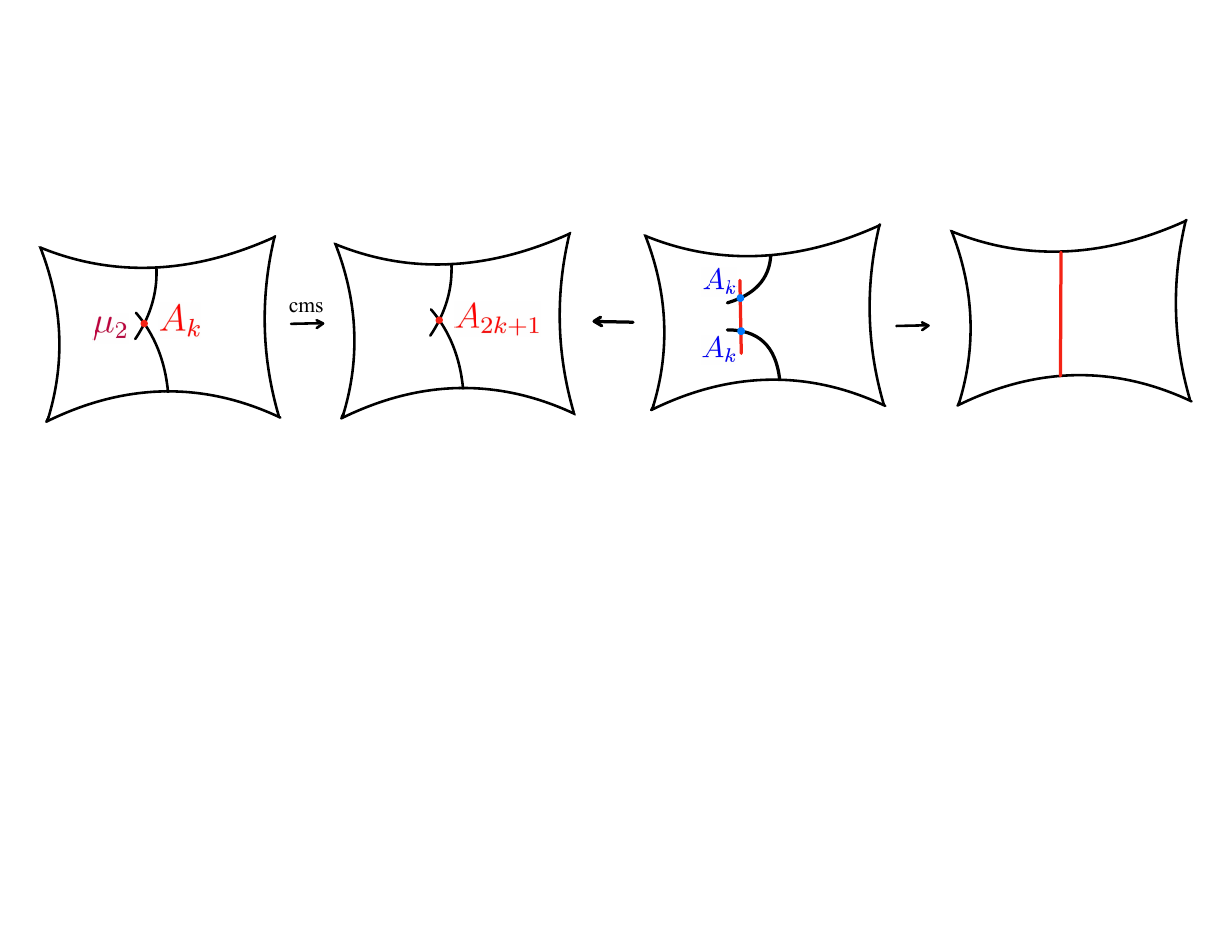}
    \caption{}
    \label{fig:nodes of X}
\end{figure}
\medskip
Our third main result is the following.

\begin{theorem}\label{thm_intro_ruled_model_exists}
    Every pair $(X^{(i)},\tfrac{1}{n}D^{(i)})$ constructed from 
    \[
    (\cX^{\tw},\tfrac{1}{n}\cD^{\tw}) \ \longrightarrow \ (\cC^{\tw},\bfM^{\tw})
    \]
    using \Cref{thm_intrp_MMP} admits a ruled model. 
\end{theorem}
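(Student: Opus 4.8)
The plan is to induct on the length $k$ of the chosen MMP for $(\cC^{\tw},\bfM^{\tw})$, running in parallel the compatible birational contractions $q^{(i)}\colon (X^{(i)},\tfrac1n D^{(i)})\dashrightarrow(X^{(i+1)},\tfrac1n D^{(i+1)})$ produced by \Cref{thm_intrp_MMP}, starting from $(X^{(0)},\tfrac1n D^{(0)})=(\cX^{\tw},\tfrac1n\cD^{\tw})$. Since the base is a surface, every step $p^{(i)}$ of the base MMP is a divisorial contraction of a $(K_{Y^{(i)}}+B_Y^{(i)}+\bfM^{(i)})$-negative curve $\Gamma^{(i)}$ to a point $y^{(i)}$, so $q^{(i)}$ contracts the $\pi$-vertical divisor over $\Gamma^{(i)}$ onto the fiber over $y^{(i)}$; by \Cref{thm_intrp_MMP}(2) one also has $K_{X^{(j)}}+\tfrac1n D^{(j)}\sim_\bQ(\pi^{(j)})^*(K_{Y^{(j)}}+B_Y^{(j)}+\bfM^{(j)})$ for all $j$, so $(X^{(j)},\tfrac1n D^{(j)})$ is the log Calabi--Yau fibration attached by the canonical bundle formula to the generalized pair $(Y^{(j)},B_Y^{(j)}+\bfM^{(j)})$.

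\emph{Base case.} $(\cX^{\tw},\tfrac1n\cD^{\tw})\to\cC^{\tw}\to\Spec R$ is already a ruled model of itself: by the stable quasimap construction recalled above, $\cC^{\tw}$ is a family of nodal curves, $\pi$ has relative dimension one, $K_{\cX^{\tw}}+\tfrac1n\cD^{\tw}$ is $\pi$-trivial with $\cD^{\tw}$ $\pi$-ample, and the fibers over the smooth points of $\cC^{\tw}\to\Spec R$ are slc log Calabi--Yau pairs $(\bP^1,\tfrac1n(p_1+\cdots+p_{2n}))$. Replacing $\cC^{\tw}$ and $\cX^{\tw}$ by their coarse spaces affects neither local stability nor the crepant birational class, so \Cref{defn:ruled model} applies.

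\emph{Inductive step.} Assume $(X^{(i)},\tfrac1n D^{(i)})$ admits a ruled model $(\cY,\tfrac1n\cD_\cY)\xrightarrow{\pi_\cY}\cC\to\Spec R$; since the canonical bundle formula is compatible with crepant birational maps, the base $(\cC,B_\cC+\bfM)$ is crepant birational to $(Y^{(i)},B_Y^{(i)}+\bfM^{(i)})$. As nothing changes away from the fiber over $y^{(i)}$, I would work locally over $y^{(i)}$. First I would produce a model $\cC'$ of $Y^{(i+1)}$ that is again a family of nodal curves and crepant birational to $(Y^{(i+1)},B_Y^{(i+1)}+\bfM^{(i+1)})$: concretely, contract on $\cC$ the proper transform of $\Gamma^{(i)}$ together with the divisors of $\cC\to Y^{(i)}$ lying over $y^{(i)}$ (this realizes $p^{(i)}$ on the ruled-model base, but typically creates a cyclic quotient singularity), then extract the minimal Hirzebruch--Jung chain needed to turn that singularity into a nodal (i.e.\ $A_1$-type) point. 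Next I would pull back the fibration: form $X^{(i+1)}\times_{Y^{(i+1)}}\cC'$, normalize, and run a relative MMP over $\cC'$ in the sense of \Cref{thm_intrp_MMP} to reach a locally stable log Calabi--Yau fibration $(\cY',\tfrac1n\cD_{\cY'})\to\cC'$ with $\cD_{\cY'}$ relatively ample. Because $\cC'\to Y^{(i+1)}$ is crepant birational, $(\cY',\tfrac1n\cD_{\cY'})$ is crepant birational to $(X^{(i+1)},\tfrac1n D^{(i+1)})$; because $\cC'$ is a family of nodal curves and $(\cY',\tfrac1n\cD_{\cY'})$ is a relative canonical model, its fibers over the smooth points of $\cC'\to\Spec R$ are irreducible slc log Calabi--Yau pairs $(\bP^1,\tfrac1n(p_1+\cdots+p_{2n}))$, the new smooth base points being controlled by the fact that these are the only slc log Calabi--Yau structures in the relevant deformation class. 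This $(\cY',\tfrac1n\cD_{\cY'})\to\cC'$ is the desired ruled model of $(X^{(i+1)},\tfrac1n D^{(i+1)})$.

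The hard part will be the nodal-model step inside the induction: proving that after a base divisorial contraction one can always reach a crepant, nodal-curve-family model of the base, and that the log Calabi--Yau fibration pulled back to it is locally stable with every fiber over a smooth base point an irreducible slc pair $(\bP^1,\tfrac1n(\sum p_i))$ and nothing worse. This reduces to the local classification of the fibration-compatible surface singularities of $Y^{(i)}$ and of the explicit crepant extractions and contractions converting them into nodal configurations — which is exactly the analysis carried out in the propositions cited in \Cref{thm_intro_description_boundary} and illustrated in \Cref{fig:nodes of X}.
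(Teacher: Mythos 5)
Your proposal misses the key geometric observation that drives the paper's induction, and as a result goes down a path that is both harder and not what the paper does. The first sentence of the paper's proof is that each step $p^{(i)}\colon C^{(i)}\to C^{(i+1)}$ of the base MMP \emph{only contracts rational tails of the special fiber} of $C^{(i)}\to\Spec R$ (since any non-tail component $B$ has $\deg(\omega_{C^{(i)}})|_B\ge 0$, so the log divisor restricted to $B$ is already nef). Consequently the contracted tail maps to a \emph{smooth} point $x\in C^{(i+1)}$, and $C^{(i+1)}$ is again a family of nodal curves over $\Spec R$ without any new singularity to resolve. Your inductive step instead asserts that contracting "typically creates a cyclic quotient singularity" and inserts a Hirzebruch--Jung chain, then normalizes a fiber product and runs a fresh MMP. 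None of this is needed; the HJ-chain step is a red herring once you see that the image is a smooth point of a nodal family, and the pull-back-and-MMP step replaces a one-line extension argument with something you have not justified (the claim that the resulting fibers over smooth base points are irreducible $(\bP^1,\tfrac1n\sum p_i)$ "because these are the only slc log CY structures in the relevant deformation class" is the whole point and is left hanging).

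What the paper actually does in the inductive step is much lighter. By induction one has a ruled model $\sY^{(m-1)}\to\sC^{(m-1)}$, and $p^{(m-1)}$ is an isomorphism away from the single smooth point $x\in C^{(m)}$. One forms the twisted curve $\sC^{(m)}$ with coarse space $C^{(m)}$ by gluing $\sC^{(m-1)}$ restricted to the complement of the contracted tail with a neighborhood of $x$; the ruled model over $\sC^{(m)}\smallsetminus\{x\}$ is a $\bP^1$-fibration, i.e.\ a morphism to $\cB\PGL_2$, and by \cite[Lemma 2.1]{twisted_map_2} this extends uniquely across the smooth point $x$. Taking the closure of the divisor produces $\sD^{(m)}$, and one then checks the conditions of \Cref{defn:ruled model} directly (the nodal-locus conditions are inherited from step $m-1$, and the crepant condition is checked in codimension one via the canonical bundle formula). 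Your base case is essentially correct (it is \Cref{rmk_for_qmaps_there_is_a_ruled_model}), but the inductive step should be replaced by the rational-tail observation plus the $\cB\PGL_2$-extension lemma.
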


The advantages of \Cref{thm_intro_ruled_model_exists} are threefold:
\begin{enumerate}
    \item ruled surfaces are more tractable geometrically,
    \item being crepant birational ensures control of the singularities of the fibers, and
    \item we can describe explicitly the birational transformations relating $(X^{(i)},\tfrac{1}{n}D^{(i)})$ to its ruled model. 
\end{enumerate}

In a forthcoming work, for surface pairs of the form $(X,\tfrac{1}{2}D)\to C$ with $D\to C$ admitting a section, we will use our results to recover and reprove \cite[Theorem~1.2]{inchiostro_elliptic}, \cite[Theorem~1.(a),(b)]{AB_del_pezzo}, and parts of \cite{AB_k3}.

\subsection*{Interaction with other moduli stacks}

In the last section, we use the explicit MMP of \Cref{thm_intrp_MMP} to construct a morphism from an appropriate modification of the quasimaps moduli space of $(1,4)$-curves in $\bP^1\times \bP^1$ to the GIT moduli space of $(1,4)$-curves in $\bP^1\times \bP^1$, and we compare these moduli spaces to the K-stability moduli space for $(1,4)$-curves.

\begin{theorem}
 Let $\cM^K_{(1,4)}(c)$, $\cM^{\GIT}_{(1,4)}$ and ${\cQ}_{(1,4)}$ be the K-moduli stack, GIT quotient stack, and the moduli stack of stable quasimaps of $(1,4)$-curves on $\bP^1\times\bP^1$, respectively.
 \begin{enumerate}
     \item There exists a natural isomorphism $\cM^K_{(1,4)}(c)\simeq \cM^{\GIT}_{(1,4)}$ for any $0<c<\frac{1}{2}$. In particular, there are no K-moduli wall-crossing $\cM^K_{(1,4)}(c)$.
     \item After possibly a normalization and a root stack for ${\cQ}_{(1,4)}$, there exists a natural stable reduction morphism $$\phi:\cQ_{(1,4)}\ \longrightarrow \ \cM^{\GIT}_{(1,4)},$$ which descends to a birational morphism $$\psi\colon \ove{Q}_{(1,4)} \ \longrightarrow \ \ove{M}^{\GIT}_{(1,4)}$$ between their good moduli spaces.
     \item The exceptional loci of $\psi$ (resp. $\phi$) are classified in Section \S \ref{sec:GIT and quasimap classification}.
     \item The explicit stable reduction of $\phi^{-1}\colon \cM^{\GIT}_{(1,4)}\dashrightarrow {\cQ}_{(1,4)}$ along a one-parameter family is displayed in Section \S \ref{sec:twisted stable reduction}.
 \end{enumerate}
\end{theorem}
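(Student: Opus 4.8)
The plan is to treat the four items in turn, with items (3) and (4) being explicit unwindings of the machinery assembled for (1) and (2). Throughout we are in the case $n=2$: a $(1,4)$-curve $D\subseteq\bP^1\times\bP^1$ is the graph of a degree-$4$ morphism $\bP^1\to\bP^1$, and projection to the second factor exhibits $(\bP^1\times\bP^1,\tfrac12 D)\to\bP^1$ as fibered in log Calabi--Yau pairs $(\bP^1,\tfrac12(p_1+\dots+p_4))$.

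\textbf{Item (1).} For $0<c<\tfrac12$ the divisor $-K_{\bP^1\times\bP^1}-cD=(2-c,\,2-4c)$ is ample, so $(\bP^1\times\bP^1,cD)$ is a log Fano pair, degenerating at $c=\tfrac12$ precisely to the fibration above. I would first establish that the surface is rigid in this range: the only other K-polystable degree-$8$ del Pezzo is the quadric cone $\bP(1,1,2)$, on which the limit of the class $(1,4)$ is $\cO(5)$, and since every section of $\cO(5)$ on $\bP(1,1,2)$ vanishes at the cone vertex, a short discrepancy computation (blow up the vertex) shows the corresponding pair is destabilized for every $c\in(0,\tfrac12)$; hence no K-semistable degeneration of the surface other than $\bP^1\times\bP^1$ itself occurs. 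With the surface fixed, K-(semi/poly)stability of $(\bP^1\times\bP^1,cD)$ is tested only by one-parameter subgroups of $\Aut(\bP^1\times\bP^1)=\PGL_2\times\PGL_2$ degenerating $D$ inside $|\cO(1,4)|$, and a Paul--Tian type comparison --- the CM line bundle on $|\cO(1,4)|$ is a positive multiple of the natural $\Aut$-linearization for all $c$ in this range --- identifies this with GIT (semi/poly)stability of $D$. Since this criterion does not involve $c$, the K-semistable locus is constant on $(0,\tfrac12)$, so there are no walls; matching stabilizers gives $\cM^K_{(1,4)}(c)\cong\cM^{\GIT}_{(1,4)}$, and matching the CM polarization with the GIT polarization identifies the good moduli spaces.

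\textbf{Item (2).} A general stable quasimap is an honest morphism $\bP^1=\cC\to\sP^{\GIT}_1$, which is precisely the datum of a general $(1,4)$-curve; thus $\cQ_{(1,4)}$ and $\cM^{\GIT}_{(1,4)}$ contain a common dense open substack, which already forces any extension $\psi$ to be birational. For the extension I would apply the valuative criterion. Over a punctured trait a one-parameter family of quasimaps yields, after a ramified base change and by the main results of \cite{twisted_map_2}, the twisted log Calabi--Yau fibration $(\cX^{\tw},\tfrac12\cD^{\tw})\to(\cC^{\tw},\bfM^{\tw})$ recalled above; running the MMP of \Cref{thm_intrp_MMP} on the \emph{surface} $(\cC^{\tw},\bfM^{\tw})$ and lifting it produces a KSBA-stable limit of $(X_\eta,(1+\epsilon)D_\eta)$ independent of $\epsilon$ (last clause of \Cref{thm_intrp_MMP}); contracting one further step to the log Fano model with small coefficient then gives, by Item (1), a GIT-polystable $(1,4)$-curve on $\bP^1\times\bP^1$. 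Independence of this limit from the base change and from the MMP-with-scaling is what upgrades the assignment to a genuine morphism $\psi\colon\overline{Q}_{(1,4)}\to\overline{M}^{\GIT}_{(1,4)}$ of good moduli spaces; to lift it to a morphism of stacks one passes to the normalization and a suitable root stack of $\cQ_{(1,4)}$ so that the finite automorphisms of each quasimap surject onto those of its GIT image, producing $\phi$.

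\textbf{Items (3) and (4), and the main obstacle.} Item (3) is then a finite classification: enumerate the boundary strata of (the normalized root stack of) $\cQ_{(1,4)}$ by combinatorial type of the limiting twisted curve $\cC^{\tw}_0$ together with its quasimap --- using the $n=2$ fiber classification of \S\ref{sec:classification of fibers} and tracking the MMP of \Cref{thm_intrp_MMP} on a ruled model, which exists by \Cref{thm_intro_ruled_model_exists} --- and record on which strata the GIT image acquires extra automorphisms or along which a positive-dimensional family of quasimaps has the same GIT image; these are the exceptional loci of $\phi$ and $\psi$. Item (4) is the reverse worked example: starting from a one-parameter degeneration to a strictly GIT-polystable $(1,4)$-curve, resolve it to a quasimap over a twisted curve and exhibit the explicit chain of weighted blow-ups and contractions. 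I expect the principal difficulty to be in Item (2): proving that the stable-reduction recipe defines an honest morphism rather than a multivalued correspondence, and pinning down exactly which normalization and root stack of $\cQ_{(1,4)}$ make $\phi$ land in the stack $\cM^{\GIT}_{(1,4)}$ and not merely in its good moduli space; the rigidity claim in Item (1) is the remaining point requiring a genuine, if short, computation.
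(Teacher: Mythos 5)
Your outline identifies the right logical structure (items (3) and (4) are explicit work; item (2) is where the real difficulty lives), but the proposals for items (1) and (2) take routes the paper does not take and each has a genuine gap.

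For item (1), after arguing surface rigidity you claim that "with the surface fixed, K-(semi/poly)stability of $(\bP^1\times\bP^1,cD)$ is tested only by one-parameter subgroups of $\Aut(\bP^1\times\bP^1)$ degenerating $D$." This is not correct as stated: the Fujita--Li valuative criterion requires checking $\beta$-invariants of \emph{all} divisorial valuations over $X$, not just those arising from product test configurations. The subsequent "Paul--Tian type comparison" reducing K-stability to GIT-stability via a CM-line-bundle identity is not a substitute for this; it is a nontrivial claim that the paper does not invoke and that would require its own computation. The paper instead proves the small-$c$ identification following \cite{ADL21} and then rules out walls directly: Lemmas~\ref{lem:polystable 1}, \ref{lem:polystable 2}, \ref{lem:polystable 3} compute $\beta$-invariants (Abban--Zhuang flags, and the complexity-one torus criterion for the two strictly polystable orbits $C_0$, $C_1$) to show that every GIT-(semi/poly)stable pair remains K-(semi/poly)stable for all $c\in(0,1/2)$; the absence of walls is then formal. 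Your argument skips exactly these computations, which carry the content of item (1).

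For item (2), you say the normalization and root stack of $\cQ_{(1,4)}$ are chosen "so that the finite automorphisms of each quasimap surject onto those of its GIT image." This misidentifies the purpose of the root stack. In the paper's construction (\S\ref{sec:step 2}), the root stack $\sB\to B$ is taken along the Cartier divisor $Z$ cut out by the first Fitting ideal of $\Omega_{\cC/B}$ (Lemma~\ref{lemma_nodal_locus_is_global}); its role is purely to make the subsequent blow-up of $\bV(x,y,r)$ yield a family with \emph{reduced} central fiber (a chain of three stacky rational curves rather than a non-reduced component). Moreover, your plan of "running the MMP of \Cref{thm_intrp_MMP} and contracting one further step to the log Fano model" is too coarse: the paper's Step~1 contracts specifically the degree-$2$ tails using the carefully chosen coefficient $\tfrac19+\epsilon$ on the auxiliary fibers (Lemma~\ref{lemma_objects_in_the_boundary_when_fibers_have_coef_1_over_18}), Step~2 inserts a stacky rational bridge over each remaining node, and Step~3 contracts the degree-$3$ tails and then descends the polarized family from an \'etale cover of $\sB$ back to $\sB$ by faithfully flat descent for polarized schemes. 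Without the explicit coefficient calibration and the root-stack-then-blow-up modification, the resulting limit need not be a $(1,4)$-curve on $\bP^1\times\bP^1$, and Lemma~\ref{lct} (which verifies GIT-semistability of the endpoint) would have nothing to apply to.
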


\subsection*{Explicit stable reduction and canonical bundle formula}
In this subsection we interpret how we use \Cref{thm_intrp_MMP}, in terms of the canonical bundle formula. 

Recall that, given two locally stable families $(\cX,\cD)\to \spec R$ and $\cY\to \spec R$ over the spectrum of a DVR, with a morphism $\pi\colon (\cX,\cD)\to (\cY,\bfM)$ of generalized pairs such that $K_\cX+\cD\sim_\bQ\pi^*(K_\cY+\bfM)$, it is not true that the canonical bundle formula commutes with base change. In other terms, it is not true that the restriction of $\bfM$ to the special fiber of $\cY\to\spec R$, is the moduli part of the morphism $\pi$ restricted to the special fiber of $\cX_0$.
For example, one can take a generic $(4,3)$-curve $D$ in $\bP^1\times\bP^1$ degenerating to $V(x_0x_1(x_0-x_1)(x_0-2x_1)y_0y_1(y_0-y_1))$ along $\spec R$. The projection $(\bP^1_R\times \bP^1_R,\frac{1}{2}D)\to \bP^1_R$ has no boundary part generically, but on the special fiber there are three points which contribute to the boundary part with coefficient $\frac{1}{2}$; and dually the moduli part of the special fiber is trivial while it is not on the generic fiber.
In particular, if one has a fibration in log Calabi-Yau pairs $(X,D)\to Y\to \eta$ over the generic point $\eta $ of the spectrum of a DVR, and one completes this fibration to a fibration in log Calabi--Yau pairs $(\cX,\cD)\to \cY\to \spec R$, there is no reason to believe that the limit of the moduli part of the generic fiber will be the moduli part of the special fiber. 

It turns out that this phenomenon does not happen if one takes a limit in the quasimaps moduli space of \cite{twisted_map_2} instead. Indeed, the moduli part comes from a map from $Y$ to a compact moduli space of the fibers of $\pi$, and the quasimaps moduli spaces forces (essentially by definition) this map to extend.

Now, to take the stable limits as in \Cref{thm_intro_description_boundary}, our first step is to consider a limit coming from stable quasimaps $(\cX^{\tw},\frac{1}{n}\cD^{\tw})\to (\cC,\bfM)\to \spec R$. For this limit, the limit of the moduli part is the moduli part of the limit. On the other hand,
the $\bQ$-Cartier divisor\footnote{If the limit comes from quasimaps, then $\bfM$ is an actual $\bQ$-Cartier divisor rather than a \textbf{b}-divisor} $K_{\cC}+\bfM$ might not be ample over $\spec R$. The contractions which will make $K_{\cC}+\bfM$ ample are contractions which,
on the special fiber, \textit{replace a rational tail with boundary part, of the same degree as the degree of the moduli part on the contracted tail.} In other terms, as the degree of $K_{\cC^{(i)}}+\bfM^{(i)}$ remains constant, whenever a contraction of \Cref{thm_intrp_MMP}
contracts a tail $T$ of the special fiber, the point of the special fiber to which $T$ is contracted will be in the support of the boundary part for the special fiber, with coefficient $\deg(\bfM|_T)$; see \Cref{fig:example of MMP}.

\begin{figure}
\centering\includegraphics[width=0.5\linewidth]{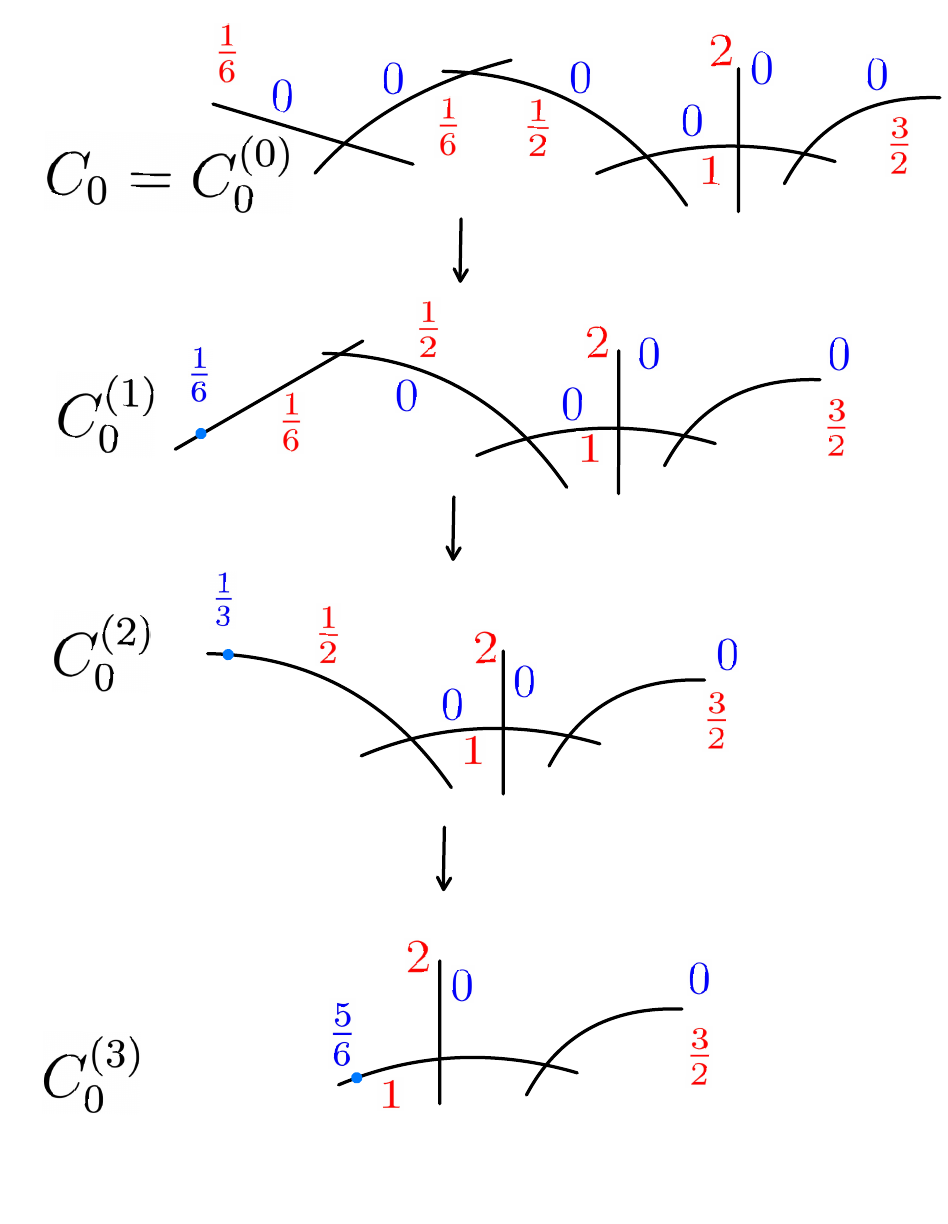}
    \caption{An example of MMP for $(C,B+\bfM)\rightarrow \Spec R$\\
    red = degree of moduli part; blue = degree of boundary part}
    \label{fig:example of MMP}
\end{figure}
Then we use the ruled model to control the fibers $X^{(i)}\to C^{(i)}$, and Section \S\ref{sec:classification of fibers} to classify the divisors on those fibers.

\subsection*{Prior and related works}
The previous works which are most relevant to the current paper are \cite{la2002explicit, ab_twisted, ascher_invariance_pluri,inchiostro_elliptic,AB_del_pezzo,AB_k3,BFHIMZ24}, where the authors use twisted stable maps of Abramovich and Vistoli to study the KSBA-compactification of the moduli space of elliptic surfaces with a section, with a with a special emphasis on elliptic
K3 surfaces in \cite{AB_k3} and rational elliptic surfaces in \cite{AB_del_pezzo}. One of the main challenges of the papers mentioned above is the explicit stable reduction algorithm to classify the objects on the boundary.
This relies on an explicit flip, called ``flip of La Nave''. Proving that, when running a specific MMP, the only type of flip occurring is the one of La Nave is the main tool used by the authors to classify the objects in the boundary, but it is arguably one of the most technical aspects in the papers above mentioned.

Now, if one takes the quotient by the hyperelliptic involution of an elliptic surface $(Y,\Sigma_Y)\to C$ with a section $\Sigma_Y$, one obtains a pair as the ones at the beginning of the introduction, of the form $\pi\colon (X,\frac{1}{2}(\Sigma+D'))\to C$ with $\Sigma$ is a section of $\pi$n and $\Sigma+D'$ the ramification locus.
Then the methods of this paper when $n=1$ and when $D = \Sigma + D'$ circumvent the need of running a specific MMP, hence removing many of the technicalities needed in the papers mentioned above. The applications of our methods to certain moduli of elliptic surfaces with either a section or a 2:1 section will be presented in a forthcoming work.

There is some other existing literature on KSBA-moduli spaces of pairs of the form $(Y,(c+\epsilon)D)$ where $K_Y+D\sim_\bQ0$, $D$ and $0\le c <1$; see \cite{ABE, aet, AE23,hacking,DH,devleming_P3}.

\subsection*{Outline of the paper}

In Section~\S\ref{section_prelim_on_qmaps} we review the basic notions used throughout the paper, with an emphasis on the moduli of stable quasimaps, and give a detailed study of stable quasimaps to log Calabi--Yau curves.  
Section~\S\ref{section_stable_reduction} develops general results on the MMP for varieties fibered in log Calabi--Yau pairs and applies them to fibrations arising from stable quasimaps.  
In Section~\S\ref{section_application_to_KSBA_moduli_when_epsilon_2_goes_to_0} we apply the results of Section~\S\ref{section_stable_reduction} to describe the boundary objects of the KSBA compactification for certain moduli spaces of surface pairs.  
Finally, Section~\S\ref{section_example_14_case} presents a concrete example of \((1,4)\)-curves on \(\mathbb{P}^1\times\mathbb{P}^1\) and compares the corresponding moduli of stable quasimaps with the \(K\)-moduli and GIT moduli stacks.

\subsection*{Table of notations}

For the reader’s convenience, we collect here the notations most frequently used throughout the paper.

\renewcommand{\arraystretch}{1.5}

\begin{center}
\begin{longtable}{| p{.18\textwidth} | p{.75\textwidth} |}
    \hline \textbf{Notation} & \textbf{Definition/Description}   \\ \hline
    \hline $\sP_n^{\GIT}$ & GIT semistable pairs $(\bP^1,p_1+\cdots+p_{2n+2})$    \\ \hline
      $\sP_n$   & enlargement of $\sP_n^{\GIT}$   \\ \hline
       $\sP_{n,\dm}$   & slc pairs $(C,(\frac{1}{n+1}+\epsilon)D)$   \\ \hline
        $\sP_{n}^{\CY}$   & moduli of boundary polarized log CY pairs $(\bP^1,\frac{1}{n+1}D_{2n+2})$     \\ \hline
        $P_{n}$   & good moduli space of $\sP_{n}$ (and ${\sP}_{n}^{\GIT}$)     \\ \hline
         $\bfM_n'$   & the Hodge line bundle on $\sP_{n}^{\CY}$     \\ \hline
         $\bfM_n$   & the pull-back of $\bfM_n'$ along $\sP_n\rightarrow \sP_{n}^{\CY}$    \\ \hline
          $(\bF_m;\mtf{e},\mtf{f})$   & the Hirzebruch surface $\bF_m$ with a section $\mtf{e}$ satisfying $(\mtf{e}^2)=-m$  and the fiber class $\mtf{f}$     \\ \hline
           $\cQ_{g,m;n,\beta}$   & closure of the locus parametrizing maps from smooth curves, in the stack of stable quasimaps from genus $g$ twisted $m$-pointed curves $\sC$ to $\sP_n$ of class $\beta$. When there is no ambiguity, we will remove the subscripts $g,m;n,\beta$    \\ \hline
               $\mtc{M}^{\GIT}_{(d_1,d_2)}$    & quotient stack of the space $|\mtc{O}_{\bP^1\times\bP^1}(d_1,d_2)|^{\sst}$ of GIT semistable $(d_1,d_2)$-curves by $\PGL(2)\times\PGL(2)$    \\ \hline
          $\ove{M}^{\GIT}_{(d_1,d_2)}$    & GIT quotient of the space $|\mtc{O}_{\bP^1\times\bP^1}(d_1,d_2)|$ of $(d_1,d_2)$-curves by $\PGL(2)\times\PGL(2)$   \\ \hline
          $\mtc{M}^{K}_{(d_1,d_2)}(c)$    & irreducible component of the K-moduli stack generically parametrizing $(\bP^1\times\bP^1,cC)$ for $(d_1,d_2)$-curves $C$    \\ \hline
          $\ove{M}^{K}_{(d_1,d_2)}(c)$    & good moduli space of $\mtc{M}^{K}_{(d_1,d_2)}(c)$    \\ \hline
          $\mtc{M}^{\KSBA}_n(\epsilon_1,\epsilon_2)$    & irreducible component of the KSBA-moduli stack generically parametrizing $\big(X,(\frac{1}{n}+\epsilon_1)D+\epsilon_2F\big)$, where $F$ are fibers of the log CY  fibration $(X,\frac{1}{n}D)\rightarrow C$    \\ \hline
          $\ove{M}^{\KSBA}(\epsilon_1,\epsilon_2)$    & coarse moduli space of $\mtc{M}^{\KSBA}_{(d_1,d_2)}(\epsilon_1,\epsilon_2)$    \\ \hline
        
\end{longtable}
    
\end{center}

\subsection*{Conventions} We adopt the following conventions throughout this paper.
\begin{enumerate}
    \item Every algebraic stack, unless otherwise stated, will be of finite type over $\bC$.
    \item A \emph{point} of a variety/scheme/stack is a $\bC$-point if there is no extra assumption. 
    \item In this paper, we widely use the language of log pairs from birational geometry. For the definition of singularities of log pairs, see \cite{KM98,Kol13}.
    \item We call a pair $(X,D)$ \emph{log Calabi-Yau} if $K_X+D\sim_{\bQ}0$. A pair $(X,D)$ is (\emph{semi-})\emph{log canonical Calabi-Yau} (in short, slc Calabi-Yau) if it is log Calabi-Yau and has (semi-)log canonical singularities.
    \item The \emph{coarse space} of a Deligne--Mumford (abbv. DM) stack $\mathcal{X}$ is the coarse moduli space $\mathcal{X}\rightarrow X$ (cf. \cite[Tag 04UX]{Sta18}). We remove the word \emph{moduli} as many stacks have no modular meaning.
    \item A \emph{stacky curve} is a proper, pure one-dimensional DM stack of finite type over $\bC$ which is generically a scheme.
    \item A \emph{twisted curve} is a special stacky curve; see \cite[Definition 4.1.2]{AV02} and \cite[Definition 1.2]{olsson2007log}. When, with the notations of \textit{loc. cit.}, we do not specify the gerbes $\Sigma_i$, we mean that $i=0$ (i.e. there are no gerbes). 
    \item A \textit{pointed twisted curve} is a twisted curve $\sC$ together with $m$ smooth distinct points $p_1,\ldots,p_m$.
    \item A \emph{fibration} $X\rightarrow Y$ is a surjective morphism between normal projective varieties with connected fibers, and $\dim X>\dim Y$.
    \item Throughout this paper, for any DVR $R$, we denote by $\eta$ (resp. $0$) the generic (resp. closed) point of $\Spec R$. If $f:X\rightarrow \Spec R$ is an object over $\Spec R$ (e.g. a family of pairs), then we denote by $X_\eta$ (resp. $X_{0}$) the generic (resp. closed) fiber of $f$.
\end{enumerate}

\subsection*{Acknowledgments}
We thank Dori Bejleri, Stefano Filipazzi, Yuchen Liu and Luca Schaffler for helpful conversations. GI was supported by AMS-Simons travel grant. RS was supported by the ''Programma per giovani ricercatori Rita Levi Montalcini” of
MUR and by PSR 2022 – Linea 4 of the University of Milan. RS is a member of the GNSAGA
group of INDAM. JZ was supported by AMS-Simons travel grant.

\section{Stable quasimaps to moduli of pointed rational curves}\label{section_prelim_on_qmaps}

\subsection{Fibrations in log Calabi-Yau pairs}
We begin by recalling two moduli spaces which will be useful later.  
Let $n\geq 1$ be an integer. We denote the GIT moduli space of $2n+2$ unordered points on $\bP^1$ by $$\sP_n^{\GIT} :=\ \big[\bP(\oH^0(\cO_{\bP^1}(2n + 2)))^{ss}/\PGL_2\big].$$ Recall that every point of $\sP_n^{\GIT}$ represents a degree $2n+2$ divisor $D$ on $\bP^1$ such that $\mult_p(D)\leq n+1$ for any $p\in \bP^1$. Equivalently, this moduli space parametrizes log canonical Calabi-Yau pairs of the form $(\bP^1,\frac{1}{n + 1}D)$.

\begin{notation}
    \textup{We will denote the enlargement constructed in \cite[Appendix A.2.1]{twisted_map_2} by \[ \iota:\ \sP_n^{\GIT}\ \subseteq \ \sP_n\]
    and by $P_n$ the good moduli space of $\sP_n$ (or $\sP_n^{\GIT}$, see point (\ref{same_gms_for_wideP_and_P}) below).}
\end{notation}
Recall that the points of $\sP_n$ parametrize pairs $(\cC,\frac{1}{n+1}D)$ such that:

\begin{itemize}
\item $\cC$ is a twisted curve with coarse moduli space $C$ and $D\subseteq \cC$ are $2n+2$ smooth (schematic points of $\cC$);
\item the pair $(C,\frac{1}{n+1}D)$ is an slc log Calabi-Yau pair with $D$ ample;
    \item either $\cC\cong \bP^1$ or $\cC$ is a twisted curve, which is the nodal union of two $\bmu_2$-root stacks of $\bP^1$ at $\infty$, glued along the $\cB\bmu_2$ closed substacks, and each branch has $n$ points.
\end{itemize}

    \begin{figure}
    \centering
    \includegraphics[width=0.6\linewidth]{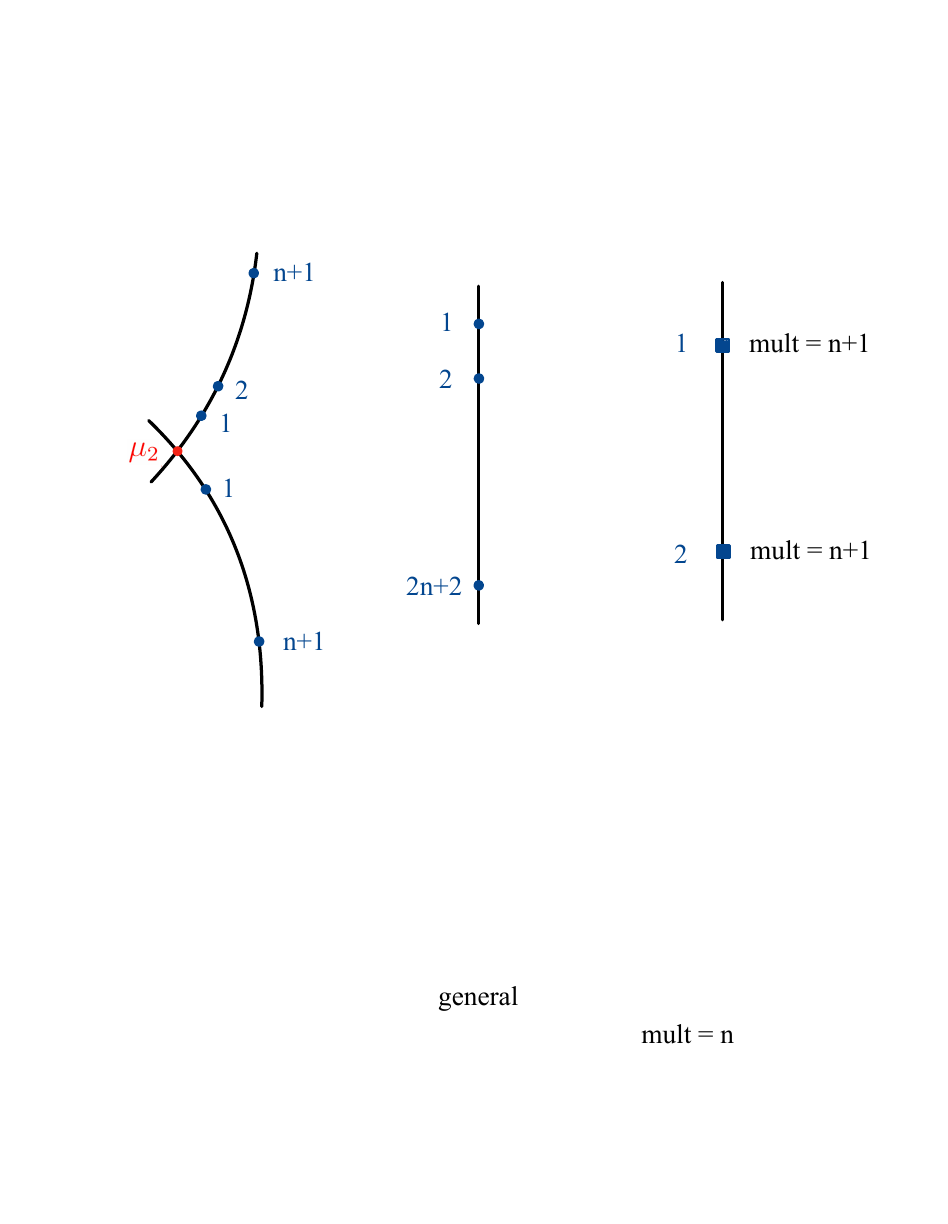}
    \caption{Three pairs parametrized by $\sP_n$}
    \label{fig:objects in stacks}
\end{figure}

\begin{figure}
    \centering
\includegraphics[width=0.6\linewidth]{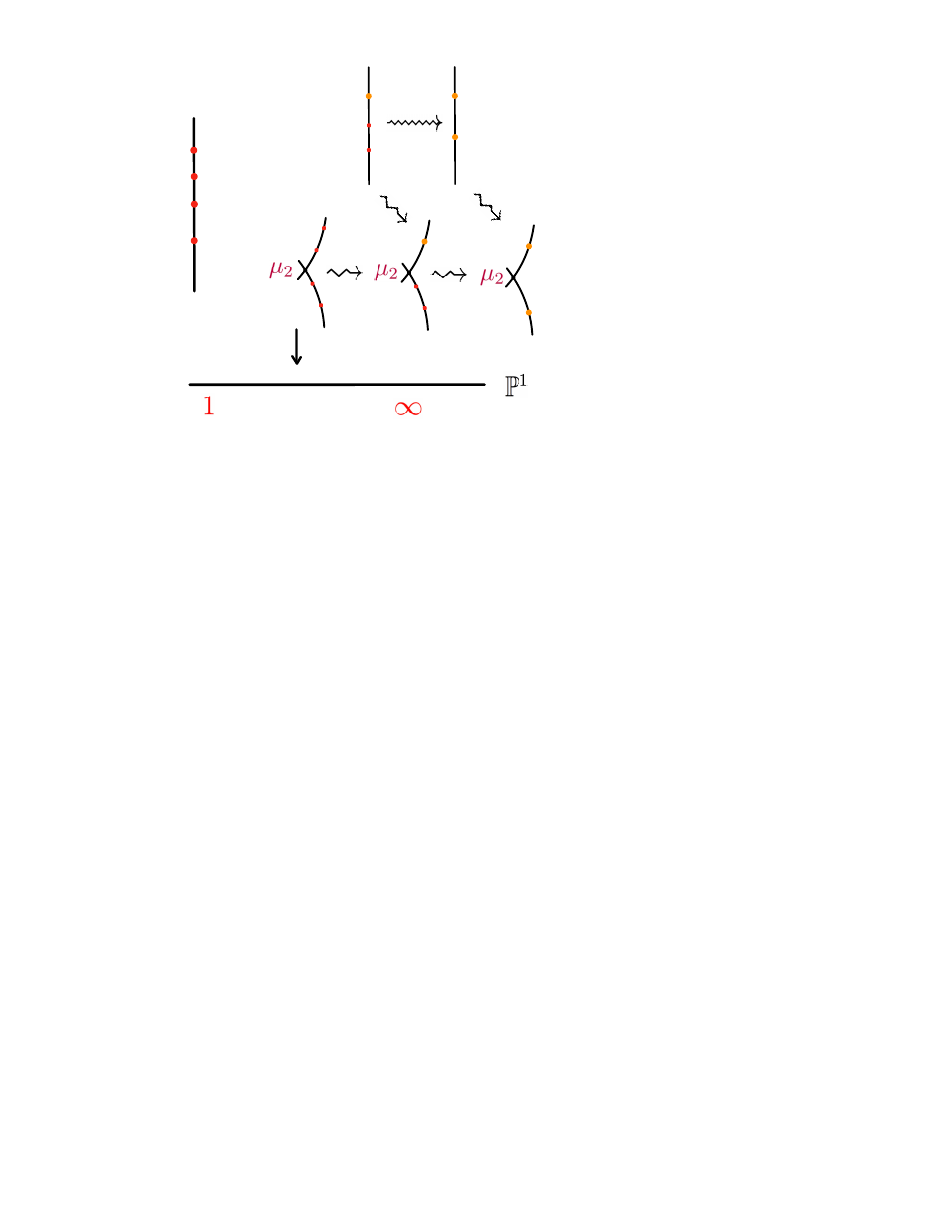}
    \caption{Points of the stack $\sP_{1}$ over its good moduli space $\bP^1$\\ red (resp. orange) point = multiplicity 1 (resp. 2); \\ squiggle arrow = $\bG_m$-equivariant degeneration }
    \label{fig:points sP1}
\end{figure}
We have the following relations between stacks.

\begin{enumerate}
\item (ref. \cite[Proposition A.7]{twisted_map_2}) the stack $\sP_n$ is a $\bmu_2$-root stack over the stack $\sP_{n}^{\CY}$ (ref. \cite[Definition 16.7]{ABBDILW23}), which parametrizes certain boundary polarized Calabi-Yau (abbreviated by bpCY) pairs. The root stack is along the divisor in $\sP_{n}^{\CY}$ parametrizing nodal curves. The morphism $\sP_n\to \sP_{n}^{\CY}$ sends a pair $(\cC,\frac{1}{n+1}D)$ consisting of a twisted curve and the divisor $D$ to the pair $(C,\frac{1}{n+1}D)$ where $\cC$ is replaced with its coarse moduli space. 
\item[(1$^{+}$)] We briefly describe the stack $\sP_{n}^{\CY}$ for the reader's convenience: it parametrizes slc pairs $(X,D)$ which admits a smoothing to $(\bP^1,\frac{1}{2n+2}(p_1+\cdots+p_{2n+2}))$, where $p_1,...,p_{2n+2}$ are distinct points, such that $-K_X$ is ample and $$(2n+2)\cdot (-K_X-D)\ \sim\  0 .$$
    \item\label{item_where_we_say_there_is_a_map_spn_to_tilde_pn} (ref. \cite[Theorem A.4(1)]{twisted_map_2}) The inclusion $\iota:\sP_n^{\GIT}\subseteq \sP_n$ admits a retraction, i.e. a morphism $$\pi:\ \sP_n \ \longrightarrow \ \sP_n^{\GIT}$$ such that $\pi\circ\iota=\Id$, which sends each point corresponding to a nodal curve to the point corresponding to the strictly polystable pair $$\big(\bP^1,(n+1)[0]+(n+1)[\infty]\big).$$
\item\label{same_gms_for_wideP_and_P} (ref. \cite[Corollary A.8]{twisted_map_2}) The stack $\sP_n$ has the same good moduli space as $\sP_n^{\GIT}$.
    \item The locus $\sP_{n,\dm}\subseteq \sP_n$ parametrizing pairs $(C,\frac{1}{n+1}D)$ such that there is an $\epsilon>0$ such that $\big(C,(\frac{1}{n+1}+\epsilon)D\big)$ is slc is an open and proper Deligne-Mumford substack.
\end{enumerate}
We summarize these relations in the following diagram
\[\xymatrix{\sP_{n,\dm} \ar@{^{(}->}[rr]^{\text{open}} & &\sP_n\ar[d]^{\text{root stack}}  \ar@/^1.0pc/@[black][dll]^-\pi\\
\ \sP_n^{\GIT} = \big[\bP(\oH^0(\cO_{\bP^1}(2n + 2)))^{ss}/\PGL_2\big] \ar@{^{(}->}[rr] \ar@/^0.5pc/@{^{(}->}[rru]^{\iota}\ar[d]_{\text{good moduli space}} & & \sP_{n}^{\CY} \ar[dll]^-{\text{good moduli space}} \\   \bP(\oH^0(\cO_{\bP^1}(2n + 2)))^{ss} \sslash\PGL_2 & &  }
\]

\subsubsection{Construction of the retraction $\pi\colon \sP_n\to \sP_n^{\GIT}$}\label{subsubsection_recalling_how_to_take_ruled_model_in_qmap_paper}
Let us provide more details on the item (2) above, more explicitly on how the retraction $\pi$ is constructed. For simplicity, we will first stick with one-parameter families which intersect $\sP_n^{\GIT}$. After, we recall a more global description that will be needed in this paper, and we refer the reader to \cite[Theorem A.4(1)]{twisted_map_2} for further details.

Let $C\to \sP_n$ be a morphism from a curve which sends the generic point of $C$ to $\sP_n^{\GIT}$. This corresponds to a surface pair $(\sX,\frac{1}{n+1}\sD)\to C$ whose nodal fibers have a twisted node, namely a balanced node of a twisted curve (see \cite[Definition 1.2]{olsson2007log}) with a $\bmu_2$-stabilizer, and whose generic fiber is smooth. Take the coarse space \[\begin{tikzcd}
	{(\sX,\frac{1}{n+1}\sD)} && {(X,\frac{1}{n+1}D)} \\
	& C.
	\arrow["{\textup{cms}}", from=1-1, to=1-3]
	\arrow[from=1-1, to=2-2]
	\arrow[from=1-3, to=2-2]
\end{tikzcd}\] As the nodes of $\sX$ have a $\bmu_2$-stabilizer, the corresponding points of $X$ will have $A_{2k+1}$-singularities. In particular, one can extract the divisors $Z\to X$ over each such singularity so that the resulting family is obtained by replacing each nodal fiber with a chain of 3 $\bP^1$s, with two $A_k$-singularities, and with the divisor not intersecting the exceptional divisor; see Figure \ref{fig:nodes of X}.
 
We can then contract $Z\to Y$ the two irreducible components of the fiber intersecting the proper transform of $D$, as in the picture above. The resulting family will be obtained by replacing each nodal fiber with a fiber isomorphic to $(\bP^1,(n+1)[0]+(n+1)[\infty])$. 

More generally, let $\sC\rightarrow \Spec R$ be a family of twisted curves with normal generic fiber, where $R$ is a DVR. Let $(\sX,\frac{1}{n+1}\sD)\to \sC$ be the family pulled back from $\sP_n^{\CY}$ along the composition $\sC\to \sP_n\to \sP_n^{\CY}$, where the first map is a stable quasimap sending the generic fiber to $\sP_n^{\GIT}$. Then for every nodal point $q$ of the fibers of $\psi\colon \sX\to \sC$, there is a smooth neighborhood of the pointed map \[(\sX,q) \ \longrightarrow \  (\sC,\psi(q)),\] which is isomorphic to a smooth neighborhood of the pointed map \[\big(\spec(A_\mtf{m}[x,y]/xy-f^2), \bV(\mtf{m},x,y)\big) \ \longrightarrow \ \big(\spec(A_{\mtf{m}}),\bV(\mtf{m})\big).\] Here, $\mtf{m}$ is the maximal ideal of the local ring $A_{\mtf{m}}:=\cO_{\sC,\psi(q) }$, and $f\in \mtf{m}$ vanishes precisely along the locus over which the family is singular. Then the birational transformation $\sX\dashrightarrow \sY$ corresponding to $\sC\to \sP_n\to \sP_n^{\GIT}$ is constructed by first performing a blow-up $\sZ\to \sX$, which in the chart above is the blow-up of $\bV(x,y,f)$, followed by a contraction $\sZ\to \sY$ which contracts the fiber components of reducible fibers of $\sZ\to \sC$, which are not the exceptional divisor for this blow-up.

\subsubsection{Moduli of stable quasimaps}
We begin by recalling the following definition from \cite{twisted_map_2}.
\begin{defn}\label{def_stable_qmap}
   Let $(\mts{C};p_1,\ldots,p_m)$ be a pointed twisted curve of genus $g$, and $\beta$ be the class of a curve in $\sP_n$. A representable morphism $\phi:\mts{C}\rightarrow \sP_n$, with the restricted universal family $\pi:(\sY,\frac{1}{n+1}\sD)\to \sC$, is called a \emph{stable quasimap} of class $\beta$ if
    \begin{itemize}
            \item[(Sing.)] the subset $\Delta$ of $\mathscr{C}$ consisting of points $p$ such that $(\sY_p,(\frac{1}{n+1}+\epsilon)\sD_p)$ does {\bf{not}} have semi-log-canonical singularities for any $0<\epsilon \ll 1$ is a finite (possibly empty) union of smooth points on $\sC\smallsetminus\{p_1,\ldots,p_m\}$,
            \item[(Stab.)] if $\cR\subseteq \sC$ is an irreducible component such that $\deg(\omega_\sC(p_1+\ldots+p_m) |_\cR)< 0$, then not all the fibers of $\pi|_\cR\colon(\sY|_\cR,\frac{1}{n+1}\sD|_\cR)\to \cR$ are $S$-equivalent; whereas if $\deg(\omega_\sC(p_1+\ldots+p_m) |_\cR)=0$, then not all fibers of $\pi|_\cR$ are isomorphic, and
            \item[(Num.)] the family $\pi$ is the pull-back of the universal family along a map $\sC\to \sP_n$ of class $\beta$ from a twisted curve of genus $g$.
        \end{itemize}
\end{defn}

The labels (Sing.), (Stab.) and (Num.) stand for singularity condition, stability condition, and numerical condition. The following follows from \cite[Theorem 1]{twisted_map_2}; see also \cite[Theorem 2]{twisted_map_2}.

\begin{theorem}There exists a proper Deligne-Mumford stack, denoted by $\cQ_{g,m;n,\beta}$, whose closed points parametrize stable quasimaps from a genus $g$ twisted $m$-pointed curve $\sC$ to $\sP_n$ of class $\beta$.
\end{theorem}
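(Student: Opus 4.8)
The plan is to deduce this as the specialization, to the target $\sP_n$, of the general existence theorem for moduli of stable quasimaps proved in \cite{twisted_map_2}: \cite[Theorem~1]{twisted_map_2} (see also \cite[Theorem~2]{twisted_map_2}) produces a proper Deligne--Mumford stack of stable quasimaps into targets of the relevant type, and $\sP_n$ is such a target. The first task is therefore to match the data. Recall that $\sP_n$ is the $\bmu_2$-root stack of the proper moduli stack $\sP_n^{\CY}$ of boundary polarized log Calabi--Yau pairs along the divisor parametrizing nodal curves, and that it carries the universal family $(\sY,\tfrac{1}{n+1}\sD)$. Hence a representable morphism $\phi\colon\sC\to\sP_n$ from a pointed twisted curve is the same datum as a family of log Calabi--Yau pairs over $\sC$ with $\bmu_2$-twisted nodes lying over the nodes of $\sC$, and the three conditions (Sing.), (Stab.), (Num.) of \Cref{def_stable_qmap} are exactly the singularity, stability and numerical conditions in the general definition. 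Granting this, \cite[Theorem~1]{twisted_map_2} applies, and $\cQ_{g,m;n,\beta}$ is then the closure of the locus of maps from smooth curves inside the resulting stack. Below I indicate how the general theorem would be proved.

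First I would show algebraicity: the stack of prestable (representable, genus $g$, $m$-pointed) twisted curves is algebraic, and over it the stack of morphisms of bounded class $\beta$ into the proper target $\sP_n$ is algebraic, exactly as for twisted stable maps of Abramovich--Vistoli \cite{AV02} --- one uses the deformation theory of the map, equivalently a Hilbert-scheme construction after embedding the universal family of pairs. Next I would check that (Sing.) and (Stab.) define an open substack: (Sing.) is the locus where the non-slc locus of $(\sY,(\tfrac{1}{n+1}+\epsilon)\sD)$ is finite and disjoint from the markings, which is open by upper semicontinuity of the relevant log canonical thresholds together with constructibility; (Stab.) is open because, on a component $\cR$ of $\sC$ with $\deg\big(\omega_\sC(p_1+\cdots+p_m)|_\cR\big)\le 0$, the conditions ``not all fibers $S$-equivalent'' and ``not all fibers isomorphic'' are open. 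The Deligne--Mumford property is then forced by (Stab.): on such a component the induced family of pairs is non-isotrivial, so no infinitesimal automorphism of $\sC$ can preserve it, and together with representability this makes all automorphism groups finite and reduced. Finally, finite type follows from boundedness of maps of fixed class $\beta$ into the projective coarse space $P_n$, since fixing the degree against an ample class and the arithmetic genus bounds the number of components and hence the entire datum.

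Properness would be checked by the valuative criterion: given a stable quasimap over the generic point $\eta$ of a DVR $R$, one must produce, after a finite ramified base change, a stable quasimap over $\Spec R$, unique once it exists. For existence, the composite $C_\eta\to\sP_n\to\sP_n^{\CY}$ maps to a proper stack, so after semistable reduction of the family of curves the induced rational map extends to a morphism $\cC^{\tw}\to\sP_n^{\CY}$; one then uses the local picture of a family of pairs near a node --- the $xy=f^2$ description recalled in \S\ref{subsubsection_recalling_how_to_take_ruled_model_in_qmap_paper}, together with the fact that $\sP_n\to\sP_n^{\CY}$ is a $\bmu_2$-root stack along the nodal divisor --- to equip the nodes of $\cC^{\tw}$ with the $\bmu_2$-twisted structure needed to lift the map representably to $\sP_n$. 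Finally one contracts the finitely many rational tails of the central curve on which the limiting family is $S$-equivalent or isotrivial, so as to enforce (Stab.), while checking that (Sing.) and (Num.) survive; uniqueness then follows from the separatedness of $\sP_n^{\CY}$ and the fact that both the twisted structure at the nodes and the destabilizing contraction are canonically determined.

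The hard part will be the existence half of the valuative criterion --- \emph{stable reduction for quasimaps} --- where three points require care: (i) extending the map across the central fiber rests on the properness and separatedness of the moduli $\sP_n^{\CY}$ of boundary polarized log Calabi--Yau pairs, a substantial input (from \cite{ABBDILW23} and \cite{twisted_map_2}); (ii) producing the correct $\bmu_2$-twisted structure at the nodes of the limit curve so that the extended map is representable into $\sP_n$ rather than merely into $\sP_n^{\CY}$ --- this is exactly the root-stack phenomenon of \S\ref{subsubsection_recalling_how_to_take_ruled_model_in_qmap_paper}, and it is what dictates the shape of the boundary objects of $\sP_n$ (a nodal union of two $\bmu_2$-root stacks of $\bP^1$ glued along $\cB\bmu_2$); and (iii) showing that the tail contraction forced by (Stab.) is needed only finitely often and introduces neither new non-slc fibers nor collisions with the markings. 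All of this is carried out in \cite{twisted_map_2}, from which the theorem follows.
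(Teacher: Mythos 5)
Your proposal is correct and follows the same route as the paper, which simply cites \cite[Theorem 1]{twisted_map_2} (see also \cite[Theorem 2]{twisted_map_2}) for this statement; your reduction of $\sP_n$ to a target of the relevant type, via its description as a $\bmu_2$-root stack of the proper stack $\sP_n^{\CY}$ with $\cQ_{g,m;n,\beta}$ taken to be the closure of the locus of maps from smooth curves, matches the paper's setup. The additional sketch you give of algebraicity, openness, finite type, and stable reduction is plausible reconstruction of the cited source, but it is supplementary to what the paper itself records, which is just the citation.
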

\begin{remark}\label{remark_S-equivalence}\textup{\begin{enumerate}
    \item When the subscripts $g,m;n,\beta$ are either clear or not relevant, we will suppress them and simply denote $\cQ_{g,m;n,\beta}$ by $\cQ$.
    \item Being S-equivalent is an equivalence relation among isomorphism class of pairs in $\sP_n$ \cite[\S 6.5]{ABBDILW23}, and the points of the good moduli space $P_n$ of $\sP_n$ are in bijection (via the morphism $\sP_n\to P_n$) to S-equivalence classes. In particular, a morphism $C\to \sP_n$ from a curve satisfies that the composition $C\to \sP_n\to P_n$ is finite if and only if there are two points $x_1,x_2\in C$ which map to pairs in $\sP_n$ which are not S-equivalent.
\end{enumerate}}
\end{remark}

\begin{defn}
    A stable $m$-pointed quasimap $\phi\colon (\sC,p_1,\ldots,p_m)\to \sP_n$ is called \emph{smoothable} if there is a DVR $R$ with a morphism $\spec R\to \cQ$ which sends the closed point $\xi$ to $[\phi]$, and the generic point $\eta$ to a point $[f\colon C\to \sP_n^{\GIT}\subseteq \sP_n]$, where $C$ is a smooth genus $g$ curve.\end{defn}

\begin{lemma}\label{lemma_non_ksba_locus_is_cartier}
    For any $n\ge 1$, there exists a Cartier divisor $\Delta_{\sP_n}\subseteq \sP_n$ supported at the set of points parametrizing pairs $(\cR;x_1,\ldots,x_{2n+2})$ where at least two of the $2n+2$ marked points $x_1,...,x_{2n+2}$ collide.
\end{lemma}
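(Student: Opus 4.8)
The plan is to realize $\Delta_{\sP_n}$ as the \emph{discriminant} of the universal marked divisor over $\sP_n$. Let $\rho\colon\cZ^{\univ}\to\sP_n$ be the universal twisted curve and $D^{\univ}\subseteq\cZ^{\univ}$ the universal divisor, so that a geometric point $s\in\sP_n$ parametrizes the pair $(\cZ^{\univ}_s,\tfrac{1}{n+1}D^{\univ}_s)$. By the moduli description of $\sP_n$ recalled above, the $2n+2$ points of $D^{\univ}$ are smooth schematic points of the fibres; in the terminology of \cite{twisted_map_2}, $D^{\univ}$ is a relative effective Cartier divisor of relative degree $2n+2$ contained in the smooth schematic locus of $\cZ^{\univ}/\sP_n$. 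Consequently the restriction $\varpi\colon D^{\univ}\to\sP_n$ of $\rho$ is finite and flat, its formation commutes with base change, and $\cE:=\varpi_\ast\cO_{D^{\univ}}$ is a locally free $\cO_{\sP_n}$-module of rank $2n+2$ whose fibre at $s$ is the finite $\kappa(s)$-algebra $\cO_{D^{\univ}_s}$.

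Next I would form the trace pairing $q\colon\cE\otimes_{\cO_{\sP_n}}\cE\to\cO_{\sP_n}$, $(a,b)\mapsto\mathrm{tr}_{\cE/\cO_{\sP_n}}(ab)$, view it as a homomorphism $\cE\to\cE^{\vee}$, and let $\mathrm{disc}\in\Gamma\bigl(\sP_n,(\det\cE^{\vee})^{\otimes 2}\bigr)$ be its determinant. I then set $\Delta_{\sP_n}:=V(\mathrm{disc})$, the zero scheme of this section of a line bundle. To see that this is genuinely an effective Cartier divisor it suffices to check that $\mathrm{disc}$ is a regular section, i.e.\ nonzero on every irreducible component. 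Let $U\subseteq\sP_n$ be the open substack over which $\varpi$ is étale, equivalently the $2n+2$ marked points are pairwise distinct; since $\sP_n$ is reduced and irreducible---being an enlargement of the irreducible stack $[\bP(\oH^0(\cO_{\bP^1}(2n+2)))^{\sst}/\PGL_2]$, in which configurations of $2n+2$ distinct points are dense---the substack $U$ is dense. Over $U$ the trace pairing is fibrewise nondegenerate, so $\mathrm{disc}|_U$ is nowhere vanishing; hence $\mathrm{disc}$ is a regular section and $\Delta_{\sP_n}$ is an effective Cartier divisor.

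It remains to identify the support. Working over $\bC$, for a geometric point $s$ the finite algebra $\cO_{D^{\univ}_s}$ is a product of local Artinian $\kappa(s)$-algebras with residue field $\kappa(s)$, and by the classical criterion its trace form is nondegenerate if and only if $\cO_{D^{\univ}_s}$ is étale over $\kappa(s)$, equivalently reduced, equivalently the length-$(2n+2)$ subscheme $D^{\univ}_s\subseteq\cZ^{\univ}_s$ consists of $2n+2$ distinct reduced points. Therefore $s\in\Delta_{\sP_n}$ precisely when $D^{\univ}_s$ has a point of multiplicity $\ge 2$, i.e.\ when at least two of the marked points $x_1,\dots,x_{2n+2}$ of $\cR=\cZ^{\univ}_s$ collide; this is the asserted support. (In particular the boundary locus of nodal twisted curves is disjoint from $\Delta_{\sP_n}$: each such curve carries $2n+2$ distinct marked points away from the node, so $\varpi$ is étale there. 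This is why one cannot simply pull back along the retraction $\pi\colon\sP_n\to\sP_n^{\GIT}$ the discriminant hypersurface of binary forms of degree $2n+2$: since $\pi$ collapses the entire boundary to the strictly polystable configuration $(\bP^1,(n+1)[0]+(n+1)[\infty])$, which lies on that hypersurface, the pull-back would acquire the whole boundary in its support.)

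The one step that is not a formal manipulation is the assertion that $\varpi\colon D^{\univ}\to\sP_n$ is finite and flat---equivalently, that $\cE$ is locally free of rank $2n+2$ over the whole of $\sP_n$, including the boundary of nodal twisted curves and the $\bmu_2$-stacky points. This is exactly where one invokes the precise construction of $\sP_n$ in \cite{twisted_map_2}: the universal $D^{\univ}$ is, by construction, a relative effective Cartier divisor supported in the smooth schematic locus of $\cZ^{\univ}/\sP_n$, which simultaneously gives flatness and makes the discriminant section commute with base change. The remaining inputs---the trace-form criterion for étaleness over a field, and the reducedness and irreducibility of $\sP_n$---are standard, so the argument should go through without further difficulty.
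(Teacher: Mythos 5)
Your proof is correct but takes a genuinely different route from the paper's. The paper descends the classical discriminant hypersurface of $\bP(\oH^0(\cO_{\bP^1}(2n+2)))$ to $\sP_n^{\GIT}$ via $\PGL_2$-equivariance, then invokes the smoothness of $\sP_n^{\CY}$ (cited from \cite{ABBDILW23}, so Weil divisors there are automatically Cartier) to conclude that the Zariski closure of that divisor in $\sP_n^{\CY}$ is Cartier, and finally pulls back along the flat root-stack morphism $\sP_n\to\sP_n^{\CY}$. You instead build the divisor intrinsically on $\sP_n$ as the vanishing locus of the discriminant of the trace pairing on $\cE=\varpi_*\cO_{D^{\univ}}$, a section of $(\det\cE^{\vee})^{\otimes 2}$. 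Your route bypasses $\sP_n^{\CY}$ entirely and, via the fibrewise trace-form criterion for \'etaleness, identifies the scheme structure and the support in one stroke; what it buys in directness it pays for in having to know that $\varpi$ is finite flat (which holds because $D^{\univ}$ lies in the smooth schematic locus of the universal twisted curve) and that $\sP_n$ is reduced and irreducible (to deduce density of the \'etale locus $U$ and hence regularity of the section). Both facts are available — irreducibility follows, for instance, because $\sP_n$ is a $\bmu_2$-root stack of the smooth, irreducible stack $\sP_n^{\CY}$ — though your one-line appeal to $\sP_n$ being ``an enlargement of an irreducible stack'' is a bit quick.

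One small inaccuracy, harmless to your argument: in your parenthetical you assert that the boundary locus of nodal twisted curves is disjoint from $\Delta_{\sP_n}$ because ``each such curve carries $2n+2$ distinct marked points.'' That is not true — the marked points on a nodal twisted curve parametrized by $\sP_n$ are permitted to collide (the pair $(C,\tfrac{1}{n+1}D)$ need only be slc, which allows multiplicity up to $n+1$), so the boundary meets $\Delta_{\sP_n}$ in a proper closed substack. Your actual point survives unscathed: since the boundary is not \emph{contained} in $\Delta_{\sP_n}$, while the retraction $\pi\colon\sP_n\to\sP_n^{\GIT}$ collapses all of it onto the strictly polystable configuration lying on the linear-system discriminant, the naive pullback along $\pi$ would have the wrong support. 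This is indeed why the paper pulls back along the root-stack map $\sP_n\to\sP_n^{\CY}$ rather than along $\pi$.
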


\begin{proof}
    
    Observe that the locus in $\bP(\oH^0(\bP^1,\cO_{\bP^1}(2n+2)))$ where two of the $2n+2$ points agree is a Cartier divisor; this is known as the discriminant divisor. This Cartier divisor is $\PGL_2$-equivariant, and thus it descends to a Cartier divisor on $\wt{\sP}_n$, denoted by $\Delta_{\wt{\sP}_n}$.
    
   As $\sP_{n}^{\CY}$ is smooth by \cite[Lemma 16.8 (b)]{ABBDILW23}, then the closure of $\Delta_{\wt{\sP}_n}$ in $\sP_{n}^{\CY}$, whose closed points correspond to pairs $(C,\frac{1}{n+1}D)$ where the support of $D$ consists of at most $2n+1$ points, is also a Cartier divisor $\Delta_{\sP_{n}^{\CY}}$. Now it suffices to take $\Delta_{\sP_n}$ to be the pull-back of $\Delta_{\sP_{n}^{\CY}}$ along the root stack $\sP_n\rightarrow \sP_{n}^{\CY}$
\end{proof}

\begin{remark}\label{rem:existence of discriminant divisor}\textup{From \Cref{lemma_non_ksba_locus_is_cartier}, for every $g,m$ and $\beta$, there is a Cartier divisor on the universal curve $\sC_{\univ}\to \cQ_{g,m;n,\beta}$ supported at the points $p\in \sC$ which map to $\Delta_{\sP_n}$. We will denote it by $\Delta_{g,m;n,\beta}$ if we want to emphasize the numerical invariant $g,m,n$ and $\beta$; otherwise we will simply denote it by $\Delta$.}
\end{remark}

\subsubsection{Hodge line bundles and moduli divisors}\label{subsubsection_moduli_part}
We summarize several results concerning the canonical bundle formula, which plays a central role in the study of lc-trivial fibrations.

Let $(\mts{X},\mts{D})\rightarrow T$ be a family of lc-trivial fibrations; see \cite[Definition 14.2]{ABBDILW23}. Then, up to replacing $T$ with a finite morphism $T'\to T$ and $X$ by the normalization of the main component of $X\times_T T'$, by \cite{Amb05} there is a $\bQ$-linear equivalence
$$K_X + D \sim_{\bQ} f^{*}(K_T + D_T +\bfM_T),$$
where $D_T$ and $\bfM_T$ are $\bQ$-divisors on $T$ defined as follows:
\begin{enumerate}
    \item $D_T$ is the \emph{discriminant divisor}, which is defined by $D_T := \sum_P (1-b_P )P$, where the sum runs through prime divisors $P \subseteq T$ and
$$b_P := \max\big\{\lambda \in \bQ \ |\  (X,D + \lambda f^*P)\textup{ is sub-lc over the generic point of }P\big\}.$$
    \item $\bfM_T := G -K_T - D_T$ is the \emph{moduli divisor} (class!), where $G$ is a $\bQ$-Cartier $\bQ$-divisor on $T$ such that $K_X + D\sim_{\bQ} f^{*}G$.
\end{enumerate}

\begin{defn}
    Let $f \colon (X,D) \rightarrow T$ be a family of boundary polarized CY pairs parametrized by $\sP_{n}^{\CY}$. We define the \emph{Hodge line bundle} associated to $f$ as
$$\lambda_{\Hodge,f} := f_*\omega^{[n+1]}
_{X/T}\big((n+1)D\big).$$
\end{defn}
By definition, this gives rise to the \emph{Hodge line bundle} on $\sP_{n}^{\CY}$, denoted by $\bfM_n'$, which satisfies the following:
\begin{enumerate}
    \item (functoriality) for any morphism $f\colon T\to \sP_{n}^{\CY}$ from a smooth scheme $T$ with the pull-back family of pairs $f\colon (X,D)\to T$, the line bundle $f^*(\bfM_n')$ is isomorphic to $\lambda_{\Hodge,f}$. 
    \item\label{fact:desent} (ref. \cite[\S 14.4]{ABBDILW23}) there is a positive integer $N$ such that $(\bfM_n')^{\otimes N}$ descends to an ample line bundle on $P_n$.
    \item (ref. \cite[Proposition 14.7 (3)]{ABBDILW23}) The Hodge line bundle associated to the family $f:(X,D)\rightarrow T$ agrees with the $(n+1)$ multiple of the moduli part in the canonical bundle formula for the morphism $\pi$.
\end{enumerate}

    We will denote by $\bfM_n$ the pull-back of $\bfM_n'$ via the root stack $\sP_n\to \sP_{n}^{\CY}$.

\begin{lemma}
    Let $\cR$ be the root stack of a smooth curve $R$ at finitely many points, and $\phi\colon\cR\to \sP_n$ be a morphism. Let $\pi\colon(\sY,\frac{1}{n+1}\sD)\to \cR$ be the pull-back of the universal family, and $\pi_R:(Y,\frac{1}{n+1}D)\rightarrow R$ be the induced morphism on the coarse spaces. Then the following are equivalent:
    \begin{enumerate}
        \item not all the fibers of $\pi\colon(\sY,\frac{1}{n+1}\sD)\to \cR$ are $S$-equivalent;
        \item the moduli part in the canonical bundle formula for $\pi_R$ has positive degree;
        \item the morphism $R\rightarrow P_n$ is non-constant.
    \end{enumerate}
\end{lemma}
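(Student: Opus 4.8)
The plan is to reduce all three conditions to a single one: the non-constancy of the morphism $\psi\colon R\to P_n$ on good moduli spaces induced by $\phi$. Recall that $\sP_n$ is a $\bmu_2$-root stack over $\sP_{n}^{\CY}$, that $\sP_{n}^{\CY}$ has good moduli space $P_n$, and that the points of $P_n$ are in bijection with $S$-equivalence classes of pairs in $\sP_n$ (\Cref{remark_S-equivalence}). Composing $\phi\colon\cR\to\sP_n$ with the root stack $\sP_n\to\sP_{n}^{\CY}$ and with $\sP_{n}^{\CY}\to P_n$ gives a morphism $\cR\to P_n$; since $P_n$ is an algebraic space and $\cR\to R$ is its coarse moduli space, this factors uniquely through $R$, defining $\psi\colon R\to P_n$. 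Concretely, $\psi$ sends $r\in R$ to the $S$-equivalence class of the fibre of $\pi_R$ over $r$. Since $R$ is a proper irreducible curve, $\psi$ is either constant or finite onto its image, which is then a curve.

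\smallskip
\emph{$(1)\Leftrightarrow(3)$.} By \Cref{remark_S-equivalence}, two fibres of $\pi$ (equivalently, of $\pi_R$, since taking the fibrewise coarse space changes neither the $S$-equivalence class nor the induced point of $P_n$) are $S$-equivalent precisely when the corresponding two points of $R$ have the same image under $\psi$. Hence ``not all fibres of $\pi$ are $S$-equivalent'' says exactly that $\psi$ is non-constant, which is condition (3).

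\smallskip
\emph{$(2)\Leftrightarrow(3)$.} First, $\pi_R\colon(Y,\tfrac{1}{n+1}D)\to R$ is a family of boundary polarized CY pairs parametrized by $\sP_{n}^{\CY}$: its general fibre is $(\bP^1,\tfrac{1}{n+1}(p_1+\cdots+p_{2n+2}))$ and its special fibres are the coarse spaces of the twisted curves appearing over $\cR$, which are exactly the nodal pairs parametrized by $\sP_{n}^{\CY}$ (this is the content of the root stack morphism $\sP_n\to\sP_{n}^{\CY}$). Let $\psi'\colon R\to\sP_{n}^{\CY}$ be the corresponding classifying map, so that $\psi=(\sP_{n}^{\CY}\to P_n)\circ\psi'$. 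By \cite[Proposition 14.7 (3)]{ABBDILW23}, the moduli part $\bfM_R$ in the canonical bundle formula for $\pi_R$ satisfies $\lambda_{\Hodge,\pi_R}\cong\cO_R\big((n+1)\bfM_R\big)$, and by functoriality of the Hodge line bundle $\lambda_{\Hodge,\pi_R}\cong(\psi')^*\bfM_n'$. Choosing $N$ as in item~\ref{fact:desent}, so that $(\bfM_n')^{\otimes N}$ is the pullback of an ample line bundle $L$ on $P_n$ along $\sP_{n}^{\CY}\to P_n$, we get $\lambda_{\Hodge,\pi_R}^{\otimes N}\cong\psi^*L$, whence $\deg_R\bfM_R=\tfrac{1}{N(n+1)}\deg_R\psi^*L$. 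If $\psi$ is constant this vanishes; if $\psi$ is non-constant, then $\psi(R)$ is a curve and $\deg_R\psi^*L>0$ since $L$ is ample. Therefore $\deg\bfM_R>0$ if and only if $\psi$ is non-constant, i.e. if and only if (3) holds.

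\smallskip
The step I expect to require the most care is the comparison between the twisted base $\cR$ and its coarse space $R$ in the proof of $(2)\Leftrightarrow(3)$: one must check that $\pi_R$ genuinely is a family of the objects parametrized by $\sP_{n}^{\CY}$ (so that functoriality of the Hodge line bundle applies on the smooth scheme $R$), that the classifying map $\psi'$ is compatible, after composing with $\sP_{n}^{\CY}\to P_n$, with the map induced by $\phi$, and that passing to the relative coarse space introduces no correction at the finitely many stacky points of $\cR$. Appealing to \cite[Proposition 14.7 (3)]{ABBDILW23} lets us avoid manipulating the discriminant and moduli \textbf{b}-divisors directly; what remains is the elementary observation that pullback of a line bundle along the birational morphism $\cR\to R$ preserves degree.
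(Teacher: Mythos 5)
Your reduction of all three conditions to the non-constancy of the map $\psi\colon R\to P_n$ is the right idea, and your proof of $(1)\Leftrightarrow(3)$ is fine: since $P_n$ is an algebraic space and $\cR\to R$ is a coarse moduli space, the composite $\cR\to\sP_n\to P_n$ does factor through $R$, and \Cref{remark_S-equivalence} identifies the fibres of $\psi$ with $S$-equivalence classes.

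The gap is in $(2)\Leftrightarrow(3)$, specifically in the assertion that there is a classifying map $\psi'\colon R\to\sP_{n}^{\CY}$. The map you actually have is $\cR\to\sP_n\to\sP_{n}^{\CY}$, and this need not factor through the coarse space $R$. The factoring would require that at every stacky point $p\in\cR$ with stabilizer $\bmu_k$, the composite $\bmu_k\to\Aut_{\sP_n}(\phi(p))\to\Aut_{\sP_{n}^{\CY}}(\bar\phi(p))$ be trivial. Since $\sP_n\to\sP_{n}^{\CY}$ is only a $\bmu_2$-root stack along the nodal divisor, it kills the ghost $\bmu_2$ at nodal pairs but leaves the full automorphism group of smooth pairs untouched; if $\phi(p)$ is, say, a pair of type $x_2$ or $x_3$ from \Cref{lemma_order_stabilizers} and $\bmu_k$ maps non-trivially into its automorphisms, then $\psi'$ does not exist. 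You flag this as ``the step requiring the most care,'' but the closing remark about ``pullback along the birational morphism $\cR\to R$ preserving degree'' does not address it: that observation goes in the wrong direction (one can pull back line bundles from $R$ to $\cR$, but a line bundle on $\cR$ need not descend to $R$), and in any case it does not produce the classifying map you need, nor does it justify applying the cited Hodge--line--bundle comparison directly to $\pi_R$, which presupposes that $\pi_R$ is a family in the sense of $\sP_{n}^{\CY}$.

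The paper sidesteps all of this by first choosing a finite, generically \'etale morphism $C\to\cR$ from a \emph{smooth curve} $C$ and pulling the family back to $C$. Over a scheme there is no issue: the classifying map $C\to\sP_{n}^{\CY}$ genuinely exists, so the functoriality of the Hodge line bundle and fact (\ref{fact:desent}) apply verbatim. The moduli part on $R$ is then, by the very definition of the moduli $\mathbf b$-divisor, the (normalized) push-forward of the moduli part on $C$, and since $C\to R$ is finite the positivity of the degree is preserved. Your approach, if one insists on working directly on $R$, must either verify that $\psi'$ exists (which is false in general) or replicate this base-change argument; the latter is essentially the paper's proof.
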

\begin{proof}
    Consider a finite and generically \'etale morphism $C\to \cR$ from a smooth curve $C$, and let $(Y_C,\frac{1}{n+1}D_C)$ be the pull-back family. The moduli part in the canonical bundle formula associated to the family $(Y,\frac{1}{n+1}D)\to R$ is, by definition, the push-forward of the moduli part on $C$. In particular, it suffices to check the desired statement for $(Y_C,\frac{1}{n+1}D_C)\to C$. But in this case it follows from \Cref{remark_S-equivalence} and the fact (\ref{fact:desent}).
\end{proof}

\subsection{Four unordered points on $\bP^1$}
In this subsection, we collect some facts about $\sP_1$ which we will use for constructing explicit examples in \Cref{section_application_to_KSBA_moduli_when_epsilon_2_goes_to_0} and \Cref{section_example_14_case}.

The following result is well-known, and we report it for convenience.

\begin{lemma}\label{lemma_order_stabilizers}
    The following holds true for $\sP_{1,\dm}$; see Figure \ref{fig:points in sP1,dm}.
    \begin{enumerate}
        \item The $k$-point $x_3$ in $\sP_{1,\dm}$ corresponding $(\bP^1,[1]+[\zeta]+[\zeta^2]+[\infty])$, where $\zeta$ is a primitive third root of unity, has  $\Aut_{\sP_1}(x_3)\simeq (\bZ/2\bZ)^2\rtimes (\mb{Z}/3\bZ)$;
        \item The $k$-point $x_2$ on $\sP_{1,\dm}$ representing $(\bP^1,[1]+[i]+[-1]+[-i])$ has $\Aut_{\sP_{1,\dm}}(x_2) \simeq (\bZ/2\bZ)^2\rtimes (\mb{Z}/2\bZ)$.
        \item The point $x_{\infty}\in \sP_{1,\dm}$, representing the union $\mtc{C}$ of two root stacks of $\bP^1$'s along $\cB \bmu_2$ each having two distinct marked points, satisfies that \[\begin{tikzcd}
	1 & {\bZ/2\bZ} & {\Aut_{\sP_{1,\dm}}(x_{\infty})} & {(\bZ/2\bZ)^2\rtimes (\mb{Z}/2\bZ)} & 1
	\arrow[from=1-1, to=1-2]
	\arrow[from=1-2, to=1-3]
	\arrow["\phi", from=1-3, to=1-4]
	\arrow[from=1-4, to=1-5]
\end{tikzcd}\]
        where the image of $\bZ/2\bZ$ in $\Aut_{\sP_{1,\dm}}(x_\infty)$\footnote{ usually called the \textit{ghost automorphism}; see \cite[\S 1.5]{di2021polarized}} acts trivially on the coarse moduli space of the twisted curve $\mtc{C}$, and the map $\phi$ is the restriction of the automorphisms on $\cC$ to its coarse space.
        \item A general point $x_t$ of $\sP_{1,\dm}$ has $\bmu_2\times \bmu_2$ as its stabilizer.
    \end{enumerate}

\begin{figure}
    \centering
    \includegraphics[width=0.4\linewidth]{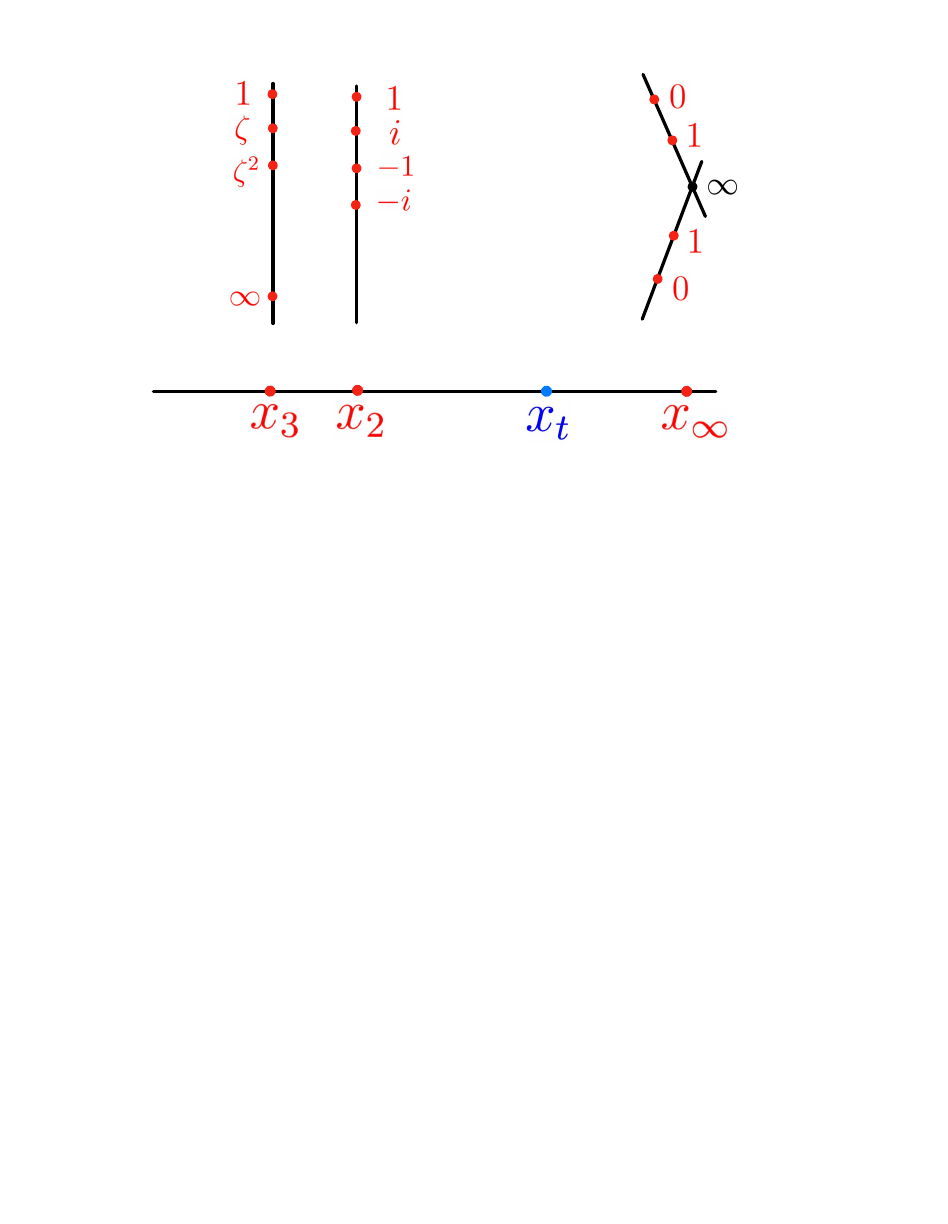}
    \caption{Points on the DM stack $\sP_{1,\dm}$}
    \label{fig:points in sP1,dm}
\end{figure}
\end{lemma}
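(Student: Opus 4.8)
The plan is to compute each automorphism group by combining the automorphisms of the underlying pointed curve (or twisted curve) with the known structure of $\sP_1$ as a $\bmu_2$-root stack over $\sP_1^{\CY}$. The basic principle is that an automorphism of a point of $\sP_1$ parametrizing $(\cC,\tfrac12 D)$ is an automorphism of the pair $(\cC, D)$ where $D$ is the \emph{unordered} $4$-point divisor, together with the compatible data coming from the root-stack/gerbe structure. The recurring group $(\bZ/2)^2$ that appears in all cases is exactly the group of automorphisms of $\bP^1$ fixing the divisor $D$ setwise but not pointwise when $D$ is a generic set of $4$ points: realizing $\bP^1$ as a double cover of $\bP^1$ branched at two of the points in two ways, one gets the Klein four-group (generated by the two involutions swapping the points pairwise in the three possible pairings, of which only two are independent). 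I would state this computation for a generic configuration first (part (4)), identifying $\Aut = \bmu_2\times\bmu_2$ as the stabilizer coming from the root-stack structure combined with the $(\bZ/2)^2$ acting on the four points; note that one $\bmu_2$ factor is the ``ghost'' automorphism inherited from $\sP_1 \to \sP_1^{\CY}$ being a $\bmu_2$-root stack along the discriminant, and the other encodes the cross-ratio symmetry.

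Next I would treat the special orbifold points (1) and (2). For $x_3$, the configuration $\{1,\zeta,\zeta^2,\infty\}$ has extra symmetry: the $3$-cycle permuting $\{1,\zeta,\zeta^2\}$ and fixing $\infty$ is realized by $z \mapsto \zeta z$, giving an extra $\bZ/3$; together with the generic $(\bZ/2)^2$ of pairwise swaps, and checking the conjugation action of the $3$-cycle on the Klein four-group, one gets the semidirect product $(\bZ/2)^2\rtimes \bZ/3 \cong A_4$ — indeed this is the stabilizer in $\PGL_2$ of the ``equianharmonic'' configuration, which is well known to be $A_4$. For $x_2$, the configuration $\{1,i,-1,-i\}$ (the ``harmonic'' or square configuration) has stabilizer in $\PGL_2$ equal to the dihedral-type group $(\bZ/2)^2\rtimes \bZ/2 \cong D_4$; here $z\mapsto iz$ is an order-$4$ element, and one identifies the normal Klein four subgroup with the pairwise swaps as before. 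In both cases one then has to verify that the root-stack structure does \emph{not} contribute an additional factor at these points — this is because $x_3$ and $x_2$ parametrize irreducible curves $\cC = \bP^1$ with $D$ \emph{reduced} (four distinct points), so they lie away from the discriminant divisor along which $\sP_1 \to \sP_1^{\CY}$ is ramified, hence $\Aut_{\sP_1} = \Aut_{\sP_1^{\CY}}$ at these points, and the latter is just the $\PGL_2$-stabilizer.

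For (3), the point $x_\infty$ parametrizes the nodal twisted curve $\cC$ that is a union of two $\bmu_2$-root stacks of $\bP^1$ at $\infty$, glued along $\cB\bmu_2$, each branch carrying two marked points. Here the automorphism group genuinely has two layers. The outer layer, $(\bZ/2)^2\rtimes \bZ/2$, is the automorphism group of the \emph{coarse} pointed curve: each branch $\bP^1$ with two marked points plus the node has a $\bZ/2$ of automorphisms (swapping the two marked points), giving $(\bZ/2)^2$, and there is a further $\bZ/2$ swapping the two branches — this is the image of $\phi$. The kernel $\bZ/2$ is the ghost automorphism: the nontrivial automorphism of the twisted curve $\cC$ acting trivially on the coarse space, which exists precisely because of the $\bmu_2$-gerbe along the node (it acts by $-1$ on one branch's root-stack structure relative to the other, or equivalently it is the automorphism of the $\bmu_2$-banded gerbe glued in at the node). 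I would construct this extension explicitly, exhibit a (set-theoretic) splitting to see it is the stated non-split-or-split extension, and note that exactness on the left follows from the fact that an automorphism acting trivially on the coarse space of a twisted curve is supported at the stacky locus, which here is only the node with band $\bmu_2$.

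The main obstacle I anticipate is bookkeeping the root-stack/gerbe contributions correctly, especially disentangling, at $x_\infty$, the ghost automorphism from the branch-swap and from the pairwise point-swaps on each branch, and confirming that at the \emph{smooth} points $x_2, x_3, x_t$ no hidden stacky automorphism appears beyond the advertised $\bmu_2\times\bmu_2$ (for generic $t$) — i.e. that the $\PGL_2$-stabilizer of four distinct points, which is trivial for generic points, is \emph{not} the whole story, the extra $\bmu_2\times\bmu_2$ coming entirely from the automorphisms of the pair $(\bP^1, \tfrac12 D)$ with $D$ unordered realized inside $\sP_1$ (i.e. from $\sP_1$ being a moduli of pairs rather than of ordered points). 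Since all of these facts are either classical ($\PGL_2$-stabilizers of the equianharmonic and harmonic configurations) or recorded in \cite{twisted_map_2,ABBDILW23,di2021polarized}, the proof should amount to assembling these inputs and the exact sequence in (3) carefully, with Figure \ref{fig:points in sP1,dm} as a guide.
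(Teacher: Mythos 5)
The paper does not actually supply a proof of this lemma: it is introduced with the sentence ``The following result is well-known, and we report it for convenience,'' and the text moves directly to the next lemma. So there is no argument in the paper to compare against; your proposal has to stand on its own.

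Your overall strategy is the right one — identify the automorphism group of a point $[(\cC,\tfrac12 D)]\in\sP_{1,\dm}$ with the $\PGL_2$-stabilizer of the unordered divisor $D$ at smooth points, and account for the ghost $\bmu_2$ of the twisted node at $x_\infty$; and the group identifications $A_4\cong(\bZ/2)^2\rtimes\bZ/3$ for the equianharmonic configuration, $D_4\cong(\bZ/2)^2\rtimes\bZ/2$ for the harmonic one, and the wreath-product description $(\bZ/2)^2\rtimes\bZ/2$ for the coarse automorphisms at $x_\infty$ are all correct.

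There is, however, a genuine internal inconsistency. In your first paragraph you write that for a general point $x_t$ ``one $\bmu_2$ factor is the ghost automorphism inherited from $\sP_1\to\sP_{1}^{\CY}$ being a $\bmu_2$-root stack along the discriminant, and the other encodes the cross-ratio symmetry.'' This is wrong, and it contradicts what you correctly assert two paragraphs later. The paper states explicitly that the root stack $\sP_1\to\sP_{1}^{\CY}$ is taken along the divisor parametrizing \emph{nodal} curves, not along the locus where marked points collide (and anyway $\sP_{1,\dm}$ for $n=1$ only contains configurations of four \emph{distinct} points). A general point $x_t$ has $\cC\cong\bP^1$, hence lies entirely off the ramification locus of the root stack, so there is no ghost contribution whatsoever at $x_t$, exactly as you observe for $x_2$ and $x_3$. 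The full $\bmu_2\times\bmu_2$ at $x_t$ is precisely the Klein four-group of Möbius transformations preserving the unordered set $\{p_1,\dots,p_4\}$, which is the normal $V_4$ in the stabilizers $A_4$ and $D_4$ at $x_3$ and $x_2$. Relatedly, the phrase in your last paragraph ``the $\PGL_2$-stabilizer of four distinct points, which is trivial for generic points'' is incorrect as stated: it is the stabilizer of an \emph{ordered} $4$-tuple that is generically trivial, while the stabilizer of the unordered set — which is what matters here, since $\sP_1$ parametrizes pairs with an unordered divisor — is $(\bZ/2)^2$. You should rewrite these two passages so that the ghost $\bZ/2$ appears only in part (3), and the $\bmu_2\times\bmu_2$ in part (4) is identified directly with the Klein four-group $V_4\subseteq\PGL_2$ without invoking the root stack.
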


\begin{lemma}\label{lemma_if_C_is_a_scheme_on_the_pteimage_of_x_i_then_i_divides_the_order_of_f}
    Let $\cC$ be the root stack of a smooth curve $C$ at a point, with a map $\phi\colon\cC\to \sP^1$ inducing $f\colon C\to \bP^1$ on good moduli spaces. Let $x_2$ (resp. $x_3$) as in Lemma \ref{lemma_order_stabilizers}. 
    \begin{itemize}
        \item If $p\in \cC$ is a schematic point mapping to $x_2$, then $f$ ramifies of order $2k$ at $p$.
        \item If $p\in \cC$ is a schematic point mapping to $x_3$, then $f$ ramifies of order $3k$ at $p$.
    \end{itemize}
    In particular, if $\cC$ is a scheme along $\phi^{-1}(x_2)$ (resp. $\phi^{-1}(x_3)$), then $2|\deg(f)$ (resp. $3|\deg(f)$).
\end{lemma}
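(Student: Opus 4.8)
The plan is to reduce the statement to a one–variable formal computation, by first describing the local structure of $\sP_1$ and of its good moduli space $P_1$ around the points $x_2$ and $x_3$, and then pulling back along $\phi$.

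First I would pin down the local model. Since $x_2$ and $x_3$ parametrize pairs supported on a smooth $\bP^1$, they lie away from the locus of nodal curves, along which $\sP_1\to\sP_1^{\CY}$ is the $\bmu_2$-root stack; hence formally near $x_i$ (for $i=2,3$) one has $\sP_1\cong\sP_1^{\CY}$, which is smooth of dimension one by \cite[Lemma 16.8(b)]{ABBDILW23}. As $\sP_{1,\dm}$ is Deligne--Mumford and contains $x_i$, one may write, in a formal (or \'etale) neighbourhood of $x_i$, $\sP_1\cong[\Spec\bC[[\mu]]/\Aut_{\sP_1}(x_i)]$ with $x_i=[\{\mu=0\}]$. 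The group $\Aut_{\sP_1}(x_i)$ acts on the germ $\Spec\bC[[\mu]]$ through a faithful action of its quotient by the subgroup of elements acting trivially, and that subgroup is exactly the generic stabilizer of $\sP_1$, since an element acts trivially on the germ if and only if it fixes every nearby point, i.e.\ lies in the stabilizer of a general point. By \Cref{lemma_order_stabilizers} the generic stabilizer is $\bmu_2\times\bmu_2$, of order $4$, while $|\Aut_{\sP_1}(x_2)|=8$ and $|\Aut_{\sP_1}(x_3)|=12$; so the action on the germ factors through a faithful action of a cyclic group of order $d_2=2$, resp.\ $d_3=3$ (geometrically, $d_i$ is the order of the ``extra'' automorphism of the harmonic, resp.\ equianharmonic, configuration of four points, which acts nontrivially on the one-dimensional versal deformation). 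Linearizing, $u:=\mu^{d_i}$ is a uniformizer of $P_1$ at the image $\bar x_i$ of $x_i$, the map $\sP_1\to P_1$ is totally ramified of degree $d_i$ over $\bar x_i$, and its set-theoretic fibre over $\bar x_i$ is the single point $x_i$.

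Next I would pull back by $\phi$. Let $p\in\phi^{-1}(x_i)$ be a schematic point; in a formal neighbourhood of $p$ the twisted curve $\cC$ is a smooth scheme $\Spec\bC[[t]]$, and a morphism from this strictly henselian local scheme to $[\Spec\bC[[\mu]]/\Aut_{\sP_1}(x_i)]$ is the datum of a (necessarily trivial) torsor together with an equivariant map, hence simply of a power series $\mu=\psi(t)$ with $\psi(0)=0$, where $\psi\neq 0$ because $f$ is finite. Passing to coarse spaces, $u$ pulls back to $\psi(t)^{d_i}$ up to a unit, so $f$ ramifies at $p$ of order $d_i\cdot\ord_t(\psi)$; this gives the first two bullet points, with $k=\ord_t(\psi)\geq 1$. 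For the ``in particular'', I would use that $\cC\to C$ is a homeomorphism of underlying topological spaces and that, by the previous step, $\phi^{-1}(x_i)$ maps bijectively onto $f^{-1}(\bar x_i)$, so that $\deg f=\sum_{q\in f^{-1}(\bar x_i)}e_q=\sum_{p\in\phi^{-1}(x_i)}e_p$; if $\cC$ is a scheme along $\phi^{-1}(x_i)$ then every such $p$ is schematic, each $e_p$ is divisible by $d_i$, and hence so is $\deg f$.

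The hard part will be the first step: correctly identifying the local structure of $\sP_1\to P_1$ at $x_2$ and $x_3$, and in particular recognising the kernel of the $\Aut_{\sP_1}(x_i)$-action on the versal deformation as the order-$4$ generic stabilizer, so that the effective stacky order at $x_i$ is $|\Aut_{\sP_1}(x_i)|/4$ (namely $2$ and $3$) rather than $|\Aut_{\sP_1}(x_i)|$ itself. Everything after that is a short formal computation.
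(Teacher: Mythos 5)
Your proof is correct and essentially parallels the paper's: both arguments boil down to identifying the effective stacky order of $\sP_1$ over its coarse space $P_1$ at $x_i$ — above the generic $\bmu_2\times\bmu_2$ gerbe — as $d_2=2$ and $d_3=3$, and then reading off the ramification of $f$ by pulling back a local uniformizer on $P_1$ through the root stack of $\cC$. The difference is in how this local structure is established. The paper sets $\cA^1:=(\bP^1\setminus\{\infty\})\times_{\bP^1}\sP_1$ and invokes the Geraschenko--Satriano bottom-of-the-gerbe decomposition of $\cA^1\to\bA^1$ into a gerbe followed by a root stack, reading the root stack's stabilizers ($\bmu_2$ at $x_2$, $\bmu_3$ at $x_3$) off the orders in \Cref{lemma_order_stabilizers}. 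You instead prove the special case you need by hand: working in the formal chart $[\Spec\bC[[\mu]]/\Aut_{\sP_1}(x_i)]$, you linearize, observe that the kernel of the action on the germ is precisely the generic stabilizer $\bmu_2\times\bmu_2$ (an identity the paper leaves implicit), and conclude the effective degree is $|\Aut_{\sP_1}(x_i)|/4=d_i$, so that $u=\mu^{d_i}$ is the uniformizer downstairs. Your route is thus more self-contained — it essentially reproves the relevant piece of the Geraschenko--Satriano decomposition — while the paper's is shorter given the citation. One small point worth flagging in both proofs: the closing degree count $\deg f=\sum_{q\in f^{-1}(\bar{x}_i)}e_q$ and the pullback computation tacitly use that $f$ is non-constant (you note $\psi\neq 0$ ``because $f$ is finite''); this hypothesis is not explicit in the lemma's statement but is clearly intended, and the paper's proof assumes it as well.
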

\begin{proof}
Let $\infty\in \bP^1$ be the point in the good moduli space of $\sP_1$ which is not stable, and let $\cA^1:=(\bP^1\setminus \{\infty\})\times_{\bP^1}\sP_1$. Then $\cA^1$ is a smooth Deligne-Mumford stack with coarse moduli space $\bA^1$, so from \cite[Theorem 1 and Remark 2]{geraschenko2017bottom} we can factor $\cA^1\to \bA^1$ as $$\cA^1\ \xrightarrow{\alpha} \ \cA^1/\!\!/G\ \xrightarrow{\beta} \ \bA^1,$$ where $\alpha$ is a gerbe and $\beta$ is a root stack. We can now use Lemma \ref{lemma_order_stabilizers} to control the stabilizers of $\cA^1/\!\!/G$: the stabilizer of the geometric generic point in $\cA^1$ is $\bmu_2\times \bmu_2$, and there is a point $x$ with $\bmu_4$ as a subgroup of $\Aut_{\cA^1}(x)$ (namely, $x_2$) and a point $y$ with $\bmu_3$ as a subgroup of the automorphisms (namely, $x_3$). In particular, one has \[
\Aut_{\cA^1/\!\!/G}(x_2)=\bmu_2\ \ \ \text{ and } \ \ \ \Aut_{\cA^1/\!\!/G}(x_3)=\bmu_3.
\]
The conclusion follows since an affine curve $D$ with a morphism $\xi:D\to \bA^1$ which factors via $D\to \cA^1/\!\!/G\to \bA^1$ is totally ramified at $\xi^{-1}(x_i)$ of order a multiple of $i$. 
\end{proof}
\begin{lemma}\label{lemma_M11_to_P1_is_injective}
    There is a morphism $\cM_{1,1}\to \sP_1$ that induces an injective map $\bA^1\to \bP^1$ on good moduli spaces.
\end{lemma}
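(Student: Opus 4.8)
The plan is to realize the moduli stack $\cM_{1,1}$ of elliptic curves inside $\sP_1$ via the classical identification of an elliptic curve with a double cover of $\bP^1$ branched at $4$ points. Concretely, an object of $\cM_{1,1}$ over a base $T$ is an elliptic curve $E\to T$ with origin; the quotient $E/\{\pm 1\}$ is a $\bP^1$-bundle (or, more precisely, a conic bundle which is generically $\bP^1$), and the branch locus is a divisor $D\subseteq E/\{\pm 1\}$ of degree $4$ which is fiberwise the $2$-torsion (the Weierstrass points). Since $(\bP^1, \tfrac12 D)$ with $D$ four distinct points is an slc log Calabi--Yau pair, and over the cusp the branch divisor degenerates to $2[0]+2[\infty]$ on a nodal curve (matching the nodal objects allowed in $\sP_1$), this construction produces, after checking representability, a morphism $\cM_{1,1}\to \sP_1$. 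First I would set this up over the universal elliptic curve and verify that the resulting family of pairs $(\cC, \tfrac12 D)$ satisfies the conditions defining points of $\sP_1$ recalled after the Notation, including the twisted/root-stack structure at the cusp: the hyperelliptic involution acts on the nodal fiber, and taking the stack quotient produces exactly the $\bmu_2$-root-stack-at-$\infty$ nodal twisted curve appearing in the third bullet of the description of $\sP_1$.

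Next I would identify the induced map on good moduli spaces. The good moduli space of $\cM_{1,1}$ is $\bA^1$ via the $j$-invariant, and the good moduli space of $\sP_1$ is $P_1\cong \bP^1$ (the GIT quotient of $4$ unordered points, i.e. the $j$-line of cross-ratios compactified by one point). The map $\cM_{1,1}\to \sP_1$ sends an elliptic curve to its $4$ branch points, and the classical relation between the $j$-invariant of $E$ and the $j$-invariant (cross-ratio invariant) of its $4$ Weierstrass points is a nonconstant — indeed, degree-one up to the standard normalization — rational function $\bA^1\to \bP^1$. To get injectivity on good moduli spaces I would invoke the fact that an elliptic curve is determined up to isomorphism by the unordered $4$-point configuration of its Weierstrass points on $E/\{\pm1\}$ (two elliptic curves with the same branch configuration are isomorphic), so the map $\bA^1_j\to \bP^1 = P_1$ is injective; combined with nonconstancy this is exactly the asserted statement.

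The main obstacle I expect is \textbf{representability} of the morphism $\cM_{1,1}\to \sP_1$ in the sense required by Definition \ref{def_stable_qmap} and the construction of quasimaps: a morphism of stacks between the given DM stacks is automatic as a $1$-morphism, but one must check that the map on automorphism groups is injective at every point, i.e. that no nontrivial automorphism of an elliptic curve acts trivially on the associated pointed $\bP^1$ together with its branch divisor. This is delicate precisely at the special points $j=0$ and $j=1728$ (the curves with extra automorphisms $\bmu_6$ and $\bmu_4$): there one must verify that the quotient map $\Aut(E)\to \Aut(E/\{\pm1\},D)$ is injective, which amounts to saying that $-1\in\Aut(E)$ is the only automorphism acting trivially downstairs, and that is handled by the computation of stabilizers in Lemma \ref{lemma_order_stabilizers} — the groups $(\bZ/2\bZ)^2\rtimes(\bZ/3\bZ)$ at $x_3$ and $(\bZ/2\bZ)^2\rtimes(\bZ/2\bZ)$ at $x_2$ are exactly $\Aut(E)/\{\pm1\}$ together with the translation-by-$2$-torsion automorphisms $(\bZ/2\bZ)^2$ of the configuration. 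So the plan is: (i) build the family of pairs over $\overline{\cM}_{1,1}$, checking the defining conditions of $\sP_1$ at the cusp; (ii) check representability using the stabilizer computations of Lemma \ref{lemma_order_stabilizers}; (iii) identify the induced map on good moduli spaces with the classical nonconstant injective map $\bA^1_j \hookrightarrow \bP^1$ relating $j$ of $E$ to the cross-ratio invariant of its Weierstrass points.
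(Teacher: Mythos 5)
The construction of the morphism $\cM_{1,1}\to\sP_1$ via the hyperelliptic quotient of the universal elliptic curve is the same as the paper's, but your argument for injectivity on good moduli spaces is genuinely different. You invoke the classical fact that the genus-one double cover of $\bP^1$ is determined by its four branch points, so two elliptic curves with $\PGL_2$-equivalent Weierstrass configurations are already isomorphic; this is a valid, short argument. The paper instead runs a test-family computation: it pulls the map back along the Legendre pencil $y^2z=x(x-z)(x-tz)$, shows that both the composite $\bA^1_t\to M_{1,1}$ (degree $6$, via Silverman) and the composite $\bA^1_t\to P_1$ (degree $6$, via the six-element cross-ratio orbit) are generically $6{:}1$, and deduces that $M_{1,1}\to P_1$ is injective. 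Your route is conceptually cleaner; the paper's is self-contained and computational. Both are correct.

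Two things are worth flagging about the rest of your sketch. First, the representability discussion is not needed: the lemma only asks for a morphism of stacks inducing an injection on good moduli spaces, and any morphism of stacks between stacks admitting good moduli spaces induces a map on good moduli spaces by the universal property. Representability is a condition required in the definition of a stable quasimap from a twisted curve; it plays no role in this lemma, and the paper does not address it. Second, you extend the construction over $\overline{\cM}_{1,1}$ and analyze the cusp, but the lemma concerns only $\cM_{1,1}$, whose good moduli space is $\bA^1$, so the cusp analysis is superfluous. Finally, once injectivity is established nonconstancy is automatic, and the observation that the $j$-invariant/cross-ratio relation is ``degree one up to normalization'' is itself a restatement of the injectivity being claimed rather than an independent input — the substantive ingredient in your argument is the double-cover classification, which you should cite or prove rather than merely gesture at.
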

\begin{proof}
    Consider the universal elliptic curve $\cC\to \cM_{1,1}$ and its quotient $\cD\to \cM_{1,1}$  by the hyperelliptic involution. If we denote by $\Delta\subseteq \cD$ the ramification divisor, then the pair $(\cD,\Delta)\to \cM_{1,1}$ induces a morphism $\cM_{1,1}\to \sP_1$. To check that it is injective on good moduli spaces, we use a test family. Consider the family of cubics
    \[
Y\ :=\ \bV\big(y^2z - x(x-z)(x-tz) \big) \ \subseteq\  \bA^1_{t}\times \bP^2_{x,y,z}
\]
over $\bA^1_t$ with the constant section $[0,1,0]$. Away from $t=0, 1$, this gives a morphism $\bA^1_t\setminus\{0,1\}\to \cM_{1,1}$, whose composition with the coarse moduli space $\cM_{1,1}\to M_{1,1}$ is generically 6:1 by \cite[Proposition 1.7]{silverman2009arithmetic}. The quotient by the hyperelliptic involution is $$\textstyle \big(\bA^1_t\times \bP^1_{x,z}, \ \frac{1}{2}\bV\big(xz(x-z)(x-tz)\big)\big).$$
Now, observe that one can give an ordering to the four markings, and that the corresponding morphism $\bA^1_t\setminus\{0,1\}\rightarrow \cM_{0,4}$ is injective.
From this, we conclude that also the morphism to $\bP^1$ is generically also 6:1 onto its image: this follows from the fact that given four ordered points on $\bP^1$ with cross ratio $k$, the possible cross ratios of all the permutations of these points are the six values $$\textstyle \left\{k,\ \frac{1}{k},\  1-k, \ \frac{1}{1-k},\  \frac{k}{k-1},\ \frac{k-1}{k}\right\}.$$ Therefore, the map between good moduli spaces $M_{1,1}\to \bP^1$ is injective.
\end{proof}

The following results will regard the moduli part in a fibration coming from a morphism from a (twisted) curve to $\sP_1$.
\begin{lemma}\label{lemma_relation_degree_of_map_to_p1_and_moduli_part}
    Assume that $\phi\colon \cC\to \sP_1$ is a morphism from the root stack $\cC\to C$ of a smooth curve $C$, such that the composition $f\colon \cC\to \sP_1\xrightarrow{p} P_1$ is finite and such that $\phi^{-1}(\sP_{1,\dm})$ is a scheme in $\cC$. Then $\deg(f)$ is divisible by $6$. Moreover, if $\cC = C$ is a scheme and we denote by $g\colon (X,\frac{1}{2}D)\to C$ the family of pairs induced by $\pi\circ\phi\colon C\to \sP_1$, then $g$ has no boundary part and the moduli part is of degree $\frac{1}{12}\deg f$ on $C$. In particular, one has
    \[\textstyle
    K_X+\frac{1}{2}D\ \sim_\bQ \ g^*(K_C+\bfM)\ \ \ \  \text{ and }\  \ \ \deg \bfM \ =\ \frac{\deg(f)}{12}
    \]
\end{lemma}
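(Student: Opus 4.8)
The plan is to establish the three claims in turn — the divisibility $6\mid\deg f$, the vanishing of the boundary (discriminant) part of $g$, and the formula $\deg\bfM=\tfrac1{12}\deg f$ — after which the stated $\bQ$-linear equivalence is immediate. For the divisibility, first note that $f$ is surjective, since it is finite and $P_1\cong\bP^1$ is proper; hence the images $\bar x_2,\bar x_3\in P_1$ of the points $x_2,x_3$ of \Cref{lemma_order_stabilizers} lie in $f(\cC)$. Because $x_2,x_3\in\sP_{1,\dm}$ and $\phi^{-1}(\sP_{1,\dm})$ is a scheme, $\cC$ is a scheme along $\phi^{-1}(x_2)$ and $\phi^{-1}(x_3)$, and since the fibre of $\sP_1\to P_1$ over each $\bar x_i$ is the single stable point $x_i$ we have $f^{-1}(\bar x_i)=\phi^{-1}(x_i)$. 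Applying \Cref{lemma_if_C_is_a_scheme_on_the_pteimage_of_x_i_then_i_divides_the_order_of_f} \'etale-locally at these points shows that $f$ has ramification index divisible by $2$ (resp.\ $3$) at every point over $\bar x_2$ (resp.\ $\bar x_3$); summing over the fibre gives $2\mid\deg f$ and $3\mid\deg f$, hence $6\mid\deg f$.

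Assume now $\cC=C$. The key geometric input is that the family $g\colon(X,\tfrac12 D)\to C$, being pulled back along $\pi\circ\phi\colon C\to\sP_1^{\GIT}$, has all fibres of the form $(\bP^1,\tfrac12 D_4)$ with $\deg D_4=4$ and $\mult_pD_4\le 2$ for all $p$; in particular $X\to C$ is a conic bundle with smooth fibres over the smooth curve $C$. Its generic fibre is then a smooth conic over $\bC(C)$, which has a rational point by Tsen's theorem, and the induced rational section extends, so $X\cong\bP_C(E)$ is a smooth ruled surface and $D\to C$ is finite of degree $4$ with no fibral component. For a closed point $P\in C$ the fibre $X_P\cong\bP^1$ is a smooth Cartier divisor meeting $D$ properly, and $(X_P,\tfrac12 D|_{X_P})$ is lc because all coefficients of $D|_{X_P}$ are $\le 2$; by adjunction $(X,\tfrac12 D+X_P)$ is then lc near $X_P$, and since $X_P$ occurs with coefficient $1$ in $g^{*}P$ this forces $b_P=1$ in the canonical bundle formula. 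Therefore the discriminant divisor $\sum_P(1-b_P)P$ vanishes, and $K_X+\tfrac12 D\sim_\bQ g^{*}(K_C+\bfM)$ with $\bfM$ the moduli divisor of $g$.

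For the degree, I would use that $g$ is pulled back from $\sP_1^{\CY}$ along a morphism $h\colon C\to\sP_1^{\CY}$ which induces $f$ on good moduli spaces, all the structure maps being compatible over $P_1$. By functoriality of the Hodge line bundle and \cite[Proposition~14.7(3)]{ABBDILW23} one gets $2\bfM\sim_\bQ\lambda_{\Hodge,g}\cong h^{*}\bfM_1'$, and since a positive power of $\bfM_1'$ descends to a line bundle on $P_1$, the rational number $c:=2\deg\bfM/\deg f$ is independent of the chosen family. To pin down $c$ I would specialise to the test family of \Cref{lemma_M11_to_P1_is_injective}: $C=\bP^1_t$, $X=\bP^1_t\times\bP^1$, $D=\bV\big(xz(x-z)(x-tz)\big)$ of bidegree $(1,4)$. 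There $\deg f=6$ by \textit{loc.\ cit.}, while a direct computation on $\bP^1\times\bP^1$ gives $K_X+\tfrac12 D\sim_\bQ g^{*}\cO_{\bP^1_t}(-\tfrac32)$, which combined with the vanishing of the boundary part forces $\deg\bfM=\tfrac12$, hence $c=\tfrac16$. Substituting back yields $\deg\bfM=\tfrac1{12}\deg f$, completing the proof.

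The step I expect to be most delicate is the vanishing of the discriminant part: it hinges entirely on $X$ being a \emph{smooth} ruled surface, so that ordinary adjunction along the smooth fibres $X_P$ applies even at points where $D_4$ develops a double point — and this smoothness is precisely what is secured by first composing with the retraction $\pi$ (so the fibres stay $\cong\bP^1$ rather than degenerating to twisted nodal curves) and then invoking Tsen's theorem over $C$. A secondary point requiring care is the compatibility of the various good-moduli-space morphisms in the last step, which is what guarantees that $h$ really induces $f$ and that the constant $c$ is genuinely family-independent.
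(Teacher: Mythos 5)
Your proof is correct and follows essentially the same strategy as the paper: deduce $6\mid\deg f$ from \Cref{lemma_if_C_is_a_scheme_on_the_pteimage_of_x_i_then_i_divides_the_order_of_f}, kill the boundary part via (inversion of) adjunction since all fibers of $\pi\circ\phi$ are slc, and then pin down $\deg\bfM$ through the functoriality and descent of the Hodge line bundle together with a test family. The only substantive difference is the test family: the paper uses a generic Weierstrass pencil of cubics (so $\deg j=12$) and extracts the answer from the Kodaira canonical bundle formula on the elliptic double cover, whereas you use the compactified Legendre family from \Cref{lemma_M11_to_P1_is_injective} (so $\deg f = 6$) and read off $K_X+\tfrac12D$ directly on $\bP^1\times\bP^1$ — a mildly more elementary computation that avoids the elliptic-surface CBF entirely. (Your detour through Tsen's theorem is not needed — the pulled-back family over $\sP_1^{\GIT}$ has slc $(\bP^1,\tfrac12D_4)$ fibers by construction, and that plus inversion of adjunction already gives $b_P=1$ — but it does no harm.)
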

\begin{proof}
    The first part follows from Lemma \ref{lemma_if_C_is_a_scheme_on_the_pteimage_of_x_i_then_i_divides_the_order_of_f}.
For the statement on the canonical bundle formula, observe that the boundary part is empty as all the fibers of $\pi\circ \phi$ are semi-log canonical. From \Cref{subsubsection_moduli_part}, to compute the moduli part it suffices to use a test family; we choose the test family given by the quotient by the hyperelliptic involution on a pencil of cubics in Weierstrass form. Choose $A_1, A_2, B_1, B_2$ be generic enough so that the pencil of genus one curves
\[
Y\ :=\ \bV\big( s(y^2z-x^3-A_1xz^2-B_1Z^3) + t(y^2z-x^3-A_2xz^2-B_2Z^3)\big)\ \subseteq\  \bP^2_{x,y,z}\times \bP^1_{s,t}
\]
is smooth. The second projection
$f\colon Y\to \bP^1$ gives an elliptic surface with $j$ map of degree $12$. Let $q\colon Y\to X$ be the quotient by the hyperelliptic involution,
with morphism $f'\colon X\to \bP^1$. This is the restriction to $Y$ of the quotient map $$\bP^2\times \bP^1\ \longrightarrow\  \bP(1,2,1)\times \bP^1,\ \ \ \ \ ([x,y,z],[s,t])\ \mapsto\  ([x,y^2,z],[s,t]).$$ For every point $p=[s,t]\in \bP^1$ we have \[(f')^{-1}(p) \ =\ \bV\big(YZ-X^3-\alpha XZ^2 -\beta Z^3\big)\ \subseteq\  \bP(1,2,1)\]for an appropriate choice of $\alpha$ and $\beta$, where $X,Z$ have weight $1$ and $Y$ has weight $2$.
In particular, one can check that the fibers of $f'$ are isomorphic to $\bP^1$. Observe that the ramification divisor $\Delta\subseteq X$ is such that $(X,\Delta)\to \bP^1$ induces a map $\psi:\bP^1\to \sP^1$. Namely, for every $p\in \bP^1$, the pair $((f')^{-1}(p),\frac{1}{2}\Delta|_{(f')^{-1}(p)})$ is log canonical. Applying the canonical bundle formula for minimal elliptic surfaces with a section, we have that \[\textstyle f^*(2K_{\bP^1}+\frac{1}{6}\deg j)\ \sim \ 2K_Y\ \sim \  q^*(2K_X+\Delta)\  =\  q^*(f')^*(2K_\bP^1 + \psi^*\cM)\] where $j\colon\bP^1\to \overline{M}_{1,1}$ is the $j$-map associated to $f:Y\rightarrow \bP^1$ and $\frac{1}{2}\cM$ is the moduli part. Taking the push-forward of the left-hand side and the right-hand side both to $\bP^1$ and using $f_*\cO_Y=\cO_{\bP^1}$, we have that $2K_{\bP^1}+\frac{1}{6}\deg j = 2K_\bP^1 + \psi^*\cM$. By Lemma \ref{lemma_M11_to_P1_is_injective}, one has that $\deg f=\deg j$, and thus the moduli part has degree $\frac{\deg(f)}{12}$ as desired.
\end{proof}

\begin{corollary}\label{cor:degree 6n}
    Assume that $C\subseteq \bP^1\times \bP^1$ is a smooth curve of class $(n,4)$, such that for every $x\in \bP^1$, the pair $\big(\pi^{-1}_1(x),\frac{1}{2}C|_{\pi^{-1}_1(x)}\big)$ is log canonical. Then the pair $(\bP^1\times \bP^1,\frac{1}{2}C)$ induces a morphism $\phi:\bP^1\to \sP_1^{\GIT}\xrightarrow{i} \sP_1$, such that the composition $f:\bP^1\to \sP_1\xrightarrow{p} P_1$ has degree $6n$.
\end{corollary}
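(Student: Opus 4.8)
The plan is to reduce Corollary \ref{cor:degree 6n} to Lemma \ref{lemma_relation_degree_of_map_to_p1_and_moduli_part}. First I would observe that the hypothesis --- $C\subseteq\bP^1\times\bP^1$ is a smooth curve of class $(n,4)$ with $\big(\pi_1^{-1}(x),\tfrac{1}{2}C|_{\pi_1^{-1}(x)}\big)$ log canonical for every $x\in\bP^1$ --- says exactly that the first projection makes $(\bP^1\times\bP^1,\tfrac{1}{2}C)\to\bP^1$ a fibration in slc log Calabi--Yau pairs of the form $(\bP^1,\tfrac{1}{2}(p_1+p_2+p_3+p_4))$ with $-K_{\bP^1}\sim_\bQ\tfrac{1}{2}\cdot 4\,[\mathrm{pt}]$, so it is classified by a morphism $\phi\colon\bP^1\to\sP_1^{\GIT}\xrightarrow{\iota}\sP_1$ of the type considered in \S\ref{subsubsection_moduli_part}, with $\cC=C=\bP^1$ a scheme. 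In the notation of Lemma \ref{lemma_relation_degree_of_map_to_p1_and_moduli_part}, the induced family of pairs $g\colon(X,\tfrac{1}{2}D)\to\bP^1$ after applying the retraction $\pi$ (which over the smooth points is the identity, and which replaces non-lc fibers with the polystable fiber) is crepant birational to $(\bP^1\times\bP^1,\tfrac{1}{2}C)\to\bP^1$; in particular the boundary part is empty and the moduli part $\bfM$ on $\bP^1$ is unchanged.

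Next I would compute $\deg\bfM$ directly. By the canonical bundle formula for $g$, $K_X+\tfrac{1}{2}D\sim_\bQ g^*(K_{\bP^1}+\bfM)$, and pulling this back along $\pi_1\colon\bP^1\times\bP^1\to\bP^1$ (or equivalently working with $(\bP^1\times\bP^1,\tfrac{1}{2}C)$ directly, since crepant birational models have the same moduli part), one has
\[
K_{\bP^1\times\bP^1}+\tfrac{1}{2}C\ \sim_\bQ\ \pi_1^*\big(K_{\bP^1}+\bfM\big).
\]
Now $K_{\bP^1\times\bP^1}+\tfrac12 C$ has class $(-2,-2)+\tfrac12(n,4)=(\tfrac n2-2,0)$, which is indeed a pull-back of a divisor class of degree $\tfrac n2-2$ on the first factor; since $K_{\bP^1}$ has degree $-2$, we get $\deg\bfM=\tfrac n2$. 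Comparing with the formula $\deg\bfM=\tfrac{1}{12}\deg f$ from Lemma \ref{lemma_relation_degree_of_map_to_p1_and_moduli_part} yields $\deg f=12\cdot\tfrac n2=6n$, which is the claim.

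The only real subtlety is justifying that Lemma \ref{lemma_relation_degree_of_map_to_p1_and_moduli_part} applies, i.e. that the composition $f\colon\bP^1\to\sP_1\xrightarrow{p}P_1$ is finite (equivalently non-constant, by Remark \ref{remark_S-equivalence}), so that the moduli part genuinely has positive degree and we are not in a degenerate case; this follows since a general $(n,4)$-curve gives a non-constant family of $4$-point configurations, hence two non-$S$-equivalent fibers, and for the specific $C$ at hand $\deg\bfM=\tfrac n2>0$ forces non-constancy. One should also note that Lemma \ref{lemma_relation_degree_of_map_to_p1_and_moduli_part} as stated includes the hypothesis that $\phi^{-1}(\sP_{1,\dm})$ is a scheme in $\cC$, which here is automatic because $\cC=\bP^1$ is already a scheme; thus no stackiness enters and the divisibility-by-$6$ conclusion of that lemma is consistent with $\deg f=6n$. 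The main obstacle is therefore bookkeeping rather than conceptual: making sure the moduli part computed via the test family of Weierstrass cubics in Lemma \ref{lemma_relation_degree_of_map_to_p1_and_moduli_part} is genuinely the same invariant as the one read off from the $(n,4)$-curve, which is guaranteed by the functoriality of the Hodge line bundle $\bfM_1'$ on $\sP_1^{\CY}$ recorded in \S\ref{subsubsection_moduli_part}.
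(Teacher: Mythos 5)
Your proposal is correct and takes essentially the same route as the paper: compute $\deg\mathbf{M}=\tfrac{n}{2}$ directly from the class of $K_{\bP^1\times\bP^1}+\tfrac12 C$, then invoke the relation $\deg\mathbf{M}=\tfrac{1}{12}\deg f$ from \Cref{lemma_relation_degree_of_map_to_p1_and_moduli_part}. The paper's proof is a one-liner asserting exactly this computation; your version spells out the class calculation and the finiteness check, which is the intended reasoning.
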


\begin{proof}
   One can compute the canonical bundle formula explicitly, and the moduli part has degree $\frac{n}{2}$. The desired statement follows from \Cref{lemma_relation_degree_of_map_to_p1_and_moduli_part}.
\end{proof}

\begin{lemma}\label{lem: Hirzebruch}
    Let $\phi:\bP^1\to \sP_1$ be a morphism such that the composition $ \bP^1 \rightarrow   \sP_1  \rightarrow  P_1 $ is finite. Let $\pi:\sP_1\xrightarrow{\pi}\sP_1^{\GIT}$ be the retraction morphism, and $f\colon X\to \bP^1$ be the $\bP^1$-bundle induced by $\pi$. Then $X$ is isomorphic to a Hirzebruch surface $\mathbb{F}_n$ such that $6n\le \deg f$.
\end{lemma}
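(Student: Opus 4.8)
The plan is to realize $X\to\bP^1$ as the pull-back of the universal family along $\pi\circ\phi$, to identify $X$ with a Hirzebruch surface, and then to pin down the index $n$ by combining the canonical bundle formula --- in the form of \Cref{lemma_relation_degree_of_map_to_p1_and_moduli_part} --- with the numerical class of the relative degree-$4$ boundary divisor and with the GIT-semistability of the fibres. Throughout, $\deg f$ denotes the degree of the finite morphism $\bP^1\to P_1$ of the hypothesis. First I would set $\psi:=\pi\circ\phi\colon\bP^1\to\sP_1^{\GIT}$ and let $g\colon (X,\tfrac12 D)\to\bP^1$ be the pull-back of the universal family of $\sP_1^{\GIT}$ along $\psi$, where $D\subseteq X$ is the pull-back of the universal divisor (so $D$ has relative degree $4$ over $\bP^1$, a point of $\sP_1^{\GIT}$ cutting out a degree-$4$ divisor on $\bP^1$). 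Since every point of $\sP_1^{\GIT}$ parametrizes a pair whose underlying curve is $\bP^1$, the morphism $g$ is a conic bundle with smooth total space and all fibres isomorphic to $\bP^1$; hence it is the $\bP^1$-bundle of the statement and $X\cong\bF_n$ for some $n\ge 0$. It remains to bound $n$.

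Next I would invoke \Cref{lemma_relation_degree_of_map_to_p1_and_moduli_part} with $\cC=C=\bP^1$ (the trivial root stack): since $\bP^1\to P_1$ is finite, it gives that $g$ has trivial boundary part and that
\[
K_X+\tfrac12 D\ \sim_\bQ\ g^*\bigl(K_{\bP^1}+\bfM\bigr),\qquad \deg\bfM=\tfrac{1}{12}\deg f .
\]
On $\bF_n$, writing $\mathfrak e$ for the section with $\mathfrak e^2=-n$ and $\mathfrak f$ for the fibre class, one has $K_X\sim-2\mathfrak e-(n+2)\mathfrak f$, while $D\sim 4\mathfrak e+b\mathfrak f$ for some $b\in\bZ$ because $D\cdot\mathfrak f=4$. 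Both sides of the displayed $\bQ$-linear equivalence are multiples of $\mathfrak f$, and comparing their coefficients gives $\tfrac b2-(n+2)=-2+\tfrac{1}{12}\deg f$, that is, $b=2n+\tfrac16\deg f$.

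Finally I would exploit that $\psi$ factors through $\sP_1^{\GIT}$, so each fibre $(\bP^1,\tfrac12 D|_{\mathfrak f})$ is GIT-semistable and hence $D$ has multiplicity at most $2$ at every point of every fibre. Writing $D=m\mathfrak e+D'$ with $D'$ effective and $\mathfrak e\not\subseteq D'$, restriction to a fibre forces $m\le 2$, since the point $\mathfrak e\cap\mathfrak f$ acquires multiplicity at least $m$. If $m=2$, GIT-semistability forces $D'$ to be disjoint from $\mathfrak e$ (otherwise some fibre point would have multiplicity $\ge 3$), hence $0=D'\cdot\mathfrak e=b-2n$ and $\deg f=6(b-2n)=0$, contradicting finiteness of $\bP^1\to P_1$. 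Thus $m\le 1$, and since $D'\sim(4-m)\mathfrak e+b\mathfrak f$ is effective with $\mathfrak e\not\subseteq D'$ we get $D'\cdot\mathfrak e=b-(4-m)n\ge 0$, i.e.\ $b\ge(4-m)n\ge 3n$; together with $b=2n+\tfrac16\deg f$ this gives $6n\le\deg f$. I expect this last step to be the only point requiring care: the computation with the canonical bundle formula and with intersection numbers on $\bF_n$ is routine, whereas ruling out the strictly-semistable configuration in which a multiplicity-$2$ locus of $D$ runs along the negative section $\mathfrak e$ --- which is exactly where finiteness of $\bP^1\to P_1$ (equivalently, positivity of the moduli part) enters --- is what needs attention; once it is excluded, both remaining cases $m\in\{0,1\}$ yield $b\ge 3n$ for free.
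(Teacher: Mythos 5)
Your proof is correct and takes essentially the same approach as the paper: both compute $D \sim 4\mathfrak{e} + \bigl(2n + \tfrac{1}{6}\deg f\bigr)\mathfrak{f}$ via \Cref{lemma_relation_degree_of_map_to_p1_and_moduli_part}, and both conclude by showing that the negative section $\mathfrak{e}$ cannot appear in $D$ with multiplicity $2$, which finiteness of $\bP^1\to P_1$ forbids. Your case split on $m=\mult_{\mathfrak e}(D)$ is a slightly more explicit packaging of the paper's contradiction argument (which derives $2\mathfrak e\subseteq D$ from $n>\tfrac16\deg f$ via two applications of negativity), but the ingredients and the key geometric input are identical.
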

\begin{proof}
First observe that $X$ is a $\bP^1$-fibration over $\bP^1$, and hence $X\simeq \bF_n$ for some $n$. We may assume that $n\geq 1$. Let $\mtf{e}\subseteq X$ be the negative section, and $\mtf{f}$ be the class of a fiber of $X\to \bP^1$. One has that $K_X \sim -2\mtf{e} + (-n-2)\mtf{f}$. By Lemma \ref{lemma_relation_degree_of_map_to_p1_and_moduli_part}, one has
    \[\textstyle
    K_X + \frac{1}{2}D \ \sim \  f^*K_{\bP^1}+\frac{\deg(f)}{12}\mtf{f}\]  and thus \[ \textstyle D \ \sim \  -2K_X + 2f^*(K_{\bP^1})+\frac{\deg(f)}{6}\mtf{f}\ =\  4\mtf{e} + (2n + \frac{\deg(f)}{6})\mtf{f}.
    \]
    Assume by contradiction that $n>\frac{\deg(f)}{6}$. Then $(D.\mtf{e})<0$, and as both are effective divisors, we must have that $\mtf{e}\subseteq D$. However, if $n>\frac{\deg(f)}{6}$, we also have that $(D - \mtf{e}).\mtf{e}<0$, so $2\mtf{e}\subseteq D$. This is impossible: since $C\to \bP^1$ was finite, the generic fiber of $f|_{D}:D\to C$ consists of four distinct points.
\end{proof}
We end this section with the following lemma which will be used in Section \S\ref{sec:KSBA limit} to guarantee that certain minimal models are canonical models.

\begin{lemma}\label{lemma_on_tails_i_have_a_marking}
Let $\cR$ be the $\mu_k$-root stack of $\bP^1$ at a point $p$, and $\phi\colon \cR\to \sP_n$ be a morphism such that $\phi(p)\in \sP_{n,\dm}$ and that the induced morphism between coarse spaces $\bP^1\to P_n$ is finite. Assume that if there is $q\in \cR\smallsetminus\{p\}$ such that $\phi(q)$ represents a nodal curve, then the image of every point of $\cR$ represents a nodal curve. Then there is a point $x\in \cR\setminus\{p\}$ such that $\phi(p)$ represents a pair with divisor supported on less than $2n+2$ points.
\end{lemma}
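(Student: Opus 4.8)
The plan is to argue by contradiction: assume that for every $x \in \cR \smallsetminus \{p\}$ the pair $\phi(x)$ has divisor supported on the full set of $2n+2$ points (equivalently, none of the $2n+2$ marked points collide, and if $\phi(x)$ represents a nodal curve then $x = p$ by the hypothesis, forcing $\phi$ to avoid the nodal locus entirely away from $p$). Translating this via \Cref{lemma_non_ksba_locus_is_cartier} and \Cref{rem:existence of discriminant divisor}, this says precisely that $\phi^{-1}(\Delta_{\sP_n}) \subseteq \{p\}$ as a set, i.e. the pull-back $\phi^*\Delta_{\sP_n}$ is a Cartier divisor on $\cR$ supported at $p$ (and possibly empty). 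I would then combine this with the numerical input coming from the canonical bundle formula to derive a contradiction with the finiteness of $\bP^1 \to P_n$.

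The key computation is the following. Let $\pi_\cR \colon (\sY, \tfrac{1}{n+1}\sD) \to \cR$ be the pull-back of the universal family, with coarse-space version $(Y, \tfrac{1}{n+1}D) \to \bP^1$. Since $\phi(p) \in \sP_{n,\dm}$, a neighbourhood of the fiber over $p$ is slc even after perturbing the coefficient, and by hypothesis every other fiber is slc as well (the pair $\phi(x)$ has $2n+2$ distinct points on a $\bP^1$, hence is log canonical, in fact klt); thus the family $(Y, (\tfrac{1}{n+1}+\epsilon)D) \to \bP^1$ is locally stable for $0 < \epsilon \ll 1$ with no degenerate fibers except possibly over $p$. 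The discriminant part of the canonical bundle formula is therefore supported only at $p$: write $K_Y + \tfrac{1}{n+1}D \sim_\bQ f^*(K_{\bP^1} + B_{\bP^1} + \bfM)$ with $\Supp(B_{\bP^1}) \subseteq \{p\}$. Now I would bound $\deg B_{\bP^1} < 1$ using that the coefficient of $p$ in the discriminant is $1 - b_p$ with $b_p > 0$ (this is where $\phi(p) \in \sP_{n,\dm}$, i.e. the log canonical threshold of the fiber being $> \tfrac{1}{n+1}$, enters), and I would bound $\deg \bfM > 0$ using the finiteness of $\bP^1 \to P_n$ together with the Lemma relating non-constancy of the map to $P_n$, positivity of the moduli part, and non-$S$-equivalence of fibers. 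But then $\deg(K_{\bP^1} + B_{\bP^1} + \bfM) = -2 + (\text{something} < 1) + (\text{something} > 0)$, which I want to contrast with $\deg$ of $K_Y + \tfrac{1}{n+1}D$ restricted appropriately.

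A cleaner route, which I would ultimately prefer, works directly on the orbifold curve $\cR$. The marked points of the universal divisor trace out a curve $\mathcal{D} \subseteq \sY$, and $\mathcal{D} \to \cR$ is finite of degree $2n+2$; the locus where $\phi^{-1}(\Delta_{\sP_n})$ is nonempty is exactly where $\mathcal{D} \to \cR$ fails to be étale, i.e. the branch locus. If $\phi^{-1}(\Delta_{\sP_n}) \subseteq \{p\}$ and $\cR$ is the $\mu_k$-root stack of $\bP^1$ at $p$, then the degree-$(2n+2)$ cover $\mathcal{D} \to \cR$ is branched only over the single stacky point $p$; pulling back to $\bP^1$ (the coarse space) and using that $\bP^1$ is simply connected, one finds that the monodromy of this cover is generated by a single element — the local monodromy at $p$ — which must therefore be a product of the trivial permutation, forcing the cover to be either totally split or of a very restricted cyclic type. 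I would then check that this restricted behaviour is incompatible with $\bP^1 \to P_n$ being finite: finiteness forces the fibers to vary in moduli, hence the $2n+2$ points genuinely move and collide somewhere, contradicting the assumption. The expected main obstacle is making the monodromy/branching argument precise in the stacky setting — in particular handling the $\mu_2$-gerbe structure at nodal points (should $\phi(q)$ land in the nodal locus, which the hypothesis then propagates to all of $\cR$) and correctly accounting for the root-stack index $k$ in the local monodromy at $p$; this is exactly the kind of bookkeeping that Lemmas \ref{lemma_if_C_is_a_scheme_on_the_pteimage_of_x_i_then_i_divides_the_order_of_f} and \ref{lemma_relation_degree_of_map_to_p1_and_moduli_part} were set up to handle in the $n=1$ case, and I would model the general argument on those.
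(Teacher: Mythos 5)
Your second route captures the opening move of the paper's proof: because $\cR\smallsetminus\{p\}$ is a scheme isomorphic to $\bA^1$ (the root stack only modifies $p$), every finite \'etale cover of it is trivial, and so if $\phi^{-1}(\Delta_{\sP_n})\subseteq\{p\}$ then $\sD$ splits into $2n+2$ pairwise disjoint sections over $\cR\smallsetminus\{p\}$. Your phrasing is slightly off---since $\pi_1(\bA^1)=1$, there is no local monodromy at $p$ to generate anything; the cover simply splits---but the conclusion is the right one.

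The genuine gap is the step from \emph{split cover} to \emph{contradiction with finiteness}. You assert that ``finiteness forces the fibers to vary in moduli, hence the $2n+2$ points genuinely move and collide somewhere,'' but this is precisely the content of the lemma and is not automatic: a priori nothing prevents a non-isotrivial family of $2n+2$ disjoint sections on a ruled surface. The paper closes this gap with two pieces of work that your proposal does not supply. First, it passes to a finite cover $\bP^1\to\cR$ which is \'etale over $p$ (via identifying the $\mu_k$-root stack at two points with $[\bP^1/\mu_k]$), so that the $2n+2$ sections, which are only known to be disjoint away from $p$, pull back to $2n+2$ globally disjoint sections of a surface $X\to\bP^1$. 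Second, it runs a three-case analysis. If the fiber over $p$ is $\bP^1$, then $X\to\bP^1$ is a $\bP^1$-bundle with at least three pairwise disjoint sections (using $\phi(p)\in\sP_{n,\dm}$), which forces $X\cong\bP^1\times\bP^1$ and all sections to be of class $(0,1)$, hence an isotrivial family, contradicting the finiteness of $\bP^1\to P_n$. If the fiber over $p$ is nodal and the generic fiber is smooth, contracting one branch of the nodal fiber produces a ruled surface with at least three disjoint sections, again $\bP^1\times\bP^1$, but then two curves in the same fiber class would acquire different intersection numbers with the image of the contracted divisor---impossible. If the generic fiber is nodal, the image in $P_n$ is a single point, contradicting finiteness. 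You flagged correctly that nodal fibers need handling, but you offered no resolution, and you also did not supply the ruled-surface argument (three disjoint sections $\Rightarrow$ $\bP^1\times\bP^1$ $\Rightarrow$ isotriviality), which is the key mechanism.

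Your first route, a degree count in the canonical bundle formula, does not lead anywhere as written: the bounds $\deg B_{\bP^1}<1$ and $\deg \bfM>0$ are not in tension with anything, since $\deg(K_Y+\tfrac{1}{n+1}D)$ is not independently constrained and $-2+(\text{something}<1)+(\text{something}>0)$ has no definite sign. I would drop it entirely and focus on making the second route precise along the lines above.
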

\begin{proof}
    We argue by contradiction. Let $f\colon (\sX,\frac{1}{n+1}\sD)\rightarrow \cR$ be the pull-back along $\phi$ of the universal family over $\sP_n$. If all the fibers of $f$ had $2n+2$ distinct markings away from $p$, then the divisor $\sD$ would be \'etale over $\cR\smallsetminus\{p\}$. As the only \'etale cover of $\cR\smallsetminus\{p\}$ is the trivial one, then $\sD$ consists of $2n+2$ disjoint sections. Observe that there is a finite cover $\bP^1\to \cR$, which is \'etale over $p$: indeed, the $\mu_k$-root stack of $\bP^1$ at \textit{two} points is isomorphic to the quotient stack $[\bP^1/\bmu_k]$, where $\mu_k$ acts on $\bP^1$ by $$\xi*[a:b] \ :=\  [\xi a:b].$$ 
     Then $[\bP^1/\bmu_k]$ admits a map to the root stack of order $k$ at a single point $p$: there is a morphism $[\bP^1/\bmu_k]\to \cR$ which is the root stack at a point $q\in \cR\smallsetminus \{p\}$.

    Now pulling back the universal family of $\sP_n$ along $\bP^1\rightarrow \cR\rightarrow \sP_1$, one gets a surface pair $(X,\frac{1}{n+1}D)$ where $D$ consists of $2n+2$ distinct sections. Observe also that, as by assumption $\phi(p)\in\sP_{n,\dm}$, if the fiber over $p$ is $\bP^1$ then at least three of these $2n+2$ sections are disjoint, and if the fiber is nodal at least four are disjoint. We distinguish three cases.
    
    \textbf{Case 1}: the fiber over $p$ is $\bP^1$. Then by assumption $X\rightarrow \bP^1$ is a $\bP^1$-fibration. As $D$ consists of at least 3 disjoint sections, it has to be that $X\simeq\bP^1\times\bP^1$, and $D\sim \mtc{O}_{\bP^1\times\bP^1}(2n+2,0)$, i.e. $$(X,D)\ \simeq\  (\bP^1,p_1+\cdots+p_{2n+2})\times\bP^1,$$ and hence the moduli part is constant. But the moduli part on $\bP^1$ is the pull-back of the one on $\sC$, so the moduli part on $\sC$ has degree 0 which contradicts the finitedness of $C\to \sP_1\to \bP^1$.
    
    \textbf{Case 2}: the fiber over $p$ is nodal, and the generic fiber is smooth. Then we can contract one of the two branches, let $E$ be such branch. The new surface is a ruled surface with $2n+2$ sections, with at least three of them disjoint,
    so it is again $\bP^1\times\bP^1$. Moreover, if we denote by $F_1, \ldots,F_{n+1}$ the connected components of $D$ which intersect $E$, then the images of $F_i$ intersect once contracting $E$, while not intersecting the image of $D- F_1- \ldots-F_{n+1}$. This is not possible as two fibers of the same fibration in $\bP^1$ have the same numerical class.

    \textbf{Case 3}: the generic fiber is nodal. This is not possible as the nodal locus in $P_n$ is a single point, and by assumption $\cR\to \sP_n$ induces a finite morphism on good moduli space.
\end{proof}
\subsection{Addendum}
The following lemmas, familiar to experts, will be invoked in the foregoing discussion. We state them here separately, as they are of a different nature.

\begin{lemma}\label{lemma_root_stack_of_a_curve_has_no_brauer}
    Let $C$ be a smooth curve, and $\cC\rightarrow C$ be a sequence of root stacks so that $\cC$ is smooth. Then any $\bP^1$-fibration $f\colon \cS\to \cC$ over $\cC$ is the projectivization of a rank $2$ vector bundle. 
\end{lemma}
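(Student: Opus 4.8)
The plan is to reduce the statement to the vanishing of the (cohomological) Brauer group of $\cC$. Since $f\colon\cS\to\cC$ is a $\bP^1$-fibration — a smooth proper morphism all of whose fibres are isomorphic to $\bP^1$ — it is a Severi--Brauer scheme of relative dimension one, i.e. a twisted form of $\bP^1_\cC$, and hence is classified by a class $\alpha\in H^1(\cC,\PGL_2)$ (étale, equivalently fppf, cohomology). From the exact sequence of sheaves $1\to\bG_m\to\GL_2\to\PGL_2\to 1$ one sees that $f$ is the projectivization $\bP_\cC(\cV)$ of a rank-two bundle if and only if $\alpha$ lifts to $H^1(\cC,\GL_2)$, i.e. if and only if its image $\delta(\alpha)\in H^2(\cC,\bG_m)$ under the connecting map vanishes. (Concretely, the Leray sequence for $f$ produces an exact sequence $\Pic(\cS)\to\bZ\xrightarrow{\,\partial\,}H^2(\cC,\bG_m)$, where $\bZ=\Pic$ of the geometric generic fibre and $\partial(1)=\delta(\alpha)$, so $\cS$ is some $\bP_\cC(\cV)$ exactly when a line bundle on $\cS$ restricts to $\cO(1)$ on that fibre.) Thus it suffices to prove $H^2(\cC,\bG_m)=0$.

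To establish that, I would work through the coarse moduli space $\pi\colon\cC\to C$. In characteristic $0$ the stack $\cC$ is a tame smooth Deligne--Mumford stack, so $\pi_*\bG_m=\bG_m$, and $\pi$ is an isomorphism over the dense open $U\subseteq C$ complementary to the finitely many stacky points $p_1,\dots,p_r$; consequently each higher direct image $R^q\pi_*\bG_m$ with $q\ge 1$ restricts to $0$ on $U$, hence is a sheaf supported on $\{p_1,\dots,p_r\}$. Feeding this into the Leray spectral sequence $H^a(C,R^b\pi_*\bG_m)\Rightarrow H^{a+b}(\cC,\bG_m)$, the group $H^2(\cC,\bG_m)$ is assembled from $H^2(C,\bG_m)$, $H^1(C,R^1\pi_*\bG_m)$ and $H^0(C,R^2\pi_*\bG_m)$. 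The first is $\mathrm{Br}(C)$ for a smooth curve over $\bC$, which vanishes by Tsen's theorem together with $\mathrm{Br}(C)\hookrightarrow\mathrm{Br}(\bC(C))$; the second vanishes because a sheaf supported on a finite set of closed points has no $H^1$ on a curve. So the entire problem reduces to showing $R^2\pi_*\bG_m=0$, which is purely local at each $p_i$.

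For the local computation, near a stacky point $p=p_i$ of root order $k$ the stack $\cC$ is étale-locally isomorphic to $[\Spec A/\mu_k]$, where $A=\cO^{\mathrm{sh}}_{C,p}[s]/(s^k-t)$ for a uniformizer $t$ and $\mu_k$ acts by $s\mapsto\zeta s$. Since $\Spec A$ is a strictly henselian regular local $\bC$-scheme, $\Pic(\Spec A)=0$ and $H^2(\Spec A,\bG_m)=0$, so Hochschild--Serre collapses to $H^q([\Spec A/\mu_k],\bG_m)\cong H^q(\mu_k,A^\times)$. Writing $A^\times=\bC^\times\times(1+\mathfrak m_A)$ as $\mu_k$-modules, with $\mu_k$ acting trivially on $\bC^\times$, the factor $1+\mathfrak m_A$ is uniquely divisible in characteristic $0$ (by Hensel) and hence $\mu_k$-cohomologically trivial in positive degrees, while $H^2(\mu_k,\bC^\times)=\bC^\times/(\bC^\times)^k=0$ because $\bC^\times$ is divisible. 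Therefore $(R^2\pi_*\bG_m)_{\bar p}=0$ at every stacky point, so $R^2\pi_*\bG_m=0$, and the spectral-sequence argument above yields $H^2(\cC,\bG_m)=0$, which completes the proof.

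The step I expect to require the most care is the last one: identifying the étale-local model of the root stack at a stacky point and carrying out the Hochschild--Serre computation there, the key inputs being the divisibility of $\bC^\times$ (giving $\bC^\times/(\bC^\times)^k=0$) and the unique divisibility of $1+\mathfrak m_A$ in characteristic $0$. Everything else — the Brauer-obstruction reformulation, the Leray reduction through the coarse space, and the vanishing $\mathrm{Br}(C)=0$ — is formal or standard.
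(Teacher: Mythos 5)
Your proof is correct, but it takes a genuinely different route from the one in the paper. The paper argues geometrically: over the dense open $U\subseteq C$ where $\cC\to C$ is an isomorphism, the $\bP^1$-fibration admits a section $\sigma_U$ (this is the same Brauer-group-of-a-curve input you use, applied to $U$); then by \cite[Theorem~3.1]{BV24} the section extends to a representable morphism $\cC'\to\cS$ from a root stack $\cC'\to C$, and representability plus Zariski's main theorem forces $\cC'=\cC$, so $\cS\to\cC$ has a genuine section $\Sigma$ and $\cS\cong\bP(f_*\cO_\cS(\Sigma))$. You instead reformulate the problem as the vanishing of the cohomological Brauer group $H^2_{\mathrm{\acute et}}(\cC,\bG_m)$, and prove that vanishing via Leray for the coarse moduli morphism $\pi\colon\cC\to C$ (killing $H^2(C,\bG_m)$ by Tsen, $H^1(C,R^1\pi_*\bG_m)$ because the sheaf is supported on finitely many points, and $R^2\pi_*\bG_m$ by a local Hochschild--Serre computation for $[\Spec A/\bmu_k]$ with $A$ strictly henselian, where the divisibility of $\bC^\times$ and unique divisibility of $1+\mathfrak m_A$ kill the group cohomology). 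Both arguments are sound. The paper's approach is shorter once one grants the Bresciani--Vistoli extension theorem, and it produces an explicit section, which is what is actually used downstream; your approach is self-contained, applies verbatim to any tame smooth stacky curve over $\bC$ (not just root stacks along a smooth curve), and makes visible where each hypothesis (characteristic $0$, algebraically closed base field, tameness) enters. Two very minor points you could flag: the identification $H^2_{\mathrm{\acute et}}(C,\bG_m)=\mathrm{Br}(C)$ uses regularity of $C$; and to pass from the punctual computation to the vanishing of the sheaf $R^2\pi_*\bG_m$ you want to identify its stalk at a stacky point with $H^2$ of the strictly henselian local model $[\Spec A/\bmu_k]$, which is fine since $\pi$ is proper (so proper base change, or simply finiteness of $\pi$, lets you compute stalks this way).
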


\begin{proof}
    Let $U\subseteq \cC$ be the open subscheme  restricting on which $\cC\rightarrow C$ is an isomorphism, and the morphism $f|_U$ admits a section $\sigma_U$. Since the composition $\cS\rightarrow \cC\rightarrow C$ is proper, then by \cite[Theorem 3.1]{BV24}, $\sigma_U$ can be extended to a (representable) section $\cC'\to \cS$, where $\cC'\rightarrow C$ is a (unique) root stack. As both $\cC'\to \cS$ and $\cS\to \cC$ are representable, then so is the composition $\cC'\to \cC$, and hence by Zariski's main theorem $\cC=\cC'$. In particular, $\cS\to \cC$ has a section $\Sigma$, so $\cS$ is the projectivization of $f_*\cO_{\cS}(\Sigma)$.
\end{proof}
\begin{lemma}\label{lemma_gms_of_seminormal_is_seminormal}
     The good moduli space of a seminormal algebraic stack is seminormal.
\end{lemma}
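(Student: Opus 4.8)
The plan is to show that $X$ agrees with its own seminormalization, by pulling the seminormalization morphism back along the good moduli space map $\pi\colon\mathcal{X}\to X$ and using the good behaviour of good moduli spaces under base change.

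First I would set up the preliminaries. Seminormality is a smooth-local condition, so it is meaningful to call the algebraic stack $\mathcal{X}$ seminormal, and, passing to a smooth atlas, one transports the usual characterization: a reduced Noetherian algebraic stack $\mathcal{X}$ is seminormal if and only if every finite representable universal homeomorphism $\mathcal{Z}\to\mathcal{X}$ inducing trivial residue field extensions, with $\mathcal{Z}$ reduced, is an isomorphism. Since $\mathcal{O}_X\xrightarrow{\ \sim\ }\pi_*\mathcal{O}_{\mathcal{X}}$ and $\mathcal{X}$ is reduced (a seminormal stack is reduced), the ring of sections of $\mathcal{O}_X$ over any open is reduced, so $X$ is reduced; being of finite type over $\bC$ it is excellent, hence the seminormalization $n\colon X^{\mathrm{sn}}\to X$ exists and is finite.

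For the main step I would set $\mathcal{X}':=\mathcal{X}\times_X X^{\mathrm{sn}}$. As good moduli space morphisms are stable under arbitrary base change, $\pi'\colon\mathcal{X}'\to X^{\mathrm{sn}}$ is again a good moduli space; and since $n$ is finite, a universal homeomorphism, and induces trivial residue field extensions, so is the projection $\mathcal{X}'\to\mathcal{X}$, whence precomposing with the nilpotent immersion $\mathcal{X}'_{\mathrm{red}}\hookrightarrow\mathcal{X}'$ gives a finite representable universal homeomorphism $\mathcal{X}'_{\mathrm{red}}\to\mathcal{X}$ with trivial residue field extensions and reduced source. By the characterization above, seminormality of $\mathcal{X}$ forces $\mathcal{X}'_{\mathrm{red}}\xrightarrow{\ \sim\ }\mathcal{X}$. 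Then I would apply the good moduli space functor: because $\pi'$ is a good moduli space and $X^{\mathrm{sn}}$ is reduced, the good moduli space of $\mathcal{X}'_{\mathrm{red}}$ is $X^{\mathrm{sn}}$ itself (apply the exact functor $\pi'_*$ to $\mathcal{O}_{\mathcal{X}'}\twoheadrightarrow\mathcal{O}_{\mathcal{X}'_{\mathrm{red}}}$: the induced map $\mathcal{O}_{X^{\mathrm{sn}}}\to\pi'_*\mathcal{O}_{\mathcal{X}'_{\mathrm{red}}}$ is surjective with nilpotent kernel and reduced target, hence an isomorphism, and cohomological affineness passes to the closed substack $\mathcal{X}'_{\mathrm{red}}$). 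Thus the isomorphism $\mathcal{X}'_{\mathrm{red}}\xrightarrow{\ \sim\ }\mathcal{X}$ descends to $X^{\mathrm{sn}}\xrightarrow{\ \sim\ }X$, and since seminormalizations are seminormal, $X$ is seminormal.

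The genuinely delicate points here are bookkeeping rather than geometry: that seminormality and its universal-homeomorphism characterization transfer faithfully between $\mathcal{X}$ and a smooth atlas, and --- the one place to be careful --- that the good moduli space really does commute with passage to the reduced substack (the kind of compatibility whose failure one must guard against, e.g.\ for the canonical bundle formula under base change). If one prefers to bypass seminormality of stacks entirely, the \'etale-local structure theorem for good moduli spaces reduces the statement to the assertion that for a linearly reductive group $G$ over $\bC$ and a seminormal $G$-algebra $B$ the invariant ring $B^G$ is seminormal; this follows from Swan's criterion, since for $b,c\in B^G$ with $b^3=c^2$ the element $a\in B$ with $a^2=b$, $a^3=c$ provided by seminormality of $B$ is unique (as $B$ is reduced), hence $G$-invariant, hence lies in $B^G$.
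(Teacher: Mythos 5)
Your two arguments are both correct, and they sit on opposite sides of the paper's own proof. The paper takes the \'etale-local route: it writes the good moduli space as $[\Spec A/G]\to\Spec A^G$ with $G$ reductive, notes that $\Spec A\to\Spec A^G$ factors through the seminormalization $\nu\colon\Spec(A^G)^{\sn}\to\Spec A^G$ (since $\Spec A$ is seminormal), invokes the universal property of categorical quotients to obtain a section $s$ of $\nu$, and then cites a lemma from the literature to show $s_*\mathcal{O}_{\Spec A^G}\cong\mathcal{O}_{\Spec(A^G)^{\sn}}$, forcing $\nu$ to be an isomorphism. Your alternative (the last paragraph) reduces to exactly the same affine situation but finishes more directly: given $b,c\in B^G$ with $b^3=c^2$, seminormality of $B$ produces $a$ with $a^2=b$, $a^3=c$; such an $a$ is unique because in a reduced ring $(a-a')^3 = -3a^2a'+3aa'^2 = -3a'^3+3a^3 = 0$ forces $a=a'$; uniqueness gives $G$-invariance and hence $a\in B^G$. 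This is a genuinely cleaner way to conclude than the paper's section-and-connected-fibers argument, and it avoids the external citation.

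Your main argument is the more interesting departure. Rather than localize, you pull the seminormalization $n\colon X^{\sn}\to X$ back to the stack, use Alper's theorem that good moduli space morphisms are stable under \emph{arbitrary} base change to get a good moduli space $\mathcal{X}'\to X^{\sn}$, and observe that $\mathcal{X}'_{\mathrm{red}}\to\mathcal{X}$ is a finite representable universal homeomorphism with reduced source and trivial residue field extensions, which the universal-homeomorphism characterization of seminormality (transported to stacks via a smooth atlas) forces to be an isomorphism. Finally the observation that the good moduli space commutes with passage to the reduction --- because $\pi'_*$ is exact, $\pi'_*(\text{nilradical})$ is nilpotent, and $X^{\sn}$ is reduced --- identifies the good moduli space of $\mathcal{X}'_{\mathrm{red}}\cong\mathcal{X}$ as $X^{\sn}$, so uniqueness of good moduli spaces gives $X^{\sn}\cong X$. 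The payoff of this approach is that it makes no use of the local-structure theorem for good moduli spaces at all: it needs only push-forward exactness, base-change stability, and reduction compatibility, all of which are formal properties of the notion. The price is that you have to check the smooth-local transfer of the seminormality characterization and the residue-field and reducedness bookkeeping for $\mathcal{X}'_{\mathrm{red}}$; your verifications of those points are correct as sketched. One place worth spelling out slightly more is the final identification: you should note that the two good moduli space structures $\mathcal{X}'_{\mathrm{red}}\to X^{\sn}$ and $\mathcal{X}'_{\mathrm{red}}\cong\mathcal{X}\to X$ are related by $n$, so that the canonical isomorphism $X^{\sn}\cong X$ furnished by initiality is $n$ itself, not merely some abstract isomorphism.
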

\begin{proof} The good moduli space $\cX\to X$ of an algebraic stack $\cX$ is, \'etale locally on $X$, of the form $[\spec A/G]\to \spec(A^G)$, and being seminormal can be checked \'etale locally. Therefore, it suffices to prove that $\spec A^G$ is seminormal for any seminormal affine scheme $\spec A$ with an action by a reductive group $G$. As $\spec A$ is reduced, then so is $\spec A^G$. Since $\Spec A$ is seminormal, then the quotient $\Spec A\rightarrow \Spec A^G$ factors through the seminormalization $\nu\colon \spec (A^G)^{\sn}\to\spec(A^G)$. By the universal property of categorical quotients, $\nu$ has a section $s\colon \spec A^G\to \spec(A^G)^{\sn}$, which has connected fibers as $\nu$ is a homeomorphism. By \cite[Lemma 2.3]{MR4705376}, there is an isomorphism
    \[
    s_*\cO_{\spec A^G} \ \cong  \ \cO_{\spec(A^G)^{\sn}}.
    \] 
    In particular $\spec(A^G)^{\sn}$ has the same underlying topological space as $\spec A^G$ with the isomorphic structure sheaf, thus $\spec(A^G)^{\sn}\cong \spec A^G$. 
\end{proof}

\begin{lemma}\label{lemma_flops_are_crepant}
    Let $\pi\colon (X,D)\to (Y,B_Y+\bfM)$ be a morphism of lc generalized pairs such that $K_X+D\sim_\bQ \pi^*(K_Y+B_Y+\bfM)$. Let $\tau^{\st}\colon Y\to (Y^{\st},L)$ be a morphism to a polarized normal variety such that \[(\tau^{\st})^*L  \ \cong\  \cO_Y(m(K_Y+B_Y+\bfM))\] for some $m>0$.
    Let $0<|\epsilon| \ll 1$ and $\tau\colon (X,(1+\epsilon)D)\to (X',(1+\epsilon)D')$ be a contraction of an extremal ray. Then there is a morphism $\pi'\colon X'\to Y^{st}$ and $m(K_{X'}+D')\sim_\bQ(\pi')^*L$. In particular, $\tau:(X,D)\rightarrow (X',D')$ is crepant.
\end{lemma}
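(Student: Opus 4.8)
The plan is to prove that the contracted extremal ray is $(K_X+D)$-trivial; once this is known, both the existence of $\pi'$ and the crepancy of $\tau$ follow from standard facts. I would proceed in three steps.

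First, I would record that $K_X+D$ is not merely nef but semiample, pulled back from $Y^{\st}$. Indeed, since $L$ is ample and $(\tau^{\st})^*L\cong\cO_Y\big(m(K_Y+B_Y+\bfM)\big)$, the class $K_Y+B_Y+\bfM$ is nef, hence so is $K_X+D\sim_\bQ\pi^*(K_Y+B_Y+\bfM)$. Writing $g:=\tau^{\st}\circ\pi\colon X\to Y^{\st}$ we obtain $m(K_X+D)\sim_\bQ g^*L$, so a curve $C\subseteq X$ is contracted by $g$ if and only if $(K_X+D)\cdot C=0$.

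Second, I would analyze the extremal ray $R$ contracted by $\tau$. By construction $R$ is $(K_X+(1+\epsilon)D)$-negative, so $(K_X+(1+\epsilon)D)\cdot R<0$, while $(K_X+D)\cdot R\geq 0$ by nefness. Since $K_X+(1+\epsilon)D=(K_X+D)+\epsilon D$, a perturbation argument valid for $0<|\epsilon|\ll 1$ forces $(K_X+D)\cdot R=0$: one uses that $K_X+D$ is $\bQ$-Cartier and nef (so its positive values on curves are bounded away from $0$), together with the boundedness of the length of $(K_X+(1+\epsilon)D)$-negative extremal rays of the lc pair $(X,(1+\epsilon)D)$ given by the Cone Theorem, to rule out the possibility $(K_X+D)\cdot R>0$. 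This is the step I expect to be the main obstacle: one has to convert ``$(K_X+D)\cdot R>0$ and $(K_X+(1+\epsilon)D)\cdot R<0$'' into a lower bound on $|\epsilon|$ that is \emph{uniform} over all extremal rays, which is where the length estimate is needed (and where, in the setting of \Cref{thm_intrp_MMP}, the $\pi$-ampleness of $D$ makes the bookkeeping transparent, since then $(K_X+D)\cdot R=0$ already holds for every ray contracted by $\pi$).

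Third, I would conclude. As $(K_X+D)\cdot R=0$, the semiample divisor $g^*L$ is $\tau$-numerically trivial, so $g$ is constant on the fibers of the extremal contraction $\tau$; by the rigidity lemma $g$ factors as $g=\pi'\circ\tau$ for a morphism $\pi'\colon X'\to Y^{\st}$. Likewise $K_X+D$ is $\tau$-numerically trivial, and since $\tau$ is a birational contraction the negativity lemma gives $K_X+D=\tau^*(K_{X'}+D')$, which is exactly the assertion that $\tau\colon(X,D)\to(X',D')$ is crepant. Finally, pushing forward $m(K_X+D)\sim_\bQ g^*L=\tau^*(\pi')^*L$ and using $\tau_*\tau^*=\id$ on $\bQ$-Cartier classes yields $m(K_{X'}+D')\sim_\bQ(\pi')^*L$, completing the proof.
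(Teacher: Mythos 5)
Your proof follows the same route as the paper's: show that the contracted ray $R$ is $(K_X+D)$-trivial, then deduce the factorization through $Y^{\st}$ and crepancy. Your Step 1 (nefness and semiampleness of $K_X+D$) and Step 3 (rigidity lemma to get $\pi'$, and the fact that a $\tau$-numerically trivial $\mathbb{Q}$-Cartier divisor on $X$ descends to a $\mathbb{Q}$-Cartier divisor on $X'$ with pull-back equal to $K_X+D$) are correct and in fact a little more detailed than the paper's one-paragraph proof, which simply pushes forward $m(K_X+D)\sim_\bQ g^*L$ and compares the two $\mathbb{Q}$-Cartier divisors on the normal variety $X'$.

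The place where your argument and the paper's both become informal is your Step 2, which you rightly flag as the crux. However, the specific mechanism you propose for closing it does not go through: the length bound from the Cone Theorem gives
\[
0 \;>\; (K_X+D)\cdot C + \epsilon\,(D\cdot C) \;\ge\; -2\dim X,
\]
which is symmetric between $(K_X+D)\cdot C$ and $-\epsilon D\cdot C$, and hence only traps $(K_X+D)\cdot C$ in the window $[-\epsilon D\cdot C - 2\dim X,\, -\epsilon D\cdot C)$. Since $|D\cdot C|$ is \emph{not} bounded a priori as $C$ ranges over extremal curves of $\overline{NE}(X)$, combining this with the discreteness $(K_X+D)\cdot C\in\frac{1}{N}\mathbb{Z}_{\ge 0}$ does not produce a contradiction, nor a lower bound on $|\epsilon|$ uniform over rays. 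The paper's proof is equally silent here ("$L$ is ample, $|\epsilon|$ is small") and does not supply the missing uniformity either. The cleanest way to make both proofs rigorous, and the one consistent with how the lemma is actually invoked in \Cref{cor_mmp_does_not_contract_divisors}, is to interpret "$0<|\epsilon|\ll 1$" as $\epsilon$ chosen small \emph{after} the extremal ray $R$ is fixed (or after the finitely many rays arising in a terminating MMP run are fixed): for a fixed $R$, the two conditions $(K_X+D)\cdot R\ge 0$ and $(K_X+(1+\epsilon)D)\cdot R<0$ for all sufficiently small $|\epsilon|$ immediately force $(K_X+D)\cdot R=0$, with no appeal to length estimates at all. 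So your proof is the same approach as the paper's and is no less rigorous; I would simply replace the length-estimate heuristic in Step 2 by this direct limiting argument for a fixed ray.

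One further small remark: as stated the lemma also implicitly assumes $\tau$ is \emph{birational} (the paper's proof says "as $\tau$ is a birational contraction"). Your Step 3 uses this as well, so it is worth making explicit, since a Mori fibre space contraction of the extremal ray would break the conclusion $m(K_{X'}+D')\sim_\bQ(\pi')^*L$.
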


\begin{proof}
First observe that any curve contracted by $\tau$ is also contracted by $\tau^{\st}\circ \pi$, as $L$ is ample, $|\epsilon|$ is small, and
\[\textstyle
K_X+(1+\epsilon)D \ \sim_\bQ\  \pi^*(K_Y+B_Y+\bfM) +\epsilon D \ \sim_{\bQ} \ \frac{1}{m}\pi^*(\tau^{st})^*L+\epsilon D .
\]
    Therefore, there is a morphism $\pi'\colon X'\to Y^{\st}$. As $\tau$ is a birational contraction, and \[\textstyle K_X+D\ \sim_\bQ\  \pi^*(K_X+B_Y+\bfM) \ \sim_\bQ \ \frac{1}{m}\pi^*(\tau^{st})^*L,\]
    then the two $\bQ$-Cartier divisors $K_{X'}+D'$ and $\frac{1}{m}(\tau^{\st})^*L$ agree on the normal variety $X'$.
\end{proof}

\section{Minimal models for fibered log Calabi-Yau pairs}\label{section_stable_reduction}

The main objective of this section is to establish the following two results.  
First, we show that if $\pi\colon (X,B_X) \to Y$ is a fibration whose fibers are log Calabi-Yau pairs, and if a step of the MMP 
$Y \dashrightarrow Y'$ for a generalized pair on \(Y\) (induced by \(\pi\)) is given, then this step can be followed by a corresponding step of the MMP 
\(X \dashrightarrow X'\) as stated in \Cref{prop_can_run_mmp_upstairs_to_follow_the_one_downstairs}.  

We then apply this to the case where the fibration arises from a stable quasimap \(\cC \to \sP_n\)
over the spectrum of a DVR \(R\). In this setting, we obtain a structural description of the morphism \(X' \to Y'\) in terms of ruled surfaces; see \Cref{thm_you_can_take_ruled_model_in_ksba_moduli}.

\subsection{An MMP for fibrations in log Calabi-Yau pairs}\label{section_mmp}

In this subsection, we establish certain results concerning fibrations in log Calabi--Yau pairs. Roughly speaking, we show that if 
\[
\pi\colon (X,B_X) \ \longrightarrow \ (Y,B+\mathbf{M})
\]
is a fibration of a log Calabi-Yau pair onto a generalized pair \((Y,B+\mathbf{M})\) satisfying 
\[
\pi^*(K_Y+B+\mathbf{M})  \ \sim_{\mathbb{Q}} \ K_X+B_X,
\]
then, under mild assumptions, any MMP for \((Y,B+\mathbf{M})\) can be accompanied by a birational modification for \((X,B_X)\) such that each step of the MMP for \((Y,B+\mathbf{M})\) still admits a fibration in log Calabi--Yau pairs given by a birational model of \((X,B_X)\); see \Cref{prop_can_run_mmp_upstairs_to_follow_the_one_downstairs}.

\begin{lemma}\label{lemma_ext_contraction_downstairs_induces_one_upstairs}
    Let $(Y,B+\bfM)$ be an lc generalized pair, let $p\colon(Y,B+\bfM) \longrightarrow (Y',B'+\bfM')$ be a birational contraction of a $(K_Y+B+\bfM)$-negative extremal ray with exceptional locus $\Exc(p)\subseteq Y$. Let $\pi\colon(X,B_X)\to Y$ be a morphism from an lc pair $(X,B_X)$ such that $$\pi^*(K_Y+B+\bfM) \ \sim_{\bQ}\ K_X + B_X,$$ and $\Xi\subseteq X$ be a divisor which maps surjectively to $\operatorname{Ex}(p)$.
    Let $\pi'\colon (X',B_{X'})\to (Y',B'+\bfM')$ be a weak canonical model of $(X,B_X)$ over $Y'$. Then $\Xi$ is contracted via the rational map $X\dashrightarrow X'$.
\[\begin{tikzcd}
	&& W \\
	\Xi & {(X,B_X)} && {(X',B_{X'})} \\
	{\Exc(p)} & {(Y,B+\bfM)} && {(Y',B'+\bfM')}
	\arrow["q"', from=1-3, to=2-2]
	\arrow["{q'}", from=1-3, to=2-4]
	\arrow["\subseteq"{description}, draw=none, from=2-1, to=2-2]
	\arrow[two heads, from=2-1, to=3-1]
	\arrow["{(p\circ \pi)\textup{-MMP}}", dashed, from=2-2, to=2-4]
	\arrow["\pi"', from=2-2, to=3-2]
	\arrow["{\pi'}", from=2-4, to=3-4]
	\arrow["\subseteq"{description}, draw=none, from=3-1, to=3-2]
	\arrow["p", from=3-2, to=3-4]
\end{tikzcd}\]
\end{lemma}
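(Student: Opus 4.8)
The plan is to exhibit the strict transform of $\Xi$ on $X'$ as a summand of an effective $\bQ$-Cartier divisor which is nef over $Y'$ and whose support is contracted by $\pi'$ to a locus of codimension at least two, and then to conclude that this divisor — hence the strict transform of $\Xi$ — vanishes. We may assume $\Xi$ is prime. Since $(Y',B'+\bfM')$ is a generalized pair, $K_{Y'}+B'+\bfM'$ is $\bQ$-Cartier, so $p$ is a divisorial contraction and $E:=\Exc(p)$ is a prime divisor with $\pi(\Xi)=E$. Write $g:=p\circ\pi\colon X\to Y'$, and let $\varphi\colon X\dashrightarrow X'$ denote the birational contraction in the statement, so that $B_{X'}=\varphi_*B_X$ and $\pi'\circ\varphi=g$ as rational maps.

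First I would record the key identity. Because $E$ is the unique $p$-exceptional divisor and $K_Y+B+\bfM$, $K_{Y'}+B'+\bfM'$ are both $\bQ$-Cartier, there is a rational number $e$ with
\[
K_Y+B+\bfM\ \sim_{\bQ}\ p^*(K_{Y'}+B'+\bfM')+eE.
\]
Intersecting both sides with a curve $\gamma$ generating the contracted ray gives $(K_Y+B+\bfM)\cdot\gamma=e\,(E\cdot\gamma)$; since the ray is $(K_Y+B+\bfM)$-negative and $E\cdot\gamma<0$ (the exceptional divisor of a divisorial contraction is anti-ample over its image, and all contracted curves are proportional to $\gamma$ as the relative Picard number is one), we conclude $e>0$. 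Pulling back along $\pi$ and using $\pi^*(K_Y+B+\bfM)\sim_{\bQ}K_X+B_X$ then yields
\[
K_X+B_X\ \sim_{\bQ}\ g^*(K_{Y'}+B'+\bfM')+e\,\pi^*E,\qquad e>0.
\]
Moreover $\Xi$ occurs in $\pi^*E$ with strictly positive coefficient: being a prime divisor dominating $E=\Supp E$, it is a component of $\pi^{-1}(E)$, and a local equation of $E$ pulls back to a function vanishing along $\Xi$.

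Next I would push this relation forward along $\varphi$. As $\varphi$ is a birational contraction over $Y'$ and $K_{Y'}+B'+\bfM'$ is $\bQ$-Cartier, one has $\varphi_*g^*(K_{Y'}+B'+\bfM')=(\pi')^*(K_{Y'}+B'+\bfM')$ — check this on a common resolution $W$ of $X$ and $X'$ with a morphism $h\colon W\to Y'$, where both sides pull back to $h^*(K_{Y'}+B'+\bfM')$. Thus, setting $N':=\varphi_*\pi^*E\ge 0$,
\[
K_{X'}+B_{X'}\ \sim_{\bQ}\ (\pi')^*(K_{Y'}+B'+\bfM')+e\,N'.
\]
Since $X'$ is a weak canonical model over $Y'$, $K_{X'}+B_{X'}$ is nef over $Y'$; intersecting the displayed relation with an arbitrary $\pi'$-contracted curve, on which $(\pi')^*(\,\cdot\,)$ has degree zero, shows that $N'$ is nef over $Y'$. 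On the other hand $\pi'(\Supp N')=g(\Supp \pi^*E)\subseteq p(E)$ has codimension at least two in $Y'$, as $p$ is divisorial.

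Finally I would invoke the standard negativity of contracted divisors: an effective $\bQ$-Cartier divisor on a $\bQ$-factorial variety that is nef over a base and whose support is contracted to a locus of codimension $\ge 2$ must vanish. Concretely, a prime $\bQ$-Cartier divisor whose image has codimension $\ge 2$ is covered by a family of contracted curves along which it is negative (cf.\ \cite{KM98}); taking $C'$ a general member of such a family inside the component of $N'$ whose image has largest dimension, and off the remaining components of $N'$, one gets $N'\cdot C'<0$, contradicting nefness over $Y'$ unless $N'=0$. As $\Xi$ has positive coefficient in $\pi^*E$, its strict transform satisfies $0\le\varphi_*\Xi\le N'=0$, hence $\varphi_*\Xi=0$; that is, $\Xi$ is contracted by $X\dashrightarrow X'$.

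The step I expect to be the main obstacle is this last one: one must ensure the negativity statement applies, which rests on the $\bQ$-factoriality preserved along the MMP (so that the components of $N'$ are $\bQ$-Cartier) and on being able to choose the auxiliary contracted curve away from the other components of $N'$. Getting the sign $e>0$ right in the key identity, via the intersection with the contracted ray, is the other place that demands attention.
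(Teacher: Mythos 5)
You take a genuinely different route than the paper, and it is worth comparing them. The paper's proof works directly on a common resolution $W$: assuming the strict transform $\Xi_W$ survives, it chooses an integral curve $C\subseteq\Xi_W$ avoiding the discrepancy divisor $E$ with $q^*(K_X+B_X)=(q')^*(K_{X'}+B_{X'})+E$, notes that $\pi_*q_*C$ lies in the contracted extremal ray, and then concludes
\[
0> C\cdot q^*(K_X+B_X)\ \ge\ C\cdot (q')^*(K_{X'}+B_{X'})\ =\ q'_*C\cdot (K_{X'}+B_{X'})\ \ge\ 0,
\]
a contradiction with nefness. You instead establish the decomposition $K_Y+B+\bfM\sim_\bQ p^*(K_{Y'}+B'+\bfM')+eE$ with $e>0$, pull it back to $X$, push it forward along $\varphi$ to obtain $K_{X'}+B_{X'}\sim_\bQ(\pi')^*(K_{Y'}+B'+\bfM')+eN'$ with $N':=\varphi_*\pi^*E\ge 0$ nef over $Y'$ and supported over the codimension-$\ge 2$ locus $p(\Exc(p))$, and seek to conclude $N'=0$. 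The derivation of $e>0$, the push-forward identity $\varphi_*g^*(\cdot)=(\pi')^*(\cdot)$, and the observation that $\Xi$ occurs with positive coefficient in $\pi^*E$ are all sound (for $e>0$ one can also simply apply the relative negativity lemma to $p^*(K_{Y'}+B'+\bfM')-(K_Y+B+\bfM)$, which is $p$-nef and $p$-exceptional).

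The gap is at the final negativity step, which is stated rather than proved. You invoke that an effective $\bQ$-Cartier divisor on $X'$ which is nef over $Y'$ and whose image in $Y'$ has codimension $\ge 2$ must vanish, citing \cite{KM98} only loosely. The negativity lemma in \cite{KM98} is proved for proper \emph{birational} morphisms, not for the fibration $\pi'$; the statement is still true for fibrations, but it requires an argument. Your sketch — take a covering family of $\pi'$-contracted curves "inside the component of $N'$ whose image has largest dimension, and off the remaining components" — does not obviously work: when several components of $N'$ lie over the same stratum of $Y'$ one cannot in general move a covering curve of one component off the others, and then $N_i'\cdot C'<0$ for a single component does not control the sign of $N'\cdot C'$. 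A clean way to close the gap is a cutting argument: restrict to a general complete-intersection surface $S\subseteq X'$ so that $\pi'|_S$ is generically finite onto its image and $\Supp(N'|_S)$ is contracted to finitely many points, and then apply the negative definiteness of the exceptional intersection matrix to get $N'|_S=0$, hence $N'=0$. Alternatively, once $\varphi_*\Xi$ is identified as a summand of $N'$, you can bypass the abstract negativity statement altogether and test $K_{X'}+B_{X'}$ against a movable curve in $\varphi_*\Xi$, which is essentially the paper's argument and makes the push-forward machinery unnecessary.
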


\begin{proof}
Assume by contradiction that $\Xi$ is not contracted. Let $q\colon W\to X$ and $q'\colon W\to X'$ be a common resolution of $X\dashrightarrow X'$. Then we have 
\[q^*(K_X+B_X)\ =\ (q')^*(K_{X'} + B_{X'} ) + E,\] where
$E$ is an effective $q'$-exceptional divisor which does not contain $\Xi_W$, the proper transform of $\Xi$ in $W$.
As $\Xi$ maps surjectively to the exceptional locus of $\pi$, and as $\Xi $ is not contracted, there is an integral curve $C\subseteq \Xi_W$ which:
\begin{enumerate}
    \item is not contained in $\Supp E$;
    \item maps finitely to $\operatorname{Ex}(p)$ and $X'$; and
    \item the class of $\ove{C}:=\pi_*q_*C$ is contained in the $p$-extremal ray, and thus $$(\ove{C}. \ K_Y+B+\bfM)\ <\  0.$$
\end{enumerate}
It follows that \begin{equation}\nonumber
\begin{split}
    0\ >\ \big(C.\ q^*(K_X+B_X)\big) \ & =\  \big(C.\ q'^*(K_{X'} + B_{X'} ) + E\big)\\ 
    & \ge \ \big(C.\ q'^*(K_{X'} + B_{X'})\big)\  =\  \big(q'_*C.\ (K_{X'} + B_{X'})\big),
\end{split}
\end{equation}
which contradicts the nefness of $K_{X'}+B_{X'}$.
\end{proof}

\begin{prop}\label{prop_can_run_mmp_upstairs_to_follow_the_one_downstairs}
    Let $(X,B_X)$ be a klt pair with a fibration $\pi\colon(X,B_X)\to (Y,B_Y+\bfM_Y)$ to a klt generalized pair such that $$K_X+B_X \ \sim_\bQ \ \pi^*(K_Y+B_Y+\bfM_Y) \text{ and }\pi_*\cO_X=\cO_Y.$$ Assume that
    \begin{enumerate}
        \item $B_X$ is $\pi$-ample and $\bQ$-Cartier, and
        \item there is 
        \begin{itemize}
           \item either a divisorial contraction $p:Y\rightarrow Y'$ of a $(K_Y+B_Y+\bfM_Y)$-extremal ray,
           \item or a flip $p\colon Y\dashrightarrow Y^+$ of a $(K_Y+B_Y+\bfM)$-extremal ray, which factors as \[\begin{tikzcd}
	Y && {Y^+} \\
	& {Y'}
	\arrow["p", dashed, from=1-1, to=1-3]
	\arrow["{p^{-}}"{description}, from=1-1, to=2-2]
	\arrow["{p^+}"{description}, from=1-3, to=2-2]
\end{tikzcd}.\]
        \end{itemize} 
    \end{enumerate}
     Then there exists a weak canonical model $(X,B_X)\dashrightarrow  (X',B_{X'})$ over $Y'$ such that
    \begin{itemize}
    \item if $Y\to Y'$ is divisorial, then one can make $B_{X'}$ ample over $Y'$;
    \item if $Y\dashrightarrow Y^{+}$ is a flip, then one can choose $(X',B_{X'})\to Y'$ such that it factors as $(X',B_{X'})\to Y^+\to Y'$ with $B_{X'}$ ample over $Y^+$;
    \item if $K_{Y'}+B_{Y'}+\bfM_{Y'}$ is ample (resp. nef), then the pair $(X',(1+\epsilon)B_{X'})$ is a (weak) canonical model for $(X,(1+\epsilon)B_X)$ for any $0<\epsilon \ll 1$.        
    \end{itemize}
\end{prop}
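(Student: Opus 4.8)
The plan is to perturb $B_X$ to the $\pi$-ample divisor $(1+\epsilon)B_X$ for $0<\epsilon\ll 1$: this makes the pair of log general type relative to the target of the contracted step, so that $X'$ can be produced as a relative canonical model there, and the perturbation can be removed afterwards. Write $p^{-}\colon Y\to Y'$ for $p$ in the divisorial case and for the small contraction underlying the flip in the flip case, and put $Y^{\circ}=Y'$ in the first case, $Y^{\circ}=Y^{+}$ in the second. In either case $\pi$ induces a (possibly only rational) map $X\dashrightarrow Y^{\circ}$, and a general fibre of $X\dashrightarrow Y^{\circ}$ coincides with a general fibre of $\pi$ because $p^{-}$ (resp.\ $p^{+}$) is birational. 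Since $K_X+(1+\epsilon)B_X\sim_{\bQ}\pi^{*}(K_Y+B_Y+\bfM_Y)+\epsilon B_X$ restricts to the ample divisor $\epsilon B_X$ on such a fibre, and $(X,(1+\epsilon)B_X)$ is klt for $0<\epsilon\ll 1$, the pair is of log general type over $Y^{\circ}$; by the existence of minimal models for klt pairs it then admits a relative canonical model over $Y^{\circ}$ (using a log resolution of $X\dashrightarrow Y^{\circ}$ when that map is only rational), realised by a $(K_X+(1+\epsilon)B_X)$-MMP over $Y^{\circ}$. By finiteness of models the underlying variety and the push-forward of $B_X$ stabilise as $\epsilon\to 0^{+}$; I will write $(X',B_{X'})$ for the resulting data, with $\pi'\colon X'\to Y^{\circ}$, so that $K_{X'}+(1+\epsilon)B_{X'}$ is ample over $Y^{\circ}$ for all $0<\epsilon\ll 1$.

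Next I would establish the crepancy relation $K_{X'}+B_{X'}\sim_{\bQ}(\pi')^{*}(K_{Y^{\circ}}+B_{Y^{\circ}}+\bfM_{Y^{\circ}})$. On a common resolution $q\colon W\to X$, $q'\colon W\to X'$ the relative-canonical-model property gives, for each small $\epsilon>0$, an identity $q^{*}(K_X+(1+\epsilon)B_X)=(q')^{*}(K_{X'}+(1+\epsilon)B_{X'})+E_{\epsilon}$ with $E_{\epsilon}\geq 0$ and $q'$-exceptional; since $(X',B_{X'})$ does not depend on $\epsilon$, letting $\epsilon\to 0^{+}$ yields the same identity with $(1+\epsilon)$ replaced by $1$ and an effective $q'$-exceptional $E_{0}$, so that $q'_{*}q^{*}(K_X+B_X)=K_{X'}+B_{X'}$. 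Feeding in $K_X+B_X\sim_{\bQ}\pi^{*}(K_Y+B_Y+\bfM_Y)$, so that $q^{*}(K_X+B_X)\sim_{\bQ}h^{*}(K_Y+B_Y+\bfM_Y)$ with $h=\pi q$, I would push forward by $q'_{*}$: in the divisorial case, writing $K_Y+B_Y+\bfM_Y\sim_{\bQ}p^{*}(K_{Y'}+B_{Y'}+\bfM_{Y'})+aE_Y$ with $a>0$ and $E_Y=\Exc(p)$, \Cref{lemma_ext_contraction_downstairs_induces_one_upstairs} shows that the divisorial part of $\pi^{-1}(E_Y)$ is contracted by $X\dashrightarrow X'$, so $q'_{*}$ annihilates $h^{*}E_Y$ and the relation drops to $Y'$; in the flip case $h^{*}(K_Y+B_Y+\bfM_Y)\sim_{\bQ}(h^{+})^{*}(K_{Y^{+}}+B_{Y^{+}}+\bfM_{Y^{+}})$ because the flip is a crepant birational contraction, and the same push-forward (now recognising $\pi'$ through $\pi'q'=h^{+}$) gives the relation over $Y^{+}$.

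The listed properties then follow formally. Because $K_{X'}+B_{X'}$ is $\pi'$-numerically trivial and $K_{X'}+(1+\epsilon)B_{X'}$ is ample over $Y^{\circ}$, the difference $\epsilon B_{X'}$ is ample over $Y^{\circ}$, which is the first two bullets (ampleness of $B_{X'}$ over $Y'$, resp.\ over $Y^{+}$ after factoring $(X',B_{X'})\to Y^{+}\to Y'$). If $K_{Y^{\circ}}+B_{Y^{\circ}}+\bfM_{Y^{\circ}}$ is nef (resp.\ ample) then $K_{X'}+(1+\epsilon)B_{X'}\sim_{\bQ}(\pi')^{*}(K_{Y^{\circ}}+B_{Y^{\circ}}+\bfM_{Y^{\circ}})+\epsilon B_{X'}$ is the sum of a nef (resp.\ nef and big) pullback and a $\pi'$-ample class, hence nef (resp.\ ample); together with the inequality $E_{\epsilon}\geq 0$ above this exhibits $(X',(1+\epsilon)B_{X'})$ as a (weak) canonical model of $(X,(1+\epsilon)B_X)$. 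That $X\dashrightarrow X'$ is a birational contraction and $K_{X'}+B_{X'}$ is nef over $Y'$ — so that $(X',B_{X'})$ is a weak canonical model of $(X,B_X)$ over $Y'$ — is immediate in the divisorial case, and in the flip case uses \Cref{lemma_ext_contraction_downstairs_induces_one_upstairs} for $p^{-}$ together with the $p^{+}$-ampleness of $K_{Y^{+}}+B_{Y^{+}}+\bfM_{Y^{+}}$.

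The main obstacle will be the flip case. The relative canonical model of $(X,(1+\epsilon)B_X)$ over $Y'$ itself does \emph{not} in general factor through $Y^{+}$ — it is contracted $(K_X+B_X)$-negatively over the flipping locus — so one must genuinely build $X'$ as a relative model over $Y^{+}$: pass to a resolution of both $X$ and of the rational map $X\dashrightarrow Y^{+}$, equip it with the crepant sub-boundary of $(X,B_X)$ perturbed by a divisor ample over $Y^{+}$, and run the MMP over $Y^{+}$. The real work is to verify that the resulting model is still reached from $X$ by a birational contraction over $Y'$, i.e.\ that no divisor of $X$ is extracted; this is where \Cref{lemma_ext_contraction_downstairs_induces_one_upstairs} applied to the small contraction $p^{-}$, together with a careful comparison of discrepancies on the chosen resolution, is essential. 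By contrast the divisorial case is essentially formal once the perturbation trick and the negativity/push-forward computation are in place.
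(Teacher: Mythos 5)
Your divisorial case is in the right spirit (perturb, pass to the relative canonical model over $Y'$, use $\epsilon$-independence), but the flip case rests on an assertion that is not correct and on a step you acknowledge you do not carry out. You claim the relative canonical model of $(X,(1+\epsilon)B_X)$ over $Y'$ ``does not in general factor through $Y^{+}$'' and therefore propose to build $X'$ as a relative model over $Y^{+}$ via a log resolution of $X\dashrightarrow Y^{+}$; you then must, and explicitly admit you do not, verify that the resulting $X\dashrightarrow X'$ extracts no divisors. That verification is not a routine consequence of \Cref{lemma_ext_contraction_downstairs_induces_one_upstairs} (which, as stated, concerns a weak canonical model of $(X,B_X)$ over $Y'$, not of a perturbed pair constructed over $Y^{+}$ from a resolution), so this is a genuine gap.

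The paper handles both cases uniformly over $Y'$ and shows the factorization \emph{does} hold. Take $(X',(1+\epsilon)B_{X'})$ to be the relative canonical model over $Y'$; by \cite[Theorem 5.59]{HK} it is $\epsilon$-independent, so $(X',B_{X'})$ is a weak canonical model over $Y'$. The hypotheses $\pi_*\cO_X=\cO_Y$ and $K_X+B_X\sim_\bQ\pi^*(K_Y+B_Y+\bfM_Y)$ identify the relative canonical $\cO_{Y'}$-algebra of $(X,B_X)$ with $\bigoplus_m p_*\cO_Y\big(md(K_Y+B_Y+\bfM_Y)\big)$, whose Proj $X^{\can}$ is $Y'$ (divisorial case) or $Y^{+}$ (flip case). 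The nef divisor $K_{X'}+B_{X'}$ then induces a morphism $X'\to X^{\can}$, giving the factorization $X'\to Y^{+}\to Y'$, and ampleness of $B_{X'}$ over $X^{\can}$ drops out of $K_{X'}+(1+\epsilon)B_{X'}\sim_{\bQ,X^{\can}}\epsilon B_{X'}$. This is shorter than your common-resolution and push-forward computation, requires no discrepancy bookkeeping, and never invokes \Cref{lemma_ext_contraction_downstairs_induces_one_upstairs} (the paper uses that lemma only afterwards, in \Cref{rmk_during_MMP_pull_back_of_lc_divisor_is_lc_divispr}). Note also that even in the divisorial case your push-forward step quietly requires every divisorial component of $h^{*}\Exc(p)$ to be $q'$-exceptional, which via \Cref{lemma_ext_contraction_downstairs_induces_one_upstairs} would need all such components to dominate $\Exc(p)$; this is automatic in pure relative dimension one but is not assumed here, and the algebra argument sidesteps it.
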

\begin{proof}
Let $(X,(1+\epsilon)B_X)\dashrightarrow (X',(1+\epsilon)B_{X'})$ be the relative canonical model over $Y'$, which by \cite[Theorem 5.59]{HK} does not depend on $\epsilon$ as long as $0<\epsilon \ll 1$. In particular, $(X,B_X)\dashrightarrow (X',B_{X'})$ is a relative weak canonical model over $Y'$. Then the canonical model of $(X,B_X)$ over $Y$ agrees with that of $(X',B_{X'})$ over $Y'$, which is isomorphic to 
\[\textstyle
\Proj_{Y'} \bigoplus_m p_*\pi_*\cO_X(md(K_X+B_{X})) \ = \  \Proj_{Y'} \bigoplus_m p_*\cO_Y(md(K_Y+B_{Y} + \bfM_Y))
\]
for $d$ positive and divisible enough.
However, the latter is the relative canonical model of $(Y,B_Y+\bfM_Y)$ over $Y'$, which is
\begin{enumerate}
    \item $Y'$ if $Y\to Y'$ is a divisorial contraction;
    \item $Y^+$ if $Y\dashrightarrow Y'$ is a flipping contraction.
\end{enumerate}
In particular, the relative canonical model $(X',B_{X'})\to X^{\can}$ is
\begin{enumerate}
    \item $X'\to Y'$ if $Y\to Y'$ is a divisorial contraction;
    \item $X'\to Y^+$ if $Y\dashrightarrow Y'$ is a flipping contraction.
\end{enumerate} As $X'\to X^{\can}$ is induced by $K_{X'}+B_{X'}$, then we have 
\[
K_{X'} + (1+\epsilon)B_{X'}  \ \sim_{\bQ,X^{\can}} \ \epsilon B_{X'}.
\]
In particular, we have that $B_{X'}$ is ample over $X^{\can}$.
\end{proof}

\begin{remark}\label{rmk_during_MMP_pull_back_of_lc_divisor_is_lc_divispr}\textup{
    With the assumptions of Proposition \ref{prop_can_run_mmp_upstairs_to_follow_the_one_downstairs}, from \Cref{lemma_ext_contraction_downstairs_induces_one_upstairs}, if $Y\to Y'$ is a divisorial contraction and all the divisors $\Xi \subseteq X$ mapping to $\Exc(p)$ map surjectively onto it, then \[(\pi')^*(K_{Y'}+\bfM_{Y'}+B_{Y'}) \ =\  K_{X'}+B_{X'}.\] Observe that the assumptions of \Cref{lemma_ext_contraction_downstairs_induces_one_upstairs} are automatic if $X\rightarrow Y$ is a fibration from a threefold to a surface. In this case, $X'\to Y'$ is also of pure relative dimension one. 
}\end{remark}
We end this subsection by invoking the following remark, which relates KSBA-stable limits to the MMP.
\begin{remark}[\textup{cf. \cite[Corollary 4.57]{Kol23}}]
    \textup{If $R$ is a DVR and $(X,D)\to \spec R $ a locally stable family with a stable generic fiber, then the canonical model of $(X,D)$ over $\spec R $ will be the stable limit of the generic fiber of $(X,D)\to \spec R $.}
\end{remark}

\subsection{From quasimaps limit to KSBA-limit}\label{sec:KSBA limit}

In this subsection, we will use the moduli space $\cQ$ discussed in \Cref{section_prelim_on_qmaps} to construct certain weak canonical models that will allow us to control limits in the KSBA-moduli space of stable pairs. More specifically, consider a moduli space of surface pairs $(X,\frac{1}{n+1}D)$ admitting a fibration $$\textstyle (X,\frac{1}{n+1}D)\ \longrightarrow \  C$$ with fibers isomorphic to log canonical Calabi-Yau pairs of the form $(\bP^1,\frac{1}{n+1}\Delta)$.
Two examples which are worth keeping in mind are the following:
\begin{enumerate}
    \item the (non-compact) moduli space of $(\bP^1\times\bP^1,\frac{1}{n+1}D)$ where $D$ is a generic divisor of class $(2n,k)$ with $k> 2n$,
    \item the (non-compact) moduli space of Hirzebruch surface pairs $(\bF_d,\frac{1}{n+1}D)$, where $D$ the union of the negative section $\mtf{e}$ and a multisection not containing $\mtf{e}$ of the class $(2n-1)\mtf{e}+(2n-1)d\mtf{f}$ with $\mtf{f}$ the class of the fiber of $\bF_d\rightarrow \bP^1$. 
\end{enumerate}
When $n=1$, pairs $(X,\frac{1}{2}D)$ as in the second example appear as the quotient by the hyperelliptic involution of an elliptic surface $Y\to \bP^1$.

 The goal of this section is to establish a few general results on the KSBA-compactification of these moduli spaces, using the moduli space $\cQ$. On a first approximation, we will use the results of \Cref{section_mmp} on the total space of a family of surface pairs corresponding to a one-parameter family in $\cQ$.
 
 For doing so, the first step is to guarantee that such a one-parameter family has the correct singularities, when we replace the coefficient of the divisor from $\frac{1}{n+1}$ with $\frac{1}{n+1} +\epsilon$ (\Cref{lemma_qmap_limit_plus_divisor_has_lc_sing}), and we add to the divisor some of the fibers with small coefficient (\Cref{cor_singularities_remain_slc_if_increasing_coeff_on_d_and_adding_fibers}). 

\begin{lemma}\label{lemma_qmap_limit_plus_divisor_has_lc_sing}
    Let $R$ be a DVR, $\sC\to \spec R $ a family of twisted curves, and let $\pi\colon (\sX,c\sD)\to \sC$ a proper morphism of Deligne-Mumford stacks, with $(\sX,c\sD)$ with slc singularities where $0<c<1$ is a rational number. Assume that
\begin{enumerate}
    \item $\sD$ is a $\bQ$-Cartier divisor;
    \item\label{condition_on_lct} for every point $x\in \sC_\eta$ (resp. $x\in \sC_0$) contained in the smooth locus of the family of twisted curves $\sC\to \spec R$, \begin{center}$\operatorname{lct}(\sX_\eta,c\sD_\eta;\pi^{-1}(x))>0$ (resp. $\operatorname{lct}(\sX_0,c\sD_0;\pi^{-1}(x))>0$); and\end{center}
    \item there is an $0<\epsilon\ll 1$ such that $(\sX,(c+\epsilon)\sD)\to \sC$ is locally stable, except possibly along a proper closed subscheme $\Delta\subseteq \sC$ contained in the smooth locus of each fiber of $\sC\to\spec R $. 
\end{enumerate}
    Let $(X,c{D})$ be the coarse moduli space of $(\sX,c\sD)$. Then $K_X$ is $\bQ$-Cartier, and the pair $(X,(c+\epsilon){D}+ X_0)$ is slc, where $0<\epsilon\ll 1$. In particular, if the generic fiber $(X_\eta,cD_\eta )$ is klt and $\pi$ is localy stable, then also $(X,(c+\epsilon){D})$ is klt for $0<\epsilon \ll 1$. 
\end{lemma}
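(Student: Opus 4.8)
The plan is to transport every assertion from the coarse scheme $(X,D)$ to the Deligne--Mumford stack $(\sX,c\sD)$, where the hypotheses are phrased, and then to verify the slc statement on the stack by playing the local stability conditions (2) and (3) against adjunction along $\sX_0$ and openness of the log canonical locus.

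\emph{Reduction to the stack.} The fibers of $\sX\to\sC$ and of $\sC\to\Spec R$ are nodal twisted curves, hence Gorenstein, so $\omega_{\sX/\sC}$ and $\omega_{\sC/R}$ are line bundles and therefore so is $\omega_\sX$. In particular $K_\sX$ is Cartier, and as $\sD$ is $\bQ$-Cartier by hypothesis (1), so is $K_\sX+c\sD$. The stacky structure of $\sX$ is supported on the codimension-two preimages of the balanced twisted nodes of the fibers of $\sC\to\Spec R$, whose automorphism groups contain no pseudo-reflections; hence the coarse space map $\phi\colon\sX\to X$ is an isomorphism in codimension one, is crepant, and satisfies $\phi^*(K_X+cD)=K_\sX+c\sD$. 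Since $\omega_\sX$ is a line bundle that is equivariantly trivial at the stacky points, a suitable power of it descends to a line bundle on $X$, so $K_X$ is $\bQ$-Cartier; and since $\phi$ is, \'etale-locally on $X$, a quotient by a finite group acting without pseudo-reflections, slc singularities descend. The marked points of $\sD$ being schematic, $\sD$ misses the stacky locus, so $\phi^*D=\sD$, and $\phi^*X_0=\sX_0$. It therefore suffices to prove that $(\sX,(c+\epsilon)\sD+\sX_0)$ is slc for $0<\epsilon\ll 1$.

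\emph{Slc on the stack.} Since $\sX_0=\pi^*\sC_0$ is a Cartier divisor, by inversion of adjunction it is enough to show that $(\sX_0,(c+\epsilon)\sD_0)$ is slc --- which, $\sX\to\sC$ being a fibration in curves, also makes $(\sX,(c+\epsilon)\sD)$ slc in a neighbourhood of $\sX_0$ --- and that $(\sX_\eta,(c+\epsilon)\sD_\eta)$ is slc. Over $\sC\smallsetminus\Delta$ this follows from hypothesis (3): there $(\sX,(c+\epsilon)\sD)\to\sC$ is locally stable, and since $\sC\to\Spec R$ is a family of nodal twisted curves the pair $(\sC,\sC_0)$ is snc, so $(\sX,(c+\epsilon)\sD+\sX_0)$ is slc away from $\pi^{-1}(\Delta)$ and its restrictions to $\sX_0$ and $\sX_\eta$ are slc by adjunction. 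It remains to treat a neighbourhood of $\pi^{-1}(x)$ for $x\in\Delta$; there $\sC$ is smooth over $\Spec R$, so $\sX_\star\to\sC_\star$ is a fibration of a surface over a smooth curve near $\pi^{-1}(x)$, where $\star=\eta$ or $0$ according to whether $x$ lies in $\sC_\eta$ or $\sC_0$. Here hypothesis (2) enters: $\lct(\sX_\star,c\sD_\star;\pi^{-1}(x))>0$ says that $(\sX_\star,c\sD_\star)$ is lc near $\pi^{-1}(x)$ and that no lc place of it sees the fiber $\pi^{-1}(x)$; using that $\sD_\star$ is finite over $\sC_\star$ (so has no $\pi$-vertical component) and that the points of $\sD_\star$ lying on the fiber are smooth points of it, one checks that in fact $(\sX_\star,c\sD_\star)$ is klt along $\sD_\star$ near $\pi^{-1}(x)$. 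Openness of the klt locus in the coefficient then gives that $(\sX_\star,(c+\epsilon)\sD_\star)$ stays lc for $0<\epsilon\ll 1$, and combined with the nodal structure of $\sX_\star$ along $\sD_\star$ inherited from $(\sX,c\sD)$ being slc we conclude that $(\sX_0,(c+\epsilon)\sD_0)$ and $(\sX_\eta,(c+\epsilon)\sD_\eta)$ are slc. Inversion of adjunction then yields that $(\sX,(c+\epsilon)\sD+\sX_0)$ is slc, and descending along $\phi$ completes the argument.

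\emph{The klt addendum and the main difficulty.} When $\pi$ is locally stable, so that $\Delta=\varnothing$, and $(X_\eta,cD_\eta)$ is klt, the same argument applies with $\sX_0$ dropped and ``slc'' replaced by ``klt'': klt-ness is open in the coefficient, so $(\sX_\eta,c\sD_\eta)$ klt gives $(\sX_\eta,(c+\epsilon)\sD_\eta)$ klt, and local stability of $\pi$ propagates this over all of $\sC$, whence $(X,(c+\epsilon)D)$ is klt. I expect the main obstacle to be the analysis near $\pi^{-1}(\Delta)$: there the fibers of $(\sX,(c+\epsilon)\sD)\to\sC$ are genuinely non-slc --- which is exactly why hypothesis (3) only requires local stability away from $\Delta$ --- so one cannot invoke local stability and must argue directly that bumping the coefficient of $\sD$ preserves slc-ness of both the total space and the central fiber, even though $\sX_\star$ may be singular along $\sD_\star$ over $\Delta$. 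Hypothesis (2) is tailored to this: via the fibration structure it forces $(\sX_\star,c\sD_\star)$ to be klt along $\sD_\star$ over $\Delta$, so the $\epsilon$-perturbation of $\sD$ creates no non-lc place, and it simultaneously guarantees that cutting with the Cartier divisor $\sX_0$ avoids all lc places.
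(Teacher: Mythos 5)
Your argument for the main slc statement mirrors the paper's strategy: descend/ascend between the coarse space and the stack using that $\phi\colon\sX\to X$ is \'etale-locally a finite quotient (and crepant since the stacky structure at balanced twisted nodes has no pseudo-reflections), reduce by adjunction to the fiber $(\sX_0,(c+\epsilon)\sD_0)$, dispose of the locus away from $\Delta$ via hypothesis (3), and then use hypothesis (2) at the isolated bad points. Where you write ``one checks that $(\sX_\star,c\sD_\star)$ is klt along $\sD_\star$ near $\pi^{-1}(x)$,'' the paper does the explicit discrepancy computation: for $E$ with center in $F_j$, condition (2) gives $a(E;\sX_0,c\sD_0+\epsilon'F_j)\geq -1$, and since $F_j$ is Cartier through the center, $\mult_E(F_j)\geq 1$, whence $a(E;\sX_0,c\sD_0)\geq -1+\epsilon'$ uniformly; this is the mechanism your ``one checks'' would have to supply, and it matches.

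The klt addendum, however, contains a genuine gap. First, ``$\pi$ locally stable'' refers to the family at coefficient $c$, and it does \emph{not} force $\Delta=\varnothing$ at coefficient $c+\epsilon$: one can have a $\pi$-fiber that is lc but not klt at coefficient $c$, so that bumping the coefficient makes it non-slc over some $x\in\Delta$. Your parenthetical inference is therefore false, though not load-bearing. Second, and more seriously, openness in the coefficient only gives $(\sX_\eta,(c+\epsilon)\sD_\eta)$ klt; you then assert that ``local stability of $\pi$ propagates this over all of $\sC$.'' This is precisely the statement that needs a proof, and it is not automatic: a locally stable family over $\Spec R$ with klt generic fiber could a priori acquire new lc places on the special fiber. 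The paper closes the gap by invoking \cite[Theorem 2.80]{Kol23}: for a locally stable family over a smooth one-dimensional base the lc centers of the total space are flat over the base, so if there are none over $\eta$ there are none at all. Since the first part of the lemma already yields that $(X,(c+\epsilon)D+X_0)$ is slc --- hence $(X,(c+\epsilon)D)\to\Spec R$ is locally stable --- this theorem applies and gives klt. Without citing this (or reproving it) the propagation step is unjustified.
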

Condition (\ref{condition_on_lct}) is automatically satisfied if either $\pi$ is locally stable, or if it is a log Calabi-Yau fibration and the boundary part in the canonical bundle formula appears with coefficient less than 1 along $\sC_\eta$ and $\sC_0$.
\begin{proof}
It suffices to prove that $(\sX,(c+\epsilon)\sD+\sX_0)$ is slc. This follows as being slc is an \'etale local property, and the map $\sX\to X$ \'etale locally is the quotient by a finite group, so $(\sX,(c+\epsilon)\sD+\sX_0)$ being slc is equivalent to $(X,(c+\epsilon){D}+ X_0)$ being slc.

By adjunction, it suffices to prove that $(\sX_0,(c+\epsilon)\sD_0)$ is slc. By assumption and adjunction, the pair $(\sX_0,(c+\epsilon)\sD_0)$ is slc, except possibly along the fiber $\{F_1,\ldots,F_k\}$ of finitely many smooth points $\{x_1,\ldots,x_k\}\subseteq \sC$; we now show it is slc over each point $x_i$. 
    
    The pair $(\sX_0,(c+\epsilon)\sD_0)$ is not slc if and only if there is a divisor $E$ whose center is on the fiber $(\sX_0)_{x_j}$ for some $j$, such that the discrepancy $a(E;\sX_0,(c+\epsilon)\sD_0)<-1$.
    However, by condition \ref{condition_on_lct} one has $ a(E;\sX_0,c\sD_0 + \epsilon' F_j)\geq-1$ for $0<\epsilon'\ll 1$. As $F_j$ is a Cartier divisor which contains the center of $E$ on $\mts{X}_0$, then $\ord_E(F_j)\geq 1$ and consequently \[-1\ \le\  a(E;\sX_0,c\sD_0 +  \epsilon'F_j) \ =\  a(E;\sX_0,c\sD_0) - \epsilon'\mult_E(F_j) \ \leq \ a(E;\sX_0,c\sD_0) - \epsilon', \] and hence $a(E;\sX_0,c\sD_0)> -1$.  Therefore, for any $0<\epsilon \ll 1$ we have that $$a(E;\sX_0,(c+\epsilon)\sD_0 )\ >\ -1$$ as desired.

    The last part of the statement about klt follows from \cite[Theorem 2.80]{Kol23}: for locally stable families over a smooth curve, the log-canonical center of the total space is flat over the base. Therefore, if $(X_\eta,cD_\eta)$ is klt, then $(X_\eta,(c+\epsilon)D_\eta +\epsilon F_\eta)$ is also klt for $0<\epsilon\ll1$, and hence so is $(X,(c+\epsilon)D +\epsilon F)$.
    \end{proof}
\begin{corollary}\label{cor_singularities_remain_slc_if_increasing_coeff_on_d_and_adding_fibers}
    Let $\pi_\sX\colon (\sX,c\sD)\to \sC\to \spec R $ be as in \Cref{lemma_qmap_limit_plus_divisor_has_lc_sing} with $(\sX,c\sD)\to \sC$ locally stable, let $c_S\in (0,1]_{\bQ}$ and let $S\subseteq \sC$ be a Cartier divisor such that $(\sC, (c_S+\epsilon) S)$ is slc for some $0<\epsilon\ll 1$. Then if we denote by $\sF$ the fibers of $\pi_\sX$ along $S$, and by roman letters the coarse moduli spaces of each Deligne-Mumford stack, the pair 
    \[
    (X,(c+\epsilon)D + c_SF + X_0)
    \]
    is slc for $0<\epsilon \ll 1$. Moreover, if the generic fiber $(X_\eta,cD_\eta )$ is klt, then also $(X,(c+\epsilon){D} + c_S F)$ is klt for $0<\epsilon \ll 1$.
\end{corollary}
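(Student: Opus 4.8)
The plan is to follow the proof of \Cref{lemma_qmap_limit_plus_divisor_has_lc_sing} almost verbatim, the only new ingredient being the fiber divisor $c_S\sF$, where $\sF=\pi_\sX^{*}S$ (a reduced divisor, since $\pi_\sX$ is locally stable and $S$ is Cartier). As there, being slc and being klt are \'etale-local on the coarse space and $\sX\to X$ is \'etale-locally a quotient by a finite group, so it suffices to prove the statements for $(\sX,(c+\epsilon)\sD+c_S\sF+\sX_0)$ and for $(\sX,(c+\epsilon)\sD+c_S\sF)$, respectively. Both $\sF$ and $\sX_0=\pi_\sX^{*}\sC_0$ are pulled back from Cartier divisors on $\sC$, and $K_X$ is $\bQ$-Cartier by \Cref{lemma_qmap_limit_plus_divisor_has_lc_sing}, so these are genuine $\bQ$-Cartier log pairs; we may also assume that $S$ does not contain $\sC_0$, so that $\sF$ and $\sX_0$ share no common component.

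First I would record the situation downstairs: from the hypothesis that $(\sC,(c_S+\epsilon)S)$ is slc and that $\sC_0$ is a reduced Cartier divisor not contained in $S$, adjunction along $\sC_0$ (whose different is supported on the nodal curve $\sC_0$, hence produces an slc pair there) shows that $(\sC,(c_S+\epsilon')S+\sC_0)$ is slc for $0<\epsilon'\ll1$. Then I would transfer this upstairs: since $\pi_\sX\colon(\sX,c\sD)\to\sC$ is locally stable, base change is harmless and the preimage of a reduced Cartier divisor of $\sC$ is again reduced, so $(\sX,\,c\sD+\sF+\sX_0)\to(\sC,\,S+\sC_0)$ is a locally stable morphism in the sense of \cite[\S4]{Kol23}; its total space is therefore slc, and since $c_S\le1$ we may decrease the coefficient of $\sF$ and conclude that $(\sX,c\sD+c_S\sF+\sX_0)$ is slc. (Alternatively, one can run the discrepancy argument of \Cref{lemma_qmap_limit_plus_divisor_has_lc_sing} directly: for a divisor $E$ over $\sX$ whose center is not contained in $\Supp\sF$ one reduces to \Cref{lemma_qmap_limit_plus_divisor_has_lc_sing}, while for a center contained in $\Supp\sF$ one bounds the discrepancy from below using the slc-ness of the base pair $(\sC,(c_S+\epsilon')S+\sC_0)$ together with the local stability of $\pi_\sX$.)

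It then remains to increase the coefficient of $\sD$ from $c$ to $c+\epsilon$, which I would do exactly as in \Cref{lemma_qmap_limit_plus_divisor_has_lc_sing}: by adjunction along $\sX_0$ it suffices to show that $(\sX_0,(c+\epsilon)\sD_0+c_S\sF_0)$ is slc; this holds away from the finitely many fibers $F_1,\dots,F_k$ over smooth points $x_1,\dots,x_k$ of $\sC$ at which increasing the coefficient of $\sD$ could break local stability, and for a divisor $E$ centered on such an $F_j$ one uses condition~(\ref{condition_on_lct}) of \Cref{lemma_qmap_limit_plus_divisor_has_lc_sing} (positivity of the relevant log canonical threshold, which holds because $\pi_\sX$ is locally stable) together with $\ord_E(F_j)\ge1$ to conclude $a\bigl(E;\sX_0,c\sD_0+c_S\sF_0\bigr)>-1$, hence $a\bigl(E;\sX_0,(c+\epsilon)\sD_0+c_S\sF_0\bigr)>-1$ for $0<\epsilon\ll1$. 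For the klt statement one repeats the last paragraph of the proof of \Cref{lemma_qmap_limit_plus_divisor_has_lc_sing}: by \cite[Theorem~2.80]{Kol23} the lc centers of a locally stable family over the smooth base $\spec R$ are flat over $\spec R$, so klt-ness of $(X_\eta,cD_\eta)$ propagates to $(X,(c+\epsilon)D+c_SF)$ for $0<\epsilon\ll1$.

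The main obstacle is the middle step: transferring the slc-ness of the base pair $(\sC,(c_S+\epsilon')S+\sC_0)$ to the total space across the locally stable morphism $\pi_\sX$. The delicate points are checking that $\pi_\sX^{-1}(S)$ is reduced and that $S$ may be taken horizontal over $\spec R$ (so that no unwanted coincidences with $\sX_0$ occur), and, in the hands-on version, separating a divisor $E$ that is horizontal over $\sC$ (controlled fiberwise by local stability) from one lying over a component of $S$ (controlled by the slc-ness downstairs), while keeping track of the interaction with the finitely many fibers over which the coefficient of $\sD$ cannot be freely increased. Once this bookkeeping is in place, the remainder is a routine repetition of the proof of \Cref{lemma_qmap_limit_plus_divisor_has_lc_sing}.
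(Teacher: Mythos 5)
Your first step—transferring slc-ness from $(\sC,(c_S+\epsilon')S+\sC_0)$ to the total space via the local stability of $\pi_\sX$—is exactly what the paper does (they cite \cite[Lemma~2.12]{patakfalvi2016fibered}, you cite the analogous statement from Kollár's book; same idea). But the second step, raising the coefficient of $\sD$ from $c$ to $c+\epsilon$, diverges from the paper, and as written it has a gap.

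You claim to ``do this exactly as in \Cref{lemma_qmap_limit_plus_divisor_has_lc_sing},'' i.e., for a divisor $E$ centered on a bad fiber $F_j$ use condition~(2) of that lemma plus $\ord_E(F_j)\ge 1$ to deduce $a(E;\sX_0,c\sD_0+c_S\sF_0)>-1$. The problem is that condition~(2) gives information about $a(E;\sX_0,c\sD_0+\epsilon'F_j)$ for a \emph{small} $\epsilon'$; it says nothing about the pair with the extra divisor $c_S\sF_0$ in it. In the case that actually occurs in the applications (e.g.\ $S=\Delta$, the discriminant), the bad points $x_j$ lie \emph{on} $S$, so $F_j$ is a component of $\sF_0$ appearing with coefficient $c_S$, which is not small. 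Even if you upgrade to ``$(\sX_0,c\sD_0+(c_S+\epsilon')\sF_0)$ is slc'' (which you do get from the transfer with coefficient $c_S+\epsilon'$ downstairs), that gives $a(E;\sX_0,c\sD_0+c_S\sF_0)\ge -1+\epsilon'\mult_E(\sF_0)$; you still need $\epsilon\,\mult_E(\sD_0)<\epsilon'\mult_E(\sF_0)$ uniformly in $E$, which does not follow from what you've written. There is also a second, milder issue: even over a fiber $F\subseteq\Supp\sF_0$ which is \emph{not} among the $F_j$'s, you are simultaneously increasing the coefficient on $\sD$ and adding $c_S\sF$, and neither slc pair you have controls that combination by itself.

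The paper sidesteps all of this in one line by using the convexity of the slc condition on the threefold: $(X,cD+(c_S+\epsilon')F+X_0)$ is slc by the transfer, $(X,(c+\epsilon'')D+X_0)$ is slc by \Cref{lemma_qmap_limit_plus_divisor_has_lc_sing}, and with $\lambda=\tfrac{c_S}{c_S+\epsilon'}$, $\epsilon=(1-\lambda)\epsilon''$, the pair $(X,(c+\epsilon)D+c_SF+X_0)$ is the convex combination, hence slc. Since $\epsilon\ll\epsilon'$ forces $\epsilon''\ll 1$, this gives the statement for all $0<\epsilon\ll 1$ without any further discrepancy analysis. I'd recommend replacing your ``repeat the Lemma's argument'' step with this convexity argument (or, if you insist on arguing via discrepancies, explicitly incorporate both slc threefold pairs and take the convex combination at that level).
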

\[\begin{tikzcd}
	& F & {(X,D)} & {(X_{\eta},D_{\eta})} \\
	{\mathscr{F}} & {(\mathscr{X},\mathscr{D})} \\
	{S} & {\mathscr{C}} \\
	& {\Spec R} & \eta
	\arrow["\subseteq"{description}, draw=none, from=1-2, to=1-3]
	\arrow[from=1-3, to=4-2]
	\arrow["\supseteq"{description}, draw=none, from=1-4, to=1-3]
	\arrow[from=1-4, to=4-3]
	\arrow["{\textup{cms}}"{description}, from=2-1, to=1-2]
	\arrow["\subseteq"{description}, draw=none, from=2-1, to=2-2]
	\arrow[from=2-1, to=3-1]
	\arrow["{\textup{cms}}"{description}, from=2-2, to=1-3]
	\arrow["\pi"', from=2-2, to=3-2]
	\arrow["\subseteq"{description}, draw=none, from=3-2, to=3-1]
	\arrow[from=3-2, to=4-2]
	\arrow["\supseteq"{description}, draw=none, from=4-3, to=4-2]
\end{tikzcd}\]
\begin{proof}
    As the fibration $\pi_\sX$ is locally stable and the base $(\sC,(c_S +\epsilon')S + \sC_0)$ is slc, where $\sC_0\subseteq \sC$ is the central fiber of $\sC\to \spec R $, the pair $(X,cD + (c_S +\epsilon')F+X_0)$ is slc from \cite[Lemma 2.12]{patakfalvi2016fibered} and the fact that finite quotients of an slc pair is slc. As the coefficients of the pairs $(X,(c+\epsilon){D} + c_S F + X_0)$ are a convex combination of the coefficients of $(X,cD + (c_S +\epsilon')F+X_0)$ and $(X,(c+\epsilon''){D} + X_0)$, which are slc, also the pair $(X,(c+\epsilon)D + c_SF + X_0)$ is slc for $0<\epsilon \ll 1$. The moreover part follows as in \Cref{lemma_qmap_limit_plus_divisor_has_lc_sing}.
\end{proof}

The following lemma is an useful observation which we will use the describe some of the objects on the boundary of $\cQ$.
    
\begin{lemma}\label{lemma_nodal_locus_for_qmap_limit_is_union_of_irred_cp_of_special_fiber}
    Let $(\sX,\frac{1}{n+1}\sD)\xrightarrow{\pi} \sC\to \spec R $ be as in Lemma \ref{lemma_qmap_limit_plus_divisor_has_lc_sing}, and assume that for each point $p\in \sC_\eta$, the fiber $\pi^{-1}(p)$ is smooth. Then the sublocus in $\sC$ over which the $\pi$-fibers are strictly nodal consists of a finite union of irreducible components of the special fiber of $\sC\to \spec R $.
\end{lemma}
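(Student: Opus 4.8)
The plan is to show that the locus $N\subseteq\sC$ over which the fibers of $\pi$ are singular is a Weil divisor contained in the special fiber $\sC_0$, and then to conclude by comparing dimensions.

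First I would set $Z\subseteq\sX$ to be the closed subset where $\pi\colon\sX\to\sC$ fails to be smooth. Since $(\sX,(\tfrac{1}{n+1}+\epsilon)\sD)\to\sC$ is locally stable away from $\Delta$, every fiber of $\pi$ is an slc curve, hence has at worst nodal singularities; thus $Z$ is exactly the union of the (finitely many, per fiber) nodes of the fibers of $\pi$, so $Z\to\sC$ is finite and $N:=\pi(Z)$ is closed. By construction $N$ is precisely the set of $q\in\sC$ such that $\pi^{-1}(q)$ is strictly nodal. The hypothesis that $\pi^{-1}(p)$ is smooth for every $p\in\sC_\eta$ gives $Z\cap\pi^{-1}(\sC_\eta)=\emptyset$, so $N\subseteq\sC_0$.

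Next I would prove that $N$ is pure of codimension one in $\sC$. Fix $q\in N$ and a node $p$ of $\pi^{-1}(q)$ lying over $q$. The divisor $\sD$ avoids $p$: a boundary point of coefficient $<1$ supported at a node of a curve is not log canonical, so it cannot appear in the slc fiber pair $\big(\pi^{-1}(q),(\tfrac{1}{n+1}+\epsilon)\sD|_{\pi^{-1}(q)}\big)$. Hence on the open set $\sX\setminus\sD$ the morphism $\pi$ is, near $p$, a locally stable family of nodal curves with empty boundary, and by the local structure of such families (see, e.g., \cite{Kol23}) we may write, \'etale-locally around $p$,
\[
\sX\ \cong\ \Spec\,\mathcal{O}_{\sC,q}[[u,v]]/(uv-f),\qquad f\in\mathfrak{m}_{\sC,q},
\]
so that $Z=V(u,v,f)$ and $N=V(f)\subseteq\Spec\,\mathcal{O}_{\sC,q}$ in a neighbourhood of $q$. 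Here $f$ is not a unit because $q\in N$, and $f$ is not a zerodivisor: otherwise $\pi$ would be non-smooth over an entire irreducible component of $\sC$, contradicting the fact that the fibers of $\pi$ over $\sC_\eta$ are smooth. Since $\mathcal{O}_{\sC,q}$ is a two-dimensional Cohen--Macaulay local ring (it is the local ring of a flat family of Cohen--Macaulay curves over the regular base $\Spec R$), Krull's principal ideal theorem shows that $V(f)$ is pure of codimension one. Therefore $N$ is a closed subset of the surface $\sC$ which is pure of dimension one.

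Finally, since $N\subseteq\sC_0$ and both $N$ and $\sC_0$ are purely one-dimensional, every irreducible component of $N$ is a one-dimensional irreducible closed subset of $\sC$ contained in $\sC_0$, hence an irreducible component of the special fiber; as $\sC_0$ has only finitely many irreducible components, $N$ is a finite union of them, which is the assertion. The only non-formal ingredient is the third paragraph, namely the local model $uv=f$ at a node of a fiber together with the fact that $f$ is a non-unit nonzerodivisor: this is what rules out an isolated strictly nodal fiber and forces $N$ to be a genuine divisor. It rests on the local structure theory of locally stable families of nodal curves and on the mild regularity of $\sC$, both of which are standard.
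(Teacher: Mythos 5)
Your proof is correct and unpacks the paper's two-sentence argument along the same lines: the local model $uv=f$ at a node (from the versal deformation of a nodal singularity) gives codimension one via Krull's principal ideal theorem, and the hypothesis on $\sC_\eta$ forces the nodal locus into $\sC_0$, whence it is a union of its components. The detour showing $\sD$ avoids the nodes is not actually needed, since the local model $uv=f$ holds for any flat family of curves with a nodal fiber by versality, irrespective of the boundary divisor.
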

\begin{proof}
    From the structure of the versal deformation space of a nodal singularity, the nodal locus is of codimension one in $\sC$. As the fibers over $\sC_\eta$ are smooth, it must be a finite union of irreducible components of the special fiber of $\sC\to \spec R $.
\end{proof}
   
\subsubsection{The ruled-model}
From \Cref{section_prelim_on_qmaps} there is a retraction morphism $$\xi\colon \ \sP_n\ \longrightarrow \  \sP_n^{\GIT}$$ which, at the level of points, is given by $$\textstyle \big[(C,\frac{1}{n+1}D)\big]\ \ \mapsto \ \ \begin{cases}
    \big[(C,\frac{1}{n+1}D)\big] \ \  
 & \textup{if }C\simeq \bP^1 \\
     &  \\
    \big[(\bP^1,(n+1)[0] + (n+1)[\infty])\big] \ \  & \textup{if }C\textup{ is nodal}.
\end{cases}$$ We will use this morphism to define the \textit{ruled model} of a surface pair $(X,\frac{1}{n+1}D)\to C$ over a curve $C$. The ruled model will be the main tool which will allow us to control the KSBA-stable limits of certain ruled surfaces as the beginning of this section, without running explicitly any MMP.

\begin{defn}\label{defn:ruled model} (ref. Figure \ref{fig:illustration of ruled models})
    Let $C\to B$ be a family of nodal curves, and let $\pi\colon(X,\frac{1}{n+1}D)\to C$ be a projective morphism (not necessarily flat) such that the composition $(X,\frac{1}{n+1}D)\to B$ is a locally stable family of surface pairs. Assume that $U\subseteq C^{\sm}$ is a dense open subset such that the restriction morphism $(X_U,\frac{1}{n+1}D_U)\xrightarrow{\pi|_U} U$ is the coarse space of the family pulled back along a morphism $U\to \sP_n$. A family of pairs $\pi_\sY\colon(\sY,\frac{1}{n+1}D_\sY)\to \sC$ is called a \textit{ruled model} for $\pi$ if:
    \begin{enumerate}
        \item $\sC\to B$ is a family of twisted curves with coarse moduli space $C$ over $B$;
        \item the fibers of $\pi_\sY$ are isomorphic to $\bP^1$;
        \item there is a neighborhood $V\subseteq C$ of each node of the fibers of $C\to B$ and a map $\mts{V}:=V\times_C\sC\to \sP_n$ such that $(X|_{V\cup U},\frac{1}{n+1}D|_{V\cup U})$ is the coarse space of the family coming from $\mathscr{V}\cup U\to \sP_n$, and $(\sY|_{U\cup \sV},(\frac{1}{n+1}D_{\sY})|_{U\cup \sV})$ comes from taking the composition ${U\cup \mathscr{V}}\to \sP_n\to \sP_n^{\GIT}$ where the map $\sP_n\to\sP_n^{\GIT}$ is recalled in \Cref{item_where_we_say_there_is_a_map_spn_to_tilde_pn},
        \item if $\sC_0$ is an irreducible component of a fiber of $\sC\rightarrow B$ with geometric generic point $\ove{\eta}$ such that
        $(X_{\ove{\eta}},(\frac{1}{n+1}+\epsilon)D_{\ove{\eta}})$ is not klt, then
        $$\textstyle (\sY_{\ove{\eta}},D_{\sY_{\ove{\eta}}})\ \simeq \ (\bP^1,(n+1)[0]+(n+1)[\infty]);$$
        \item if the generic fiber of $X\to B$ is smooth, then the coarse moduli space of $(\sY,\frac{1}{n+1}D_\sY)$ and $(X,\frac{1}{n+1}D)$ are crepant birational.
    \end{enumerate}
\end{defn}

\begin{example}\textup{
    We introduce this example to help the reader navigate how the fibers of $X\to C$ and $\sY\to \sC$ differ. If $B=\spec(k)$ is a point, a ruled model of $(X,\frac{1}{n+1}D)$ is obtained as follows.
\begin{enumerate}
\item Let $p\in C$ be a smooth point of $C$, and $\eta$ be the generic point of the irreducible component of $C$ containing $p$.
\begin{enumerate}
    \item[(a)] If $\big(X_{\bar{\eta}},(\frac{1}{n+1}+\epsilon)D_{\bar{\eta}}\big)$ is \emph{not} klt, then
        $(\sY_p,(D_\sY)|_p)\cong(\bP^1,n[0]+n[\infty])$.
        \item[(b)] If $\big(X_{\bar{\eta}},(\frac{1}{n+1}+\epsilon)D_{\bar{\eta}}\big)$ is klt, and $(X_p,\frac{1}{n+1}D_p)$ is the coarse space of a pair $\phi(p)\in \sP_n$, then
       the fiber of the corresponding ruled model is the point $\xi(\phi(p))$, where $\xi\colon\sP_n\to \sP_n^{\GIT}$ is the retraction morphism.
\end{enumerate}
    \item Let $p\in C$ be a node of $C$. Then the surface pair $(X,\frac{1}{n+1}D)$ in a neighborhood of $p$ is the coarse moduli space of the surface pair coming from a map $\sV\to \sP_n$, and the ruled model replaces the fibers with those coming from the composition $\sV\to \sP_n\to \sP_n^{\GIT}$.     
\end{enumerate}
}\end{example}

\begin{figure}
    \centering
    \includegraphics[width=0.7\linewidth]{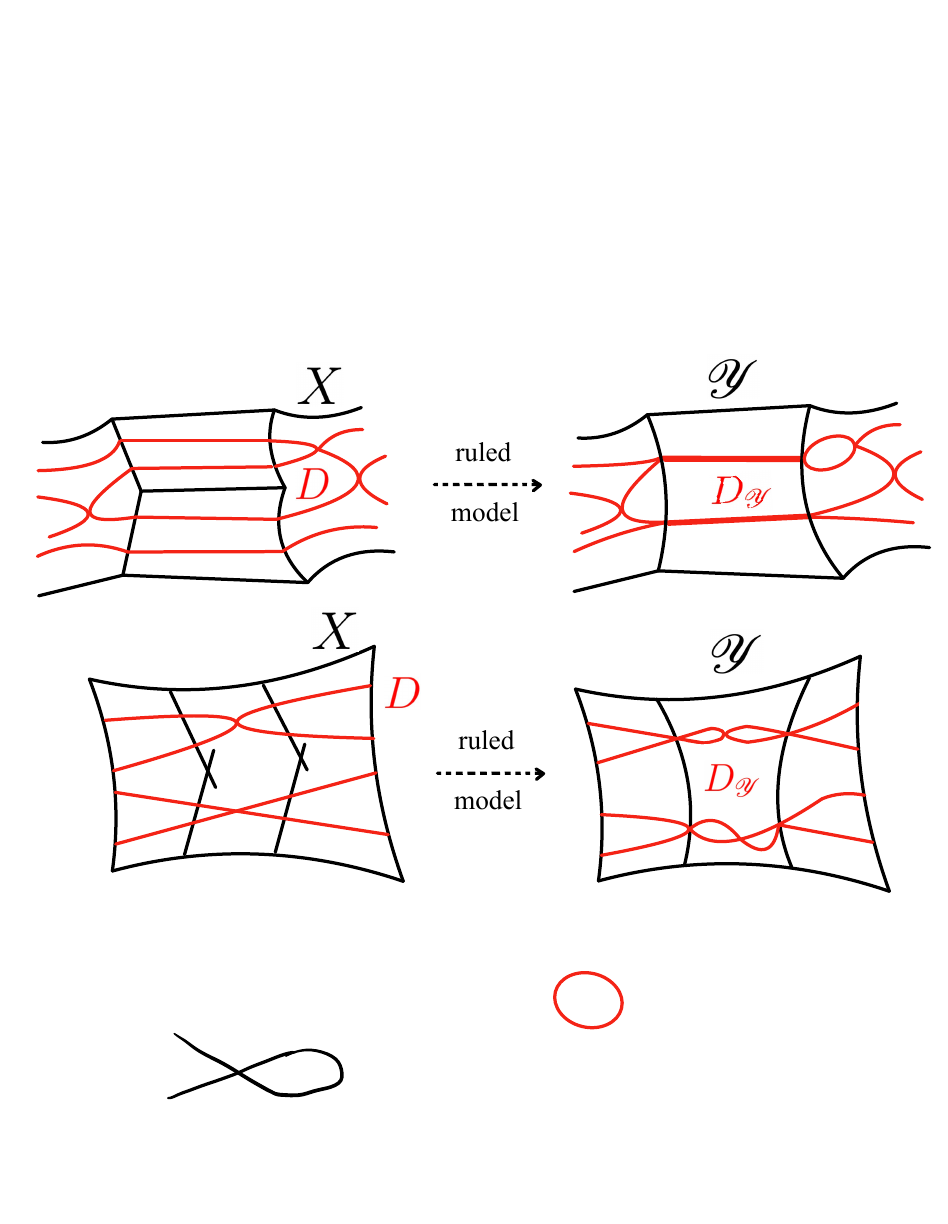}
    \caption{Two typical ruled models}
    \label{fig:illustration of ruled models}
\end{figure}

Another easy example of a surface pair admitting a ruled model is as follows.

\begin{example}\label{rmk_for_qmaps_there_is_a_ruled_model}\textup{
    Let $\pi\colon (\sX,\frac{1}{n+1}\sD)\to \sC$ coming from the pull-back along a (pointed) quasi-map $\phi\colon\sC\to \sP_n$, and $(X,\frac{1}{n+1}D)\to C$ be the coarse moduli spaces. Then one can post-compose $\phi$ with $\sP_n\to \sP_n^{\GIT}$, so that the corresponding family  $(\sY,\frac{1}{n+1}\sD_\sY)\to \sC$ is a ruled model $(X,\frac{1}{n+1}D)\to C$.}
\end{example}

The goal of this subsection is to prove that given a threefold pair $(X,\frac{1}{n+1}D)$ which is the coarse space of the threefold pair corresponding to a morphism $\spec R \to \cQ$, one can run a special MMP on $(X,\frac{1}{n+1}D)$, most of the steps of which are explicit. The setup will be the following.

\begin{setup}\label{setup_for_existence_of_ruled_model}
\textup{Let $R$ be a DVR, and let $\eta$ (resp. $0$) be the generic (resp. closed) point of $\spec R$. Let $$\textstyle (\sX,\frac{1}{n+1}\sD) \ \stackrel{\pi_\sX}{\longrightarrow} \ (\sC,x_1,\ldots,x_m) \ \longrightarrow \ \spec R $$ be a family of $m$-pointed stable quasimaps over $\spec R$. Assume that $\sC_\eta=C_\eta$ is smooth and that $\zeta_\eta\colon \sC_\eta\to \sP_n$ factors via $C_\eta\to \sP_n^{\GIT}\to \sP_n$ (i.e., the fibers of $\pi_\eta$ are smooth). We denote by $\pi_X\colon (X,D)\rightarrow C$ the morphism on coarse spaces induced by $\pi_{\sX}$.}

\[\begin{tikzcd}
	{(X,D)} & {(\mathscr{X},\mathscr{D})} & {(\mathscr{X}_{\eta},\mathscr{D}_{\eta})} \\
	& C & {\mathscr{C}} & {\mathscr{C}_{\eta}} & {\sP_n^{\GIT}} & {\mathscr{P}_n} \\
	&& {\Spec  R } & {\eta}
	\arrow["{\pi_X}"{description}, from=1-1, to=2-2]
    \arrow[ from=2-2, to=3-3]
	\arrow[from=1-2, to=1-1]
	\arrow["{\pi_{\mathscr{X}}}"{description}, from=1-2, to=2-3]
	\arrow["\supseteq"{description}, draw=none, from=1-3, to=1-2]
	\arrow["{\pi_{\eta}}"{description}, from=1-3, to=2-4]
	\arrow[from=2-3, to=2-2]
	\arrow["\zeta"{description}, shift left, bend left = 32pt, from=2-3, to=2-6]
	\arrow[from=2-3, to=3-3]
	\arrow["\supseteq"{description}, draw=none, from=2-4, to=2-3]
	\arrow[from=2-4, to=2-5]
	\arrow["{\zeta_{\eta}}"{description}, bend left = 28pt, from=2-4, to=2-6]
	\arrow["\xi", from=2-5, to=2-6]
	\arrow["\ni"{description}, draw=none, from=3-4, to=3-3]
    \arrow[ from=2-4, to=3-4]
\end{tikzcd}\]
\end{setup}
\noindent Let $S\subseteq \sC$ be a closed subscheme in the smooth locus of $\sC\to \spec R $ such that:
\begin{enumerate}
    \item it contains $x_1,\ldots,x_m$, and
    \item the pair $(\sC,(c+\epsilon')S)$ is log canonical for some $c\in \bQ\cap (0,1]$ and $0<\epsilon'\ll 1$.
\end{enumerate}
 Let $\sF:=\pi^{-1}_\sX(S)$. We finally assume that the pair $$\textstyle \big(\sX_\eta,(\frac{1}{n+1}+\epsilon)\sD + c\sF\big)$$ is KSBA-stable for $0<\epsilon\ll1$. Then denoting by $X^{(0)}$ (resp. $C^{(0)}$, $F^{(0)}$, $D^{(0)}$ and $S^{(0)}$) the coarse space of $\sX$ (resp. $\sC$, $\sF$, $\sD$ and $S$), we have that for $0<\epsilon \ll 1$  \begin{enumerate}
        \item the pair $\big(X^{(0)},(\frac{1}{n+1}+\epsilon)D^{(0)}+cF^{(0)}\big)$ is klt by Lemma \ref{cor_singularities_remain_slc_if_increasing_coeff_on_d_and_adding_fibers};
        \item the divisor $D^{(0)}$ is ample over $C^{(0)}$;
        \item up to a base change of $\spec R $, we can take a $\bQ$-divisor $H$ on $C^{(0)}$ such that that the pairs \[\textstyle \big(C^{(0)},c S^{(0)} + H^{(0)}\big) \  \ \ \ \text{ and } \ \ \ \big(X^{(0)},(\frac{1}{n+1}+\epsilon)D^{(0)}+cF^{(0)} + (\pi^{(0)})^{-1}(H^{(0)})\big)\] are KSBA-stable and klt, where $\pi^{(0)}:X^{(0)}\rightarrow C^{(0)}$ agrees with $\pi_X:X\rightarrow C$; and
        \item there is a semiample divisor $\bfM^{(0)}$ on $C^{(0)}$ so that $(C^{(0)},cS^{(0)}+\bfM^{(0)} + H^{(0)})$ is a generalized pair with $K_{C^{(0)}}+c S^{(0)}+\bfM^{(0)} + H^{(0)}$ ample and \[\textstyle (\pi^{(0)})^*(K_{C^{(0)}}+cS^{(0)}+\bfM^{(0)} + H^{(0)}) \  = \ K_{X^{(0)}}+\frac{1}{n+1}D^{(0)}+cF^{(0)} + (\pi^{(0)})^{-1}(H^{(0)}).\] 
    \end{enumerate}
    We are now in the setting of Subsection \ref{section_mmp}: running an MMP for $K_{C^{(0)}}+M^{(0)}+c S^{(0)}$ with scaling by $H^{(0)}$ can be followed by a sequence of birational contractions
    for $(X^{(0)},\frac{1}{n+1}D^{(0)}+cF^{(0)})$, so that the proper transform $D^{(i)}$ of $D^{(0)}$
    remains ample over $C^{(i)}$ and $(X^{(k)},(\frac{1}{n+1}+\epsilon)D^{(k)}+cF^{(k)})$ is a weak canonical model for $(X^{(0)},(\frac{1}{n+1}+\epsilon)D^{(0)}+cF^{(0)})$, for $0<\epsilon \ll 1$. The following diagram summarizes our situation
\begin{equation}\label{eq: diagram of MMP}   
\begin{tikzcd}
	{\big(X^{(0)},\frac{D^{(0)}}{n+1}+cF^{(0)}\big)} & {\big(X^{(1)},\frac{D^{(1)}}{n+1}+cF^{(1)}\big)} & \cdots & {\big(X^{(k)},\frac{D^{(k)}}{n+1}+cF^{(k)}\big)} \\
	{\big(C^{(0)},c S^{(0)}+\bfM^{(0)}\big)} & {\big(C^{(1)},c S^{(1)}+\bfM^{(1)}\big)} & \cdots & {\big(C^{(k)},c S^{(k)}+\bfM^{(k)}\big),}
	\arrow["{q^{(0)}}", dotted, from=1-1, to=1-2]
	\arrow["{\pi^{(0)}}", from=1-1, to=2-1]
	\arrow["{q^{(1)}}", dotted, from=1-2, to=1-3]
	\arrow["{\pi^{(1)}}", dotted, from=1-2, to=2-2]
	\arrow["{q^{(k-1)}}", dotted, from=1-3, to=1-4]
	\arrow[dotted, from=1-3, to=2-3]
	\arrow["{\pi^{(k)}}", dotted, from=1-4, to=2-4]
	\arrow["{p^{(0)}}", from=2-1, to=2-2]
	\arrow["{p^{(1)}}", from=2-2, to=2-3]
	\arrow["{p^{(k-1)}}", from=2-3, to=2-4]
\end{tikzcd}\end{equation} where the existence of morphisms $q^{(i)}$ and $\pi^{(i)}$ is guaranteed by Lemma \ref{prop_can_run_mmp_upstairs_to_follow_the_one_downstairs}, and $K_{C^{(k)}}+cS^{(k)}+\bfM^{(k)}$ is nef. We remark here that the bottom arrows are all solid as $C^{(0)}$ is a surface and hence no flips occur.

\begin{corollary}\label{cor_if_lc_div_on_the_curve_is_ample_its_pull_back_plus_epsilon+D_is_ample}
    With the same notations as above. If $K_{C^{(k)}} +  cS^{(k)}+\bfM^{(i)}$ is ample, then the KSBA-stable limit of $\big(X_\eta,(\frac{1}{n+1} +\epsilon)D_\eta+cF_\eta\big)$ is independent of $0<\epsilon\ll 1$.
\end{corollary}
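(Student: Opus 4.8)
The plan is to identify the KSBA-stable limit of $\big(X_\eta,(\tfrac{1}{n+1}+\epsilon)D_\eta+cF_\eta\big)$ with the canonical model over $\spec R$ of the klt pair $\big(X^{(0)},(\tfrac{1}{n+1}+\epsilon)D^{(0)}+cF^{(0)}\big)$, and then to recognize that this canonical model is the end product $\big(X^{(k)},(\tfrac{1}{n+1}+\epsilon)D^{(k)}+cF^{(k)}\big)$ of the MMP recorded in \eqref{eq: diagram of MMP}, which --- crucially --- does not depend on $\epsilon$. The first identification is \cite[Corollary 4.57]{Kol23} (see the remark closing \Cref{section_mmp}): since $\big(X^{(0)},(\tfrac{1}{n+1}+\epsilon)D^{(0)}+cF^{(0)}\big)\to\spec R$ is locally stable with stable generic fiber, its canonical model over $\spec R$ is the stable limit of the generic fiber. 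So it suffices to produce this canonical model and check its independence of $0<\epsilon\ll1$.

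Next I would observe that the diagram \eqref{eq: diagram of MMP} is intrinsic to $(C^{(0)},cS^{(0)}+\bfM^{(0)},H^{(0)})$ and makes no reference to $\epsilon$: the bottom row is a two-dimensional MMP with scaling, and by the proof of \Cref{prop_can_run_mmp_upstairs_to_follow_the_one_downstairs} each upstairs transformation $X^{(i)}\dashrightarrow X^{(i+1)}$ is the relative weak canonical model of $\big(X^{(i)},(\tfrac{1}{n+1}+\epsilon)D^{(i)}+cF^{(i)}\big)$ over $C^{(i+1)}$, which by \cite[Theorem 5.59]{HK} does not change for $0<\epsilon\ll1$. Consequently $X^{(k)}$, $D^{(k)}$, $F^{(k)}$ and $\pi^{(k)}\colon X^{(k)}\to C^{(k)}$ are all independent of $\epsilon$, the pair $\big(X^{(k)},(\tfrac{1}{n+1}+\epsilon)D^{(k)}+cF^{(k)}\big)$ is klt by \Cref{cor_singularities_remain_slc_if_increasing_coeff_on_d_and_adding_fibers}, and it is a weak canonical model of $\big(X^{(0)},(\tfrac{1}{n+1}+\epsilon)D^{(0)}+cF^{(0)}\big)$ over $\spec R$.

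The heart of the argument is then the following. Assuming $K_{C^{(k)}}+cS^{(k)}+\bfM^{(k)}$ is ample, I would use property (2) of \Cref{thm_intrp_MMP} (equivalently \Cref{rmk_during_MMP_pull_back_of_lc_divisor_is_lc_divispr}, whose hypotheses are automatic here since $X^{(k)}\to C^{(k)}$ is a fibration from a threefold to a surface), namely $(\pi^{(k)})^*\big(K_{C^{(k)}}+cS^{(k)}+\bfM^{(k)}\big)=K_{X^{(k)}}+\tfrac{1}{n+1}D^{(k)}+cF^{(k)}$, to write
\[
K_{X^{(k)}}+\big(\tfrac{1}{n+1}+\epsilon\big)D^{(k)}+cF^{(k)}\ =\ (\pi^{(k)})^*\big(K_{C^{(k)}}+cS^{(k)}+\bfM^{(k)}\big)+\epsilon D^{(k)}.
\]
Since $D^{(k)}$ remains ample over $C^{(k)}$ along the MMP, the right-hand side is the pull-back of an ample divisor plus an $\epsilon$-multiple of a $\pi^{(k)}$-ample divisor, hence ample for $0<\epsilon\ll1$. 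Thus the weak canonical model $\big(X^{(k)},(\tfrac{1}{n+1}+\epsilon)D^{(k)}+cF^{(k)}\big)$ is klt with ample log canonical divisor, so it is \emph{the} canonical model of $\big(X^{(0)},(\tfrac{1}{n+1}+\epsilon)D^{(0)}+cF^{(0)}\big)$ over $\spec R$; by the first paragraph it is the KSBA-stable limit. Since $X^{(k)},D^{(k)},F^{(k)}$ were seen to be independent of $\epsilon$, so is the limit.

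I expect the only genuine obstacle to be the $\epsilon$-independence of the whole diagram \eqref{eq: diagram of MMP} --- precisely, the fact that replacing the coefficient $\tfrac{1}{n+1}$ of $D$ by $\tfrac{1}{n+1}+\epsilon$ leaves unchanged the relative canonical models appearing in \Cref{prop_can_run_mmp_upstairs_to_follow_the_one_downstairs}; everything else (the relative ampleness of the pull-back of an ample plus a small relatively ample class, and the uniqueness of canonical models) is formal. Once this is in place, the statement is simply the fibered counterpart of the ``in particular'' clause of \Cref{thm_intrp_MMP}, applied to the generalized pair $\big(X^{(0)},\tfrac{1}{n+1}D^{(0)}+cF^{(0)}+(\pi^{(0)})^{-1}H^{(0)}\big)\to\big(C^{(0)},cS^{(0)}+\bfM^{(0)}+H^{(0)}\big)$ constructed just before the statement.
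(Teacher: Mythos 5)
Your argument is correct and follows essentially the same route as the paper's proof: use the pull-back identity from \Cref{rmk_during_MMP_pull_back_of_lc_divisor_is_lc_divispr} together with the $\pi^{(k)}$-ampleness of $D^{(k)}$ to conclude that $K_{X^{(k)}}+(\tfrac{1}{n+1}+\epsilon)D^{(k)}+cF^{(k)}$ is ample for $0<\epsilon\ll 1$, and observe that the MMP producing $X^{(k)}$ runs on $(C^{(0)},cS^{(0)}+\bfM^{(0)})$ and hence is independent of $\epsilon$. Your write-up merely makes explicit the two ingredients the paper leaves implicit (the identification of the KSBA-stable limit with the canonical model via \cite[Corollary 4.57]{Kol23}, and the $\epsilon$-independence of the relative canonical models via \cite[Theorem 5.59]{HK}), but the logic is the same.
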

\begin{proof}
    From Remark \ref{rmk_during_MMP_pull_back_of_lc_divisor_is_lc_divispr} we have that for every $i$,
    \[\textstyle (\pi^{(i)})^*(K_{C^{(i)}}+c S^{(i)}+\bfM^{(i)}) = K_{X^{(i)}}+\frac{1}{n+1}D^{(i)}+cF^{(i)}\] and  $D^{(i)}$ is $\pi^{(i)}$-ample by Proposition \ref{prop_can_run_mmp_upstairs_to_follow_the_one_downstairs}. In particular, if $K_{C^{(k)}} +  cS^{(k)}+\bfM^{(k)}$ is ample, then the pair $(X^{(k)},(\frac{1}{n+1} +\epsilon)D^{(k)}+cF^{(k)})$ is KSBA-stable for any $0<\epsilon\ll 1$, and it is the stable limit for $(X_\eta,(\frac{1}{n+1} +\epsilon)D_\eta+cF_\eta)$. So if $K_{C^{(k)}} +  cS^{(k)}+\bfM^{(k)}$ is ample, the stable limit of $(X,(\frac{1}{n+1} +\epsilon)D+cF)$ remains the same for every $0<\epsilon\ll1$.
\end{proof}
    
The next theorem describes each pair $(X^{(i)},\frac{1}{n+1}D^{(i)})$. Instead of studying the MMP for $(X^{(0)},\frac{1}{n+1}D^{(0)})$, what we use are ruled models $\gamma^{(i)}\colon (\sY^{(i)},\frac{1}{n+1}\sD_\sY^{(i)})\to \sC^{(i)}$, whose existence is justified in Theorem \ref{thm_you_can_take_ruled_model_in_ksba_moduli}. Denote by $g^{(i)}:(Y^{(i)},D_Y^{(i)})\rightarrow C^{(i)}$ the coarse spaces of $\gamma^{(i)}$. There could be some components of $C^{(i)}_{0}$ over which the geometric generic $g^{(i)}$-fiber is isomorphic to $$\textstyle \big(\bP^1,(n+1)[0]+(n+1)[\infty]\big).$$ We denote by $C^{(i)}_{0,1},...,C^{(i)}_{0,k}$ all these components, and by $E_1,...,E_k$ the preimages of them under $g^{(i)}$, as in Figure \ref{fig:ruled model}.

    \begin{theorem}[ref. Figure \ref{fig:ruled model}]\label{thm_you_can_take_ruled_model_in_ksba_moduli}
        With the same notations and assumptions as in \Cref{setup_for_existence_of_ruled_model}, the pair $(X^{(i)},\frac{1}{n+1}D^{(i)})$ admits a ruled model $(\sY^{(i)},\frac{1}{n+1}\sD_\sY^{(i)})\to \sC^{(i)}$. Moreover, the threefold $X^{(i)}$ is obtained from $Y^{(i)}$ by:
    \begin{enumerate}
        \item first extracting a sequence of divisors\footnote{see more details in the Proof of \Cref{prop: geometry of S3}} supported over the curves of $Y^{(i)}_{0}$ where $\frac{1}{n+1}D_Y^{(i)}$ has coefficient $1$ and which are contained in some irreducible components $E_1,\ldots,E_k$ of the central fiber of $Y\to \spec R$ via a morphism $Z^{(i)}\to Y^{(i)}$; and
        \item then contracting the proper transforms of $E_1,\ldots,E_k$ via a morphism $Z^{(i)}\to X^{(i)}$.
    \end{enumerate} 
   The pair $(X^{(i)},\frac{1}{n+1}D^{(i)})$ has discrepancy $0$ with respect to the divisors $E_j$ extracted in (1). Moreover, for every smooth point $p\in \sC^{(i)}_0$ of the central fiber $\sC^{(i)}_0\subseteq \sC^{(i)}$, the degree of the boundary part for $(\sY^{(i)},\frac{1}{n+1}\sD_\sY^{(i)})|_{\sY^{(i)}_0}\to \sC^{(i)}_0$ is strictly less than 1.
    \end{theorem}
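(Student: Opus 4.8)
The plan is to proceed by induction on the index $i$ in the diagram \eqref{eq: diagram of MMP}, carrying a ruled model along each step of the MMP downstairs. For $i=0$ the family $(\sX,\tfrac{1}{n+1}\sD)\to\sC$ is pulled back from a stable quasimap $\zeta\colon\sC\to\sP_n$, so we set $\sC^{(0)}:=\sC$ and let $(\sY^{(0)},\tfrac{1}{n+1}\sD_\sY^{(0)})\to\sC^{(0)}$ be the family pulled back along $\xi\circ\zeta\colon\sC\to\sP_n\to\sP_n^{\GIT}$, as in \Cref{rmk_for_qmaps_there_is_a_ruled_model}. Its fibers are copies of $\bP^1$ with $2n+2$ marked points, and conditions (1)--(4) of \Cref{defn:ruled model} are immediate from the description of $\xi$ recalled in \S\ref{subsubsection_recalling_how_to_take_ruled_model_in_qmap_paper}. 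Condition (5), together with the ``extract then contract'' description and the discrepancy-zero statement, is precisely the birational factorisation described there: along the locus where the fibers of $\sX\to\sC$ are nodal, the coarse space $X^{(0)}$ has du Val singularities (of type $A$ in a transverse slice); one first extracts via $Z^{(0)}\to X^{(0)}$ the divisors over these singularities --- crepant for $(X^{(0)},\tfrac{1}{n+1}D^{(0)})$ because the singularities are du Val and $D^{(0)}$ avoids the extracted divisors, which yields the discrepancy-zero claim --- and then contracts via $Z^{(0)}\to Y^{(0)}$ the fibral components meeting the strict transform of $D^{(0)}$, crepant because $K_{Z^{(0)}}+\tfrac{1}{n+1}D_Z^{(0)}$ is pulled back from $\sC^{(0)}$ and hence numerically trivial on those curves. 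Finally, the generic fiber of $\sC^{(0)}\to\Spec R$ being smooth and $\zeta_\eta$ factoring through $\sP_n^{\GIT}$, \Cref{lemma_nodal_locus_for_qmap_limit_is_union_of_irred_cp_of_special_fiber} forces the fibers of $\sY^{(0)}_0\to\sC^{(0)}_0$ equal to $(\bP^1,(n+1)[0]+(n+1)[\infty])$ to lie over a union of irreducible components of $\sC^{(0)}_0$; hence over a \emph{smooth} point $p\in\sC^{(0)}_0$ the fiber is a reduced log canonical log Calabi--Yau pair $(\bP^1,\tfrac{1}{n+1}D_p)$ with all boundary multiplicities $\le n+1$, which forces the coefficient of $p$ in the discriminant to be $<1$.

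For the inductive step, assume $(X^{(j)},\tfrac{1}{n+1}D^{(j)})$ admits a ruled model $\gamma^{(j)}\colon(\sY^{(j)},\tfrac{1}{n+1}\sD_\sY^{(j)})\to\sC^{(j)}$, and consider the divisorial contraction $p^{(j)}\colon C^{(j)}\to C^{(j+1)}$ of a $(K_{C^{(j)}}+cS^{(j)}+\bfM^{(j)})$-negative curve $T=\Exc(p^{(j)})$. Since the generic fiber of $(\sX,\dots)\to\Spec R$ is KSBA-stable, $K_{C^{(j)}}+cS^{(j)}+\bfM^{(j)}$ is already nef on the generic fiber of $C^{(j)}\to\Spec R$, so $T$ is contained in the central fiber $C^{(j)}_0$; by adjunction on the nodal curve $C^{(j)}_0$ one sees that $T$ is a rational tail meeting the rest of $C^{(j)}_0$ in a single node and that $\deg(\bfM^{(j)}|_T)<1$ (otherwise $(K_{C^{(j)}}+cS^{(j)}+\bfM^{(j)})\cdot T\ge 0$). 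By \Cref{lemma_ext_contraction_downstairs_induces_one_upstairs} the divisor $(\pi^{(j)})^{-1}(T)$ --- a ruled surface over the preimage of $T$ --- is contracted by $X^{(j)}\dashrightarrow X^{(j+1)}$, and by \Cref{prop_can_run_mmp_upstairs_to_follow_the_one_downstairs} and \Cref{rmk_during_MMP_pull_back_of_lc_divisor_is_lc_divispr} the induced morphism $\pi^{(j+1)}\colon X^{(j+1)}\to C^{(j+1)}$ is again an lc-trivial fibration of relative dimension one with $D^{(j+1)}$ relatively ample. We carry out the corresponding contraction on the ruled side: the restriction of $\gamma^{(j)}$ over the preimage $\sT$ of $T$ is a $\bP^1$-bundle, and contracting it over $C^{(j+1)}$ --- crepant for $(\sY^{(j)},\tfrac{1}{n+1}\sD_\sY^{(j)})$ by the same mechanism (\Cref{lemma_ext_contraction_downstairs_induces_one_upstairs} and \Cref{lemma_flops_are_crepant}, using that the ruled model is crepant over $\sC^{(j)}$) --- yields $\gamma^{(j+1)}\colon(\sY^{(j+1)},\tfrac{1}{n+1}\sD_\sY^{(j+1)})\to\sC^{(j+1)}$, where $\sC^{(j+1)}$ is the twisted curve over $C^{(j+1)}$ obtained by transporting the twisted structure of $\sC^{(j)}$ and, if necessary, adjoining a root stack at $p^{(j)}(T)$ so that $\xi\circ\zeta$ extends over that point. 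The axioms of \Cref{defn:ruled model} for $\gamma^{(j+1)}$ are then checked directly: (1)--(3) are unchanged away from $p^{(j)}(T)$ and are imposed there by construction; (4) holds since the geometric generic fiber over each component of $\sC^{(j+1)}_0$ is unaffected; and (5) follows because $X^{(j+1)}\dashrightarrow Y^{(j+1)}$ factors, over $C^{(j+1)}$, as the relative canonical model of a common crepant model, hence is crepant.

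The ``extract then contract'' description of the roof $X^{(i)}\leftarrow Z^{(i)}\to Y^{(i)}$, the discrepancy-zero statement for the extracted divisors, and the bound on the boundary part propagate through the induction exactly as in the base case: along the components $C^{(i)}_{0,1},\dots,C^{(i)}_{0,k}$ the fibers of $X^{(i)}\to C^{(i)}$ are nodal chains whose coarse spaces carry du Val singularities precisely along the coefficient-one locus of $D^{(i)}$, so the passage to the $\bP^1$-bundle model $\sY^{(i)}$ over these components is a crepant extraction followed by the crepant contraction of $E_1,\dots,E_k$; and over a smooth point of $\sC^{(i)}_0$ the fiber of $\sY^{(i)}$ is a reduced log canonical log Calabi--Yau curve-pair with boundary multiplicities $\le n+1$, whence its discriminant coefficient is $<1$. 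The main difficulty I expect is the inductive step --- concretely, showing that the $\bP^1$-bundle $(\gamma^{(j)})^{-1}(\sT)$ can genuinely be contracted over $C^{(j+1)}$ so as to produce \emph{again} a ruled model: one must verify that the resulting fiber over $p^{(j)}(T)$ is a $\bP^1$ carrying a log canonical log Calabi--Yau structure, that the root-stack order needed at $p^{(j)}(T)$ is exactly what makes $\xi\circ\zeta$ extend, and that the outcome is compatible with the weak canonical model $X^{(j+1)}$. This hinges on a careful local analysis, over the tail $T$, of the fibration $X^{(j)}\to C^{(j)}$ and of how the moduli part degenerates along $T$ --- which is precisely where the quasimap theory of \cite{twisted_map_2} and the explicit form of the retraction $\xi$ enter.
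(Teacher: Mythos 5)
Your overall architecture (induction along the MMP, base case from \Cref{rmk_for_qmaps_there_is_a_ruled_model}, the observation that the surface MMP contracts only rational tails with $\deg(\bfM|_T)<1$) is the right one and matches the paper's. But the inductive step as you propose it has a genuine gap. You produce $\sY^{(j+1)}$ by \emph{contracting} the ruled surface $\bP_\sT(\cV)\subseteq\sY^{(j)}$ ``over $C^{(j+1)}$.'' For the image to be a single $\bP^1$ (the would-be fiber of $\sY^{(j+1)}\to\sC^{(j+1)}$ over $p^{(j)}(T)$), the contraction must collapse $\bP_\sT(\cV)$ along the base direction $\bP_\sT(\cV)\to\sT$. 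That ``other ruling'' only exists as a contraction morphism when $\bP_\sT(\cV)\simeq\bP^1\times\bP^1$; for $\bF_n$ with $n>0$ there is only one $\bP^1$-fibration, namely the one over $\sT$, and there is no divisorial contraction of $\bF_n$ inside the threefold sending it onto a fiber of $\gamma^{(j)}$. The paper avoids this entirely: it does \emph{not} construct $\sY^{(m)}$ as a birational contraction of $\sY^{(m-1)}$. Instead, it observes that $\sC^{(m)}\setminus\{x\}\hookrightarrow\sC^{(m-1)}$, that the ruled model is a $\bP^1$-bundle there, and then \emph{extends} the corresponding classifying map $\sC^{(m)}\setminus\{x\}\to\cB\PGL_2$ across the smooth point $x$ using \cite[Lemma 2.1]{twisted_map_2} (no Brauer obstruction), with $\sD^{(m)}_\sY$ taken as the closure. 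You should replace your contraction step with this extension argument; your appeal to \Cref{lemma_ext_contraction_downstairs_induces_one_upstairs} and \Cref{lemma_flops_are_crepant} to justify the contraction is then moot.

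Two further points. First, your argument that the boundary part is $<1$ at a smooth point of $\sC^{(0)}_0$ --- ``the fiber is $(\bP^1,\frac{1}{n+1}D_p)$ with multiplicities $\le n+1$, which forces the coefficient $<1$'' --- is not sufficient as stated: the discriminant coefficient at $p$ is $1-\lct(\sY_0,\frac{1}{n+1}\sD_{\sY,0};F_p)$ and this lct depends on the behavior of the total surface near the fiber, not just on the fiber itself. What actually carries the argument in the paper (Step 5) is the identity that the boundary coefficient at $p$ for the component $W$ equals $\deg(\bfM^{(0)}|_{T_p})$, where $T_p$ is the contracted chain over $p$, combined with the observation that the surface MMP never contracts a tail with $\deg(\bfM|_T)\ge1$; you state this observation but do not connect it to the lct on the surface side. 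Second, the ``extract, then contract, crepant'' description of $X^{(i)}\leftarrow Z^{(i)}\to Y^{(i)}$ and the discrepancy-zero claim are not merely propagated by induction as you suggest; the paper devotes Steps 1--5 (including the identification of the divisors $\Gamma_{j,\ell}$ and the lc check over the nodal locus via adjunction) precisely to establish this. You flag this as the hard part, and it is: without it the ``moreover'' clauses of the theorem remain unproved.
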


\begin{figure}
    \centering
    \includegraphics[width=0.8\linewidth]{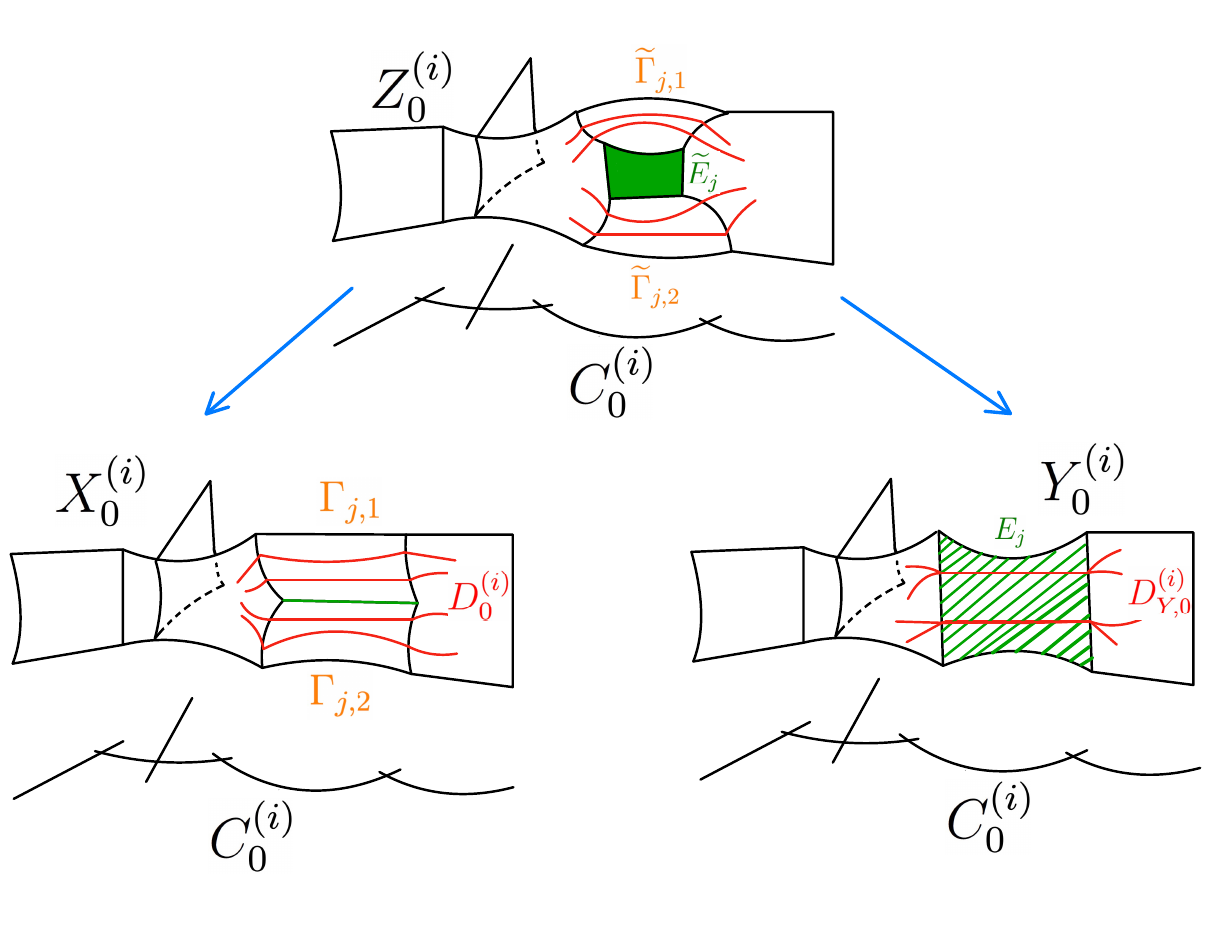}
    \caption{Interpolation with ruled models: fibers over $0\in \Spec R$. \\ The map $Z^{(i)}\to X^{(i)}$ extracts the divisor $\wt{E}_j$ and $Z^{(i)}\to X^{(i)}$ extracts $\widetilde{\Gamma}_{j,i}$.}
    \label{fig:ruled model}
\end{figure}

\begin{figure}
    \centering
    \includegraphics[width=0.7\linewidth]{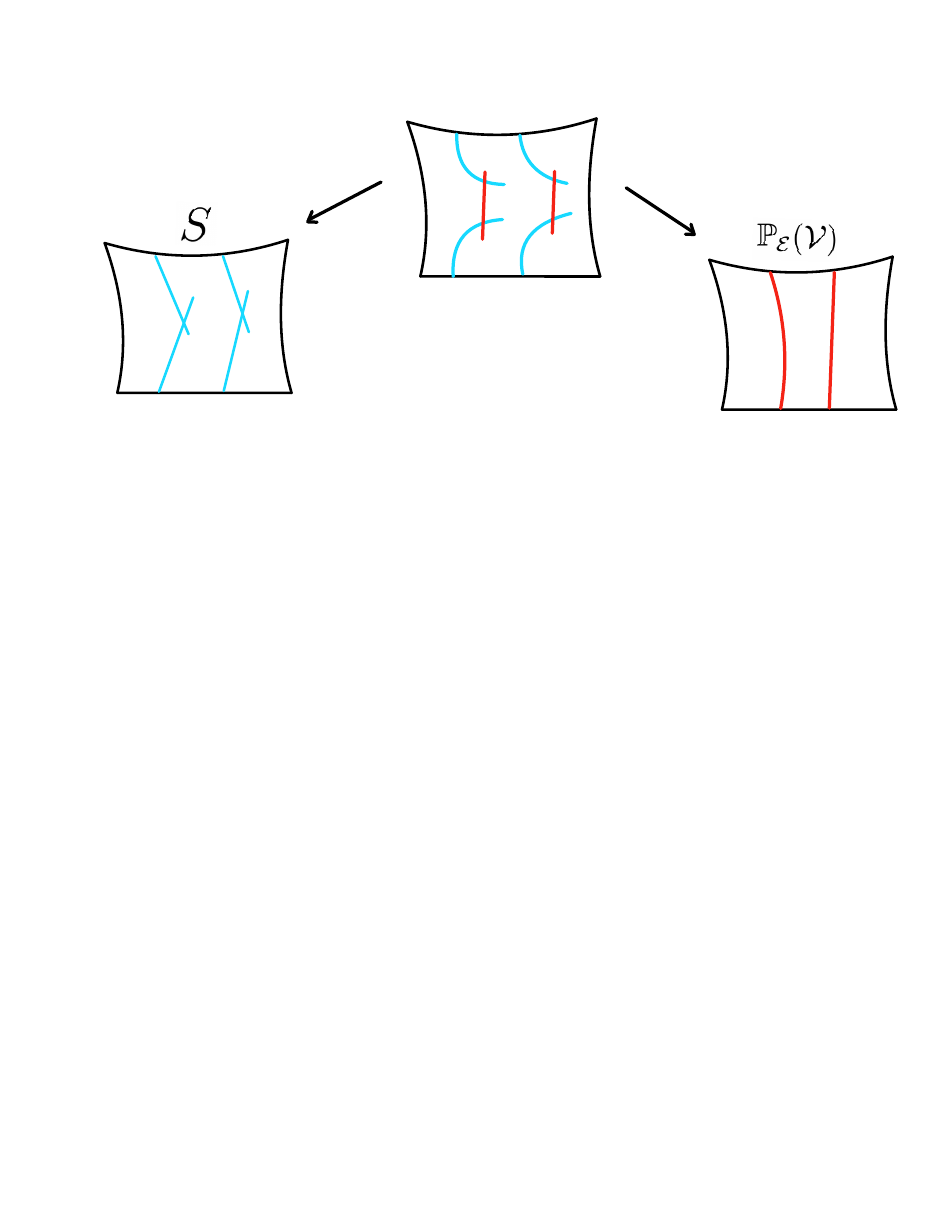}
    \caption{Interpolation with ruled models for normal surfaces}
    \label{fig:ruled model2}
\end{figure}
    
\begin{proof}
    We begin by observing that $p^{(i)}$ will only contract rational tails of the special fiber of $C^{(i)}\to \spec R $, as if $B\subseteq C^{(i)}$ is not a rational tail, one has $$0 \ \leq\  \deg(\omega_{C^{(i)}})|_B \ \le\  \deg(\omega_{C^{(i)}})|_B + (cS^{(i)}+\mathbf{M}^{(i)}).B.$$ 

    We prove the desired result by induction. As $c$ will play no role, to simplify the notations we assume that $c=0$. The initial morphism $\pi^{(0)}$ admits a ruled model by Example \ref{rmk_for_qmaps_there_is_a_ruled_model}. Assuming that there is a ruled model $$\textstyle (\sY^{(m-1)},\frac{1}{n+1}\sD^{(m-1)})\to \sC^{(m-1)}$$ for $\pi^{(m-1)}$, we now show that there is a ruled model for $\pi^{(m)}$. As each step of the MMP is the contraction of a rational tail of the special fiber of $C^{(i)}\to\spec R $, there is a \textit{smooth}
    point $x\in C^{(m)}$ so that $p^{(m-1)}$ is an isomorphism on $C^{(m)}\smallsetminus \{x\}$.
    By the induction hypothesis, it suffices to extend the ruled model over $C^{(m)}\smallsetminus \{x\}$ to the fiber over $x$. The open embedding $C^{(m)}\smallsetminus \{x\}\to C^{(m-1)}$ yields a twisted curve $\sC^{(m)}$ with coarse moduli space $C^{(m)}$, which admits an open embedding $\sC^{(m)}\smallsetminus\{x\}\to \sC^{(m-1)}$.
    The ruled model on $\sC^{(m)}\smallsetminus\{x\}$ gives a $\bP^1$-fibration, which is equivalent to a morphism $$\sC^{(m)}\smallsetminus \{x\} \ \longrightarrow \ \cB \PGL_2.$$ By \cite[Lemma 2.1]{twisted_map_2}, as $x$ is a smooth point, this extends to a map $\sC^{(m)}\to \cB \PGL_2$. In particular, there is a $\bP^1$-bundle $$r^{(m)} \ :\ \sY^{(m)}\ \longrightarrow \  \sC^{(m)}$$
    extending the one on $\sC^{(m)}\smallsetminus\{x\}$ coming from the ruled model over $\sC^{(m-1)}$. The divisor $\sD^{(m-1)}$ on $\sY^{(m-1)}$ gives a divisor on $\sY^{(m)}\smallsetminus \{x\}$ whose closure we denote by $\sD^{(m)}$. This is a Cartier divisor on a neighborhood of $(r^{(m)})^{-1}(x)$ as $\sY^{(m)}$ is smooth around $(r^{(m)})^{-1}(x)$. One can check that $(\sY^{(m)},\frac{1}{n+1}\sD^{(m)})\to \sC^m$ is indeed a ruled model. By induction it is a ruled model away from the fiber over $x$, and it is a $\bP^1$-bundle over $x$.
    So it is a ruled model. Indeed, point (3) of \Cref{defn:ruled model} is automatic as $\sY^{(m)}$ agrees with $\sY^{(m-1)}$ on the nodal locus of $\sC^{(m)}$ and the latter is a ruled model. Point (4) holds as it holds for $\sY^{(m-1)}$ and $\sY^{(m-1)}$ agrees with $\sY^{(m)}$ over the generic points of the special fiber of $\sC^{(m)}$. Finally point (5) follows from the canonical bundle formula: let $\bfM^{(m)}$ be the moduli part for $(X^{(m)},\frac{1}{n+1}D^{(m)})\to \sC^{(m)}$. Then it suffices to check that $K_{\sY^{(m)}}+\frac{1}{n+1}\sD^{(m)}$ and $\pi_{\sY^{(m)}}^*(K_{\sC^{(m)}}+\bfM^{(m)})$ agree in codimension one, which is true as it is true by indiction at the step $m-1$. 

    We prove now the moreover part. For $m=0$, it follows from how the retraction morphism $\sP_n\to \sP_n^{\GIT}$ is constructed in \cite[Theorem A.4(1)]{twisted_map_2}, which is recalled in \S\ref{subsubsection_recalling_how_to_take_ruled_model_in_qmap_paper}. In particular, one can go from $Y^{(0)}$ to $X^{(0)}$ by first performing a blowup $Z^{(0)}\to Y^{(0)}$ as in the statement of the theorem, followed by
    a contraction $Z^{(0)}\to X^{(0)}$ of the proper transform in $Z^{(0)}$ of the irreducible component of the special fiber of $Y^{(0)}$ which intersects the locus that was blown-up via $Z^{(0)}\to Y^{(0)}$ in codimension one, as in \Cref{fig:ruled model}.
    
    The resulting pair is $(Y^{(0)},\frac{1}{n+1}D_{Y}^{(0)})$: the coarse moduli space of the ruled model $\sY^{(0)}$. If we denote by $E_1,\ldots,E_k$ the divisors extracted by $Z^{(0)}\to X^{(0)}$ and by  $\Gamma_1,\ldots,\Gamma_{k'}$ the one extracted by $Z^{(0)}\to Y^{(0)}$, one can check that the pairs $(X^{(0)},\frac{1}{n+1}D^{(0)})$ and $(Y^{(0)},\frac{1}{n+1}D_{Y}^{(0)})$ are crepant birational. In particular, the divisors $E_i$ and $\Gamma_i$ have discrepancy 0 for both one of them. Again from how the map $\sP_n\to \sP_n^{\GIT}$ is constructed, for each divisor $E_j$ we can associate a set of at most two divisors $\{\Gamma_{j,1},\Gamma_{j,2}\}$, with possibly $\Gamma_{j,1}=\Gamma_{j,2}$\footnote{Indeed, $\Gamma_{j,1}$ and $\Gamma_{j,2}$ are the irreducible components of $X^{(0)}_0$ which map to a component $G\subseteq C^{(0)}_0$, such that the \textit{geometric} fiber of $\pi^{(0)}$ of the generic point $\eta_G$ of $G$ is non-normal. If the $\Gal\big(k(\overline{\eta}_G)/k(\eta_G)\big)$ acts by swapping the two irreducible components of $(\pi^{(0)})^{-1}(\overline{\eta}_G)$, then $(\pi^{(0)})^{-1}(\eta_G)$ has a single component.} such that $E_j$, $\Gamma_{j,1}$ and $\Gamma_{j,2}$ map to the same irreducible component of $C^{(0)}$.

    For the inductive step we proceed as follows. Let $G$ be an irreducible component of the central fiber of $C^{(i)}$ over which $C^{(i-1)}\to C^{(i)}$ is not an isomorphism. In other terms, the map  $C^{(i-1)}\to C^{(i)}$ contracts a tail $T$, and $G$ is the irreducible component to which it is attached. If $X^{(i-1)}$ and $Y^{(i-1)}$ are isomorphic over the generic point of $G$ there is nothing to prove: they are isomorphic over $G$ as $D^{(i)}$ and $D^{(i)}_Y$ are both ample over $C^{(i)}$, so $X^{(i-1)}$ and $Y^{(i-1)}$ are the projectivization of the same algebra in a neighbourhood of $G$. We now assume they are not isomorphic over the generic point of $G$. 
    
    The proof proceeds by first extracting the divisors $\{\Gamma_{j,\ell}\}_\ell$ via a morphism $Z^{(i)}\to Y^{(i)}$ in Step 1. We will then study and refine the space $Z^{(i)}$ in Steps 2 and 3. We will finally argue that there is a morphism $Z^{(i)}\to X^{(i)}$ which is as in the statement of the theorem in Steps 4 and 5. 
    
    \textbf{Step 1}. We extract the divisors $\{\Gamma_{j,\ell}\}_\ell$ which map to the irreducible component of $Y^{(i)}$ mapping surjectively to $G$ via $Z^{(i)}\to Y^{(i)}$, and we study $Z^{(i)}$.
    
    From \cite[Corollary 1]{moraga2020extracting} one can extract the divisors $\{\Gamma_{j,\ell}\}_\ell$ which map to the irreducible component of $Y^{(i)}$ mapping surjectively to $G$, via a morphism $Z^{(i)}\to Y^{(i)}$. The main observation for Step 1 and 2 will be that $(Y^{(0)},\frac{1}{n+1}D_Y^{(0)})$ and $(Y^{(i)},\frac{1}{n+1}D_Y^{(i)})$ are isomorphic over the generic point of $G$, and the fact that we can extract $\{\Gamma_{j,\ell}\}_\ell$ over $Y^{(0)}$ explicitly, as in \S \ref{subsubsection_recalling_how_to_take_ruled_model_in_qmap_paper}.

     Observe first that each irreducible component of $\Gamma_{j,\ell}$ maps surjectively to $G$, as in \Cref{fig:ruled model}.
     As the divisors $\Gamma_{j,\ell}$ have discrepancy 0 for $(Y^{(0)},\frac{1}{n+1}D_Y^{(0)})$ and as $(Y^{(0)},\frac{1}{n+1}D_Y^{(0)})$ and $(Y^{(i)},\frac{1}{n+1}D_Y^{(i)})$ are isomorphic over the generic point of $G$, we have:
     \begin{itemize}
         \item the divisors $\widetilde{\Gamma}_{j,\ell}$ have discrepancy 0 for $(Y^{(i)},\frac{1}{n+1}D_Y^{(i)})$ so the map 
     \[\textstyle
     \psi\colon \big(Z^{(i)},\frac{1}{n+1}D_Z^{(i)}\big)\to \big(Y^{(i)},\frac{1}{n+1}D_Y^{(i)}\big)
     \]
     is crepant birational, where $\frac{1}{n+1}D_Z^{(i)}$ (resp. $\widetilde{\Gamma}_{j,\ell}$) is the proper transform of $\frac{1}{n+1}D_Y^{(i)}$ (resp. $\Gamma_{j,\ell}$) and
     \item the (at most two) divisors $\{\widetilde{\Gamma}_{j,\ell}\}_\ell$ contracted by $Z^{(i)}\to Y^{(i)}$ have as centers curves in $D_Y^{(i)}$, so the non-empty fibers of $\widetilde{\Gamma}_{j,\ell}\to D_Y^{(i)}$ are one dimensional.
     \end{itemize}

     \textbf{Step 2}. We study the pair $(Z^{(i)},D_Z^{(i)})$ over the generic point $\eta_G\in G$. More explicitly, we prove that $D_Z^{(i)}$ is ample for $Z^{(i)}\to Y^{(i)}$ over $\eta_G$.
     
     Indeed, let $A:=\cO_{C^{(0)},\eta_G}$ and let $Z^{(0)}\to Y^{(0)}$ be the space as in the statement of the theorem, obtained using \S \ref{subsubsection_recalling_how_to_take_ruled_model_in_qmap_paper}. Observe that $A$ is a DVR. Let $(Z^{(0)}_A,\frac{1}{n+1}D_{Z,A}^{(0)})$ (resp. $(Z^{(i)}_A,\frac{1}{n+1}D_{Z,A}^{(i)})$) be the restriction of $(Z^{(0)},\frac{1}{n+1}D_{Z}^{(0)})$ (resp. $(Z^{(i)}_A,\frac{1}{n+1}D_{Z}^{(i)})$) to $\spec A$. Then the two pairs \[\textstyle\left(Z^{(0)}_A,\frac{1}{n+1}D_{Z,A}^{(0)}\right) \ \ \ \text{ and } \ \ \ \left(Z^{(i)}_A,\frac{1}{n+1}D_{Z,A}^{(i)}\right)\] are of dimension two, normal and isomorphic in codimension one as we extracted the same divisor from $(Y^{(0)},\frac{1}{n+1}D_Y^{(0)})|_{\spec A}\cong(Y^{(i)},\frac{1}{n+1}D_Y^{(i)})|_{\spec A}$. Then they are isomorphic. As the desired statement holds for $(Z^{(0)}_A,\frac{1}{n+1}D_{Z,A}^{(0)})$ by the explicit construction of $Z^{(0)}\to Y^{(0)}$, it holds for $(Z^{(i)}_A,\frac{1}{n+1}D_{Z,A}^{(i)})$.

     \textbf{Step 3}. We improve the pair $(Z^{(i)},D_Z^{(i)})$ over the nodal locus of $C^{(i)}_0$.

     Up to replacing $(Z^{(i)},(\frac{1}{n+1} +\epsilon)D_Z^{(i)})$ with its canonical model over $Y^{(i)}$, we can also assume that $D_Z^{(i)}$ is ample over $Y^{(i)}$. Observe that such a canonical model exists as $(Z^{(i)},(\frac{1}{n+1} +\epsilon)D_Z^{(i)})$ is klt, and it does not contract the divisors $\widetilde{\Gamma}_{j,\ell}$ from Step 2. We now describe the threefold $Z^{(i)}$ over the nodal locus of $C^{(i)}$.

 By \S \ref{subsubsection_recalling_how_to_take_ruled_model_in_qmap_paper}, there is $Z^{(0)}\to Y^{(0)}$ and $Z^{(0)}\to X^{(0)}$ as in the statement of the theorem. Let $U_X\subseteq X^{(0)}$ (resp. $U_Y\subseteq Y^{(0)}$) the locus where $X^{(0)}\cong X^{(i)}$ (resp. $Y^{(0)}\cong Y^{(i)}$). Observe that this locus includes the preimage of the nodal locus of $C^{(i)}$,
 as our MMP contracts only tails of $C^{(0)}$. Let $U_{Z}^{(0)}$ the preimage of $U_Y$ in $Z^{(0)}$, and $U_{Z}^{(i)}$ its preimage in $Z^{(i)}$. We summarize the notation
 \[
 X^{(0)}\supseteq U_X \subseteq X^{(i)},\text{ } Y^{(0)}\supseteq U_Y \subseteq Y^{(i)},\text{ }Z^{(0)}\supseteq U_Z^{(0)}, \text{ and } U_Z^{(i)} \subseteq Z^{(i)}.
 \]We now argue that also $U_Z^{(0)}\cong U_Z^{(i)}$.
 As we extracted the divisors $\Gamma_{j,\ell}$\[ (U_{Z}^{(i)}, D_Z^{(i)}|_{U_Z^{(i)}})\text{ agrees with }(U_{Z}^{(0)}, D_Z^{(0)}|_{U_Z^{(0)}})\text{ in codimension one.}\] Moreover, the divisors $D_Z^{(i)}|_{U_{Z}^{(i)}}$
 and $D_Z^{(0)}|_{U_{Z}^{(0)}}$ are both ample over $U_Y$: the former one as we took a canonical model at the beginning of this step, and the latter by explicit construction of \S \ref{subsubsection_recalling_how_to_take_ruled_model_in_qmap_paper}. In particular, as they are both $S_2$, they are isomorphic over $U_Y$ as they are the projectivization of the same graded algebra.

     \textbf{Step 4}. We argue that, if the boundary part for $(X^{(i)}_0,\frac{1}{n+1}D^{(i)}_0)\to C^{(i)}_0$ over a smooth point for $C^{(i)}_0$ is less than one, the pair $(Z^{(i)},(\frac{1}{n+1} +\epsilon)D_Z^{(i)} + Z^{(i)}_0)$ is lc.

     Observe that, from how $Z^{(i)}$ is constructed in Steps 1 to 3 and from \S \ref{subsubsection_recalling_how_to_take_ruled_model_in_qmap_paper}, the pair $(Z^{(i)},(\frac{1}{n+1} +\epsilon)D_Z^{(i)} + Z^{(i)}_0)$ is lc away from the fiber over finitely many smooth points of $C^{(i)}$; let $p$ be one of those points and let $W$ be the irreducible component of $C^{(i)}$ containing $p$.
     If the pair $(Z^{(i)},(\frac{1}{n+1} +\epsilon)D_Z^{(i)} + Z^{(i)}_0)$ was not lc at the fiber over $p$, as $(Z^{(i)},\frac{1}{n+1}D_Z^{(i)} + Z^{(i)}_0)$ and $(X^{(i)},\frac{1}{n+1}D^{(i)}+X^{(i)}_0)$ are crepant birational, there is an lc center for $(X^{(i)},\frac{1}{n+1} D^{(i)} + X^{(i)}_0)$ over $p$. So by adjunction \cite[Theorem 4.9]{Kol13}, there is an lc center for $(X^{(i)},\frac{1}{n+1} D^{(i)})|_W$. To rule out this possibility, it suffices to show that the boundary part for the canonical bundle formula for $(X^{(i)},\frac{1}{n+1} D^{(i)})|_W\to W$ is strictly less than one over $p$.

     \textbf{Step 5}. We prove that the boundary part for $(X^{(i)}_0,\frac{1}{n+1}D^{(i)}_0)\to C^{(i)}_0$ over a smooth point for $C^{(i)}_0$ is less than one.

     The pair $(X^{(i)},\frac{1}{n+1}D^{(i)})$ is constructed by running an MMP with scaling for the generalized pair $(C^{(0)},\bfM^{(0)})$ and using \S\ref{section_mmp}.
     This MMP contracts certain chains of rational curves which are tails of the central fiber of $C^{(0)}$. In particular, let \[p\in W\subseteq C^{(i)}\] be a point $p$ contained in an irreducible component $ W\subseteq C^{(i)}_0$ of the central fiber $C^{(i)}_0$ to which a chain of rational curves $R\subseteq C^{(0)}$ is contracted via $C^{(0)}\to C^{(i)}$.
     As the degree of $\bfM^{(i)}$ remains invariant through this MMP,
     \[
     \deg(\bfM^{(i)}|_{W}) =\deg(\bfM^{(0)}|_{W^{(0)}\cup T}) 
     \]
     where we denoted by $W^{(0)}\subseteq C^{(0)}$ the proper transform of $W$ in $C^{(0)}$, and $T$ the tails of rational curves of $C^{(i)}$ which contract to $W$ via $C^{(0)}\to C^{(i)}$. From how the family $(X^{(0)},\frac{1}{n+1}D^{(0)})$ is constructed (i.e. from a map to $\sP_n$), we have that $\bfM^{(0)}|_{W^{(0)}}$ agrees with the moduli part
     of $(X^{(0)},\frac{1}{n+1}D^{(0)})|_{W^{(0)}})$. Then from the canonical bundle formula, the degree of the {boundary} part $p$ for $(X^{(i)},\frac{1}{n+1}D^{(i)})|_W\to W$ agrees with $\deg(\bfM^{(0)}|_{ T_p})$ where $T_p$ are the chain of rational curves of $C^{(0)}$ mapping to $p$, as in \Cref{fig:example of MMP}.    
     Then observe that the MMP never contracts tails which are chain of rational curves where $\deg(\bfM^{(i-1)}|_{ T})=1$, as in this case $\deg(K_{C^{(i-1)}}+\bfM^{(i-1)}|_{ T})$ would be zero so $K_C{(i-1)}+\bfM^{(i-1)}|_{ T}$ would be already nef. So the boundary part in the canonical bundle formula for $(X^{(i)},\frac{1}{n+1}D^{(i)})|_G$ at $p$ is less than 1.

\textbf{End of the argument}. We prove that the pair $\left(Z^{(i)},(\frac{1}{n+1} +\epsilon)D_Z^{(i)}\right)$ is minimal over $C^{(i)}$ for $0<\epsilon \ll 1$, and we finish the argument.

The fibers of $Z^{(i)}\to C^{(i)}$ are one-dimensional, as the divisors extracted map to horizontal curves in $Y^{(i)}$, and 
\[\textstyle
K_{Z^{(i)}}+\big(\frac{1}{n+1} +\epsilon\big)\sim_{\bQ,C^{(i)}} \epsilon D_Z^{(i)}.
\]
Any effective divisor on a (possibly singular) curve is nef so our pair is minimal over $C^{(i)}$.

We proved that the pair
     \[\textstyle \big(Z^{(i)},(\frac{1}{n+1} +\epsilon)D_Z^{(i)} + Z^{(i)}_0\big)\]
     is log canonical and minimal over $C^{(i)}$, so one can take its canonical model, which will be a morphism rather than a rational map. As the KSBA-moduli space is separated, the morphism which takes the canonical model $(Z^{(i)},(\frac{1}{n+1} +\epsilon)D_Z^{(i)})$ over $C^{(i)}$ agrees with the pair $(X^{(i)},(\frac{1}{n+1} +\epsilon)D^{(i)})$,  and
         the canonical model $Z^{(i)}\to X^{(i)}$ contracts $\widetilde{E}_j$ as desired.
\end{proof}

 \begin{corollary}
     With the notations of Theorem \ref{thm_you_can_take_ruled_model_in_ksba_moduli}, if $G\subseteq C_0^{(i)}$ is an irreducible component of the special fiber $C_0^{(i)}$ of $C^{(i)}$ such that the generic fiber of $X^{(i)}|_G\to G$ is normal, then $(X^{(i)},\frac{1}{n+1}D^{(i)})$ and $(Y^{(i)},\frac{1}{n+1}D_Y^{(i)})$ are isomorphic over $G\setminus C_0^{(i)}$.
 \end{corollary}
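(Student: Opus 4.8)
The plan is to read the corollary off the explicit description of $X^{(i)}$ in terms of the ruled model given in \Cref{thm_you_can_take_ruled_model_in_ksba_moduli}. Recall from there that $C^{(i)}_{0,1},\dots,C^{(i)}_{0,k}$ are the components of the central fibre $C^{(i)}_0$ over whose generic point $g^{(i)}\colon (Y^{(i)},D^{(i)}_Y)\to C^{(i)}$ has geometric fibre $(\bP^1,(n+1)[0]+(n+1)[\infty])$, that $E_j=(g^{(i)})^{-1}(C^{(i)}_{0,j})$, and that $X^{(i)}$ is obtained from $Y^{(i)}$ by a morphism $Z^{(i)}\to Y^{(i)}$ extracting divisors whose centres are curves of $Y^{(i)}_0$ contained in $E_1\cup\cdots\cup E_k$, followed by a morphism $Z^{(i)}\to X^{(i)}$ contracting the proper transforms of $E_1,\dots,E_k$.

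The first step would be to check that the hypothesis forces $G$ to be none of the $C^{(i)}_{0,j}$. If $G=C^{(i)}_{0,j}$, then over the generic point $\eta_G$ the modification $Y^{(i)}\dashrightarrow X^{(i)}$ extracts the two divisors lying over the two multiplicity-$(n+1)$ sections of the fibre $(\bP^1,(n+1)[0]+(n+1)[\infty])$ — precisely the curves of $\frac{1}{n+1}D^{(i)}_Y$ of coefficient $1$ inside $E_j$ — and then contracts $E_j$; by the explicit shape of this birational transformation (recalled in \S\ref{subsubsection_recalling_how_to_take_ruled_model_in_qmap_paper} and in the footnote to \Cref{thm_you_can_take_ruled_model_in_ksba_moduli}), the geometric generic fibre of $X^{(i)}|_G\to G$ is then the union of the two extracted components $\widetilde\Gamma_{j,1},\widetilde\Gamma_{j,2}$, meeting at a point and possibly identified into a single component by the action of $\Gal\big(k(\overline{\eta}_G)/k(\eta_G)\big)$. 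In either case this fibre is non-normal, contradicting the hypothesis. Hence $G\neq C^{(i)}_{0,j}$ for all $j$, so $G$ meets $C^{(i)}_{0,1}\cup\cdots\cup C^{(i)}_{0,k}$ in only finitely many points.

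The second step is purely about exceptional loci. The centres of the divisors extracted by $Z^{(i)}\to Y^{(i)}$, as well as the divisors $E_1,\dots,E_k$ contracted by $Z^{(i)}\to X^{(i)}$, all lie over $C^{(i)}_{0,1}\cup\cdots\cup C^{(i)}_{0,k}$; hence both morphisms restrict to isomorphisms over $C^{(i)}\setminus(C^{(i)}_{0,1}\cup\cdots\cup C^{(i)}_{0,k})$, and composing them shows that $Y^{(i)}\dashrightarrow X^{(i)}$ is an isomorphism over this open set, carrying $D^{(i)}_Y$ to $D^{(i)}$ (these being proper transforms of one another under the birational map). Restricting to $G$ and using the first step gives the asserted isomorphism of pairs $(Y^{(i)},\frac{1}{n+1}D^{(i)}_Y)\cong(X^{(i)},\frac{1}{n+1}D^{(i)})$ over $G\setminus C^{(i)}_0$. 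The only point needing care is the first step — that a component of $C^{(i)}_0$ with non-normal generic $X^{(i)}$-fibre must be one of the $C^{(i)}_{0,j}$ — but this is immediate, since over every other component the modification $Y^{(i)}\dashrightarrow X^{(i)}$ does nothing and the ruled-model fibres are smooth $\bP^1$'s by point (2) of \Cref{defn:ruled model}.
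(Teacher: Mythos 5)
Your proof is correct and follows essentially the same route as the paper's, namely reading the conclusion directly off the explicit description of $Y^{(i)}\dashrightarrow X^{(i)}$ in the ``moreover'' part of \Cref{thm_you_can_take_ruled_model_in_ksba_moduli}: the centres of the extraction $Z^{(i)}\to Y^{(i)}$ and the exceptional locus of $Z^{(i)}\to X^{(i)}$ are confined to the fibres over $C^{(i)}_{0,1}\cup\cdots\cup C^{(i)}_{0,k}$, so both maps are isomorphisms away from that locus; the paper's proof compresses this to one sentence, while you additionally spell out why the normality hypothesis forces $G\neq C^{(i)}_{0,j}$ for all $j$ (so that the only points of $G$ touched by the modification are nodes). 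That extra step is implicit in the paper's argument, and making it explicit is useful.
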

 
 \begin{proof}
 This follows from part (i) in the moreover part of Theorem \ref{thm_you_can_take_ruled_model_in_ksba_moduli}, as every closed subscheme one blows up along maps to a nodal point of $C_0^{(i)}$. 
 \end{proof}

\begin{corollary}\label{cor_mmp_does_not_contract_divisors}
 Let $(X^{(k)},\frac{1}{n+1}D^{(k)}+cF^{(k)})\to\Spec R $ be as in Diagram (\ref{eq: diagram of MMP}). Assume 
 \begin{itemize}
     \item for each point $p\in C_\eta$, the fiber $\pi^{-1}(p)$ is smooth, and
     \item $K_{C^{(k)}} + M^{(k)} + c'S^{(k)}$ is ample for some $c'\le c$. 
 \end{itemize}
 Fix $0<\epsilon \ll 1$ such that $(X^{(k)},(\frac{1}{n+1}+\epsilon)D^{(k)} + cF^{(k)}\big)$ is KSBA-stable.
 Then there is a weak canonical model $$\textstyle \big(X^{(k)},(\frac{1}{n+1}+\epsilon)D^{(k)} + c'F^{(k)}\big) \ \dashrightarrow\ \big(X',(\frac{1}{n+1}+\epsilon)D'+c'F'\big)$$ over $\spec R $ such that $X^{(k)}\dashrightarrow X'$ is obtained by a sequence of flips. If $K_{C^{(k)}} + M^{(k)} + c'S^{(k)}$ is ample for every $0<c'\le c$, every flip as above is a flop for $K_{X^{(k)}}+\frac{1}{n+1}D^{(k)}$.
\end{corollary}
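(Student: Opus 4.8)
The plan is to realize the desired weak canonical model by an explicit MMP over $\Spec R$ for the pair with the lowered fibre coefficient, and to identify its steps through the canonical bundle formula together with \Cref{lemma_flops_are_crepant}. First I would record, as in \Cref{setup_for_existence_of_ruled_model} and diagram (\ref{eq: diagram of MMP}), that for every $c''\in[c',c]$
\[
K_{X^{(k)}}+\tfrac1{n+1}D^{(k)}+c''F^{(k)}\ \sim_\bQ\ (\pi^{(k)})^*\cL_{c''},\qquad \cL_{c''}:=K_{C^{(k)}}+c''S^{(k)}+\bfM^{(k)},
\]
using $F^{(k)}=(\pi^{(k)})^*S^{(k)}$. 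Since $\big(X^{(k)},(\tfrac1{n+1}+\epsilon)D^{(k)}+cF^{(k)}\big)$ is KSBA-stable, $\cL_c$ is ample on $C^{(k)}$, and $\cL_{c'}$ is ample by hypothesis; as the ample cone of the surface $C^{(k)}$ is convex, $\cL_{c''}$ is ample for all $c''\in[c',c]$. Also $\big(X^{(k)},(\tfrac1{n+1}+\epsilon)D^{(k)}+c'F^{(k)}\big)$ is klt by \Cref{cor_singularities_remain_slc_if_increasing_coeff_on_d_and_adding_fibers}, because $c'\le c$.

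Next I would run the $\big(K_{X^{(k)}}+(\tfrac1{n+1}+\epsilon)D^{(k)}+c'F^{(k)}\big)$-MMP over $\Spec R$ with scaling by the ample divisor $K_{X^{(k)}}+(\tfrac1{n+1}+\epsilon)D^{(k)}+cF^{(k)}$, equivalently by $(c-c')F^{(k)}$. The class being run equals $(\pi^{(k)})^*\cL_{c'}+\epsilon D^{(k)}$ with $\epsilon D^{(k)}$ effective and $\pi^{(k)}$-ample, hence it is big over $\Spec R$, so by \cite{HK} this MMP terminates at a weak canonical model $\big(X',(\tfrac1{n+1}+\epsilon)D'+c'F'\big)$ over $\Spec R$. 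To see that no step is divisorial, observe that at each step the contracted ray $R$ satisfies $R\cdot F^{(i)}>0$ (the proper transform of $(c-c')F^{(k)}$ is the scaling class, nef up to the wall, while $K+(\tfrac1{n+1}+\epsilon)D^{(i)}+c'F^{(i)}$ is negative on $R$). Since $F^{(i)}$ is a union of $\pi^{(k)}$-fibres (on which a $\pi^{(k)}$-vertical curve has degree $0$), the curves in $R$ are $\pi^{(k)}$-horizontal. A divisorial contraction would then collapse a divisor swept out by $\pi^{(k)}$-horizontal curves; such a divisor can neither dominate $C^{(k)}$ (over a general point of $C^{(k)}$ the fibre is $\bP^1$ and meets it in finitely many points, which a birational contraction cannot collapse) nor be a union of whole $\pi^{(k)}$-fibres over a curve $\bar E\subseteq C^{(k)}$ (its image would have to dominate $\bar E$ while having dimension $\le1$, forcing the generic fibre of $\pi^{(k)}$ to drop dimension). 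Hence every step is small, i.e.\ a flip.

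For the final assertion, assume $\cL_{c''}$ is ample for every $c''\in(0,c]$; then $\cL_0=K_{C^{(k)}}+\bfM^{(k)}$ is nef, being a limit of ample classes, hence semiample on the klt surface $C^{(k)}$ by the base-point-free theorem for generalized pairs. Let $\sigma\colon C^{(k)}\to C^{\mathrm{st}}$ be the associated morphism and $L^{\mathrm{st}}$ the ample class with $\sigma^*L^{\mathrm{st}}\sim_\bQ m\cL_0$, so that $K_{X^{(k)}}+\tfrac1{n+1}D^{(k)}\sim_\bQ(\sigma\circ\pi^{(k)})^*L^{\mathrm{st}}$ is semiample. I would then show, by induction on the steps, that $X^{(i)}$ carries a morphism $\rho^{(i)}\colon X^{(i)}\to C^{\mathrm{st}}$ with $K_{X^{(i)}}+\tfrac1{n+1}D^{(i)}\sim_\bQ(\rho^{(i)})^*L^{\mathrm{st}}$: granting this at stage $i$, the relative $\big(K+(\tfrac1{n+1}+\epsilon)D^{(i)}+c'F^{(i)}\big)$-MMP over $C^{\mathrm{st}}$ is the relative MMP for $\epsilon D^{(i)}+c'F^{(i)}$, so the contracted ray $R$ is $\rho^{(i)}$-contracted, hence $R\cdot(K_{X^{(i)}}+\tfrac1{n+1}D^{(i)})=0$; together with $R\cdot D^{(i)}<0$ (which follows from $R\cdot(K+(\tfrac1{n+1}+\epsilon)D^{(i)}+c'F^{(i)})<0$ and $R\cdot F^{(i)}>0$), the ray $R$ is $\big(K_{X^{(i)}}+(1+\epsilon')\tfrac1{n+1}D^{(i)}\big)$-negative for $0<\epsilon'\ll1$, and \Cref{lemma_flops_are_crepant}, applied with $Y^{\mathrm{st}}=C^{\mathrm{st}}$, $\tau^{\mathrm{st}}=\sigma$, $L=mL^{\mathrm{st}}$, $D=\tfrac1{n+1}D^{(i)}$, shows that the step is crepant for $(X^{(i)},\tfrac1{n+1}D^{(i)})$, i.e.\ a flop, and produces $\rho^{(i+1)}$. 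Since the $\Spec R$-MMP of the previous paragraph and this $C^{\mathrm{st}}$-MMP both terminate at the unique relative canonical model of $\big(X^{(k)},(\tfrac1{n+1}+\epsilon)D^{(k)}+c'F^{(k)}\big)$, they agree, so every flip above is a flop for $K_{X^{(k)}}+\tfrac1{n+1}D^{(k)}$.

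The main obstacle is the inductive identification in the last paragraph of the $\Spec R$-MMP with an MMP over $C^{\mathrm{st}}$ --- i.e.\ checking that $K+\tfrac1{n+1}D^{(i)}$ remains a pullback from $C^{\mathrm{st}}$ after each flip, which is precisely the hypothesis needed to feed the next step into \Cref{lemma_flops_are_crepant}. This rests on the semiampleness of $\cL_0=K_{C^{(k)}}+\bfM^{(k)}$ on $C^{(k)}$, for which I would invoke the base-point-free theorem for generalized klt surface pairs together with $\cL_0$ being a limit of ample classes. The exclusion of divisorial contractions, which uses pure-dimensionality of the fibres of $\pi^{(k)}$ and of its birational transforms, is a secondary but still technical point.
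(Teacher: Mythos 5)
Your proposal reaches the same conclusions, but the mechanism you use to exclude divisorial contractions is genuinely different from the paper's, and it is the part of your argument that has gaps.

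The paper's argument for ruling out divisorial contractions is a short, global one: starting from the fact that $(X^{(k)},(\tfrac{1}{n+1}+\alpha)D^{(k)}+c'F^{(k)})$ is KSBA-stable for $0<\alpha\ll 1$ (by \Cref{cor_if_lc_div_on_the_curve_is_ample_its_pull_back_plus_epsilon+D_is_ample}), one runs an MMP rescaling $c$ to $c'$ to reach $X'$, and then another MMP rescaling $\epsilon$ to $\alpha$; the end of the second MMP is the unique KSBA-stable model, which is $X^{(k)}$ itself. Since MMP steps can only preserve or decrease the number of prime divisors, a composition that is the identity can involve no divisorial contraction. This completely side-steps the question of whether the fibration $\pi^{(k)}$ survives the intermediate birational maps.

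Your exclusion argument tries to prove directly that each contracted extremal ray is $\pi^{(k)}$-horizontal and then that no horizontal divisor can be contracted. The first point is fine at the very first step: $K+(\tfrac1{n+1}+\epsilon)D+c'F = (\pi^{(k)})^*\cL_{c'}+\epsilon D$ with $(\pi^{(k)})^*\cL_{c'}$ nef and $D$ relatively ample, so a negative ray is $D$-negative, hence horizontal. But at subsequent steps your reasoning quietly assumes that $F^{(i)}$ is still a pullback of a divisor on $C^{(k)}$ under a morphism $X^{(i)}\to C^{(k)}$, and you have not shown that $\pi^{(k)}$ extends to a morphism after a flip of a horizontal curve; this is precisely the kind of persistence the two-MMP argument avoids having to check. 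Moreover, even for the first step the case analysis has hand-waving: the claim that a birational contraction of a divisor $E$ dominating $C^{(k)}$ ``cannot collapse'' the finite set $E\cap\pi^{-1}(y)$ needs a real argument, and the vertical case is ruled out by noting that a union of whole fibres is swept out by vertical curves rather than by the supporting dimensional count you give. In the second part of the corollary, where $\cL_{c'}$ is assumed ample for all $0<c'\le c$, you do have the morphism $C^{(k)}\to C^{\mathrm{st}}$ and can feed \Cref{lemma_flops_are_crepant} inductively; there your argument matches the paper's. But that stronger hypothesis is not available in the first part, so the persistence issue cannot be repaired from the flop argument. For the first part you should use the paper's identity-composition argument rather than the direct ray analysis.
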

In other terms, as long as there is no divisorial contraction on the family of curves going from $c$ to $c'$, if one fixes $\epsilon$ and decreases the coefficient of $F^{(k)}$, taking the minimal model does not contract divisors on $X^{(k)}$.

\begin{proof} By \Cref{cor_if_lc_div_on_the_curve_is_ample_its_pull_back_plus_epsilon+D_is_ample}, for any fixed $c'$, the pair $\big(X^{(k)},(\frac{1}{n+1}+\alpha)D^{(k)} + c'F^{(k)}\big)$ is KSBA-stable for any $0<\alpha\le \alpha_0$, where $\alpha_0$ depends on $c'$. Therefore, one can recover $X^{(k)}$ by 
\begin{enumerate}
    \item first running an MMP  rescaling $c$ to $c'$ in $(X^{(k)},(\frac{1}{n+1}+\epsilon)D^{(k)}+cF^{(k)})$, which leads to the weak canonical model $X'$, and then
    \item running an MMP rescaling $\epsilon$ to $\alpha$, which gives back $X^{(k)}$.
\end{enumerate}
 Neither of these MMP contains a divisorial contraction, as the composition is $\Id_{X^{(k)}}$.

 The last sentence follows from \Cref{lemma_flops_are_crepant}, as there is a morphism $\tau^{st}\colon C^{(k)}\to C^{st}$ contracting all the curves where $K_{C^{(k)}} + M^{(k)}$ has degree 0. 
\end{proof}

\section{Applications to explicit KSBA moduli spaces}\label{section_application_to_KSBA_moduli_when_epsilon_2_goes_to_0}

In this section, we apply the results of Section \S \ref{section_stable_reduction} to describe the objects on the boundary of the KSBA compactification of certain moduli spaces of surface pairs. 

Given a normal and irreducible moduli stack $\cU$ of surface pairs $(X,D)$ admitting a non-isotrivial fibration $(X,D)\to C$ over a smooth curve $C$, satisfying that
\begin{itemize}
    \item the fibers of $X\to C$ are $\bP^1$s;
    \item the restriction of $D$ on each fiber $\bP^1$ is $\sum n_ip_i$, where $p_i$ are distinct points, $0\leq n_i\leq n+1$ are integers such that $\sum n_i=2n+2$;
    \item\label{point_2} the generic fiber of $(X,\frac{1}{n+1}D)\to C$ is klt, i.e. $n_i\leq n$; and
    \item the pair $\big(X,(\frac{1}{n+1}+\epsilon_1)D + (\vec{c}+\epsilon_2) F\big)$ is KSBA-stable for any $0<\epsilon_1\ll \epsilon_2\ll 1$, where $F$ are some general fibers of $X\to C$ marked with a weight vector $\vec{c}$ with $0\le \vec{c}\le 1$.
\end{itemize}
One can keep in mind the following two examples:
\begin{enumerate}
    \item the moduli space parametrizing $(\bP^1\times\bP^1,\frac{1}{n}C)$ with $C$ a general $(2n,m)$-curve, with $2n\le m$ and $F=\emptyset$,
    \item  the moduli space parametrizing Hirzebruch surfaces $X$ which are quotients of elliptic K3 surfaces by an involution, with $D$ the ramification locus.
\end{enumerate}
As the coefficient vector $\vec{c}$ does not play any significant role in what follows, we will suppress it.
For any $\epsilon_1,\epsilon_2>0$ such that $\big(X,(\frac{1}{n+1}+\epsilon_1)D + \epsilon_2 F\big)$ is KSBA-stable for a general point $[(X,D)]\in \cU(\bC)$, by possibly shrinking $\cU$, one has a morphism $$\Phi_{\epsilon_1,\epsilon_2}\colon \cU \ \longrightarrow \  \cM^{\KSBA}_n(\epsilon_1,\epsilon_2)$$ to the KSBA-moduli stack with fixed numerical invariants and marked coefficients $(\epsilon_1,\epsilon_2)$. We denote by $\ove{\cU}^{\KSBA}(\epsilon_1,\epsilon_2)$ the closure of its scheme-theoretic image.

One can collect all admissible coefficients $(\epsilon_1,\epsilon_2)$ together to get a $2$-dimensional domain. By \cite{ascher2023wall,meng2023mmp}, up to passing to the normalization of $\ove{\cU}^{\KSBA}(\epsilon_1,\epsilon_2)$, there are wall-crossing morphisms so that one can compare the normalizations of different compactifications; see \Cref{Figure:wall-crossing}.

In Section \S\ref{subsection_epsilon_2_non_zero}, we will apply the results in Section \S\ref{section_stable_reduction} to study the wall crossing along the red path from the purple point to the pink point in \Cref{Figure:wall-crossing}.

\begin{figure}
    \centering
\includegraphics[width=.25\linewidth]{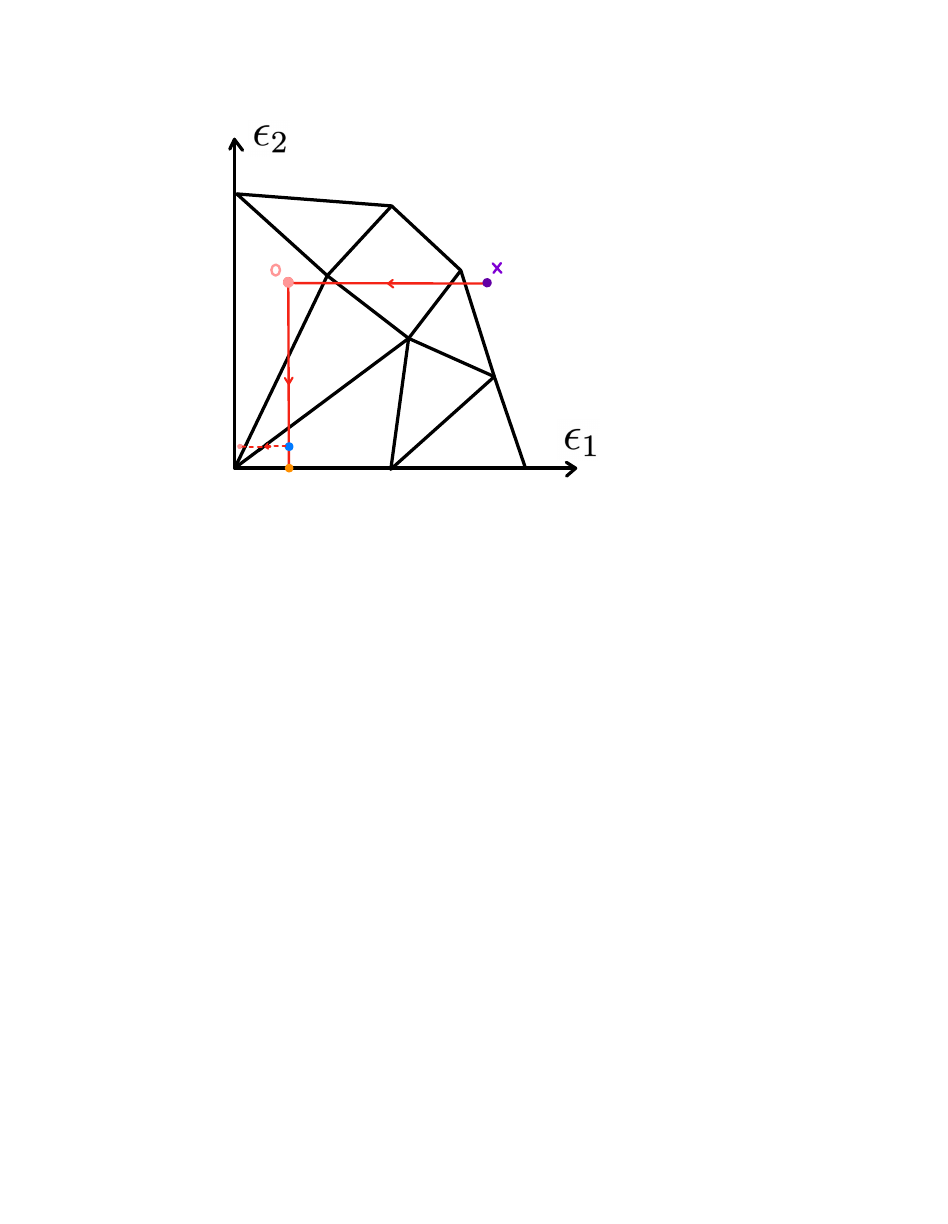}
    \caption{Wall crossing of KSBA moduli stacks $\mtc{M}^{\KSBA}_n(\epsilon_1,\epsilon_2)$}
    \label{Figure:wall-crossing}
\end{figure}

\subsection{KSBA-moduli space with a marked multi-section}\label{subsection_epsilon_2_non_zero}

Let $$\textstyle (\sX,\sD)\ \stackrel{\pi}{\longrightarrow}\  \sC \ \longrightarrow \ \cU$$ be the universal family over $\cU$. By \Cref{rem:existence of discriminant divisor}, there is an intrinsic Cartier divisor $\Delta\subseteq \sC$. whose support consists of points $p$ such that fibers of $\pi|_{\sD}:\sD\rightarrow \sC$ over $p$ are supported by less than $2n+2$ points, i.e. where at least two points collide. We denote by $\Delta_u$ the restriction of $\Delta$ on the fiber over $u\in \cU$. Then by \Cref{cor_singularities_remain_slc_if_increasing_coeff_on_d_and_adding_fibers} the family
 \[\textstyle
 \left(\sX,(\frac{1}{n+1}+\epsilon_1)\sD + \epsilon_2\pi^{-1}(\Delta)\right)\ \longrightarrow \ \cU
 \]
 is a family of KSBA-stable pairs for any $0<\epsilon_1\ll \epsilon_2\ll 1$. The following result, in particular, verifies that $$\ove{\cU}^{\KSBA}\ := \ \ove{\cU}^{\KSBA}(\epsilon_1,\epsilon_2)$$ does not depend on the choice of $0<\epsilon_1\ll \epsilon_2\ll 1$, which justifies our notation.
 \begin{theorem}\label{thm_section_4_in_ksba_limit_we_have_ruled_model}
     Keep the notations as above. Assume one of the following holds: 
     \begin{itemize}
         \item either $n=1$; or
         \item there exists a point $u\in \cU(\bC)$ such that the all fibers of $\big(\sX_u,(\frac{1}{n+1}+\epsilon_1)\sD_u\big)\to \sC_u$ over $\Delta_u$ are slc.
     \end{itemize} Then the surface pairs $(X,(\frac{1}{n+1}+\epsilon_1)D+\epsilon_2F)$ parametrized by $\ove{\cU}^{\KSBA}$ 
\begin{enumerate}
\item admit a ruled model (ref. \Cref{defn:ruled model}), and
\item do not depend on the choice of $\epsilon_i$, as long as $0<\epsilon_1\ll \epsilon_2\ll 1$.
\end{enumerate} 
 \end{theorem}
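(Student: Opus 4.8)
The plan is to bring any family parametrized by $\ove{\cU}^{\KSBA}$ into the situation of \Cref{setup_for_existence_of_ruled_model} and then invoke \Cref{thm_you_can_take_ruled_model_in_ksba_moduli}, \Cref{cor_if_lc_div_on_the_curve_is_ample_its_pull_back_plus_epsilon+D_is_ample} and \Cref{cor_mmp_does_not_contract_divisors}. Since $\cM^{\KSBA}_n(\epsilon_1,\epsilon_2)$ is a proper Deligne--Mumford stack and $\ove{\cU}^{\KSBA}$ is the closure of the image of $\cU$, every closed point of $\ove{\cU}^{\KSBA}$ is the image of the closed point of a morphism $\Spec R\to\cM^{\KSBA}_n(\epsilon_1,\epsilon_2)$ from a DVR whose generic point lands in $\Phi_{\epsilon_1,\epsilon_2}(\cU)$; as $\cU$ is irreducible we may moreover take this generic point to be a general member of $\cU$. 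Pulling back the universal family, I replace the given pair by a locally stable family $(X,(\tfrac1{n+1}+\epsilon_1)D+\epsilon_2F)\to\Spec R$ whose generic fiber is a general member of $\cU$, with $F=\pi^{-1}(\Delta)$. The log Calabi--Yau fibration $(X_\eta,\tfrac1{n+1}D_\eta)\to C_\eta$ then determines a morphism $\zeta_\eta\colon C_\eta\to\sP_n^{\GIT}\subseteq\sP_n$; marking $C_\eta$ at finitely many points of $\Delta_\eta$ (by \Cref{lemma_on_tails_i_have_a_marking} every rational tail of a stable limit already carries a discriminant point, which makes the stability condition automatic), I extend $\zeta_\eta$, using properness of $\cQ$ and after a ramified base change of $\Spec R$, to a stable quasimap $\zeta\colon\sC^{\tw}\to\sP_n$ over $\Spec R$. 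Pulling back the universal family gives $(\sX^{\tw},\tfrac1{n+1}\sD^{\tw})\to\sC^{\tw}\to\Spec R$ with its moduli $\bQ$-divisor $\bfM$, and I let $S\subseteq\sC^{\tw}$ be the closure of $\Delta_\eta$ together with the markings; the local model near the nodes recalled in \S\ref{subsubsection_recalling_how_to_take_ruled_model_in_qmap_paper} shows that $S$ lies in the smooth locus of $\sC^{\tw}\to\Spec R$ and that $(\sC^{\tw},(\epsilon_2+\epsilon')S)$ is lc for $0<\epsilon'\ll1$. Thus all of \Cref{setup_for_existence_of_ruled_model} holds with $c=\epsilon_2$, provided one knows that $\big(\sX^{\tw}_\eta,(\tfrac1{n+1}+\epsilon_1)\sD^{\tw}_\eta+\epsilon_2\sF_\eta\big)$ is KSBA-stable for $0<\epsilon_1\ll1$, equivalently that the coarse-space pair $\big(X^{(0)},(\tfrac1{n+1}+\epsilon_1)D^{(0)}+\epsilon_2F^{(0)}+X^{(0)}_0\big)$ is semi-log-canonical.

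This last point is the main obstacle, and it is where the dichotomy in the hypothesis is used. When $n=1$, the family $(\sX^{\tw},\tfrac12\sD^{\tw})\to\sC^{\tw}$ is locally stable: the fibers $(\bP^1,\tfrac12\Delta_4)$ are always slc, and the behaviour of the total space is controlled by the explicit description of $\sP_1$ in \S\ref{section_prelim_on_qmaps}; hence \Cref{lemma_qmap_limit_plus_divisor_has_lc_sing} and \Cref{cor_singularities_remain_slc_if_increasing_coeff_on_d_and_adding_fibers} apply directly and give the required slc-ness and klt-ness. When $n\ge2$, I would instead invoke the assumed point $u\in\cU(\bC)$: slc-ness being an open condition, a general member of $\cU$ — in particular $\sX^{\tw}_\eta$ — has all fibers over its discriminant locus slc with coefficient $\tfrac1{n+1}+\epsilon_1$, which forces the boundary part of the canonical bundle formula of $(\sX^{\tw}_\eta,\tfrac1{n+1}\sD^{\tw}_\eta)\to C_\eta$ to have coefficients strictly less than $1$ along $C_\eta$; since moreover the fibers of $\pi_{\sX^{\tw}}$ over the generic point of each component of $\sC^{\tw}_0$ are slc (they are pulled back from $\sP_n$), condition (\ref{condition_on_lct}) of \Cref{lemma_qmap_limit_plus_divisor_has_lc_sing} is satisfied, and the conclusion again follows from \Cref{cor_singularities_remain_slc_if_increasing_coeff_on_d_and_adding_fibers}.

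Granting this, we are in \Cref{setup_for_existence_of_ruled_model}: running the MMP with scaling for $(C^{(0)},\epsilon_2S^{(0)}+\bfM^{(0)})$ along $H^{(0)}$ and lifting it to $(X^{(0)},(\tfrac1{n+1}+\epsilon_1)D^{(0)}+\epsilon_2F^{(0)})$ via \S\ref{section_mmp}, we reach the relative canonical model $(X^{(k)},(\tfrac1{n+1}+\epsilon_1)D^{(k)}+\epsilon_2F^{(k)})$ over $\Spec R$ — which, by separatedness of the KSBA moduli functor, is the KSBA-stable limit of the family we started from — on which $K_{C^{(k)}}+\epsilon_2S^{(k)}+\bfM^{(k)}$ is ample (with any residual trivial curves contracted, a step that is crepant for the pair upstairs by \Cref{lemma_flops_are_crepant}); then \Cref{thm_you_can_take_ruled_model_in_ksba_moduli} supplies a ruled model $(\sY^{(k)},\tfrac1{n+1}\sD_\sY^{(k)})\to\sC^{(k)}$, which is assertion (1). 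For (2): since $K_{C^{(k)}}+\epsilon_2S^{(k)}+\bfM^{(k)}$ is ample, \Cref{cor_if_lc_div_on_the_curve_is_ample_its_pull_back_plus_epsilon+D_is_ample} gives that the limit does not depend on $\epsilon_1$; and by \Cref{cor_mmp_does_not_contract_divisors}, applied with $c=\epsilon_2$ and $c'$ ranging over a small interval $(0,\epsilon_2]$ on which no divisorial contraction occurs on the family of curves, decreasing $\epsilon_2$ contracts no divisor of $X^{(k)}$ and, for $\epsilon_2\ll1$, crosses no wall, so the underlying pair $(X^{(k)},D^{(k)},F^{(k)})$ is unchanged. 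Hence $\ove{\cU}^{\KSBA}=\ove{\cU}^{\KSBA}(\epsilon_1,\epsilon_2)$ is independent of $0<\epsilon_1\ll\epsilon_2\ll1$, as claimed.
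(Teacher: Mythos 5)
Your overall skeleton is right — reduce to a one‑parameter family over a DVR, take a quasimap limit, verify the conditions of \Cref{setup_for_existence_of_ruled_model}, then invoke \Cref{thm_you_can_take_ruled_model_in_ksba_moduli}, \Cref{cor_if_lc_div_on_the_curve_is_ample_its_pull_back_plus_epsilon+D_is_ample}, and \Cref{cor_mmp_does_not_contract_divisors}. However, the way you handle the dichotomy in the hypothesis is not correct, and the key step — that the closure $S$ of $\Delta_\eta$ lands in the \emph{smooth} locus of $\sC^{\tw}\to\Spec R$ — is not justified by what you wrote.

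The issue is that you mark $C_\eta$ at the points of $\Delta_\eta$ uniformly, for all $n$, and then say that ``the local model near the nodes'' shows $S$ is in the smooth locus. Neither half of this works as stated. For $n=1$, the fibers over $\Delta_\eta$ are of the form $(\bP^1,\tfrac12(2p+q+r))$, and with coefficient $\tfrac12+\epsilon$ these are \emph{not} slc (the coefficient at $p$ exceeds $1$): that is precisely the (Sing.)-bad locus $\Delta$ of \Cref{def_stable_qmap}, which that condition requires to lie in $\sC\setminus\{p_1,\dots,p_m\}$. Marking at $\Delta_\eta$ therefore violates the (Sing.)\ condition for $n=1$, so the extension to a \emph{pointed} stable quasimap $\sC^{\tw}\to\sP_1$ you invoke does not exist. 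The paper instead uses \emph{unpointed} quasimaps $\cQ_{g,0;1,\beta}$ for $n=1$ and argues via \Cref{lemma_non_ksba_locus_is_cartier}: the pairs with colliding points live outside $\sP_{1,\dm}$, so the (Sing.)\ condition itself forces the limit of $\Delta_\eta$ into the smooth locus of $\sC^{\tw}$. That is the content of the $n=1$ half of the dichotomy, not a local-stability assertion about $(\sX^{\tw},\tfrac12\sD^{\tw})\to\sC^{\tw}$.

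Conversely, for $n\ge 2$ the (Sing.)\ condition \emph{cannot} force the discriminant into the smooth locus, because there are pairs with colliding points inside $\sP_{n,\dm}$. This is where the second hypothesis (existence of $u\in\cU(\bC)$ with all fibers over $\Delta_u$ slc) is really used: it guarantees an \'etale cover $\cU'\to\cU$ over which the discriminant is a union of disjoint sections with slc fibers, so that marking at those sections is \emph{compatible} with (Sing.) and the limit in $\cQ_{g,m;n,\beta}$ has markings (hence discriminant) in the smooth locus of the twisted curve. You instead present the second hypothesis as a device to get boundary coefficients $<1$ and KSBA-stability of the generic fiber — but the latter is already part of the hypotheses on $\cU$, and the boundary-coefficient bound, while true, is not the role this assumption plays. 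Once the correct dichotomy is in place, the remainder of your argument (running the MMP with scaling of \Cref{setup_for_existence_of_ruled_model}, stability via \Cref{lemma_on_tails_i_have_a_marking} on rational tails and (Stab.) on rational bridges, and the $\epsilon_1,\epsilon_2$-independence via \Cref{cor_if_lc_div_on_the_curve_is_ample_its_pull_back_plus_epsilon+D_is_ample} and \Cref{cor_mmp_does_not_contract_divisors}) matches the paper.
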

 \begin{proof} 

The results hold for the pair represented by the geometric generic point $\eta$ of $\ove{\cU}^{\KSBA}$. It suffices to prove the theorem for any stable limit of $\eta$ over $\Spec R$ for a DVR $R$. We first construct a limit using moduli of stable quasimaps, where the discriminant divisor $\Delta$ extends to the smooth locus.
 

If $n=1$, consider the morphism $\cU\to \cQ_{g,0;1,\beta}$, and take an arbitrary limit of $\sC_{\eta}$ in $\cQ_{g,0;1,\beta}$, denoted by $\sC_u$. By \Cref{lemma_non_ksba_locus_is_cartier}, the limit of $\Delta_{\eta}$ over any one-parameter family is supported at those points whose images to $\sP_1$ represent pairs with divisor supported at most three points. When $n=1$, those points are not in $\sP_{1,\dm}$, and hence the support of the limit of $\Delta_{\eta}$ is contained in the smooth locus of $\sC_u$ by the stable quasimap condition (Sing.).

When $n>1$, this argument does not work, as the stable quasimap condition does not guarantee that the limit of $\Delta_\eta$ is contained in the smooth locus of $\sC_u$, since there are curves with two points colliding in $\sP_{n,\dm}$ for $n>1$. Instead, we use \textit{pointed} stable quasimaps as follows. 
     By our assumptions, there is an \'etale morphism $\cU'\to \cU$ with $\cU'\neq\emptyset$ such that:
 \begin{itemize}
     \item the cardinality of the points of $\Delta_{\cU'}$ is constant,
     \item the support of $\Delta_{\cU'}$ consists of disjoint sections $\sigma_1,\ldots,\sigma_m$, and
     \item the fibers of $(\sX,(\frac{1}{n+1}+\epsilon)\sD)|_{\cU'}\to \sC|_{\cU'}$ are slc over $\sigma_i$ for every $i$.
 \end{itemize}
These data yield a morphism $$\Phi^{(q)}\colon \cU' \ \longrightarrow \  \cQ_{g,m;n,\beta}$$ for a certain choice of $\beta$. Then by definition, the support of the limit of $\Delta_{\eta}$ in $\cQ_{g,m;n,\beta}$ is contained in the smooth locus, as desired.

Now, let $R$ be a DVR as above, \[\textstyle \left(\sY,\sD_\sY\right) \ \longrightarrow \ (\sC,x_1,\ldots,x_m)\ \longrightarrow \ \spec R \] be the stable quasimap limit construct as above, where $m=0$ if $n=1$, and let \[\textstyle \left(Y,D_Y\right)\ \stackrel{\pi}{\longrightarrow} \ (C,x_1,\ldots,x_m)\ \longrightarrow \ \spec R\] be the coarse spaces. Denote by $\eta$ and $u$ the generic point and the closed point of $\Spec R$ respectively as above. As the limit of $\pi^{-1}(\Delta_{u})$, which we denote by $F$, will not intersect the double locus
of the central fiber of $Y\to \spec R $, then the pair $(Y,(\frac{1}{n+1}+\epsilon_1)D_Y+\epsilon_2 F)$ satisfied the assumptions of \Cref{setup_for_existence_of_ruled_model}. Therefore, we can take a relative weak canonical model of $\big(Y,(\frac{1}{n+1}+\epsilon_1)D_Y+\epsilon_2 F\big)$ over $\Spec R$ as in Section \S\ref{sec:KSBA limit}. Then we may assume claim that $\big(Y,(\frac{1}{n+1}+\epsilon_1)D_Y+\epsilon_2 F\big)\rightarrow \Spec R$ is already a weak canonical model, and we claim that it is a canonical model for $0<\epsilon_1\ll\epsilon_2\ll 1$.

Let $(C,\epsilon_2\Delta'+\bfM)$ be the generalized pair defined by the canonical bundle formula for $(Y,\frac{1}{n+1}D_Y+\epsilon_2 F)$. By \Cref{cor_if_lc_div_on_the_curve_is_ample_its_pull_back_plus_epsilon+D_is_ample}, it suffices to check that $K_{C}+\epsilon_2\Delta'+\bfM$ is ample. Notice that it is already nef, and the only possible components of $C_u$, the restriction of $K_{C}+\epsilon_2\Delta'+\bfM$ to which is not ample, are 
\begin{enumerate}
    \item either a rational bridge $R$, or
    \item a rational tail $T$.
\end{enumerate}
From the stability condition (Stab.) for stable quasimaps, a rational bridge $R$ must contain a point in the closure of $\Delta_\eta$; this addresses case (1) as $\epsilon_2>0$. By \Cref{lemma_on_tails_i_have_a_marking}, $\Delta$ intersects every rational tail at the beginning of the MMP, and hence the same holds at the end of the MMP. As $K_{C}+\epsilon_2\Delta'+\bfM$ is nef for any $0<\epsilon_2\ll 1$ and $\Delta'$ intersects every rational bridge, then the restriction of $K_{C}+\epsilon_2\Delta'+\bfM$ must be ample for every $0<\epsilon_2\ll 1$ as desired.
 \end{proof}

Fix $g$ and $\beta$ as above. Assume the following condition holds: for every point in $\cQ_{g,0;1,\beta}$ parametrizing maps $C\to \sP_1$ from a smooth curve such that for every surface pair $(X,\frac{1}{2}D)\to C$ corresponding to a point in $\cU$, there exist $0<\epsilon_1\ll \epsilon_2$ such that $(X,(\frac{1}{2}+\epsilon_1)D+\epsilon_2 \pi^{-1}(\Delta_C))$ is KSBA-stable. Then the following holds.

\begin{corollary}\label{cor_there_is_a_map_normalization_qmaps_to_ksba} 
There exists a choice of $0<\epsilon_1\ll \epsilon_2\ll 1$, and a natural morphism \[\Phi_{\epsilon_1,\epsilon_2}\colon\cQ_{g,0;1,\beta}^{\norm} \ \longrightarrow \ \cM^{\KSBA}_2(\epsilon_1,\epsilon_2)\] from the normalization of $\cQ_{g,0;1,\beta}$, which generically is given by $$\textstyle \big[(X,\frac{1}{2}D)\to C\big] \ \mapsto \ \big[(X,(\frac{1}{2}+\epsilon_1)D+\epsilon_2 \pi^{-1}(\Delta_C))\big].$$  The surface pairs in the image of $\Phi_{\epsilon_1,\epsilon_2}$ do not depend on $\epsilon_i$ as long as $0<\epsilon_1\ll \epsilon_2\ll 1$, and they admit a ruled model.
\end{corollary}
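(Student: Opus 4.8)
The plan is to build $\Phi_{\epsilon_1,\epsilon_2}$ over a dense open substack of $\cQ_{g,0;1,\beta}^{\norm}$, extend it across the boundary using the stable‑limit construction from the proof of \Cref{thm_section_4_in_ksba_limit_we_have_ruled_model}, and then read off the two asserted properties from that same theorem.

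\emph{Construction over the smooth–curve locus.} Let $\cQ^{\circ}\subseteq \cQ_{g,0;1,\beta}$ be the open substack parametrizing stable quasimaps from smooth curves, and pull everything back to the preimage of $\cQ^{\circ}$ in $\cQ_{g,0;1,\beta}^{\norm}$, which we still denote $\cQ^{\circ}$. On $\cQ^{\circ}$ the universal quasimap family $(\cX^{\tw},\tfrac12\cD^{\tw})\to\cC^{\tw}$, together with the discriminant divisor $\Delta\subseteq\cC^{\tw}$ of \Cref{rem:existence of discriminant divisor} and its preimage $F:=\pi^{-1}(\Delta)$, gives the family of pairs $\bigl(\cX^{\tw},(\tfrac12+\epsilon_1)\cD^{\tw}+\epsilon_2 F\bigr)$. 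Since $\cQ_{g,0;1,\beta}$ is of finite type and KSBA‑stability is an open condition governed, as the weights vary, by finitely many walls, the standing hypothesis of the corollary together with \Cref{cor_singularities_remain_slc_if_increasing_coeff_on_d_and_adding_fibers} lets us choose a single $0<\epsilon_1\ll\epsilon_2\ll1$ for which this is a family of KSBA‑stable pairs over all of $\cQ^{\circ}$. Passing to coarse spaces, this family defines a morphism $\cQ^{\circ}\to\cM^{\KSBA}_2(\epsilon_1,\epsilon_2)$ with the stated generic description, hence a rational map $\Phi_{\epsilon_1,\epsilon_2}\colon\cQ_{g,0;1,\beta}^{\norm}\dashrightarrow\cM^{\KSBA}_2(\epsilon_1,\epsilon_2)$.

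\emph{Extension to a morphism.} This is the crux. Since $\cQ_{g,0;1,\beta}^{\norm}$ is normal and $\cM^{\KSBA}_2(\epsilon_1,\epsilon_2)$ is a proper, separated Deligne--Mumford stack, I would first extend $\Phi_{\epsilon_1,\epsilon_2}$ across every codimension‑one point $\mathfrak p$: there $R:=\cO_{\cQ_{g,0;1,\beta}^{\norm},\mathfrak p}$ is a DVR whose generic point lands in $\cQ^{\circ}$, so pulling back the universal quasimap family with its discriminant divisor, taking coarse spaces, and running the MMP of \Cref{thm_you_can_take_ruled_model_in_ksba_moduli} (the setting being exactly \Cref{setup_for_existence_of_ruled_model}, with $c$ the weight of $F$) produces a relative weak canonical model; \Cref{thm_section_4_in_ksba_limit_we_have_ruled_model} then guarantees it is the relative \emph{canonical} model, i.e. a KSBA‑stable family over $\Spec R$ extending the generic fibre and independent of $0<\epsilon_1\ll\epsilon_2\ll1$. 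By separatedness this extension is unique, so $\Phi_{\epsilon_1,\epsilon_2}$ is defined at $\mathfrak p$. To upgrade ``defined in codimension one'' to ``defined everywhere'' I would carry out the same construction in the family over $\cQ_{g,0;1,\beta}^{\norm}$: run the relative MMP of \Cref{thm_intrp_MMP} for the generalized pair $(\cC^{\tw},\epsilon_2\Delta'+\bfM)$ over $\cQ_{g,0;1,\beta}^{\norm}$ — as $\cC^{\tw}\to\cQ_{g,0;1,\beta}^{\norm}$ is a family of curves, this only contracts rational tails and bridges inside fibres — which by \Cref{thm_intrp_MMP} is accompanied by birational contractions of $(\cX^{\tw},\tfrac12\cD^{\tw})$ terminating in the relative canonical model, a family of KSBA‑stable pairs over all of $\cQ_{g,0;1,\beta}^{\norm}$. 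This family is the desired morphism $\Phi_{\epsilon_1,\epsilon_2}$, and by the codimension‑one uniqueness above it agrees with the naive construction wherever the latter makes sense.

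\emph{Conclusion and main difficulty.} The remaining assertions are then immediate: the surface pairs in the image admit a ruled model and do not depend on $\epsilon_1,\epsilon_2$ in the stated range, since both facts hold for every KSBA‑stable limit over a DVR through a boundary point by parts (1) and (2) of \Cref{thm_section_4_in_ksba_limit_we_have_ruled_model}, and every point of the image of $\Phi_{\epsilon_1,\epsilon_2}$ is such a limit; over $\cQ^{\circ}$ no MMP step is needed, which gives the generic formula $[(X,\tfrac12 D)\to C]\mapsto[(X,(\tfrac12+\epsilon_1)D+\epsilon_2\pi^{-1}(\Delta_C))]$. The step I expect to be the real obstacle is precisely this global extension — performing the relative MMP and relative canonical model of \Cref{thm_intrp_MMP} in the family over $\cQ_{g,0;1,\beta}^{\norm}$ and checking compatibility with base change — rather than one DVR at a time; \Cref{thm_section_4_in_ksba_limit_we_have_ruled_model} handles every codimension‑one point and supplies uniqueness, so what is left is the (standard, but not purely formal) passage from codimension one to the whole normal stack, where normality of $\cQ_{g,0;1,\beta}^{\norm}$ and the klt‑ness of the pairs involved — guaranteeing that relative canonical models exist and behave well — are what make the argument go through.
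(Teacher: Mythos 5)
Your overall strategy is close to the paper's, and the pieces you assemble (boundedness/finite type to get uniform $\epsilon_1,\epsilon_2$, \Cref{cor_singularities_remain_slc_if_increasing_coeff_on_d_and_adding_fibers} for local stability over $\cQ^{\circ}$, \Cref{thm_section_4_in_ksba_limit_we_have_ruled_model} for the $\epsilon$-independence and ruled models over DVRs) are the right ones. But the extension step has a genuine gap, and you put your finger on it without resolving it.

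Defining $\Phi_{\epsilon_1,\epsilon_2}$ at every codimension-one point via \Cref{thm_section_4_in_ksba_limit_we_have_ruled_model} and separatedness is fine, but a rational map from a normal stack to a proper Deligne--Mumford stack that is defined in codimension one does \emph{not} automatically extend to a morphism (compare $\bA^2\setminus\{0\}\to\bP^1$). To get a genuine morphism one must produce, over all of $\cQ_{g,0;1,\beta}^{\norm}$, a single family of KSBA-stable pairs commuting with base change. Your proposed fix is to run ``the relative MMP of \Cref{thm_intrp_MMP}'' over the whole normalization; but \Cref{thm_intrp_MMP} (and \Cref{prop_can_run_mmp_upstairs_to_follow_the_one_downstairs}) are stated over the spectrum of a DVR, and even if one ran an MMP for the total space over a higher-dimensional base, one would still have to check that the output restricts fiberwise to the canonical models -- i.e.\ commutes with base change -- which is precisely the nontrivial content needed here. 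The paper resolves this exactly where you wave your hands: it invokes the wall-crossing / MMP-in-families results of \cite{ascher2023wall,meng2023mmp} (e.g.\ \cite[Theorem~1.5]{meng2023mmp}), which give the relative MMP for locally stable families over a normal base together with the base-change compatibility. With that reference in place of \Cref{thm_intrp_MMP} in your last step, your argument closes up and matches the paper's.
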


\begin{proof}
    This follows immediately from wall-crossings for KSBA-stable pairs of \cite{ascher2023wall, meng2023mmp}, \Cref{thm_section_4_in_ksba_limit_we_have_ruled_model}, together with the fact that one can pick $\epsilon_1$ and $\epsilon_2$ uniformly due to the boundedness of $\cQ_{g,0;1,\beta}$.
\end{proof}
\subsection{Irreducible components of KSBA-stable limits}\label{subsection_irred_cpt_of_KSBA_limits}

The next three propositions are established under the following setup. Consider  
\[
\textstyle \big(X^{(i)},\frac{1}{n+1}D^{(i)}+cF^{(i)}\big) \ \stackrel{\pi^{(i)}}{\longrightarrow} \ C^{(i)} \ \longrightarrow \ \Spec R
\]
as in Diagram~(\ref{eq: diagram of MMP}). Let \(G\) be an irreducible component of \(C^{(i)}_0\), and let \(S := (\pi^{(i)})^{-1}(G)\) with the reduced scheme structure. The propositions that follow analyze the geometry of \(S\) in this context.

\begin{prop}\label{prop: geometry of S1}
    If $X^{(i)}$ agrees with the coarse moduli space of its ruled model in a neighborhood of $S$, then there is a stacky curve $\cG$ with coarse space $G$, and a vector bundle $\cV$ on $\cG$ such that $S$ is the coarse moduli space of $\bP_{\cG}(\cV)$. 
\end{prop}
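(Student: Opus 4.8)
The plan is to read the structure of $S$ off the ruled model supplied by \Cref{thm_you_can_take_ruled_model_in_ksba_moduli}. That result gives a ruled model
\[
\textstyle \gamma^{(i)}\colon \big(\sY^{(i)},\tfrac{1}{n+1}\sD_\sY^{(i)}\big)\ \longrightarrow\ \sC^{(i)},
\]
where $\sC^{(i)}$ is a twisted curve with coarse space $C^{(i)}$ and, by \Cref{defn:ruled model}(2), every fiber of $\gamma^{(i)}$ is isomorphic to $\bP^1$. First I would take $\cG\subseteq \sC^{(i)}_0$ to be the reduced irreducible component mapping onto $G$. Since the twisted curves in this paper carry no gerbes, $\cG$ is a sequence of cyclic root stacks of $G$ along the nodes of $C^{(i)}_0$ lying on $G$ and along the markings on $G$; in the situation of the proposition $G$ is smooth, so $\cG$ is a smooth orbifold curve with coarse space $G$, and this is the $\cG$ in the statement.

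Next I would identify $S$ with the coarse space of $\sY^{(i)}|_{\cG}:=\sY^{(i)}\times_{\sC^{(i)}}\cG$. The hypothesis says that $X^{(i)}$ coincides with the coarse space $Y^{(i)}$ of $\sY^{(i)}$ over an open neighborhood of $S$; since $\sC^{(i)}$ is a tame stack, forming the coarse space commutes with the base change $\cG\to\sC^{(i)}$, so $S=(\pi^{(i)})^{-1}(G)_{\mathrm{red}}$ is exactly the coarse space of the $\bP^1$-fibration $\sY^{(i)}|_{\cG}\to\cG$. Finally I would invoke \Cref{lemma_root_stack_of_a_curve_has_no_brauer}: a $\bP^1$-fibration over the smooth root stack $\cG$ of the smooth curve $G$ is the projectivization $\bP_{\cG}(\cV)$ of a rank $2$ vector bundle $\cV$ on $\cG$. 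Hence $S$ is the coarse space of $\bP_{\cG}(\cV)$, as claimed.

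The delicate point — and essentially the only place where anything beyond bookkeeping happens once the ruled model is available — is the reduction to $G$ smooth. If the component $G$ of $C^{(i)}_0$ carries a self-node, then $\cG$ acquires a twisted self-node over it and \Cref{lemma_root_stack_of_a_curve_has_no_brauer} does not apply directly; one would instead run the argument on the normalization $\widetilde{\cG}\to\cG$, obtain $\sY^{(i)}|_{\widetilde{\cG}}\cong \bP_{\widetilde{\cG}}(\widetilde{\cV})$, and then descend the projective-bundle structure across the node by choosing a $\GL_2$-lift of the identification of the two fibers over the branches of the node. This is precisely the gluing mechanism made precise in \Cref{prop: geometry of S3}, which is why a component with a self-node is placed in case (3) rather than case (1) of \Cref{thm_intro_description_boundary}, and in the situation of the present proposition it does not occur. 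Apart from this, the argument is formal; all the substantive work is contained in \Cref{thm_you_can_take_ruled_model_in_ksba_moduli}.
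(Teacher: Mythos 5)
Your proof follows exactly the same route the paper takes: \Cref{thm_you_can_take_ruled_model_in_ksba_moduli} supplies a ruled model $\sY^{(i)}\to\sC^{(i)}$ whose fibers are $\bP^1$'s, the hypothesis lets you replace $S$ by the coarse space of $\sY^{(i)}|_\cG$ over the component $\cG\subseteq\sC^{(i)}_0$, and \Cref{lemma_root_stack_of_a_curve_has_no_brauer} identifies the $\bP^1$-fibration $\sY^{(i)}|_\cG\to\cG$ with $\bP_\cG(\cV)$. The paper's own proof of both \Cref{prop: geometry of S1} and \Cref{prop: geometry of S2} is a one-liner citing precisely these two ingredients, so your write-up is essentially the fleshed-out version of that argument; the extra remark about why a self-node on $G$ is deferred to \Cref{prop: geometry of S3} (since it forces $S$ non-normal) is a useful clarification rather than a deviation.
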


\begin{prop}[ref. Figure \ref{fig:ruled model2}]\label{prop: geometry of S2}
    Suppose $S$ is normal but $X^{(i)}$ does not agree with its ruled model in a neighborhood of $S$. Then there is a stacky curve $\cG$ with coarse moduli space $G$, and a vector bundle $\cV$ on $\cG$ such that $(S,\frac{1}{n+1}D^{(i)}|_S)$ is crepant birational to the coarse space of $(\bP_{\cG}(\cV),\frac{1}{n+1}\cD)$. Moreover, $(S,\frac{1}{n+1}D^{(i)}|_S)$ is obtained by
    \begin{enumerate}
        \item (weighted) blowing up $\bP_\cG(\cV)$ along zero-dimensional subschemes, whose support map to the nodes $n_1,...,n_{\ell}$ of $C^{(i)}_0$ on $G$, then
        \item contracting proper transform of fibers over $n_1,...,n_{\ell}$, and taking coarse space. 
    \end{enumerate}
\end{prop}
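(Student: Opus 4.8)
The plan is to restrict the ruled model furnished by \Cref{thm_you_can_take_ruled_model_in_ksba_moduli} to the preimage of the single component $G$ and to read the statement off from the explicit description of $X^{(i)}$ in terms of that ruled model. Let $\cG\subseteq\sC^{(i)}_0$ be the irreducible component of the central fibre of the twisted curve $\sC^{(i)}$ whose coarse space is $G$; being a component of a family of twisted curves, $\cG$ is a smooth stacky curve, obtained from $G$ by a sequence of root stacks at (some of) the nodes of $\sC^{(i)}_0$ lying on it. By \Cref{defn:ruled model}(2) the restriction $\gamma^{(i)}|_\cG\colon\sY^{(i)}|_\cG\to\cG$ is a $\bP^1$-fibration, so \Cref{lemma_root_stack_of_a_curve_has_no_brauer} supplies a rank two vector bundle $\cV$ on $\cG$ with $\sY^{(i)}|_\cG\cong\bP_\cG(\cV)$; set $\cD:=\sD_\sY^{(i)}|_\cG$, so that $(\bP_\cG(\cV),\tfrac{1}{n+1}\cD)$ is the candidate model and the part of $Y^{(i)}$ lying over $G$ is its coarse space. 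This produces the data $\cG$, $\cV$, $\cD$ occurring in the statement.

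Next I would establish the crepant birationality. By \Cref{defn:ruled model}(5) the coarse spaces of $(\sY^{(i)},\tfrac{1}{n+1}\sD_\sY^{(i)})$ and $(X^{(i)},\tfrac{1}{n+1}D^{(i)})$ are crepant birational over $\Spec R$; moreover, by \Cref{setup_for_existence_of_ruled_model} together with \Cref{cor_singularities_remain_slc_if_increasing_coeff_on_d_and_adding_fibers}, both threefolds are log canonical with their central fibres occurring as reduced divisorial lc strata. Applying adjunction \cite[Theorem 4.9]{Kol13} to the component over $G$ then transfers this to crepant birationality of the restricted pairs $(S,\tfrac{1}{n+1}D^{(i)}|_S)$ and the coarse space of $(\bP_\cG(\cV),\tfrac{1}{n+1}\cD)$. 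The normality hypothesis on $S$ enters precisely here: it guarantees that the induced birational map is a map of normal surfaces rather than of their normalizations, the latter phenomenon being exactly the content of \Cref{prop: geometry of S3}.

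For the explicit description in items (1)--(2) I would work locally over $G$. Since $S$ is normal and connected, the geometric generic fibre of $S\to G$ is $\bP^1$; in particular $G$ is neither a component of $\sC^{(i)}_0$ over which $\pi^{(i)}$ has nodal generic fibre (such a component would force $S$ reducible, hence non-normal, by \Cref{lemma_nodal_locus_for_qmap_limit_is_union_of_irred_cp_of_special_fiber}) nor one of the components $C^{(i)}_{0,j}$ over which the ruled model is degenerate. Hence the generic fibre of $S\to G$ is normal, so $X^{(i)}$ and $Y^{(i)}$ agree over $G\smallsetminus C^{(i)}_0$, and every divisor extracted by $Z^{(i)}\to Y^{(i)}$ and contracted by $Z^{(i)}\to X^{(i)}$ that meets $(\pi^{(i)})^{-1}(G)$ lies over one of the nodes $n_1,\dots,n_\ell$ of $C^{(i)}_0$ on $G$. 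Over each $n_j$ I would invoke the local model recalled in \S\ref{subsubsection_recalling_how_to_take_ruled_model_in_qmap_paper}: in an \'etale neighbourhood of the twisted node, passing between $\bP_\cG(\cV)$ and $X^{(i)}$ amounts to blowing up $\bV(x,y,f)$ and then contracting the fibral components that are not the exceptional divisor, the $\mu_2$-stabilizer producing an $A_{2k+1}$-type coarse singularity. Restricting this picture to the surface $S$, the blow-up becomes a (weighted) blow-up of $\bP_\cG(\cV)$ along a zero-dimensional subscheme supported over $n_j$ -- zero-dimensional because, by the moreover part of \Cref{thm_you_can_take_ruled_model_in_ksba_moduli}, the extracted divisors $\Gamma_{j,\ell}$ have one-dimensional non-empty fibres over $D_Y^{(i)}$ and therefore meet $S$ in a finite set -- the contraction becomes that of the proper transform of the fibre of $\bP_\cG(\cV)\to\cG$ over $n_j$, and taking the coarse space at the end returns $(S,\tfrac{1}{n+1}D^{(i)}|_S)$.

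The main obstacle is this last local analysis: one has to check that the threefold operations of \Cref{thm_you_can_take_ruled_model_in_ksba_moduli} restrict to exactly the advertised surface operations on $S$, matching the weights of the weighted blow-up with the order of the $\mu_2$-root structure and with the $A_{2k+1}$ local equation, and ruling out any modification of $S$ over points of $G$ other than the $n_j$. This is a delicate \'etale-local computation at each twisted node, of the same flavour as Figure \ref{fig:nodes of X}. Everything else -- the identification of $\cG$, $\cV$, $\cD$, and the crepant birationality -- is a formal consequence of \Cref{thm_you_can_take_ruled_model_in_ksba_moduli}, \Cref{lemma_root_stack_of_a_curve_has_no_brauer}, and \Cref{defn:ruled model}, so the substance of the proof is concentrated entirely in the node analysis.
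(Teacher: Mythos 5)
Your proof is correct and takes the same route as the paper, whose own proof is the single line ``Both propositions follow from Lemma \ref{lemma_root_stack_of_a_curve_has_no_brauer} and Theorem \ref{thm_you_can_take_ruled_model_in_ksba_moduli}''; you have essentially unpacked exactly how those two results combine (restriction of the ruled model to $\cG$, application of the no-Brauer lemma to get $\bP_\cG(\cV)$, restriction of the explicit blow-up/contraction description in the moreover part of \Cref{thm_you_can_take_ruled_model_in_ksba_moduli} to the surface over $G$, and the crepant property via the threefold crepant birationality). The one place where you could tighten the argument is the justification that the blow-up loci meet $S$ in a zero-dimensional set: the cleanest reason is that the blow-up centers are curves contained in the components $E_j$ of $Y^{(i)}_0$ (lying over the nodal components $C^{(i)}_{0,j}$, not over $G$), so they are transverse to $Y^{(i)}|_G$ and meet it only along the finitely many points of the fibers over the nodes $n_j$ -- rather than appealing to the fibration $\Gamma_{j,\ell}\to D_Y^{(i)}$ -- but this is a presentational choice, not a gap.
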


\begin{remark}\textup{
  If $G$ is a rational tail or a rational bridge in Proposition \ref{prop: geometry of S1} or Proposition \ref{prop: geometry of S2}, then we can assume that $\cV=\cO_{\cG}\oplus L$ for a line bundle $L$ of non-positive degree. Indeed, by \cite[Theorem 2.4]{martens2012variations}, any vector bundle on a root stack of $\bP^1$ at at most two points is the direct sum of two line bundles $L_1,L_2$ with $\deg(L_1)\le \deg(L_2)$; one can twist $L_1\oplus L_2$ by $L_2^{-1}$.}
\end{remark}

\begin{proof}[Proof of Proposition \ref{prop: geometry of S1} and Proposition \ref{prop: geometry of S2}]
   Both propositions follow from Lemma \ref{lemma_root_stack_of_a_curve_has_no_brauer} and Theorem \ref{thm_you_can_take_ruled_model_in_ksba_moduli}.
\end{proof}

\begin{prop}[ref. Figure \ref{fig:Push-out diagram}]\label{prop: geometry of S3}
    Suppose that $S$ is not normal. Then it is isomorphic to the coarse space of the push-out $\cS$ of the following diagram
$$\begin{tikzcd}
\Sigma \arrow[r, "i"] \arrow[d, "2:1"'] & \mathbb{P}_{\Sigma}(\cV) \arrow[d] \\
\cG \arrow[r] & \cS
\end{tikzcd}, $$ where
\begin{itemize}
    \item $\cG$ and $\Sigma$ are stacky curves,
    \item $\Sigma\to \cG$ is a finite morphism of degree 2,
    \item $\cV$ is a rank 2 vector bundle on $\Sigma$, and
    \item $i$ is a section of $\bP_{\Sigma}(\bV)\rightarrow \Sigma$.
\end{itemize}
Let $F\subseteq S$ be the sublocus along which $S$ is glued to its nearby components in $X^{(i)}_0$. If $(S,\frac{1}{n+1}D|_S+F)$ is log Calabi-Yau, then each component of the coarse space of $\Sigma$ is rational.
\end{prop}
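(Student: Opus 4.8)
The plan is to pull the log Calabi--Yau structure back to the normalization of $S$, run adjunction along the conductor, and then use the bookkeeping of moduli parts to rule out an elliptic base.

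First I would identify the normalization. By the first part of \Cref{prop: geometry of S3}, the normalization $\nu\colon T\to S$ is the coarse space of the $\bP^1$-bundle $\bP_\Sigma(\cV)\to\Sigma$, and the conductor on $T$ is precisely the section $i(\Sigma)\cong\Sigma$. Since by hypothesis $(S,\tfrac{1}{n+1}D|_S+F)$ is log Calabi--Yau and slc, Kollár's normalization formula for slc pairs (\cite[\S5.1]{Kol13}) gives
\[
0\ \sim_{\bQ}\ \nu^*\bigl(K_S+\tfrac{1}{n+1}D|_S+F\bigr)\ =\ K_T+i(\Sigma)+\tfrac{1}{n+1}D_T+F_T,
\]
with $i(\Sigma)$ appearing with coefficient $1$, where $D_T$ and $F_T$ are the strict transforms of $D|_S$ and $F$. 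From \Cref{thm_you_can_take_ruled_model_in_ksba_moduli} and the description of the ruled model, $F_T$ is a union of fibers of $\pi_T\colon T\to\Sigma$ (it consists of the fibers of $S\to G$ sitting over nodes of $C^{(i)}_0$ on $G$), and restricting the displayed relation to a general fiber $\ell\cong\bP^1$ gives $D_T\cdot\ell=n+1$, so $D_T$ is a genuine multisection. Throughout I work on the smooth stack $\bP_\Sigma(\cV)$ over the stacky curve $\Sigma$ and pass to coarse spaces only at the end.

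Second, the genus bound. Applying adjunction along the conductor section $i(\Sigma)\cong\Sigma$, and using that $\bP_\Sigma(\cV)$ is smooth along $i(\Sigma)$, we get $K_\Sigma+\mathrm{Diff}_{i(\Sigma)}(\tfrac{1}{n+1}D_T+F_T)\sim_{\bQ}0$ with $\mathrm{Diff}\geq 0$. Hence $\deg K_\Sigma\leq 0$ on each connected component, and as the coarse-space map $\Sigma\to\Sigma_0$ only adds an effective ramification divisor, $2g(\Sigma_0')-2=\deg K_{\Sigma_0'}\leq\deg(K_\Sigma|_{\Sigma'})\leq 0$ for every component $\Sigma_0'$ of the coarse space. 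Thus $g(\Sigma_0')\in\{0,1\}$, and it remains to exclude $g(\Sigma_0')=1$. Suppose $g(\Sigma_0')=1$. Then equality above forces $\mathrm{Diff}|_{\Sigma'}=0$ and $\Sigma$ unstacky over $\Sigma_0'$; in particular $F_T$ has no fiber over $\Sigma_0'$. Since a fiber of $S\to G$ over a node of $C^{(i)}_0$ on $G$ lies in $F$ (it is where $S$ meets an adjacent component of $X^{(i)}_0$) and meets $i(\Sigma)$, we conclude $G$ carries no nodes of $C^{(i)}_0$; as $C^{(i)}_0$ is connected and the MMP of Diagram~(\ref{eq: diagram of MMP}) only contracts rational tails, this forces $C^{(i)}_0=G$, so $G$ is smooth of genus one and $C_\eta$ is smooth of genus one. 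Over $\Sigma_0'$ the surface $R:=\bP_{\Sigma_0'}(\cV)$ is an honest ruled surface with $\bigl(R,i(\Sigma_0')+\tfrac{1}{n+1}D_R\bigr)$ log Calabi--Yau, so the canonical bundle formula for $R\to\Sigma_0'$ gives $0\sim_{\bQ}K_{\Sigma_0'}\sim_{\bQ}-B_{\Sigma_0'}-\bfM_{\Sigma_0'}$ with $B_{\Sigma_0'},\bfM_{\Sigma_0'}\geq 0$, hence the moduli part $\bfM_{\Sigma_0'}=0$. By base-change compatibility of Ambro's moduli divisor along the degree two map $\Sigma_0'\to G$, this gives $\deg(\bfM^{(i)}|_G)=0$; since the degree of the moduli part is invariant through the curve MMP of Diagram~(\ref{eq: diagram of MMP}) (cf. Step~5 of the proof of \Cref{thm_you_can_take_ruled_model_in_ksba_moduli}), is locally constant in the family $C^{(i)}\to\Spec R$, and $C^{(i)}_0=G$, we obtain $\deg(\bfM_\eta|_{C_\eta})=\deg(\bfM^{(i)}|_{C^{(i)}_0})=0$. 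But $C_\eta\to\sP_n^{\GIT}$ classifies $(X_\eta,D_\eta)\to C_\eta$, whose general fibers are klt, hence GIT stable with closed orbit; by the results of \S\ref{subsubsection_moduli_part}, vanishing of the moduli part degree forces all these fibers to be $S$-equivalent, hence isomorphic, so the fibration is isotrivial, contradicting the non-isotriviality assumption on $\cU$. Therefore $g(\Sigma_0')=0$ and every component of the coarse space of $\Sigma$ is rational.

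The main obstacle is the genus one exclusion: it is invisible at the level of the surface $S$ alone, since the log Calabi--Yau relation on a ruled surface over an elliptic curve is perfectly consistent, and one must propagate the vanishing of a moduli part through the base change $\Sigma_0'\to G$, through the explicit curve MMP, and along the family over $\Spec R$, back to the original non-isotrivial fibration. The inputs needed for this — base-change compatibility of the moduli divisor (valid up to a finite cover, by Ambro) and invariance of its degree under the curve MMP with scaling — are available from the earlier sections, but assembling them cleanly for a possibly non-normal $S$ is the delicate step. By contrast, the genus $\leq 1$ bound is a routine adjunction computation once the normalization has been identified.
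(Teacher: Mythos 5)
Your proposal addresses only the final sentence of the proposition --- the rationality of the components of $\Sigma$ --- and takes the pushout description as given. The bulk of the paper's proof (Steps~1--4) is devoted precisely to establishing that pushout structure via explicit blow-ups along the smooth curve $\Xi$, descent of seminormality, and a gluing argument relying on \Cref{lemma_gms_of_seminormal_is_seminormal} and \cite{alper2024artin}. None of that is addressed, so this is at best a proof of roughly the last fifth of the statement.

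For the rationality claim itself, your route agrees at a high level with the paper's: identify the normalization of $(S,\tfrac{1}{n+1}D|_S+F)$ as the pair $(\bP_\Sigma(\cV), i(\Sigma)+\tfrac{1}{n+1}D_T+F_T)$ and apply the canonical bundle formula to the $\bP^1$-fibration. The paper dispatches this in one sentence (``from the canonical bundle formula $G$ is rational''); you correctly observe that the easy part of this argument only yields $\deg K_\Sigma \le 0$ on each component, i.e.\ genus $\le 1$, and that the genus-one case needs to be actively excluded. That observation is sound and worth making explicit. Your exclusion argument, however, has a delicate step that is not justified: you invoke ``base-change compatibility of Ambro's moduli divisor along the degree two map $\Sigma_0'\to G$'' to pass from $\bfM_{\Sigma_0'}=0$ to $\deg(\bfM^{(i)}|_G)=0$. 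But $\bP_\Sigma(\cV)\to\Sigma$ is not a base change of $S\to G$ along $\Sigma\to\cG$; it is the normalization, and the two fibrations classify different kinds of fibers (a $\bP^1$ with a distinguished conductor point plus $n+1$ weighted points, versus a nodal log CY curve). The moduli parts live on genuinely different stacks, and relating their degrees requires an argument through the gluing data for slc pairs that you do not supply. There is also a gap in the step that equality in adjunction forces ``$\Sigma$ unstacky over $\Sigma_0'$''; the stack structure on $\Sigma$ arises from the twisted nodes of $\sC$, and you have not shown that those contribute to $\mathrm{Diff}$. Finally, whether the elliptic case can actually be ruled out here at all depends on inheriting the non-isotriviality hypothesis from the ambient \S\ref{section_application_to_KSBA_moduli_when_epsilon_2_goes_to_0} setup; the three propositions are stated under \Cref{setup_for_existence_of_ruled_model} and the argument should make clear that this extra input is being used. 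In short: your approach is broadly aligned with the paper's but is partial, and the genus-one exclusion, while aimed in a reasonable direction, leans on an unjustified base-change identity.
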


\begin{figure}
    \centering
\includegraphics[width=.5\linewidth]{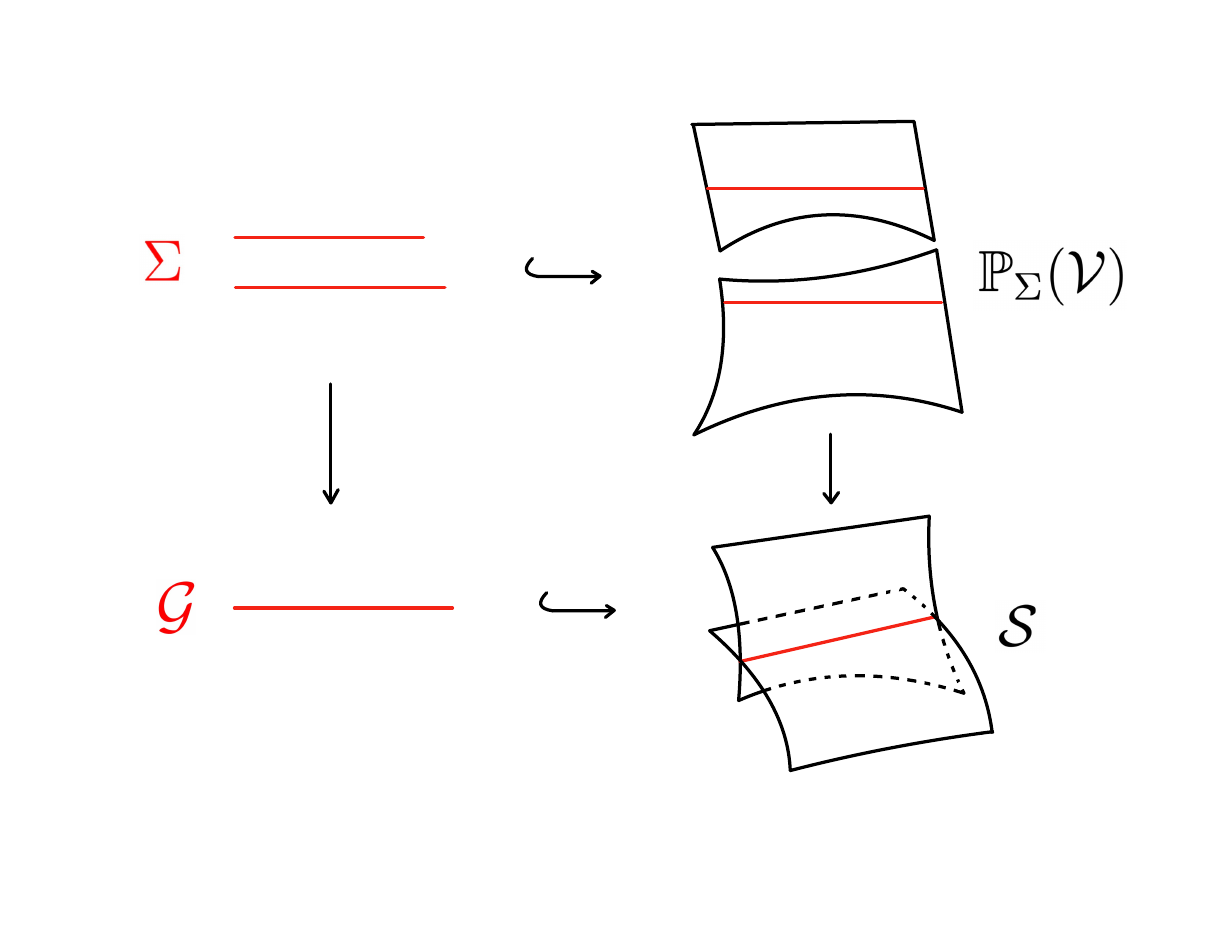}
    \caption{Push-out diagram defining $\cS$}
    \label{fig:Push-out diagram}
\end{figure}    
\begin{proof} 
    
    Consider the ruled model $\sY\to \sC$ of $X\rightarrow C$. As $\sC$ has $A_{n-1}$-singularities, then one can resolve them by taking their \emph{canonical smooth covering stack} $\sC'\to \sC$, which \'{e}tale locally replaces $\Spec k[\![x,y,t]\!]/(xy-t^n)$ with $[\Spec(k[\![u,v]\!])/\bmu_n]$, with the action being $$\zeta *u\ :=\ \zeta u, \ \ \ \ \zeta* v\ := \ \zeta^{-1}v.$$ 
    Observe that $\sC'\to \sC$ is an isomorphism on the locus where $\sC\to \spec R$ is smooth.
    It is straightforward to check that $\sC'\to \Spec R $ is also a twisted curve, with the same coarse moduli space as $\sC$. The pull-back of the universal family
    $(\sY,\sD_\sY)\to \sC$ along $\sC'\rightarrow \sC$ will be a $\bP^1$-bundle over $\sC'$, so it is smooth. To simplify the notation, we will still denote $\sC'$ by $\sC$ and $\sY\times_{\sC}\sC'$ by $\sY$; this will cause no confusion. Recall also that $G$ is an irreducible component of the central fiber of $C$; we will denote by $\cG$ the corresponding irreducible component in $\sC$.

    We now extract the divisors $\Gamma_{\ell,i}$ of \Cref{thm_you_can_take_ruled_model_in_ksba_moduli} in several steps. 
    By Theorem \ref{thm_you_can_take_ruled_model_in_ksba_moduli}, there is a curve $\Xi\hookrightarrow\sY\to \sC$, supported on $\sD_{\sY}$, which is a 2:1 multisection of the morphism    
    \[
    \cE:=\cG\times_\sC\sY \ \longrightarrow \ \cG,
    \] and such that $S$ is obtained by extracting a divisor over this multisection (the divisors $\widetilde{\Gamma}_{\ell,i}$ in \Cref{fig:ruled model2}), and contracting the proper transform of $\cE$. These transformations are performed as follows, where we will denote by $E$ the coarse moduli space of $\cE$. 

    \textbf{Step 1}. First we show that $\Xi$ is smooth.
    
    Indeed, away from the fibers over a finite set of smooth closed points $\{x_1,\ldots,x_r\}\subseteq \sC$, the surface pair $(\sY,\frac{1}{n+1}\sD_\sY)\to \sC$ comes from a morphism $\sC\to \sP_n^{\GIT}$.
    The curve $\cG$ maps to the polystable point $(\bP^1,(n+1)([0]+[\infty]))$ of $\sP_n^{\GIT}$, so the fibers of $\Xi\to \cG$ consist of two reduced points away from $\{x_1,\ldots,x_r\}$. We now argue that $\Xi$ is also smooth over $x_i$. Indeed, the curve $\sC$ is a scheme around $x_i$, and the two pairs $(X,\frac{1}{n+1}D)$ and $(Y,\frac{1}{n+1}D_Y)$ are crepant birational over the family of curves $C$. So if we denote by $\sigma_i$ a section of $C\to \spec R$ through $x_i$ (which exists up to replacing $\spec R$ with an \'etale cover) and by $F_i$ (resp. $F_{Y,i}$) its preimage in $X$ (resp. $Y$) also the two pairs $$\textstyle (X,\frac{1}{n+1}D+X_0+\epsilon F_i), \ \ \ \ \ \ (Y,\frac{1}{n+1}D_Y+Y_0+\epsilon F_{Y,i})$$ are crepant birational. By the last part of \Cref{thm_you_can_take_ruled_model_in_ksba_moduli}, for $0<\epsilon \ll 1$ the pair $(X,\frac{1}{n+1}D+X_0+\epsilon F_i)$ is lc so also $(Y,\frac{1}{n+1}D_Y+Y_0+\epsilon F_{Y,i})$ is lc. Then $(E,\frac{1}{n+1}(D_Y)|_{E}+\epsilon (F_{Y,i})_{E})$
    is lc, and as $\Xi$ has coefficient one in $\frac{1}{n+1}(D_Y)|_{E}$ it cannot be singular over $x_i$. So $\Xi$ is smooth as desired.

    Then $\Xi$ is a local complete intersection in $\sY$. We can blow it up to get $\sY_1\to \sY$. Let then $\sD_{\sY,1}$ be the proper transform of $\sD_\sY$, and $\cE_1$ the exceptional divisor (which could be two irreducible components if $\Xi$ is disconnected) with coarse space $E_1$. Let finally $\Xi_1$ be the intersection $\sD_{\sY,1}\cap \cE_1$.
    
    \textbf{Step 2}. We describe the blow-up $\sY_1\to \sY$ explicitly.
    
    Over the geometric generic point $\eta_G$ of $G$, we replaced \[\textstyle (\bP^1_{\eta_G},\frac{1}{n+1}((n+1)[0]_{\eta_G}+(n+1)[\infty]_{\eta_G})\] with a chain of three rational curves, and the proper
    transform of the divisor has is supported at the two tails.
    Moreover, from how the ruled model is constructed generically over $\sC$ (i.e. via the morphism $\sP_n\to \sP_n^{\GIT}$ as in \S \ref{subsubsection_recalling_how_to_take_ruled_model_in_qmap_paper}), the geometric fiber $(\sY_1,\frac{1}{2}\sD_{\sY,1})|_{\eta_G}\to \eta_G$ over the geometric generic point $\eta_G$ of $G$, is a chain of three $\bP^1$s with $2n+2$ points marked with coefficient $\frac{1}{n+1}$. Moreover, it has a point of coefficient one if and only if it has two points of coefficient one.
In other terms, 
\begin{center}either  $ \lfloor (\frac{1}{n+1}\sD_{\sY,1})|_{\eta_G} \rfloor =0$ or $\lfloor (\frac{1}{n+1}\sD_{\sY,1})|_{\eta_G} \rfloor  =   (\frac{1}{n+1}\sD_{\sY,1})|_{\eta_G}$.\end{center}  
    This follows as the geometric fiber of $X\to C$ over $\eta_G$ has an $A_{k}$-singularity \textit{for $k$ odd} (see \Cref{fig:nodes of X}).
    In particular there are two cases:
    \begin{enumerate}
        \item\label{point_1_in_a_proof} $\Xi_1$ has still an horizontal irreducible component of coefficient one, or
        \item\label{point_2_in_a_proof} over $\Xi$ the central fiber $(\sD_{\sY,1})_0$ of $\sD_{\sY,1}$ has no horizontal irreducible component of coefficient one.
    \end{enumerate}

\textbf{Step 3}. We now extract the proper transforms of $\Gamma_{\ell,i}$.

In case (\ref{point_1_in_a_proof}), observe that that $\Xi_1$ is smooth. Indeed, the blow-up $(\sY_1,\frac{1}{n+1}\sD_{\sY,1})\to (\sY,\frac{1}{n+1}\sD_\sY)$ is again crepant birational, and $\sY_1$ is again smooth. So we can repeat the argument above. Then we blow-up $\sY_2\to \sY_1$ along $\Xi_1$, and iterate this process until we arrive at case (\ref{point_2_in_a_proof})
\[
(\Xi_m\subseteq \sY_m)\to (\Xi_{m-1}\subseteq \sY_{m-1})\to \ldots \to (\Xi_1\subseteq \sY_1)\to (\Xi\subseteq \sY)
\]
So after a sequence of blow-ups, we extract the divisors which in the proof of \Cref{thm_you_can_take_ruled_model_in_ksba_moduli} and in \Cref{fig:ruled model2} we denoted by $\widetilde{\Gamma}_{\ell,i}$: the proper transforms of the irreducible components of $X$ mapping over the coarse space of $\cG\subseteq \sC$. From how $\widetilde{\Gamma}_{\ell,i}$ are constructed, they are the exceptional divisor of the blow-up of a
smooth stacky threefold over a smooth stacky curve. We will denote such a smooth stacky curve by $\Sigma$. So $\widetilde{\Gamma}_{\ell,i}\cong \bP_\Sigma(\cV)$ for a vector bundle $\cV$ on $\Sigma.$

There are maps $\bP_\Sigma(\cV)\to X$ and $\Sigma\to \cG\to X$ which map $\bP_\Sigma(\cV)$ to the irreducible components $\Gamma_{\ell,i}$ and $\Sigma$ to the double locus of the central fiber of $X\to \spec R$ over $G$.
This follows as in the proof of \Cref{thm_you_can_take_ruled_model_in_ksba_moduli}: proceeding as in \textit{loc. cit.} we observe that the pair $(Y_m,(\frac{1}{n+1}+\epsilon )D_m)$ is slc, and its canonical model $(Y_m,(\frac{1}{n+1}+\epsilon )D_m)\to (Y^c,(\frac{1}{n+1}+\epsilon)D^c)$
   over $Y$ contracts all but the last exceptional divisors we extracted. As in \Cref{thm_you_can_take_ruled_model_in_ksba_moduli}, one can study the structure of $Y^c$ over the nodal locus of $C$, and check that the pair $(Y^c,(\frac{1}{n+1}+\epsilon)D^c+Y^c_0)$ is slc using the canonical bundle formula. From separatedness of the KSBA moduli space, the canonical model for $(Y^c,(\frac{1}{n+1}+\epsilon)D^c+Y^c_0)$ is $X$, and there is a morphism $Y^c\to X$. This induces maps $\bP_\Sigma(\cV)\to S$ and $\Sigma\to \cG\to S$ where $S$ are the irreducible components of $X$ over $G$. 

\textbf{Step 4}. We claim that $\Gamma_{1,i}\cup\Gamma_{2,i}$ agrees with the coarse space of a pushout diagram, as in the statement.
   
   By \cite[Theorem 1.8]{alper2024artin}, one can take $\cS$ to be the push-out of the diagram as in the statement of the corollary, which admits a morphism $\cS\to S$ by the universal property of pushouts. Then $\cS$ is seminormal, as the pushout of seminormal Deligne-Mumford stacks is seminormal from the universal property of morphisms between seminormal Deligne-Mumford stacks.
   As $\cS$ is proper by \cite[Theorem A.4(viii)]{Rydh}, it admits a coarse moduli space $S'$. Then there is a morphism $\phi\colon S'\rightarrow S$ from the universal property of coarse moduli spaces.
    As the fibers of $S'\to S$ are connected and from how $S'$ is constructed, the map $S'\to S$ does not contract any curve. In particular, it induces a bijection on closed points, and $S$ is seminormal as it is an irreducible component of an slc pair. Since $S'$ is seminormal by \Cref{lemma_gms_of_seminormal_is_seminormal}, then $\phi$ is an isomorphism by the universal property of seminormalizations.

   The last paragraph of the proposition follows from the canonical bundle formula.
   Denote by $P$ the coarse moduli space of $\bP_\Sigma(\cV)$, and by $\sigma$ the coarse space of $\Sigma$. Then the pair $(P,\sigma + \frac{1}{n+1}D|_P +F|_P)$ is the normalization of $(S,\frac{1}{n+1}D|_S+F|_S)$. In particular, there is a fibration $(P,\sigma + \frac{1}{n+1}D|_P+F|_P)\to G$ in log Calabi-Yau pairs, and the pair $(P,\sigma + \frac{1}{n+1}D|_P+F|_P)$ is Calabi-Yau: from the canonical bundle formula $G$ is rational. 
\end{proof}
\subsubsection{Non-slc fibers and boundary part}\label{sec:classification of fibers}
We now classify the non-slc fibers of $$\textstyle\left(X,\big(\frac{1}{2}+\epsilon_1)D+\epsilon_2F\right) \ \longrightarrow \ C$$
over the smooth locus of $C$, for a fibration
parametrized by $\cM^{\KSBA}_2(\epsilon_1,\epsilon_2)$.
Coupled with Section \S \ref{subsection_irred_cpt_of_KSBA_limits}, this gives a complete characterization of the surface pairs on the boundary of $\cM^{\KSBA}_2(\epsilon_1,\epsilon_2)$, completing the proof of \Cref{thm_intro_description_boundary}.

   Let $R$ be a DVR and let $D$ be a divisor on $\bP^1_R$ such that the pair $(\bP^1_R,\frac{1}{2}D)$ is klt and fibered in log Calabi-Yau over $\spec R$. From \Cref{thm_you_can_take_ruled_model_in_ksba_moduli} the boundary part in the canonical bundle formula appears with coefficient $0<b<1$ over the closed point $0\in\spec R$. Then $\Supp(D|_{F})$ is at most two points, where $F=F_0$ is the fiber over $0\in \Spec R$. We first display the cases when $D$ has unibranched singularities; the branched singularities are the combination of unibranched singularities.

\begin{prop}
  Suppose that the support of $D\cap \pi^{-1}(p)$ consists of two points $x_1$ and $x_2$. Then up to swapping the indices, the local intersection number $(D.F)$ is $1$ at $x_2$ and $3$ at $x_2$. Moreover, the following holds.
    \begin{enumerate}
        \item If $D$ is smooth at $x_2$, then $D\to \spec R$ is ramified of ramification index $3$ and $b=\frac{1}{6}$.
        \item If $D$ is singular at $x_3$ of multiplicity $2$, then the analytic local defining polynomials for $(D,F)$ are $(y^2-x^3,y)$, and $b=\frac{1}{3}$.
        \item If $D$ is singular at $x_3$ of multiplicity $3$, then the analytic local defining polynomials for $(D,F)$ are $(y^3-x^m,x)$, where $m=4$ or $5$; in these two cases, one has $b=\frac{2}{3}$ and $b=\frac{5}{6}$ respectively.
    \end{enumerate}
\end{prop}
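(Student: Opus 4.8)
The plan is to localize the problem at the two points of $D\cap F$ and then run a short case analysis by the local intersection numbers. Away from $D\cap F$ the pair is snc and the fibration is trivially log Calabi--Yau, so everything is concentrated at $x_1,x_2$; fix a formal chart $\widehat{\mathcal{O}}_{\bP^1_R,x_i}\cong k[[u,t]]$ in which $F=\{t=0\}$, with $t$ the image of a uniformizer of $R$. Write $f_i$ for a local equation of $D$ and $m_i:=(D\cdot F)_{x_i}=\dim_k k[[u]]/(f_i(u,0))$, so that $m_1+m_2=\deg(D|_F)=4$ and $m_i\ge 1$; the standing hypothesis is that $D$ is unibranched at each $x_i$, so $f_i$ is irreducible and $m_i$ is the contact order of the unique branch of $D$ through $x_i$ with the smooth curve $F$. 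Finally, $b=1-\min_i c_i$ where $c_i:=\lct_{x_i}(\bP^1_R,\tfrac12 D;F)$.

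First I would establish the local lemma: if $m_i\le 2$ then $c_i=1$, so $x_i$ does not contribute to $b$. Since $F$ is the divisor being added, lc-ness forces $c_i\le 1$; for the reverse inequality observe that when $m_i\le 2$ the curve $F$ is either transverse to the tangent cone of $D$ at $x_i$ (if $\mult_{x_i}D=2$, so $D$ is an $A_{2k}$-branch met by $F$ off its tangent line) or meets $D$ with contact order at most $2$ (if $D$ is smooth at $x_i$), and in either case a finite sequence of blow-ups produces a log resolution of $(\bP^1_R,\tfrac12 D+F)$ in which no exceptional divisor has discrepancy below $-1$. As \Cref{thm_you_can_take_ruled_model_in_ksba_moduli} gives $0<b<1$, some $x_i$ must have $m_i\ge 3$; combined with $m_1+m_2=4$ and $m_i\ge 1$ this forces $\{m_1,m_2\}=\{1,3\}$, which (after swapping indices if needed) is the first assertion of the proposition. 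The would-be pattern $(2,2)$ is precisely what is ruled out here, and this is the one place the hypothesis that the fibre is non-slc, i.e. $b>0$, is used.

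Next, at the point $x_2$ with $m_2=3$ we have $\mult_{x_2}D\le m_2=3$. I would then split according to $\mult_{x_2}D\in\{1,2,3\}$ and, in each case, combine $m_2=3$, kltness of $(\bP^1_R,\tfrac12 D)$, and the classification of unibranched plane-curve singularities to determine the analytic type of the \emph{pair} $(D,F)$ at $x_2$. If $\mult_{x_2}D=1$: $D$ is smooth and tangent to $F$ to order $3$, so after adjusting the fibre coordinate the pair is $(t-u^3,\,t)$, equivalently $D\to\Spec R$ is totally ramified of index $3$ at $x_2$. If $\mult_{x_2}D=2$: $D$ is an $A_{2k}$-singularity $y^2=x^{2k+1}$, and the contact order of such a branch with a smooth curve is $2$, or an even number $\le 2k$, or $2k+1$; hence $m_2=3$ forces $k=1$, and then the pair is $(y^2-x^3,\,y)$. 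If $\mult_{x_2}D=3$: $D$ is a unibranched triple point whose tangent cone is a single line met with order $\ge 4$, so $m_2=3$ forces $F$ transverse to that line; then $D$ has semigroup $\langle 3,m\rangle$ with $3\nmid m$ and the pair is $(y^3-x^m,\,x)$, while kltness forces $\tfrac13+\tfrac1m=\lct(y^3=x^m)>\tfrac12$, i.e. $m\le 5$, and $m=3,6$ are reducible, leaving $m\in\{4,5\}$.

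Finally, in each of the four normal forms I would read off $b=1-c_2$ from the extremal quasi-monomial valuation $v$ adapted to the Newton polygon of $D$ at $x_2$, using $c_2=\min_v (A(v)-\tfrac12 v(D))/v(F)$ with $A(v)$ the log discrepancy: the weight-$(1,3)$ valuation gives $c_2=\tfrac56$ and $b=\tfrac16$ for $(t-u^3,t)$; the weight-$(2,3)$ valuation gives $c_2=\tfrac23$ and $b=\tfrac13$ for $(y^2-x^3,y)$; the weight-$(3,4)$ valuation gives $c_2=\tfrac13$ and $b=\tfrac23$ for $(y^3-x^4,x)$; and the weight-$(3,5)$ valuation gives $c_2=\tfrac16$ and $b=\tfrac56$ for $(y^3-x^5,x)$; in each case one checks that no deeper divisor of a log resolution is more restrictive, which is a short explicit computation for these small multiplicities. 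The hard part is the middle step: making the normal forms rigorous for the pair $(D,F)$ rather than merely for $D$ — i.e. showing that the automorphism group of the germ of $D$ acts transitively on the smooth curves realizing the prescribed contact order — while simultaneously using kltness and $m_2=3$ to eliminate every other a priori possible unibranched type; the residual lct computations are then routine.
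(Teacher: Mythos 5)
Your proof is correct in outline and reaches the same conclusions, but the route differs from the paper's at the one genuinely delicate step, and you have flagged exactly where. For the first assertion ($\{m_1,m_2\}=\{1,3\}$), your local lemma ``$m_i\le 2\Rightarrow c_i=1$'' and the paper's appeal to inversion of adjunction are essentially the same observation, and both are fine (though note that for this step the paper does not assume $D$ unibranched — it explicitly allows $x_i$ a double point — while your phrasing ``$D$ is an $A_{2k}$-branch'' is overly narrow; the $(2,2)$ configuration must also be excluded when $x_i$ is a node, which the same discrepancy computation handles). The real divergence is at the multiplicity-$3$ case. You argue directly from the classification of unibranched germs: the first Puiseux pair is $(3,q)$ with $\gcd(3,q)=1$, kltness forces $q\in\{4,5\}$, and then Zariski's theorem that germs with characteristic pair $(3,4)$ or $(3,5)$ have no moduli pins down the normal form $y^3-x^q$. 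This works, but it imports a nontrivial classification theorem and the worry (which you correctly raise) about why there is only one Puiseux pair and why the automorphism group of the germ of $D$ acts transitively on smooth curves of prescribed contact order. The paper sidesteps all of this with a single blow-up at $x_2$: after one blow-up the strict transform $\wt D$ is again unibranched with $\mult\le 2$ (the $\mult\ge 3$ case is killed by kltness), and the normal form of a unibranched $\mult\le 2$ germ is elementary; the local intersection $(\wt D.E)_p=(D.F)_{x_2}=3$ then forces $\wt D$ to be smooth (giving $m=4$) or an ordinary cusp tangent to $E$ (giving $m=5$). This is more self-contained, stays at multiplicity $\le 2$ where no classification input is needed, and avoids the transitivity question about smooth transversals altogether. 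Your quasi-monomial lct computations at the end are correct and in fact more explicit than what appears in the paper's proof, so if you replace the Puiseux-semigroup step with the paper's one-blow-up reduction, the resulting argument is complete.
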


\begin{proof}
   By assumption, one has that $(D.F)_{x_1}+(D.F)_{x_2}=4$. If $(D.F)_{x_1}=2$, then $D$ is either smooth at $x_1$, or $x_1$ is a double point of $D$; in both cases $(\bP^1_{R},F+D)$ is log canonical at $x_1$ (and similarly at $x_2$) by inversion of adjunction. Thus it has to be that $(D.F)_{x_1}=1$ and $(D.F)_{x_2}=3$, up to possibly swapping the indices.

   If $D$ is smooth at $x_2$, then the analytic local defining polynomial of $(D,F)$ is $(y-x^3,y)$, in which case one has that $\lct(\bP^1_R,\frac{1}{2}D;F)=\frac{5}{6}$, and hence $b=\frac{1}{6}$.

   If $\mult_{x_2}D=2$, then analytic locally, $D$ is defined by the polynomial $y^2-x^m$. As $(D.F)_{x_2}=3$, then it has to be that $m=3$ and the (analytic) defining polynomial of $F$ is $y$. In this case, one has that $\lct(\bP^1_R,\frac{1}{2}D;F)=\frac{2}{3}$, and hence $b=\frac{1}{3}$. 

   Now assume that $\mult_{x_2}D=3$. Since $D$ is assumed to be unibranched, then analytic locally, the defining polynomial of $D$ is $y^3-g(x,y)$, where $g(x,y)$ is a polynomial with every monomial having degree at least $4$. Take a blow-up of $\bP^1_R$ at $x_2$ and denote by $E$ (resp. $\wt{D}$ and $\wt{F}$) the exceptional divisor (resp. proper transform of $D$ and $F$). Since $D$ is unibranched at $x$, then $\Supp(\wt{D}\cap E)$ is a single point, denoted by $p$. 
   \begin{enumerate}
       \item If $\mult_y\wt{D}\geq3$, then $(\bP^1_R,\frac{1}{2}D)$ is not klt: the log discrepancy with respect to the exceptional divisor of blow-up at $p$ is at most $0$. 
       \item If $\mult_p\wt{D}=2$, then analytic locally at $y$, $\wt{D}$ is defined by polynomial $u^2-v^m$. Since $$(\wt{D}.\wt{F}+E)\ = \ (\wt{D}.\wt{F}+E)_p \ =\ (\wt{D}.E)_p \ = \ (D.F)_{x_2}=3,$$ then it has to be the case that $m=3$ and $E$ is defined by the polynomial $u$, analytic locally at $p$. Therefore, $(D.F)$ has local polynomial $(y^3-x^5,x)$, and $b=\frac{5}{6}$.
       \item The similar argument shows that if $\mult_p\wt{D}=1$, then $(D.F)$ has local polynomial $(y^3-x^4,x)$, and $b=\frac{2}{3}$.
   \end{enumerate}
\end{proof}

Analogously, one has the following.

\begin{prop}
  Suppose that the support of $D\cap \pi^{-1}(p)$ consists of a single point $x$. Then $\mult_x(D)=1$ or $3$, and the following holds.
    \begin{enumerate}
        \item If $D$ is smooth at $x$, then $D\to \spec R$ is ramified of ramification index $4$ and $b=\frac{1}{4}$.
        \item If $D$ is singular at $x$ (of multiplicity $3$), then the analytic local defining polynomials for $(D,F)$ are $(y^3-x^4,y)$, and one has $b=\frac{3}{4}$.
    \end{enumerate}
\end{prop}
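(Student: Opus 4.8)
The plan is to reduce everything to an analytic‑local computation near the point $x$ of the closed fibre $F:=F_0\subseteq\bP^1_R$, in complete parallel with the two‑point proposition just proved. Since $D\to\spec R$ has degree $2n+2=4$ and $\Supp(D\cap F)=\{x\}$, the whole length‑$4$ scheme $D\cap F$ is concentrated at $x$, so $(D.F)_x=4$; throughout we keep the standing hypothesis that $D$ is unibranched at $x$.

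First I would bound $\mult_x D$. As $\bP^1_R$ is smooth and $(\bP^1_R,\tfrac12 D)$ is klt, the exceptional divisor $E_0$ of the ordinary blow‑up of $x$ has discrepancy $a(E_0;\bP^1_R,\tfrac12 D)=1-\tfrac12\mult_x D>-1$, forcing $\mult_x D\le 3$. To exclude $\mult_x D=2$ I would blow up $x$ once: since $(D.F)_x=4>\mult_x D$ the curve $D$ is tangent to $F$, so the strict transforms $\widetilde D,\widetilde F$ and the exceptional curve $E$ all pass through one point $p$, with $(\widetilde D.E)_p=\mult_x D=2$, $(\widetilde F.E)_p=1$ and $(\widetilde D.\widetilde F)_p=(D.F)_x-\mult_x D=2$. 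A smooth $\widetilde D$ at $p$ would then be tangent simultaneously to the transverse pair $E,\widetilde F$, which is absurd; hence $\widetilde D$ is again a double point at $p$, now transverse to both $E$ and $\widetilde F$, and iterating one never leaves a configuration of this shape — incompatible with the normalised branch of $D$ being totally ramified of degree $4$ over $\spec R$ together with the klt hypothesis. This exclusion is the step I expect to cost the most; an alternative is to imitate the two‑point argument and run inversion of adjunction along $\widetilde F$ after one blow‑up, where the different of $\tfrac12\widetilde D$ has coefficient $\le 1$. Either way $\mult_x D\in\{1,3\}$.

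Next the two cases. If $\mult_x D=1$, then $(D.F)_x=4$ says $D$ is tangent to $F$ to order $4$; a finite‑determinacy argument (the defining equation of $D$ has the form $y-x^4\cdot(\text{unit})+(\text{terms divisible by }y)$, which can be normalised while keeping $F=\{y=0\}$) identifies $(D,F)$ analytically with $(\{y-x^4=0\},\{y=0\})$, and in particular $D\to\spec R$ is totally ramified of index $(D.F)_x=4$. The weighted blow‑up of $(x,y)$ with weights $(1,4)$ produces $E'$ with $a(E')=4$ and $\ord_{E'}D=\ord_{E'}F=4$, so $a(E';\bP^1_R,\tfrac12 D+\lambda F)=2-4\lambda$, which is $\ge-1$ exactly for $\lambda\le\tfrac34$; since the Newton polygon of $y-x^4$ is non‑degenerate this monomial valuation is extremal, giving $\lct_x(\bP^1_R,\tfrac12 D;F)=\tfrac34$, i.e.\ $b=\tfrac14$. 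If $\mult_x D=3$, blowing up $x$ gives $(\widetilde D.E)_p=3$, $(\widetilde F.E)_p=1$, $(\widetilde D.\widetilde F)_p=1$; the last equality forces $\widetilde D$ smooth at $p$ and transverse to $\widetilde F$, and $(\widetilde D.E)_p=3$ makes it tangent to $E$ to order $3$. Blowing this back down — using finite determinacy for the $(3,4)$‑cusp together with its tangent line $\widetilde F$ — identifies $(D,F)$ with $(\{y^3-x^4=0\},\{y=0\})$. The weighted blow‑up with weights $(3,4)$ then gives $a(E')=6$, $\ord_{E'}D=12$, $\ord_{E'}F=4$, so $a(E';\bP^1_R,\tfrac12 D+\lambda F)=-4\lambda$, and minimising $\tfrac{a+b-\frac12\min(3b,4a)}{b}$ over the monomial valuations $E_{a,b}$ confirms $\lct_x(\bP^1_R,\tfrac12 D;F)=\tfrac14$, hence $b=\tfrac34$.

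The main obstacle is the exclusion of $\mult_x D=2$: one must rigorously rule out a unibranched double point with $(D.F)_x=4$ that is compatible with klt and with $D\to\spec R$ of degree $4$. I would attack this through the iterated blow‑up bookkeeping sketched above, recording at each stage the intersection numbers with the exceptional and fibre curves and the klt constraint on the strict transform (or via the inversion‑of‑adjunction variant); the remainder is the same blow‑up plus log canonical threshold computation already carried out in the two‑point case.
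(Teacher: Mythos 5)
Your reduction of $\mult_x D\le 3$ via the ordinary blow-up, and your local computations of the log canonical thresholds in the two allowed cases (the tangency type $(y-x^4,y)$ with $b=\frac14$ and the $E_6$-type $(y^3-x^4,y)$ with $b=\frac34$), are sound and follow the same template as the paper's proof of the two-point proposition. But you rightly flag the exclusion of $\mult_x D=2$ as the crux, and the argument you sketch for it does not close: after the second blow-up the strict transforms of $E$ and $\widetilde F$ land on distinct points of the new exceptional curve and decouple from $\widetilde{\widetilde D}$, so the claimed ``never leaves this configuration'' fails, and the appeal to the normalised branch having degree $4$ plus klt yields no contradiction.

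In fact the case $\mult_x D=2$ does not seem to be excludable under the hypotheses as written. Take analytic coordinates with $F=V(y)$ and
\[
D \;=\; V\big((y+x^2)^2 - x^5\big).
\]
Then $D$ is \emph{unibranched} at the origin (it is an $A_4$ cusp, $\lct(\bA^2,D)=\frac{7}{10}>\frac12$, so $(\bP^1_R,\frac12 D)$ is klt), $\mult_x D=2$, and $(D.F)_x=\ord_x(x^4-x^5)=4$. A weighted blow-up of $(x,y)$ with weights $(1,2)$ — or a full log resolution, where the second exceptional divisor $E_2$ has discrepancy $2$, $\ord_{E_2}D=4$, $\ord_{E_2}F=2$ — gives $\lct_x\big(\bP^1_R,\frac12 D;F\big)=\frac12$, hence $b=\frac12\in(0,1)$, consistent with all the stated constraints. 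The germ is realized globally by, e.g., $D=V\big(t^2(1-x)+2tx^2+x^4\big)\subset\bA^1_R$, whose restriction to $F=V(t)$ is $4[x]$. So unless there is an additional global constraint from the quasimaps/ruled-model construction that is not reflected in the purely local hypotheses (klt, fibered in log CY, $0<b<1$), this proposition — and the corresponding row missing from Table~1 — should include the case $\mult_x D=2$, local pair $\big((y+x^2)^2-x^m,y\big)$ for odd $m\ge5$, with $b=\frac12$. Note also that the inversion-of-adjunction trick used in the two-point case to kill a $(2,2)$-split (there $(D.F)_{x_i}=2$ forces $b_{x_i}\le 0$) does not apply here: a single point with $(D.F)_x=4$ can perfectly well contribute $b=\frac12>0$. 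You should resolve this discrepancy before investing further in the iteration bookkeeping: either locate the extra hypothesis that rules out the $A_4$-tangency (and then reroute the argument through it) or record the additional case.
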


\renewcommand{\arraystretch}{1.5}
\begin{longtable}{| p{.10\textwidth} | p{.18\textwidth} | p{.20\textwidth} | p{.07\textwidth} | p{.07\textwidth} |}

\hline
\textbf{Index} & \textbf{Multiplicity} & \textbf{Local equation} & \textbf{lct} & \textbf{b} \\ \hline
\endfirsthead
\hline
\textbf{Index} & \textbf{Multiplicity} & \textbf{Local equation} & \textbf{lct} & \textbf{b} \\ \hline
\endhead
$3$ & $1$ & $(y-x^3,y)$ & $\tfrac{5}{6}$ & $\tfrac{1}{6}$ \\ \hline
$3$ & $2$ & $(y^2-x^3,y)$ & $\tfrac{2}{3}$ & $\tfrac{1}{3}$ \\ \hline
$3$ & $3$ & $(x^3-y^4,y)$ & $\tfrac{1}{3}$ & $\tfrac{2}{3}$ \\ \hline
$3$ & $3$ & $(x^3-y^5,y)$ & $\tfrac{1}{6}$ & $\tfrac{5}{6}$ \\ \hline
$4$ & $1$ & $(y-x^4,y)$ & $\tfrac{3}{4}$ & $\tfrac{1}{4}$ \\ \hline
$4$ & $3$ & $(y^3-x^4,y)$ & $\tfrac{1}{4}$ & $\tfrac{3}{4}$ \\ \hline
\caption{Boundary part: unibranched case} 
\end{longtable}

\renewcommand{\arraystretch}{1.5}
\begin{longtable}{| p{.10\textwidth} | p{.18\textwidth} | p{.30\textwidth} | p{.07\textwidth} | p{.07\textwidth} |}

\hline
\textbf{Index} & \textbf{Multiplicity} & \textbf{Local equation} & \textbf{lct} & \textbf{b} \\ \hline
\endfirsthead
\hline
\textbf{Index} & \textbf{Multiplicity} & \textbf{Local equation} & \textbf{lct} & \textbf{b} \\ \hline
\endhead
$3$ & $2$ & $\big(x(y-x^2),y\big)$ & $\tfrac{3}{4}$ & $\tfrac{1}{4}$ \\ \hline
$3$ & $3$ & $\big(x(x-ay)(x-by),y\big)$ & $\tfrac{1}{2}$ & $\tfrac{1}{2}$ \\ \hline
$3$ & $3$ & $\big(x(x^2-y^3),y\big)$ & $\tfrac{1}{4}$ & $\tfrac{3}{4}$ \\ \hline
$3$ & $3$ & $\big((x+ay)(x^2-y^m),y\big)$ & $\tfrac{1}{2}$ & $\tfrac{1}{2}$ \\ \hline
$4$ & $2$ & $\big(x(y-x^3),y\big)$ & $\tfrac{2}{3}$ & $\tfrac{1}{3}$ \\ \hline
$4$ & $2$ & $(y^2-x^4,y)$ & $\tfrac{1}{2}$ & $\tfrac{1}{2}$ \\ \hline
$4$ & $3$ & $\big((y-x^2)(y-ax)(y-bx),y\big)$ & $\tfrac{1}{2}$ & $\tfrac{1}{2}$ \\ \hline
$4$ & $3$ & $\big(x(y^2-x^3),y\big)$ & $\tfrac{1}{3}$ & $\tfrac{2}{3}$ \\ \hline
\caption{Boundary part: non-unibranched case} 
\end{longtable}
\begin{remark}
    The contribution for the boundary part on the normalization of a surface as in \Cref{prop: geometry of S3} is $\frac{1}{2}$. This happens if either the divisor is tangent to the horizontal double locus, or if the divisor has two transverse branches which both intersect transversally the double locus.
\end{remark}

\begin{remark}
    We remark that in this subsection we study the horizontal part of the divisor. It could (and in fact, in some cases it does) happen however that the divisor has vertical components. For example, a smooth $(4,4)$ curve in $\bP^1\times\bP^1$ can degenerate to the sum of four fibers for each of the two fibrations of $\bP^1\times \bP^1$. 
\end{remark}

\section{GIT quotients, K-moduli and quasimaps moduli spaces}\label{section_example_14_case}

The goal of this section is to compare the GIT, K-moduli, and quasimap compactification of the moduli stack (resp. space) of smooth (1,4)-curves in $\bP^1\times \bP^1$. We will show that the K-moduli is isomorphic to the GIT moduli, and there exists a birational morphism from the moduli of quasimaps to the GIT moduli.

\subsection{Quasimap and GIT compactifications of $(1,4)$-divisors}\label{sec:GIT and quasimap classification} We begin with describing the objects parametrized by the quasimap moduli stack and the GIT moduli, respectively. 

Let $\beta$ be a class in $\sP_1$ such that $\cQ_{0,1,\beta}$ generically parametrizes maps $\bP^1\to \sP_1$, which induces a family $$\textstyle p_1\ :\  (\bP^1\times\bP^1,\frac{1}{2}D) \ \longrightarrow\  \bP^1,$$ where $D$ is a $(1,4)$-divisor, i.e. $\mtc{O}_{\bP^1\times\bP^1}(D)\simeq p_1^*\mtc{O}_{\bP^1}(1)\otimes p_2^*\mtc{O}_{\bP^1}(4)$. 
\begin{notation}
    \textup{We denote by $\overline{\cQ}$ the closure of the locus in $\cQ_{0,1,\beta}$ parametrizing maps $\bP^1\to \sP_1$ corresponding to a divisor of class $(1,4)$ in $\bP^1\times \bP^1$.}
\end{notation}

\begin{prop}\label{prop:classfication_of_quasimaps_1_4}
   A quasimap $[\phi\colon \cC\to \sP_1]\in \overline{\cQ}$ satisfies one of the following \textup{(ref. Figure (\ref{fig:Classification of quasimaps}))}:
    \begin{enumerate}
        \item $\cC\cong \bP^1$, and the map $\phi$ induces a family $\bP^1\times\bP^1\to \bP^1$ with divisor $(1,4)$, or
        \item $\cC$ is a nodal union of two irreducible components $\cC_4\cup \cC_2$ with a $\bmu_3$ stabilizer as a node, and the map $\phi$ on good moduli spaces has degree 4 (resp. 2) on the coarse moduli space of $\cC_4$ (resp. $\cC_2$), or
        \item $\cC$ is a nodal union of two irreducible components $\cC_{3,1}\cup \cD_{3,2}$ with a $\bmu_2$ or $\bmu_4$ stabilizer as a node, and the map $\phi$ on good moduli spaces has degree 3 on the coarse moduli space of $\cC_{3,1}$ and $\cD_{3,2}$, or
        \item $\cC$ is a nodal union of four irreducible components $\cC_{2,1}\cup \cC_{2,2}\cup \cC_{2,3}\cup \cD$ with a $\bmu_3$ stabilizer at each node, and the map $\phi$ on good moduli spaces has degree 2 on the coarse moduli spaces of $\cC_{2,i}$ for every $i$ and contracts $\cD$, or
         \item $\cC$ is a nodal union of three irreducible components $\cC_{2,1}\cup \cC_{2,2}\cup \cC_{2,3}$ with a $\bmu_3$ stabilizer at each node, and the map $\phi$ on good moduli spaces has degree 2 on the coarse moduli spaces of $\cC_{2,i}$ for every $i$; or
           \item $\cC$ is a nodal union of three irreducible components $\cC_{2}\cup_p \cC_{1}\cup_q \cC_{3}$ with a $\bmu_3$ stabilizer at $p$ and a $\bmu_4$ stabilizer at $q$, and the map $\phi$ on good moduli spaces has degree $i$ on the coarse moduli spaces of $\cC_{i}$ for every $i$.
    \end{enumerate}
    \begin{figure}
    \centering
\includegraphics[width=.7\linewidth]{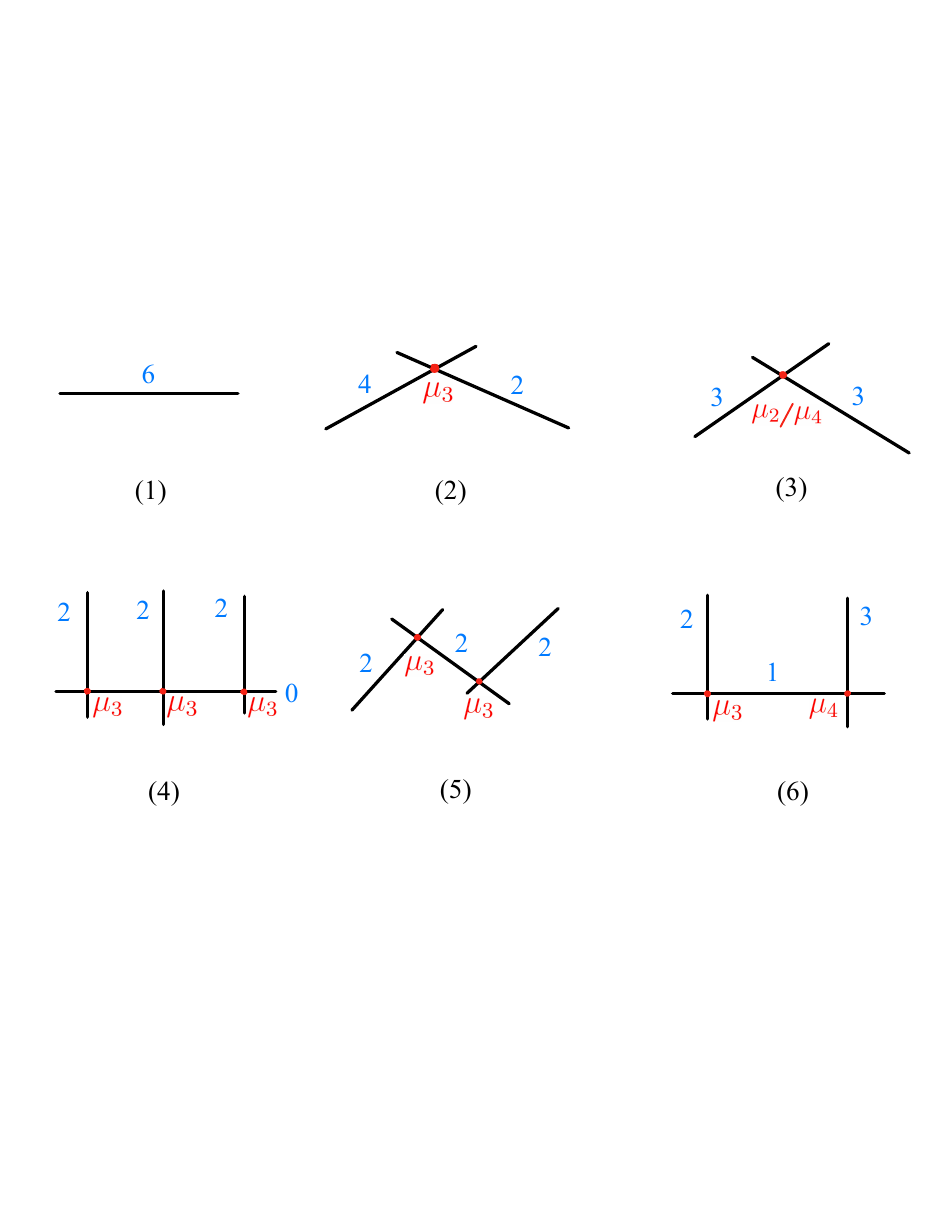}
    \caption{Classification of quasimaps \\
    red = stabilizer; blue = degree}
    \label{fig:Classification of quasimaps}  
\end{figure}
\end{prop}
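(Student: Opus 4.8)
The plan is to classify an arbitrary quasimap $[\phi\colon\cC\to\sP_1]\in\overline{\cQ}$ by combining the numerical constraint on the degree of the induced morphism to the good moduli space $P_1\cong\bP^1$, the tree structure forced by $\cC$ having genus $0$, and the local constraints coming from representability of $\phi$ together with the stability and singularity axioms of \Cref{def_stable_qmap}. \textbf{Step 1 (the total degree).} Writing $f\colon C\to P_1$ for the morphism on coarse spaces induced by $\cC\to\sP_1\to P_1$, I would first show that $\deg f=6$. A point of $\overline{\cQ}$ in the dense open locus parametrizes a map $\bP^1\to\sP_1^{\GIT}$ attached to a smooth $(1,4)$-curve in $\bP^1\times\bP^1$, and by \Cref{cor:degree 6n} (via \Cref{lemma_relation_degree_of_map_to_p1_and_moduli_part}) the corresponding morphism to $P_1$ has degree $6$; since $\deg f$ is the discrete datum recorded by the class $\beta$, condition (Num.) forces $\deg f=6$ on all of $\overline{\cQ}$. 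By (Stab.), moreover, two points of $\cC$ have non-$S$-equivalent images, so $f$ is non-constant; on each irreducible component $f$ is therefore either constant or finite onto $P_1$.

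\textbf{Step 2 (combinatorics of $\cC$).} As $\cC$ has genus $0$, its coarse space $C$ is a tree of $\bP^1$'s; write $\cC=\bigcup_i\cC_i$ and let $d_i\ge 0$ be the degree of $f|_{\cC_i}$ onto $P_1$, so that $d_i=0$ iff $\cC_i$ is contracted and $\sum_i d_i=6$. A contracted component has all fibres of the universal family isomorphic, hence $S$-equivalent, so, since there are no markings, (Stab.) forces $\deg(\omega_\cC|_{\cC_i})>0$ — i.e.\ $\cC_i$ carries at least three nodes. In a genus-$0$ tree with $\sum_i d_i=6$ this is possible for at most one component, which must then meet exactly three non-contracted components; this yields configuration (4), with $\cD$ the contracted ``star''.

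\textbf{Step 3 (the non-contracted components).} Each non-contracted $\cC_i$ is a (possibly stacky) genus-$0$ curve surjecting onto $P_1$, hence meeting the preimages of the three distinguished points of $P_1$: the images of $x_2,x_3\in\sP_{1,\dm}$ of \Cref{lemma_order_stabilizers} and of the nodal point $x_\infty$. By \Cref{lemma_if_C_is_a_scheme_on_the_pteimage_of_x_i_then_i_divides_the_order_of_f}, $f|_{\cC_i}$ ramifies to even order at every schematic preimage of $x_2$ and to order divisible by $3$ at every schematic preimage of $x_3$, so if $d_i$ is odd then $\cC_i$ is stacky over $x_2$ and if $3\nmid d_i$ then $\cC_i$ is stacky over $x_3$; since a twisted curve is stacky only at nodes (there being no markings), these become forced nodes. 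Feeding this into Riemann--Hurwitz $2d_i-2=\sum_p(e_p-1)$ on $\cC_i$ leaves only $d_i\in\{1,2,3,4\}$, with $d_i=1$ forcing a node over $x_2$ and one over $x_3$, $d_i=2$ and $d_i=4$ forcing a node over $x_3$, and $d_i=3$ forcing a node over $x_2$. Representability then constrains the root orders: the stabilizer of $\cC$ at such a node embeds into $\Aut_{\sP_1}$ of its image, so by \Cref{lemma_order_stabilizers} a node over $x_3$ carries $\bmu_3$, one over $x_2$ carries $\bmu_2$ or $\bmu_4$, and one over $x_\infty$ carries $\bmu_2$; a finer analysis of the ramification at the node selects which of $\bmu_2,\bmu_4$ occurs.

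\textbf{Step 4 (assembling the tree) and the main obstacle.} It remains to glue the components into a genus-$0$ tree with $\sum_i d_i=6$ compatibly with Steps 2--3 and with the balancing condition at nodes. Going through the partitions of $6$ with parts in $\{1,2,3,4\}$: $6=6$ gives (1); $6=4+2$, glued along the common $x_3$-node, gives (2); $6=3+3$, glued along the common $x_2$-node, gives (3); $6=2+2+2$ gives (5), and inserting the contracted star gives (4); $6=2+1+3$, a chain through an $x_3$-node and an $x_2$-node, gives (6). Every other partition is excluded: a summand $d_i\in\{1,5\}$ needs two forced nodes and so cannot be a leaf of the tree, while a contracted component needs three non-contracted neighbours and so cannot coexist with fewer than three positive-degree components. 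The main obstacle is precisely this last stage — the careful verification, combinatorial type by combinatorial type, that only the ramification profiles and node stabilizers appearing in (1)--(6) are simultaneously compatible with representability, the divisibility of \Cref{lemma_if_C_is_a_scheme_on_the_pteimage_of_x_i_then_i_divides_the_order_of_f}, Riemann--Hurwitz, and the axioms (Sing.), (Stab.) and (Num.) (with \Cref{lemma_on_tails_i_have_a_marking} controlling the rational tails) — so that the list (1)--(6) is indeed exhaustive.
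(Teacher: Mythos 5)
Your strategy is recognizably close to the paper's, but it is organized differently and has one concrete gap. The paper splits off the smooth case ($\cC\cong\bP^1$) first and argues via \Cref{lem: Hirzebruch} that the associated ruled surface is $\bF_n$ with $6n\le\deg f=6$, hence $n\in\{0,1\}$, and then rules out $\bF_1$ by a degeneration argument on curve classes; only then does it handle the nodal case by picking a leaf $\cD$ of minimal degree and running divisibility constraints from \Cref{lemma_if_C_is_a_scheme_on_the_pteimage_of_x_i_then_i_divides_the_order_of_f}, concluding $\deg(f|_D)\in\{2,3\}$ and treating each case. Your proposal instead enumerates partitions of $6$ among components, which is a legitimate reorganization, but you simply assert ``$6=6$ gives~(1)'' without the Hirzebruch-surface step: nothing in your argument explains why the surface in the smooth case must be $\bP^1\times\bP^1$ and not $\bF_1$. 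This is a genuine omission and is not covered by your appeal to Riemann--Hurwitz, divisibility or the quasimap axioms.

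Two smaller issues. First, the claim in Step~3 that Riemann--Hurwitz ``leaves only $d_i\in\{1,2,3,4\}$'' is not a Riemann--Hurwitz consequence; the bound $d_i\le 4$ in the nodal case comes from $\sum d_i=6$ together with the exclusion of degree-$1$ leaves, not from the ramification formula alone (and indeed $d_i=6$ occurs in the smooth case, so the statement as written is internally inconsistent). Second, your Step~4 exclusion rules only consider $d_i\in\{1,5\}$ as leaves and the contracted-component valence count, which does not by itself eliminate chains such as $2$--$1$--$1$--$2$ with two internal degree-$1$ components; you acknowledge this as the ``main obstacle,'' and the paper deals with it by locating nodes over $x_2,x_3,x_\infty$ and using the root-stack description of $\sP_1\to\sP_1^{\CY}$ to constrain ramification at the preimage of the nodal point, which your outline does not invoke. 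These pieces are exactly what keeps the final combinatorial check from being a formality, so they should be made explicit rather than deferred.
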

\begin{proof}  Assume first that $\cC$ is smooth, i.e. $\mtc{C}\simeq \bP^1$. By \Cref{lem: Hirzebruch}, the pair $(X,\frac{1}{2}D)$ satisfies that either $X\simeq \bP^1\times \bP^1$ or $X\simeq \bF_1$. However, $\bP^1\times\bP^1$ cannot degenerate to $\bF_1$: a ruling $f_1$ of $\bP^1\times \bP^1$ would degenerate to a fiber of $\bF_1\to \bP^1$, and the other ruling $f_2$ of $\bP^1\times \bP^1$ would degenerate to a curve class in $F_2$ of the form $\mtf{e} + a \mtf{f}$; but $(\mtf{e} + a \mtf{f})^2 = -1 +2a\neq  0$.

We now assume that $\cC$ is not smooth. 
Let $\cD\subseteq \cC$ be an (irreducible) tail of $\cC$, i.e. it corresponds to a leaf in the dual graph of $\cC$. Since $\phi$ is stable, there must be another irreducible component $\cD'$ which is a leaf in the dual graph. Denote by $D$ (resp. $D'$) the coarse moduli space of $\cD$ (resp. $\cD'$). As $\phi$ is stable, the map $D\to \bP^1$ is finite, and $\cD$ and $\cD'$ have exactly one node, $n$ and $n'$. We know that $\deg(f|_{D})+\deg(f|_{D'})\le 6$ and $1\le\min( \deg(f|_{D'}),\deg(f|_{D'}))$, so there are only a few possible cases. We may assume that $\deg (f|_{D})\leq 3$.
\begin{enumerate}[(i)]
    \item $\deg(f|_{D})=1$ is impossible: as $\cC$ has at most one twisted note on $\cD$, then $\mtc{D}$ at most one stacky point, which contradicts Lemma \ref{lemma_if_C_is_a_scheme_on_the_pteimage_of_x_i_then_i_divides_the_order_of_f}.
    \item If $\deg(f|_{D})=2$, then Lemma \ref{lemma_if_C_is_a_scheme_on_the_pteimage_of_x_i_then_i_divides_the_order_of_f} implies that $\cD$ has a node over $x_3$, and the map $D\to \bP^1$ is totally ramified over $x_3$. From the stability condition, one can check that there is at most one irreducible component of $\cC$ over which the degree of $f$ is zero, and this component must be attached to the point on $\cC$ mapping to $x_3$. Thus, the only possible cases are those listed as (2), (4), (5) and (6).
    \item If $\deg(f|_{D})=3$, then Lemma \ref{lemma_if_C_is_a_scheme_on_the_pteimage_of_x_i_then_i_divides_the_order_of_f} implies that $\cD$ has a node over $x_2$. Since $\deg(f)=6$ and each irreducible component which is a leaf on the dual graph of $\cC$ maps finitely to $\bP^1$, from (i) there is exactly one other leaf. This is case (3).
\end{enumerate}\end{proof}

\begin{prop}\label{prop:classification of GIT}
   Let $C_0:=\bV(x_0^4y_0-x_1^4y_1)$ and $C_1:=\bV(x_0x_1(x_0^2y_0-x_1^2y_1))$ be two curves on $\bP^1_{[x_0:x_1]}\times\bP^1_{[y_0:y_1]}$. Then under the $G:=\PGL(2)\times\PGL(2)$-action, a curve $C\in |\mtc{O}_{\bP^1\times\bP^1}(1,4)|$ is
   \begin{enumerate}
       \item GIT stable if and only if it is smooth and does not isotrivially degenerate to $C_0$ or $C_1$;
       \item strictly GIT polystable if and only if it is in the same $G$-orbit of $[C_0]$ or $[C_1]$.
   \end{enumerate}  
\end{prop}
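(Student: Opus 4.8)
\emph{Proof sketch.}
The plan is to run the Hilbert--Mumford numerical criterion directly. Write $C=\{F=0\}$ with $F=y_0P(x_0,x_1)+y_1Q(x_0,x_1)$ and $P,Q\in\oH^0(\bP^1,\cO(4))$, so that $C$ is recorded by the ordered pencil $\langle P,Q\rangle$ of binary quartics; matching the statement, $C_0$ has $(P,Q)=(x_0^4,-x_1^4)$ and $C_1$ has $(P,Q)=(x_0^3x_1,-x_0x_1^3)$. The parameter space is $\bP(V)$ with $V=\oH^0(\bP^1\times\bP^1,\cO(1,4))\cong\Sym^4\otimes\Sym^1$, carrying the unique $G=\PGL(2)\times\PGL(2)$-linearization (the group being semisimple). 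By Hilbert--Mumford it suffices to test one-parameter subgroups; up to conjugation these are the diagonal $\lambda_{p,q}$ with $x_0\mapsto t^px_0$, $x_1\mapsto t^{-p}x_1$, $y_0\mapsto t^qy_0$, $y_1\mapsto t^{-q}y_1$, under which $x_0^ix_1^{4-i}y_0$ has weight $p(2i-4)+q$ and $x_0^ix_1^{4-i}y_1$ has weight $p(2i-4)-q$. The essential point to keep in mind is that stability must be tested after \emph{every} change of coordinates, not merely on the given equation: a coordinate change moves $x_0$ to an arbitrary linear form and reparametrizes the pencil $\langle P,Q\rangle$.

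First I would determine the unstable locus. Running through the finitely many slopes $q/p\in[-\infty,\infty]$, and allowing an arbitrary coordinate change, one checks that $C$ is not semistable precisely when one of the following holds: (i) $P$ and $Q$ are linearly dependent, i.e.\ the pencil collapses and $C$ contains a whole ruling fiber; (ii) there is a line $\ell$ with $\ell^2\mid\gcd\langle P,Q\rangle$ and $\ell^3$ dividing some member of the pencil --- geometrically, $C$ has a point of multiplicity $\ge 3$ on a line component $\{\ell=0\}$ of $C$; or (iii) there is a line $\ell$ with $\ell\mid\gcd\langle P,Q\rangle$ and some member of the pencil equal to $c\ell^4$. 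In each case $P$ and $Q$ acquire a common factor, so any curve with $\gcd\langle P,Q\rangle=1$ --- in particular any smooth $(1,4)$-curve, since then $C$ is the graph of the degree-four morphism $[x]\mapsto[-Q(x):P(x)]$ --- is automatically semistable; complementing the three conditions above describes the full semistable locus.

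Next I would identify, among semistable curves, those that fail to be stable. Such a $C$ admits a nonzero one-parameter subgroup with $\mu(C,\lambda)=0$, i.e.\ after coordinates its equation lies in the weight-zero subspace $V_0(\lambda_{p,q})$ for some $(p,q)\ne 0$; since at most one $y_0$-monomial and at most one $y_1$-monomial can lie at weight zero, this subspace is two-dimensional and spanned by some $x_0^ax_1^{4-a}y_0$ and $x_0^{a'}x_1^{4-a'}y_1$, and $p(2a-4)+q=0=p(2a'-4)-q$ forces $a+a'=4$. The three cases $(a,a')\in\{(0,4),(1,3),(2,2)\}$ give respectively $[x_1^4y_0+c\,x_0^4y_1]$, in the $G$-orbit of $C_0$; $[x_0x_1^3y_0+c\,x_0^3x_1y_1]=[x_0x_1(x_1^2y_0+c\,x_0^2y_1)]$, in the orbit of $C_1$; and $[x_0^2x_1^2(y_0+cy_1)]$, which is $G$-equivalent to $[x_0^2x_1^2y_0]$ and hence unstable by (i) --- so it does not occur. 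Thus, up to $G$, the closed strictly-semistable orbits are exactly $[C_0]$ and $[C_1]$. It remains to check that $C_0$ and $C_1$ are genuinely strictly polystable: each is stabilized by a one-dimensional torus --- $(sx_0,s^{-1}x_1,s^{-4}y_0,s^4y_1)$ fixes $C_0$ and $(sx_0,s^{-1}x_1,s^{-2}y_0,s^2y_1)$ fixes $C_1$ --- so its orbit has dimension $\le 5<6=\dim G$ and neither is stable; $\mu([C_i],\lambda)\ge 0$ for all $\lambda$ follows from the weight formula applied over the whole orbit together with the failure of (i)--(iii) (using $\gcd\langle P,Q\rangle=1$ for $C_0$ and $\gcd\langle P,Q\rangle=x_0x_1$ with balanced residual for $C_1$); and the orbits are closed in $\bP(V)^{ss}$ because $C_0$ and $C_1$ are fixed by their destabilizing subgroups, so any $\lambda$ with $\mu([C_i],\lambda)=0$ sends $C_i$ back into its own orbit. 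Assembling these statements yields exactly the Proposition: $C$ is stable iff it is smooth and $\overline{G\cdot[C]}$ contains neither $[C_0]$ nor $[C_1]$, and strictly polystable iff $[C]\in G\cdot[C_0]\cup G\cdot[C_1]$.

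The main obstacle is the bookkeeping in the weight analysis of the second paragraph --- one must genuinely range over all coordinate systems, the curve $x_0^2x_1^2(y_0+cy_1)$ being the cautionary example of an equation that looks semistable ``on the nose'' but is destabilized by a twisted one-parameter subgroup --- together with the verification that the orbits of $C_0$ and $C_1$ are closed in the semistable locus. For the latter it may be cleaner to pass to the $\SL(2)\times\SL(2)$-equivariant description in which $C$ corresponds to the pair of covariants $\bigl(J(P,Q),(P,Q)_3\bigr)\in\Sym^6\oplus\Sym^2$ of the pencil and to invoke the classical GIT of binary forms there.
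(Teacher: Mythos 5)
Your route is the same as the paper's: the Hilbert--Mumford criterion applied to the $2$-torus, conjugated over $G$ (the paper phrases the torus criterion as a Newton-polytope/barycenter condition; you phrase it with explicit weights; these are identical). Your combinatorial elaborations --- the three unstable configurations (i)--(iii) and the enumeration of two-dimensional weight-zero subspaces $V_0(\lambda_{p,q})$ leading to the closed orbits $[C_0],[C_1]$ --- are more explicit than the paper's two-line argument, and they do establish part (2).

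The last paragraph, however, overreaches. What your analysis actually yields is: $C$ is stable iff $C$ is \emph{semistable} and $\overline{G\cdot[C]}$ avoids $[C_0]$ and $[C_1]$. To reach the stated form of (1) you need the further implication ``$C$ semistable and singular $\Rightarrow \overline{G\cdot[C]}$ contains $[C_0]$ or $[C_1]$'', which you do not prove; your conditions (i)--(iii) all force $\gcd\langle P,Q\rangle\neq 1$, but the converse fails, so reducible semistable curves are left unaccounted for. (The same leap is made in the paper's proof, which only states the polytope criterion and then asserts (2).) In fact the implication appears to be false. Take
\[
C \;=\; \bV\!\bigl((x_0-ax_1)(x_0-bx_1)(x_0^2y_0-x_1^2y_1)\bigr),\qquad a,b\in\bC^{\times}\ \text{generic},
\]
a nodal curve with two fibre components, so $\gcd(P,Q)=(x_0-ax_1)(x_0-bx_1)$ and $C$ is singular. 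The discriminant covariant $\operatorname{disc}(y_0P+y_1Q)\in\Sym^6(\bC^2)$ is an $\SL_2^{(x)}$-invariant, $\SL_2^{(y)}$-equivariant degree-$6$ polynomial in the coefficients of $F$. It sends $\hat C\mapsto c\cdot y_0y_1\,(a^2y_0-y_1)^2(b^2y_0-y_1)^2$, a binary sextic with multiplicity pattern $(1,1,2,2)$, hence $\SL_2^{(y)}$-stable; whereas $\operatorname{disc}(C_0)$ and $\operatorname{disc}(C_1)$ are both proportional to $y_0^3y_1^3$, pattern $(3,3)$. Equivariance forces: if $\lambda$ destabilized $C$ then $\lambda_y$ would destabilize $\operatorname{disc}(C)$ (impossible, it is stable), so $C$ is semistable; and if $C$ degenerated to $[C_0]$ or $[C_1]$ under a one-parameter subgroup, $\operatorname{disc}(C)$ would degenerate to a $(3,3)$ sextic, which cannot happen, since under any $\lambda\in X_*(\SL_2)$ all non-fixed roots flow to a single fixed point, so from $(1,1,2,2)$ one can only reach patterns $(6,0),(5,1),(4,2)$. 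Thus $C$ is semistable, its orbit closure avoids $[C_0]$ and $[C_1]$, and hence it is \emph{stable} --- but singular. So the word ``smooth'' in part (1) should most likely be ``semistable''.
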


\begin{proof}
    Observe that a $(1,4)$-curve on $\bP^1\times\bP^1$ is singular if and only if it is reducible. For any $(1,4)$-curve $C=\bV(f)$, consider the convex hull $\Delta(C)$ in the following diagram spanned by all monomials in $f$ with non-zero coefficients, where the red bullet point $P$ is the barycenter of the 10-vertex-diagram. 
    \[\begin{tikzcd}
	\begin{array}{c}  x_0^4y_0 \\ \bullet \end{array} & \begin{array}{c}  x_0^3x_1y_0 \\ \bullet \end{array} & \begin{array}{c}  x_0^2x_1^2y_0 \\ \bullet \end{array} & \begin{array}{c}  x_0x_1^3y_0 \\ \bullet \end{array} & \begin{array}{c}  x_1^4y_0 \\ \bullet \end{array} \\
	&& {\color{red}{\bullet}} \\
	\begin{array}{c} \bullet \\ x_0^4y_1 \end{array} & \begin{array}{c} \bullet \\ x_0^3x_1y_1 \end{array} & \begin{array}{c} \bullet \\ x_0^2x_1^2y_1 \end{array} & \begin{array}{c} \bullet \\ x_0x_1^3y_1 \end{array} & \begin{array}{c} \bullet \\ x_1^4y_1 \end{array}
\end{tikzcd}\]
Then a curve $C$ is 
    \begin{itemize}
        \item GIT semistable if and only if $P$ is contained in $\Delta(C')$ for any curve $C'$ in the same $G$-orbit of $C$; 
        \item GIT polystable if and only if it is GIT semistable and there exists a $C'$ in the same $G$-orbit of $C$ such that $\Delta(C')$ is a line segment through $P$;
        \item GIT stable if and only if $P$ is contained in the interior of $\Delta(C')$ for any curve $C'$ in the same $G$-orbit.
    \end{itemize}
    The only two strictly GIT polystable points up to the $G$-action are $[C_0]$ and $[C_1]$. 
\end{proof}

To understand the interpolation between the GIT moduli and the moduli of quasimaps, let us classify the smooth divisors on $\bP^1\times\bP^1$ of bidegree $(1,4)$. Let $C\subseteq \bP^1\times\bP^1$ be such a divisor. Then 
\begin{itemize}
    \item the projection $p_2:(\bP^1\times\bP^1,\frac{1}{2}C)\rightarrow \bP^1$ induces a morphism $\bP^1\rightarrow \sP_1^{\GIT}$, i.e. every point on $C$ has ramification index $\leq 2$ with respect to $p_2|_C:C\rightarrow \bP^1$; or
    \item $p_2|_C:C\rightarrow \bP^1$ has a branch point with ramification index either $3$ or $4$. Considering all possible combinations, one of the following cases occur:
    \begin{enumerate}
        \item there is a unique ramification index $3$ point;
        \item there are two ramification index $3$ points;
        \item there are three ramification index $3$ points;
        \item there is a unique ramification index $4$ point;
        \item there is a unique ramification index $4$ point and a unique ramification index $3$ point;
        \item there are two ramification index $4$ points.
    \end{enumerate}
\end{itemize}

Denote by $Z_i$ the sublocus parametrizing curves satisfying item (i), where $i=1,...,6$. Then these loci have the following specialization relation:
\[\begin{tikzcd}
	{Z_6=\{Q\}} & {\ove{Z}_5} & {\ove{Z}_4} & {\ove{Z}_1} \\
	& {Z_3=\{P\}} & {\ove{Z}_2}
	\arrow["\subseteq"{marking, allow upside down}, draw=none, from=1-1, to=1-2]
	\arrow["\subseteq"{marking, allow upside down}, draw=none, from=1-2, to=1-3]
	\arrow["\subseteq"{marking, allow upside down}, draw=none, from=1-2, to=2-3]
	\arrow["\subseteq"{marking, allow upside down}, draw=none, from=1-3, to=1-4]
	\arrow["\subseteq"{marking, allow upside down}, draw=none, from=2-2, to=2-3]
	\arrow["\subseteq"{marking, allow upside down}, draw=none, from=2-3, to=1-4]
\end{tikzcd},\] where $\ove{Z}_i$ is the closure of $Z_i$.

\subsection{From quasimap moduli space to GIT moduli space}
The goal of this subsection is to relate the quasimap moduli space with the GIT moduli space of (1,4)-curves in $\bP^1\times \bP^1$. More precisely, let $\sB$ be the normalization of a root stack of $\overline{\cQ}$, which is defined in Section \S \ref{sec:step 2}; we will construct a morphism from $\sB$ to the GIT moduli stack of $(1,4)$-curves in $\bP^1\times \bP^1$. 

\begin{prop}\label{prop:existence of a morphism}
   There exists a morphism $$\sB^n\ \longrightarrow \  \cM^{\GIT}_{(1,4)}.$$
\end{prop}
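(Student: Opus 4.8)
The plan is to build $\sB^n \to \cM^{\GIT}_{(1,4)}$ by first defining it over the locus of smooth quasimaps and then extending by a GIT-theoretic stable reduction argument, with the explicit description of degenerate quasimaps and of the ruled model used to make the extension functorial. Recall that a morphism to the quotient stack $\cM^{\GIT}_{(1,4)}=[\,|\cO_{\bP^1\times\bP^1}(1,4)|^{\sst}/G\,]$, with $G=\PGL(2)\times\PGL(2)$, is the same as a family of pairs $(P,D)$ over the source, where $P$ is a form of $\bP^1\times\bP^1$ and $D$ is a relatively-$(1,4)$ divisor all of whose fibres are GIT-semistable.

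First I would work over the open substack of $\overline{\cQ}$ parametrising quasimaps $\phi\colon \bP^1\to \sP_1^{\GIT}$ from a smooth source. By \Cref{lem: Hirzebruch} the induced ruled surface is forced to be $\bP^1\times\bP^1$ — the alternative $\bF_1$ is excluded since $\bP^1\times\bP^1$ cannot specialise to $\bF_1$, exactly as in the proof of \Cref{prop:classfication_of_quasimaps_1_4} — and the corresponding divisor is a $(1,4)$-curve. A smooth $(1,4)$-curve is irreducible, hence GIT-semistable by \Cref{prop:classification of GIT}; consequently the universal family $(\bP^1\times\bP^1,D)$ over this open locus already defines a morphism to $\cM^{\GIT}_{(1,4)}$.

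Next I would extend this rational map over the boundary of $\sB^n$ using the valuative criterion together with the projectivity of $\overline{M}^{\GIT}_{(1,4)}=|\cO_{\bP^1\times\bP^1}(1,4)|\sslash G$. Given a DVR $R$ with fraction field $K$ and a map $\Spec R\to \sB^n$ whose generic point lands in the smooth locus, the generic fibre is a $K$-point of $\overline{M}^{\GIT}_{(1,4)}$, which extends uniquely to an $R$-point. By the semistable reduction theorem for GIT quotients (Hilbert--Mumford), after a finite base change $R\subseteq R'$ this $R'$-point is realised by an $R'$-flat family $(\bP^1_{R'}\times\bP^1_{R'},D_{R'})$ of GIT-semistable $(1,4)$-curves whose restriction over $\eta$ lies in the $G$-orbit of $D_\eta$. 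This gives the extension over $\Spec R'$; the role of passing first to a root stack and then to the normalisation in the definition of $\sB^n$ in \S\ref{sec:step 2} is precisely to build in the stabiliser data that absorbs these finite base changes, so that the resulting family descends to $\sB^n$ and, being defined in codimension one on the normal stack $\sB^n$, extends to all of it.

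The main obstacle is the descent/functoriality step: proving that the GIT-semistable limits produced above over DVRs glue to one flat family of $(1,4)$-curves over $\sB^n$, and that the root stack chosen in \S\ref{sec:step 2} is exactly the one making this possible. Here I would feed in the ruled-model machinery of \S\ref{section_stable_reduction}: by \Cref{thm_you_can_take_ruled_model_in_ksba_moduli} the threefold over $\Spec R$ attached to the quasimap admits a ruled model $(\sY,\tfrac12\sD_\sY)\to\sC$, and running the $(K_\sC+\bfM)$-MMP together with the canonical-bundle-formula bookkeeping identifies, case by case along the list of \Cref{prop:classfication_of_quasimaps_1_4}, the limiting $(1,4)$-curve — it is a smooth (or at worst irreducible) curve in the shallow strata, and one of the polystable curves $C_0,C_1$ of \Cref{prop:classification of GIT} in the deepest ones. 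Checking that each such limit is GIT-semistable, and that the assignment is compatible with base change, is what determines the precise root stack; once this is in place the morphism $\sB^n\to\cM^{\GIT}_{(1,4)}$ exists, and its exceptional locus is obtained by matching the two classifications as in \S\ref{sec:GIT and quasimap classification}.
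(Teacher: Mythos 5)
Your outline targets the right object (a flat family of $\bP^1\times\bP^1$s with GIT-semistable $(1,4)$-divisors over $\sB^n$) and cites the right classification tools, but the extension mechanism has a genuine gap. The valuative criterion applies to the proper good moduli space $\overline{M}^{\GIT}_{(1,4)}$, not to the quotient stack $\cM^{\GIT}_{(1,4)}$, which is not separated (and hence not proper) because of the strictly semistable locus. The semistable-reduction fill-ins you invoke over DVRs are only canonical up to $G$-conjugation and $\bG_m$-twisting, so they do not assemble into a global family without an a priori compatible choice; and the required finite base changes could, in principle, have any degree, whereas the root stack $\sB\to B$ of \S\ref{sec:step 2} is a $\bmu_2$-root stack along the discriminant $Z$ chosen so that the blow-up $\sC''\to\sC'$ of the twisted family of curves has reduced fibres --- a purpose with nothing to do with GIT. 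Asserting that this particular root stack ``absorbs these finite base changes'' is precisely the statement one would need to prove, and the valuative-criterion framing gives no traction on it. Similarly, ``defined in codimension one on a normal stack, so it extends everywhere'' is not a valid closing move for a map to a non-proper target.

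The paper's argument runs the other way: Steps 1--3 of \S\ref{sec:step 2} \emph{construct} the family $(Y_\sB,\tfrac12 D_{Y_\sB})\to C_\sB\to\sB$ directly, by a KSBA wall-crossing contracting the degree-two tails, the explicit root stack and blow-up of twisted curves, and a relative MMP that is descended from an \'etale cover to $\sB$ by descent for polarized schemes (relative ampleness of $D_Y'$ is what makes the descent go through). GIT enters only at the very end, as a \emph{check}: the proof of \Cref{prop:existence of a morphism} verifies, via the log-canonical-threshold bookkeeping of \Cref{lct} together with \Cref{prop:classification of GIT}, that every fibre of $D_{Y_\sB}$ is a GIT-semistable $(1,4)$-divisor. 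In other words, GIT semistable reduction is a corollary of the explicit construction, not the engine of the proof. The content of your third paragraph --- the ruled-model MMP and the case-by-case matching --- is the actual construction; it should come first, and the ``extension'' language should be dropped.
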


\noindent We divide the argument into three steps, by modifying the universal family of surface pairs over $\ove{\cQ}$, temporarily denoted by $(\sX,\sD)\to \sC\to \overline{\cQ}$.

\textbf{Step 1.} Using the classification of \Cref{prop:classfication_of_quasimaps_1_4} and the wall crossing of KSBA moduli spaces (ref. \cite{ascher2023wall,meng2023mmp}), we replace the family of surface pairs over the normalization of $\overline{\cQ}$ with a family obtained by a birational contraction which contracts the irreducible components of $\sX$ over irreducible components of $\sC$ that map to $\bP^1$, the coarse space of $\sP_1$, with a map of degree 2. These appear in \Cref{prop:classfication_of_quasimaps_1_4} as the cases (2), (4), (5) and (6). The end result will be a family of surface pairs over the normalization of $\overline{\cQ}$.

\textbf{Step 2.} The transformation of Step 1 leaves a certain locus on $\sX$ unchanged, as it only contracts irreducible components which map to tails of degree $2$. Specifically, it leaves a neighborhood of the nodal locus in each irreducible component mapping to a tail of degree $3$ unchanged. In Step 2, we study the local behavior of $\sC$ on this locus, and then perform a root stack followed by a specific blow-up to add a (stacky) rational bridge over the nodes of $\sC$, which have $\bmu_2$ or $\bmu_4$ as stabilizers. This modifies the family of surface pairs by adding an irreducible component which is isotrivial.

\textbf{Step 3.} Apply the wall-crossing results of \cite{ascher2023wall,meng2023mmp} to perform a birational contraction. The resulting family of surface pairs, which will be over the normalization of a root stack of $\overline{\cQ}$, will give the desired morphism to the GIT moduli stack $\cM^{\GIT}_{(1,4)}$ of $(1,4)$-curves in $\bP^1\times \bP^1$.

\subsubsection{Step 1}\label{subsub_step_1} We begin by simplifying the family of surface pairs $$\textstyle (\sX,\frac{1}{2}\sD)\ \longrightarrow \ \sC \ \longrightarrow \ \overline{\cQ}^n$$ over the normalization $\overline{\cQ}^n$ of $\overline{\cQ}$, contracting via a birational contraction $\sX \dashrightarrow \sX'$ the irreducible components of $\sX$ which map to an irreducible component of $\sC$ with a node with stabilizer $\bmu_3$. 

Let $B$ be a normal scheme of finite type over $\bC$, let $\psi\colon\cC\rightarrow B$ be a family of twisted curves, and $\phi\colon \cC\to \sP_1\times B$ be a family of stable quasimaps parametrized by $\overline{\cQ}$. Assume that the generic fiber of $\psi$ is smooth. Let $(\cX,\frac{1}{2}\cD)\to \cC$ be the universal family of surface pairs with coarse spaces $\pi\colon (X,\frac{1}{2}D)\to C$, and let $f\colon C\to \bP^1$ be the induced morphism between the coarse spaces of $\cC\rightarrow \sP_1$.

\[\begin{tikzcd}
	& {(\mathcal{X},\frac{1}{2}\mathcal{D})} &&& {(\mathcal{X}_{\univ},\frac{1}{2}\mathcal{D}_{\univ})} \\
	{(X,\frac{1}{2}D)} & {\mathcal{C}} && {\mathscr{P}_1\times B} & {\mathscr{P}_1} \\
	C && B && {\mathbb{P}^1}
	\arrow[from=1-2, to=1-5]
	\arrow[from=1-2, to=2-1]
	\arrow[from=1-2, to=2-2]
	\arrow[from=1-5, to=2-5]
	\arrow["\pi"{description}, from=2-1, to=3-1]
	\arrow["\phi", from=2-2, to=2-4]
	\arrow[from=2-2, to=3-1]
	\arrow["\psi"{description}, from=2-2, to=3-3]
	\arrow["{p_2}", from=2-4, to=2-5]
	\arrow["{p_2}"{description}, from=2-4, to=3-3]
	\arrow[from=2-5, to=3-5]
	\arrow["f"{description}, bend right = 18pt, from=3-1, to=3-5]
\end{tikzcd}\]

\begin{lemma}
     Let $F_{a}:=(f\circ \pi)^{-1}(a)$ be the fiber over $a\in \bP^1$. Then for three general points $a,b,c\in \bP^1$, the family 
    \[\textstyle 
    \xi\colon \left(X,(\frac{1}{2}+\epsilon)D +\frac{1}{6}(F_a + F_b + F_c) \right)\ \longrightarrow \ B
    \]
    is KSBA-stable. In particular, there is a morphism $\overline{\cQ}\to \cM^{\operatorname{KSBA}}$ to the moduli space of KSBA-stable pairs.
\end{lemma}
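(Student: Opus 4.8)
The plan is to build the family of pairs, verify that each $\xi$-fiber is KSBA-stable (log canonical with ample log-canonical class), and then deduce the existence of the morphism to $\cM^{\operatorname{KSBA}}$ from the universal property of the KSBA moduli stack. First I would observe that $\big(X,(\tfrac{1}{2}+\epsilon)D\big)\to C\to B$ is already a family of the shape studied in \Cref{setup_for_existence_of_ruled_model}: the fibration $(X,\tfrac{1}{2}D)\to C$ has slc log Calabi-Yau fibers, $D$ is relatively ample, and by construction $\big(X_\eta,(\tfrac12+\epsilon)D_\eta\big)$ is KSBA-stable for $0<\epsilon\ll1$. The divisor $F_a+F_b+F_c$ is a sum of three fibers of the composite $X\to C\to\bP^1$ over general points $a,b,c$; by genericity these fibers avoid the finitely many "bad" fibers (nodes of $C$, non-reduced fibers of $X\to C$, fibers where $D$ fails to be étale), so each $F_\bullet$ restricted to a fiber of $X\to B$ is a reduced union of disjoint rational curves lying in the smooth locus of the total space.

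The main technical content is the singularity/positivity check on each closed fiber of $\xi$. For the singularities, I would invoke \Cref{cor_singularities_remain_slc_if_increasing_coeff_on_d_and_adding_fibers} with $c=\tfrac12$, $c_S=\tfrac16$ and $S$ the union of the three sections of $C\to B$ through $a,b,c$ (pulled back appropriately; the coefficient $\tfrac16$ is $<1$ so $(C,(\tfrac16+\epsilon')S)$ is slc), together with the fact that the generic fiber $\big(X_\eta,(\tfrac12+\epsilon)D_\eta\big)$ is klt, to conclude that $\big(X,(\tfrac12+\epsilon)D+\tfrac16(F_a+F_b+F_c)\big)$ is slc (indeed klt on the generic fiber, slc on the special fiber) for $0<\epsilon\ll1$. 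The choice of coefficient $\tfrac16$ is dictated by \Cref{lemma_relation_degree_of_map_to_p1_and_moduli_part}: the moduli part of $\pi\circ\phi\colon C\to\sP_1$ has degree $\tfrac{1}{12}\deg f$, and one checks $K_C+\tfrac16(F_a+F_b+F_c)+\bfM\sim_\bQ$ is of the right positive shape once three general fibers are added (since $\deg(K_{\bP^1})=-2$ and $3\cdot\tfrac16=\tfrac12$, the discrepancy of adding moduli part plus boundary is compensated by the pull-back from $\bP^1$). For positivity, I would run the canonical bundle formula: $K_X+\tfrac12 D+\tfrac16(F_a+F_b+F_c)\sim_\bQ f^*\big(K_{\bP^1}+\tfrac16(a+b+c)+\bfM_{\bP^1}\big)$ where $\bfM_{\bP^1}$ is the Hodge-type moduli part, which is semiample of positive degree; since $K_X+(\tfrac12+\epsilon)D+\tfrac16(\cdots)\sim_\bQ (\text{pull-back of an ample class})+\epsilon D$ and $D$ is relatively ample over $C$ while the pulled-back class is relatively trivial and ample on the base, the total $\bQ$-divisor is ample on each fiber for $0<\epsilon\ll1$.

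The hard part will be making precise the interaction between the \emph{twisted} curve $\cC$ and its coarse space $C$: one must check that the moduli part $\bfM$ descending to $C$ (and ultimately to $\bP^1$) is genuinely ample and not merely nef, and that adding exactly three general fibers with coefficient $\tfrac16$ suffices to kill any residual non-positivity coming from rational tails — this is exactly the mechanism of \Cref{lemma_on_tails_i_have_a_marking} and \Cref{thm_section_4_in_ksba_limit_we_have_ruled_model}, which guarantee that every rational tail of $C$ carries a marked point from the discriminant, so once we also mark three general fibers the log-canonical class on $C$ becomes ample. Once ampleness on fibers is established, the family $\xi$ is a locally stable family of KSBA-stable pairs over the normal base $B$, so by the representability of the KSBA functor (cf.~\cite{Kol23}) it induces a morphism $B\to\cM^{\operatorname{KSBA}}$; taking $B=\overline{\cQ}^n$ (or the relevant étale chart and descending) gives the claimed map $\overline{\cQ}\to\cM^{\operatorname{KSBA}}$.
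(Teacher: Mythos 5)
Your overall strategy — genericity of $a,b,c$ for local stability, the canonical bundle formula plus degree bookkeeping for ampleness, then representability of the KSBA functor for the induced morphism — matches the paper's approach, and the arithmetic works out (deg of the moduli part on $C$ is $\tfrac{\deg f}{12}=\tfrac12$, $f^{-1}(a+b+c)$ contributes degree $3$ with coefficient $\tfrac16$, and $-2+\tfrac12+3>0$). However, the single point that makes the paper's proof go through cleanly is understated in your write-up: for a \emph{family} over a positive-dimensional normal base $B$, you need one fixed triple $(a,b,c)\in\bP^1$ that works for \emph{every} $b\in B$ simultaneously. A pointwise genericity argument, as you phrase it ("these fibers avoid the finitely many bad fibers"), does not automatically give this because the bad locus in $C_b$ could a priori sweep out all of $\bP^1$ as $b$ varies. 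The paper's classification (\Cref{prop:classfication_of_quasimaps_1_4}) is precisely what rescues this: the nodes of any $\cC$ in $\overline{\cQ}$ lie only over the two distinguished points $x_2,x_3\in\bP^1$, so a generic fixed triple avoids them uniformly across the family. This is the specific input your argument needs but only gestures at.

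Two other minor misalignments. First, you invoke \Cref{cor_singularities_remain_slc_if_increasing_coeff_on_d_and_adding_fibers}, which is stated over the spectrum of a DVR, so applying it to a family over $B$ would require a reduction to one-parameter families; the paper's proof instead checks local stability directly using the classification. Second, pulling in \Cref{lemma_on_tails_i_have_a_marking} and \Cref{thm_section_4_in_ksba_limit_we_have_ruled_model} is more than is needed: those results control the output of an MMP, whereas here the claim is that the pair is \emph{already} stable without any further birational modification, which is verified fiberwise via the canonical bundle formula and the classification. So the route is essentially the paper's, but the key uniformity argument via \Cref{prop:classfication_of_quasimaps_1_4} should be made the centerpiece, and the extraneous machinery can be dropped.
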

\begin{proof}
    As $a,b,c$ are general, and all the nodes of a family in $\overline{\cQ}$ are over either $x_2$ or $x_3$ by \Cref{prop:classfication_of_quasimaps_1_4}, then the pair is locally stable. Using the canonical bundle formula and the classification of \Cref{prop:classfication_of_quasimaps_1_4} one can check that each fiber of $\xi$ is KSBA-stable.
\end{proof}
When $B$ is normal, one can use the results in \Cref{sec:KSBA limit} together with the wall-crossing results to construct the relatively stable model of 
\[\textstyle
\xi\colon \left(X,(\frac{1}{2}+\epsilon)D +(\frac{1}{9} +\epsilon)(F_a + F_b + F_c) \right)\ \longrightarrow \ B,
\]
which we denote by 
\[\textstyle 
\xi'\colon \left(X',(\frac{1}{2}+\epsilon)D' +(\frac{1}{9} +\epsilon)F \right)\ \longrightarrow \ B.
\]
The next lemma follows from the results in \Cref{sec:KSBA limit}. In particular, to find the stable replacements, it suffices to run an MMP for the generalized pair given by the family of curves over $B$.

\begin{lemma}\label{lemma_objects_in_the_boundary_when_fibers_have_coef_1_over_18}
    Let $p\in B$ be a closed point. Then the relative stable model is given by the following \textup{(ref. Table \ref{table:Relative stable model})}:
    \begin{enumerate}
        \item if $\xi^{-1}(p)\cong \bP^1$, then $(\xi')^{-1}(p)\cong (\xi)^{-1}(p)$;
        \item if $\xi^{-1}(p)$ is as in \Cref{prop:classfication_of_quasimaps_1_4}(2), then $(\xi')^{-1}(p)\simeq (\bP^1\times \bP^1,\frac{1}{2}D),$ whose boundary part is supported at a single point with coefficient $\frac{1}{6}$;
        \item if $\xi^{-1}(p)$ is as in  \Cref{prop:classfication_of_quasimaps_1_4}(3), then $(\xi')^{-1}(p)\cong (\xi)^{-1}(p)$;
        \item if $\xi^{-1}(p)$ is as in \Cref{prop:classfication_of_quasimaps_1_4} (4), then $(\xi')^{-1}(p)\simeq (\bP^1\times \bP^1,\frac{1}{2}D)$, whose boundary part is supported at three points each with coefficient $\frac{1}{6}$;
        \item if $\xi^{-1}(p)$ is as in \Cref{prop:classfication_of_quasimaps_1_4}(5), then $(\xi')^{-1}(p)\simeq (\bP^1\times \bP^1,\frac{1}{2}D)$, whose boundary part is supported at two points each with coefficient $\frac{1}{6}$; and
        \item if $\xi^{-1}(p)$ is as in \Cref{prop:classfication_of_quasimaps_1_4}(6), then $(\xi')^{-1}(p)$ has two irreducible components, and is isomorphic to $(\xi)^{-1}(p)$ away from the component of $(\xi)^{-1}(p)$ which maps to the irreducible component of degree $2$.
    \end{enumerate}
    \end{lemma}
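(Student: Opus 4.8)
The plan is to compute the relative stable model fibrewise, case by case along the classification of Proposition~\ref{prop:classfication_of_quasimaps_1_4}, using the machinery of Section~\ref{section_stable_reduction}. Fix a closed point $p\in B$ and choose a DVR $R$ with a map $\Spec R\to B$ hitting $p$ whose generic fibre is a general family of $(1,4)$--curves; by separatedness of the KSBA stack it suffices to identify the stable limit over $\Spec R$. The family over $\Spec R$ arises from a stable quasimap $\sC\to\sP_1$, so by Example~\ref{rmk_for_qmaps_there_is_a_ruled_model} it admits a ruled model, and the hypotheses of Setup~\ref{setup_for_existence_of_ruled_model} hold with $S$ the disjoint, general closures of $F_a,F_b,F_c$ and $c=\tfrac{1}{9}+\epsilon$. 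Thus by Theorem~\ref{thm_you_can_take_ruled_model_in_ksba_moduli} and Corollary~\ref{cor_if_lc_div_on_the_curve_is_ample_its_pull_back_plus_epsilon+D_is_ample} the relative stable model is produced by running the (relative, over $\Spec R$) MMP for the generalized pair $(C,cS+\bfM)$ on the family of curves, where $\bfM$ is the honest $\bQ$--divisor moduli part furnished by the quasimap, and then transporting the result upstairs via the ruled model; by Theorem~\ref{thm_section_4_in_ksba_limit_we_have_ruled_model} and Corollary~\ref{cor_mmp_does_not_contract_divisors} the outcome does not depend on the choice of $0<\epsilon\ll1$, which also yields assertion~(2) of the lemma.

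Next I would run that MMP in each of the six configurations. On the special fibre $C_0$ the MMP contracts exactly the rational tails $T$ with $(K_C+cS+\bfM)\cdot T<0$; since $K_C\cdot T=-1$ for such a tail, this is the condition $\deg(\bfM|_T)+cS\cdot T<1$. Because $a,b,c$ are general, the divisors $F_a,F_b,F_c$ avoid the nodes of $C_0$ and the low--degree components, so $cS\cdot T=0$ on the tails that matter, and the relevant number is $\deg(\bfM|_T)$, which by Lemma~\ref{lemma_relation_degree_of_map_to_p1_and_moduli_part} (applied to the component, taking into account the twisted point it carries) is $\tfrac{1}{12}$ times the degree of the induced map to $P_1$: so $\tfrac{1}{6}$ on a degree--$2$ component, $\tfrac{1}{12}$ on a degree--$1$ component, $\tfrac{1}{4}$ on a degree--$3$ component, and $0$ on a contracted component. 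Reading off Proposition~\ref{prop:classfication_of_quasimaps_1_4}: in case~(1) nothing happens; in cases~(2),~(4),~(5) the degree--$2$ tails get contracted, each producing a boundary point of coefficient $\tfrac{1}{6}$, and the surviving component becomes a $(1,4)$--ruled surface over $\bP^1$, which by Lemma~\ref{lem: Hirzebruch} together with the impossibility of degenerating $\bP^1\times\bP^1$ to $\bF_1$ (shown in the proof of Proposition~\ref{prop:classfication_of_quasimaps_1_4}) must be $(\bP^1\times\bP^1,\tfrac{1}{2}D)$, giving assertions~(2),~(4),~(5); and in cases~(3),~(6) the degree--$3$ component (and in~(6) also the degree--$1$ component) survives, so the stable fibre keeps its reducible shape, modified only over the degree--$2$ component, which gives assertions~(3) and~(6). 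Throughout, the upstairs description (the surface $X'$, its divisor $D'$, and the boundary part) is read off from Definition~\ref{defn:ruled model} and Theorem~\ref{thm_you_can_take_ruled_model_in_ksba_moduli}.

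The step that needs the most care — and the one I expect to be the main obstacle — is showing that the degree--$3$ and degree--$1$ components are not contracted, since a naive count gives $\deg(\bfM|_T)<1$ on them as well. Here one has to use the structure of the twisted nodes over $x_2$ and $x_3$: restricting the generalized pair $(C,cS+\bfM)$ to the component in question and computing the different at the twisted node (an $A_1$ or $A_3$ point of the coarse surface $C$) together with the boundary contributions produced by the preceding contractions, and then invoking the local analysis of \S\ref{sec:classification of fibers}, one checks that $(K_C+cS+\bfM)|_{T}$ is already nef, so the MMP stops and $X'$ retains the component. The remaining verifications — that the boundary point created by contracting a degree--$2$ tail has coefficient exactly $\tfrac{1}{6}$, that in cases~(4),~(5) the three (resp.\ two) contractions produce three (resp.\ two) distinct such points, and that in case~(6) the resulting threefold agrees with $\xi^{-1}(p)$ away from the component mapping to the degree--$2$ tail — are then routine consequences of the explicit form of the ruled model, and they are collected in Table~\ref{table:Relative stable model}.
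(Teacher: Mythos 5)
Your overall strategy -- run the MMP on the base curve with the generalized boundary $(C,cS+\bfM)$, use the ruled model to read off the surfaces, and work through the six cases of Proposition~\ref{prop:classfication_of_quasimaps_1_4} -- is the same as the paper's, and the reduction to contraction of degree-$2$ tails is exactly the observation the paper makes. However, there is a genuine error in the intersection count that both drives and undermines your argument. You claim that because $a,b,c$ are general, the fibers $F_a,F_b,F_c$ ``avoid the low-degree components'', so $cS\cdot T=0$ on the tails. This is false. The divisor $S=f^{-1}(a+b+c)\subseteq C$ is a multisection of $C\to\Spec R$, and a general point $a\in\bP^1$ has exactly $d_T$ preimages on a tail $T$ whose induced map $T\to\bP^1$ has degree $d_T$; these preimages are interior points of $T$, not of other components. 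Hence $cS\cdot T = 3d_T\,c$, not $0$. Being general only ensures $S$ misses the nodes, not the positive-degree components.

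This error is fatal to your argument as written, because the term $cS\cdot T$ is precisely what the choice $c=\tfrac{1}{9}+\epsilon$ is calibrated to exploit. The correct computation for a rational tail $T$ (so $K_C\cdot T=-1$) gives $(K_C+cS+\bfM)\cdot T = -1 + 3d_T(\tfrac{1}{9}+\epsilon) + \deg(\bfM|_T)$. For $d_T=2$ this is $-1+\tfrac{2}{3}+\tfrac{1}{6}+6\epsilon=-\tfrac{1}{6}+6\epsilon<0$, so the tail is contracted; for $d_T=3$ it is $-1+1+\tfrac{1}{4}+9\epsilon>0$, so it is not; for $d_T=4$ likewise positive; and for a degree-$1$ bridge $K_C\cdot T=0$, so again nothing is contracted. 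This is why the MMP contracts exactly the degree-$2$ tails, with no need for the argument you flag as the ``main obstacle.'' Your proposed patch -- computing differents at the twisted nodes and invoking \S\ref{sec:classification of fibers} -- is a red herring: once $cS\cdot T$ is counted correctly, the degree-$3$ tails fail the contraction criterion outright, and the degree-$1$ components are bridges, not tails, so they never enter the discussion. The rest of your proposal (identifying the surviving ruled surfaces as $\bP^1\times\bP^1$ via Lemma~\ref{lem: Hirzebruch}, and the boundary coefficients produced by the contractions) is sound once this count is repaired.
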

    
\begin{center}
\begin{longtable}{| p{.08\textwidth} | p{.50\textwidth}| p{.19\textwidth} |}
    \hline \textbf{$\xi^{-1}(p)$} & \textbf{$(\xi')^{-1}(p)$} & \textbf{boundary part}  \\ \hline
    \hline (1) &  $(\xi)^{-1}(p)\simeq (\bP^1\times\bP^1,\frac{1}{2}D)$ &  0 \\ \hline (2)   &  $(\bP^1\times\bP^1,\frac{1}{2}D)$  & $\frac{1}{6}p$  \\ \hline (3)   & $(\xi)^{-1}(p)$ & 0  \\ \hline (4)   &  $(\bP^1\times\bP^1,\frac{1}{2}D)$ &  $\frac{1}{6}(p_1+p_2+p_3)$ \\ \hline (5)   &  $(\bP^1\times\bP^1,\frac{1}{2}D)$  & $\frac{1}{6}(p_1+p_2)$  \\ \hline (6)   &  $(\xi)^{-1}(p)$ with degree $2$ component contracted  & $\frac{1}{6}p$  \\  \hline
    \caption{Relative stable model of $\xi$}
    \label{table:Relative stable model}  
\end{longtable}
    
\end{center}

\begin{proof}
It suffices to observe that this MMP contracts precisely all the \emph{degree $2$ tails} of $C\rightarrow B$, because of our choice of the coefficient $\frac{1}{9}+\epsilon$. Here, by \emph{degree two}, we mean those components $\Gamma$ of fibers of $C\rightarrow B$ satisfying that the morphism $\Gamma\rightarrow \bP^1$ induced by the $\cC\rightarrow \sP_1$ has degree $2$. In particular, the fibers of $\xi'$ have at most two irreducible components, and the map $X\dashrightarrow X'$ is an isomorphism along the non-normal locus of the fibers of $\xi'$.
\end{proof}

\begin{lemma}\label{lct}
    Let $(Y,\frac{1}{2}D)$ be a pair as in \Cref{lemma_objects_in_the_boundary_when_fibers_have_coef_1_over_18} cases (1), (2), (4) or (5). Then $[D]$ is GIT semistable.
\end{lemma}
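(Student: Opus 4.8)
\textbf{Proof proposal for Lemma \ref{lct}.}

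The plan is to verify GIT semistability directly using the combinatorial criterion established in the proof of Proposition \ref{prop:classification of GIT}: a $(1,4)$-curve $C = \bV(f)$ on $\bP^1 \times \bP^1$ is GIT semistable if and only if for every $C'$ in the $G$-orbit of $C$, the barycenter $P$ of the $10$-vertex Newton diagram lies in the convex hull $\Delta(C')$ of the monomials occurring in $f'$. Equivalently, by the Hilbert--Mumford criterion, it suffices to show that no one-parameter subgroup of $G = \PGL(2) \times \PGL(2)$ destabilizes $C$; since all the pairs in cases (1), (2), (4), (5) have $X \cong \bP^1 \times \bP^1$ with $D$ a genuine $(1,4)$-divisor, the point $[D] \in |\cO_{\bP^1\times\bP^1}(1,4)|$ is well-defined and we just need to locate it relative to the stable/strictly-polystable loci classified in Proposition \ref{prop:classification of GIT}.

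First I would recall that by Proposition \ref{prop:classification of GIT}, the only strictly GIT-polystable orbits are those of $C_0 = \bV(x_0^4 y_0 - x_1^4 y_1)$ and $C_1 = \bV(x_0 x_1(x_0^2 y_0 - x_1^2 y_1))$, and an unstable curve must isotrivially degenerate to one of $C_0, C_1$ (more precisely: its orbit closure misses the semistable locus, which forces a specific monomial pattern). So the strategy reduces to showing that in each of the four cases the divisor $D$ is \emph{not} GIT-unstable. For case (1), $(Y,\tfrac12 D) = (\xi)^{-1}(p)$ comes from a quasimap with smooth domain $\cC \cong \bP^1$, so $D$ is a smooth $(1,4)$-curve; a smooth $(1,4)$-curve is GIT-stable unless it isotrivially degenerates to $C_0$ or $C_1$ (Proposition \ref{prop:classification of GIT}(1)), but $C_0$ and $C_1$ are both singular (reducible), hence no smooth curve lies in their orbit, and isotrivial degeneration of a smooth curve within the family $\xi$ to $C_0$ or $C_1$ would have to occur over $p$ itself — contradicting that the fiber over $p$ is already smooth. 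Thus in case (1), $D$ is GIT-stable, in particular semistable. For cases (2), (4), (5), by Lemma \ref{lemma_objects_in_the_boundary_when_fibers_have_coef_1_over_18} the surface is again $\bP^1 \times \bP^1$ and $D$ is a $(1,4)$-divisor whose fibration $p_2\colon(\bP^1\times\bP^1, \tfrac12 D) \to \bP^1$ has one (resp. three, two) ramification points of index $3$; the key point is that such a $D$ is \emph{still reduced and irreducible} (its only singularities along fibers are the $A_2$-type tangencies recorded in the tables of Section \S\ref{sec:classification of fibers}, which do not make $D$ reducible as a $(1,4)$-curve). An irreducible $(1,4)$-curve cannot lie in the orbit of the reducible curves $C_0$ (which is irreducible? — here I must be careful: $C_0 = \bV(x_0^4 y_0 - x_1^4 y_1)$ is actually irreducible, while $C_1 = \bV(x_0 x_1(\cdots))$ is reducible) so I would instead argue case-by-case on the monomial support after normalizing coordinates so that the index-$3$ ramification point sits over a fixed point of $\bP^1$, check that the Newton polygon of $f$ still contains the barycenter $P$, and conclude semistability.

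The main obstacle I anticipate is distinguishing case (2), (4), (5) curves from the orbit of $C_0$: since $C_0$ is itself irreducible and is exactly the curve whose projection $p_2|_{C_0}$ has ramification behavior concentrated at the two points $[1:0], [0:1]$ of the first $\bP^1$ (indeed $C_0$ has a single ramification point of index $4$ over each of two points of the base, or rather its unique totally-degenerate structure), I need to rule out that a curve with only index-$3$ ramification can degenerate to $C_0$'s orbit. The clean way is: GIT semistability is an \emph{open} condition and is detected by the barycenter criterion, so I would pick an explicit normal form for the local equation of $D$ near its index-$3$ ramification fiber (from the tables in \S\ref{sec:classification of fibers}, e.g. $(y - x^3, y)$ giving local monomials that keep the vertex of the Newton diagram away from being concentrated), write out the global $(1,4)$ equation, and verify $P \in \Delta(D)$ — and for stability of case (1), invoke openness of the stable locus plus the fact that the family $\xi$ has smooth generic fiber so a very general smooth $(1,4)$-curve is stable and hence all of them (lying in the single irreducible component $\ove{\cQ}$) are at least semistable. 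Concretely:

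\begin{proof}
By Lemma \ref{lemma_objects_in_the_boundary_when_fibers_have_coef_1_over_18}, in each of the cases (1), (2), (4), (5) we have $Y \simeq \bP^1\times\bP^1$ with $D$ a genuine divisor of class $(1,4)$, so $[D] \in |\cO_{\bP^1\times\bP^1}(1,4)|$ is a well-defined point and we may apply the criterion in the proof of Proposition \ref{prop:classification of GIT}.

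\emph{Case (1).} Here $(Y,\tfrac12 D) = \xi^{-1}(p)$ is the fiber of the original family over a point whose quasimap has smooth domain $\cC \cong \bP^1$; thus $D$ is a \emph{smooth} $(1,4)$-curve. By Proposition \ref{prop:classification of GIT}(1), a smooth $(1,4)$-curve fails to be GIT stable only if it isotrivially degenerates to $C_0$ or to $C_1$. But the only curves appearing in such an isotrivial family are the limits, and both $C_0$ and $C_1$ are singular: $C_1$ is reducible, and $C_0 = \bV(x_0^4 y_0 - x_1^4 y_1)$ is singular at $([1:0],[0:1])$ and $([0:1],[1:0])$. A smooth curve is not projectively equivalent to a singular one, so $[D]$ lies in the GIT stable locus; in particular it is GIT semistable.

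\emph{Cases (2), (4), (5).} By Lemma \ref{lemma_objects_in_the_boundary_when_fibers_have_coef_1_over_18} the divisor $D$ is the horizontal part of a log Calabi--Yau fibration $(\bP^1\times\bP^1, \tfrac12 D) \to \bP^1$ whose only degenerate fibers over smooth points of the base are of the tangency types classified in \S\ref{sec:classification of fibers} (local equation $(y-x^3, y)$ for an index $3$ ramification). In particular $D$ remains reduced, and its projection $p_2|_D \colon D \to \bP^1$ is finite of degree $4$ with at most three ramification points of index $3$ and no ramification point of index $\geq 4$; hence $D$ is irreducible (a reducible $(1,4)$-curve would contain a horizontal $(0,k)$-component or a vertical $(1,0)$-component, forcing either $p_2|_D$ to be non-finite or to have a fiber entirely contained in $D$, contradicting the fiber classification).

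It remains to check $[D]$ is not GIT unstable. Suppose a one-parameter subgroup $\lambda$ of $G = \PGL(2)\times\PGL(2)$ destabilized $D$. Normalizing coordinates by $\lambda$, the Newton polygon $\Delta(D)$ of the defining equation would miss the barycenter $P$ of the $10$-vertex diagram in the proof of Proposition \ref{prop:classification of GIT}, which forces the monomial support of $f$ to lie in a half-plane not containing $P$; up to the $G$-action this pins $f$ down to one of finitely many monomial configurations, all of which define curves that are either reducible (a union of fibers) or equal to $C_0$ up to scaling. The former is excluded since $D$ is irreducible. For the latter: $C_0$ has the property that $p_2|_{C_0}$ is totally ramified of index $4$ over two points of the base $\bP^1$ (the unique most-degenerate orbit), whereas $D$ has no ramification point of index $\geq 4$; since ramification indices of the projection are $G$-invariant, $D$ is not in the orbit of $C_0$. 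This contradiction shows $[D]$ is GIT semistable.
\end{proof}
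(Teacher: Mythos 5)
Your proposal contains a concrete factual error that breaks the Case (1) argument. You claim that $C_0 = \bV(x_0^4 y_0 - x_1^4 y_1)$ is singular at $([1:0],[0:1])$ and $([0:1],[1:0])$; it is in fact smooth: its gradient $(4x_0^3 y_0,\, -4x_1^3 y_1,\, x_0^4,\, -x_1^4)$ can vanish only if $x_0 = x_1 = 0$, which is excluded. (The same holds for the $(1,4)$-normalization $\bV(x_0 y_0^4 - x_1 y_1^4)$.) So "a smooth curve is not projectively equivalent to a singular one" does not rule out $D$ lying in the orbit closure of $C_0$, and the conclusion that $D$ is GIT \emph{stable} is both unjustified and stronger than what the lemma asserts: $C_0$ itself is smooth but only strictly polystable. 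What you actually need, and what is true, is the weaker statement that a smooth $(1,4)$-curve is GIT \emph{semistable}; the clean route to it is the irreducibility argument you use in Cases (2), (4), (5). (Irreducibility follows from $p_a(D) = 0$, so the two cases need not be separated.)

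There is also a smaller slip in Cases (2), (4), (5): you say a destabilizing one-parameter subgroup would pin $f$ down to configurations that are "either reducible or equal to $C_0$ up to scaling," but $C_0$ is GIT \emph{semistable}, not unstable, so it cannot arise as a limit forced by a destabilizing subgroup; what the Newton-polygon analysis actually shows is that a destabilizing configuration always has $f$ divisible by a linear form, i.e., $D$ contains a fiber (with multiplicity). You also swap the roles of $p_1$ and $p_2$ in a couple of places, and the key step "up to the $G$-action this pins $f$ down to one of finitely many monomial configurations, all of which define curves that are...reducible" is asserted rather than verified—the proof sketch before the $\backslash$begin\{proof\} even flags this as something you "would" do.

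For contrast, the paper's proof avoids separating cases and avoids the stable/semistable distinction entirely: it lists the possible local equations of $D$ along a fiber where $(\bP^1\times\bP^1, \frac{1}{2}D + F)$ fails to be lc, computes the local $\lct$ of each, and observes that the only one contributing $\frac{1}{6}$ to the boundary part is $y - x^3$—a smooth tangency of order $3$. Since the boundary coefficients in cases (1), (2), (4), (5) of the preceding lemma are all $0$ or $\frac{1}{6}$, this forces all ramification indices of the degree-$4$ map $D \to \bP^1$ to be $\leq 3$, which is exactly the condition that separates semistable from unstable in the Hilbert–Mumford / Newton-polygon classification of Proposition \ref{prop:classification of GIT} (the destabilizing configurations force either a ramification index $\geq 4$ or a fiber component). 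Your approach via irreducibility is philosophically close and could be made to work, but as written it has the $C_0$ error, mislabels the strictly polystable curve as an unstable limit, and leaves the central Newton-polygon check as a promissory note rather than a verified step.
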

\begin{proof}
    This follows from the classification given in Proposition \ref{prop:classification of GIT}. For any $(1,4)$-curve $C$ on $\bP^1\times\bP^1$ and any point $p\in \bP^1$, if $(\bP^1\times\bP^1,\frac{1}{2}C)$ is log canonical but $(\bP^1\times\bP^1,\frac{1}{2}C+p_1^{-1}(p))$ is not, then the local defining polynomial of $C$ along the fiber $p_1^{-1}(p)$ is one of the following: $$y-x^3,\ \ y-x^4, \ \ x(y-x^3), \ \ x(y-x^2), \ \ x^2y, \ \ x^2(y-x^2),$$ and the local log canonical threshold $\lct\big(\bP^1\times\bP^1,\frac{1}{2}C;p_1^{-1}(p)\big)$ is equal to $$\textstyle \frac{5}{6}, \ \ \frac{3}{4}, \ \ \frac{2}{3}, \ \ \frac{3}{4}, \ \ \frac{1}{2}, \ \ \frac{1}{2}$$ respectively. The only one which contributes $\frac{1}{6}$ to the boundary part is $y-x^3$, which gives a smooth point at $(0,0)$. Moreover, this point is of ramification index $3$ with respect to $p_1:C\rightarrow \bP^1$, and all the other ramification points of $p_1$ has index $2$. Therefore, $C$ is GIT semistable.
\end{proof}

\subsubsection{Step 2}\label{sec:step 2} Keep the notation in the last subsection. As $X$ and $X'$ are isomorphic over the nodal locus of $X'\to B$, then there is an open substack $\cU\subseteq \cC$ satisfying that
\begin{enumerate}
    \item the coarse space of $\cX_{\cU}:= \cU \times_\cC \cX$ is an open subscheme $X_{\cU}\subseteq X$, along which $X\dashrightarrow X'$ is an isomorphism, and
    \item the image of $X_{\cU}$ in $X'$ contains the nodal locus of the fibers of $X'\to B$.
\end{enumerate}
In other terms, in a neighborhood of the nodal locus of $X'\to B$, the family $X'\to B$ is the coarse space of a family of $\bP^1$s over $\cU$.
\[\begin{tikzcd}[ampersand replacement=\&]
	{\mathcal{X}_{\mathcal{U}}} \&\& {\mathcal{X}} \&\& X \&\& {X'} \\
	{\mathcal{U}} \&\& {\mathcal{C}} \&\& C \&\& {C'} \&\& B
	\arrow["{\operatorname{open}}"{description}, hook, from=1-1, to=1-3]
	\arrow[from=1-1, to=2-1]
	\arrow[from=1-3, to=1-5]
	\arrow[from=1-3, to=2-3]
	\arrow[dashed, from=1-5, to=1-7]
	\arrow[from=1-5, to=2-5]
	\arrow["{\zeta}"{description}, from=1-7, to=2-7]
	\arrow["{\xi'}"{description}, from=1-7, to=2-9]
	\arrow["{\operatorname{open}}"{description}, hook, from=2-1, to=2-3]
	\arrow["{\sigma_{\mathcal{U}}}"{description}, bend right =18pt, from=2-1, to=2-9]
	\arrow[from=2-3, to=2-5]
	\arrow["\sigma"{description}, bend right =12pt , from=2-3, to=2-9]
	\arrow[from=2-5, to=2-7]
	\arrow[from=2-7, to=2-9]
\end{tikzcd}\]

\begin{defn}
    Define $\cC'\to B$ to be the family of twisted curves 
    \begin{enumerate}
        \item with $C'\to B$ as its coarse moduli space, and
        \item which is isomorphic to $\cC\to B$ over $\cU$.
    \end{enumerate}
Equivalently, one can define $\cC'\rightarrow B$ to be constructed by gluing $\cU\rightarrow B$ with $C'\setminus N$, where $N\subseteq C'$ is the singular locus of $C'\to B$. In other terms, we replace the nodal locus of $C'\to B$ with the corresponding twisted nodes of $\cC\to B$.

We can define a family of curves over $\cC'$, which we denote by $$\cX_{\cC'}\ \longrightarrow \ \cC',$$ by gluing $\cX_\cU\rightarrow \cU$ with $X'\setminus \zeta^{-1}(N)\rightarrow C'\setminus N$, where $\zeta\colon X'\to C'$ is the morphism above.
\end{defn}

We first study the local geometry of $\cU\to B$. By \cite[Proposition 2.2 (b)]{olsson2007log}, every geometric nodal point $\mtf{n}\in \cU$ admits two morphisms as below, which are the strict henselization inducing an isomorphism on residual gerbes at the closed points
\begin{equation}\label{equation_local_description_node}
\big[\spec\big(\cO_{B,\sigma_\cU(\mtf{n})}^{\sh}[x,y]/(xy-f)\big)/\bmu_n\big]\longleftarrow [\Spec\cO_{\cC,\mtf{n}}^{\sh}/\bmu_n] \longrightarrow \cC
\end{equation}
where $f$ is an element of $\cO_{B,\sigma(\mtf{n})}^{\sh}$, and a generator $\zeta \in \bmu_n$ acts trivially on $\cO_{B,\sigma(\mtf{n})}^{\sh}$ and acts by $$\zeta*x = \zeta \cdot x, \ \ \ \zeta*y=\zeta^{-1}\cdot y.$$
From the classification of the stabilizers of a point in $\cC'$ given in \Cref{prop:classfication_of_quasimaps_1_4}, we see that either $n=2$ or $n=4$. In particular, if we do not force that the maps in \Cref{equation_local_description_node} induce isomorphisms on the residual gerbes, we have the usual local description of a nodal singularity 

\begin{equation}\label{equation_local_description_node_2}
\begin{tikzcd}\spec\big(\cO_{B,\sigma_\cU(\mtf{n})}^{\sh}[x,y]/(xy-f))\longleftarrow \Spec\cO_{\cC,\mtf{n}}^{\sh} \longrightarrow \cC
\end{tikzcd}
\end{equation}
Although $f$ is not be unique, the following lemma asserts that the vanishing locus $\bV(f)$ is intrinsically defined. 

\begin{lemma}\label{lemma_nodal_locus_is_global}
    There exists a closed subscheme $Z\subseteq B$ such that for every local chart $\spec \cO_{B,p}^{\sh}$ near $p\in B$ as above, $Z$ agrees with $\bV(f)$.
\end{lemma}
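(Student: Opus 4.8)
The plan is to show that the ideal sheaf $\bV(f)$ glued together from the local charts in \eqref{equation_local_description_node_2} is independent of all the choices involved, so that it defines a global closed subscheme $Z \subseteq B$. First I would isolate what "$f$" actually is: in the chart $\spec\big(\cO_{B,p}^{\sh}[x,y]/(xy-f)\big)$, the element $f$ generates the ideal which is the image under $\cO_{B,p}^{\sh} \to \cO_{B,p}^{\sh}[x,y]/(xy-f)$ of the annihilator of $\Omega^1_{\cC/B}$ near the node, equivalently the ideal cutting out the locus over which the family of twisted curves $\cC \to B$ (or, what is the same here away from stacky structure, its coarse space $C' \to B$) fails to be smooth at that node. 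More precisely, the first step is to recall from the deformation theory of nodal curves (e.g. the computation of the versal deformation of a node, as in \cite[Proposition 2.2]{olsson2007log} or standard references) that the closed subscheme of $B$ over which a given node persists is represented by the vanishing of the "smoothing parameter," and that this parameter is canonically $\bV(f)$ up to multiplication by a unit of $\cO_{B,p}^{\sh}$. Multiplication by a unit does not change the vanishing locus, so $\bV(f)$ is well defined as a closed subscheme of $\spec \cO_{B,p}^{\sh}$, independent of the choice of local coordinates $x,y$.

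Next I would check that these locally defined closed subschemes glue. The point is that the construction of $\bV(f)$ is functorial: if $\spec \cO_{B,p}^{\sh} \to \spec \cO_{B,q}^{\sh}$ is a specialization (or more precisely if two strict henselizations both receive a map from a common étale neighborhood), then the node in question, being a section of the singular locus of $\cC \to B$, pulls back compatibly, and hence so does its smoothing ideal. Concretely: the singular locus of the morphism $\cC \to B$ (equivalently of $\cC' \to B$, or of $X' \to B$ along its non-normal locus, which by Step 1 and the construction of $\cC'$ is where the relevant nodes live) is a closed substack of $\cC$ finite over $B$; call it $\Sigma_{\mathrm{sing}}$. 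Over the open locus $\cU$ this $\Sigma_{\mathrm{sing}}$ consists of the twisted nodes with $\bmu_2$ or $\bmu_4$ stabilizer appearing in \Cref{prop:classfication_of_quasimaps_1_4}. I would set $Z$ to be the scheme-theoretic image in $B$, or better the closed subscheme defined by the $0$-th Fitting ideal of $\Omega^1_{\cC/B}$ restricted near $\Sigma_{\mathrm{sing}}$ and pushed to $B$; this is a globally defined closed subscheme because Fitting ideals commute with base change and the formation of $\Omega^1_{\cC/B}$ is functorial. Étale-locally around $p \in B$ this Fitting ideal is exactly $(f)$ by the explicit presentation $\cO_{B,p}^{\sh}[x,y]/(xy-f)$: one computes $\Omega^1$ of that ring over $\cO_{B,p}^{\sh}$ and finds its zeroth Fitting ideal is $(x,y,f) \cap \cO_{B,p}^{\sh} = (f)$ after passing to the node. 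Hence $Z|_{\spec \cO_{B,p}^{\sh}} = \bV(f)$, as required.

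The main obstacle I anticipate is bookkeeping around the stacky structure and the two different local presentations \eqref{equation_local_description_node} versus \eqref{equation_local_description_node_2}: the "representable/residual-gerbe" presentation carries the $\bmu_n$-action with $f \in \cO_{B,p}^{\sh}$, while the coarse presentation \eqref{equation_local_description_node_2} has a possibly different smoothing parameter (one expects the coarse parameter to be an $n$-th-power-twisted version, but its vanishing locus is the same reduced-up-to-nilpotents subscheme, or even the same scheme, since $\bmu_n$ acts trivially on $\cO_{B,p}^{\sh}$). I would handle this by phrasing the invariant — the Fitting ideal of $\Omega^1_{\cC/B}$ near $\Sigma_{\mathrm{sing}}$, pushed forward to $B$ — entirely on the stack $\cC$ where there is no ambiguity, and only at the very end remarking that in the explicit chart this recovers $(f)$. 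A secondary, routine point is to confirm that the various $f$'s appearing for the (possibly several) distinct nodes lying over the same point $p \in B$ multiply together correctly, i.e. that $Z$ near $p$ is $\bV\big(\prod_j f_j\big)$ if there are several nodes; this follows because $\Sigma_{\mathrm{sing}} \to B$ may be finite of degree $>1$ and the Fitting ideal of the direct sum splits as a product. None of this requires new input beyond \cite{olsson2007log}, the classification in \Cref{prop:classfication_of_quasimaps_1_4}, and standard properties of Fitting ideals.
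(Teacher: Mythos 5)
Your approach is the same as the paper's: take $Z$ to be the scheme-theoretic image in $B$ of the relative singular locus of $\cC\to B$, where the singular locus is cut out by a Fitting ideal of $\Omega_{\cC/B}$, and check étale-locally that this recovers $\bV(f)$. One index is off, though: for $A=\cO_{B,p}^{\sh}[x,y]/(xy-f)$ over $R=\cO_{B,p}^{\sh}$, the module $\Omega_{A/R}$ has presentation $A\xrightarrow{(y,\,x)^{T}}A^{2}\to\Omega_{A/R}\to 0$, so $\operatorname{Fitt}_0(\Omega_{A/R})=(0)$ (there are no $2\times 2$ minors of a $2\times 1$ matrix) while $\operatorname{Fitt}_1(\Omega_{A/R})=(x,y)$. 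It is the \emph{first} Fitting ideal that cuts out the singular locus of the family, which is what the paper uses. Your claim that the zeroth Fitting ideal is $(x,y,f)$ is therefore not correct as stated; note also that $(x,y,f)=(x,y)$ in $A$ since $f=xy$, so what you are really computing is $(x,y)\cap R=(f)$, i.e.\ the ideal of the scheme-theoretic image of $V(\operatorname{Fitt}_1)$ in $\Spec R$, which is exactly right once the index is corrected. Your observation about several nodes lying over the same point multiplying their $f_j$'s together is a correct and worthwhile remark that the paper leaves implicit.
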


\begin{proof} In fact, one can take $Z$ to be the scheme-theoretic image of the singular locus of the family of curves $\cC\rightarrow B$, which is defined by the first Fitting ideal of $\Omega_{\cC/B}$.
\end{proof}

Keeping the notation as above, we proceed as follows. 
\begin{notation}\textup{Let $\sB\to B$ be the $\bmu_2$-root stack of $B$ along $Z$.}\end{notation}This root stack replaces the local description around a node of \Cref{equation_local_description_node_2} as follows 

\begin{equation}
\big[\spec\big(\cO_{\sB,\sigma_\cU(\mtf{n})}^{\sh}[x,y]/(xy-r^2)\big)/\bmu_2\big]\longleftarrow \Spec\cO_{\sC,\mtf{n}}^{\sh}\longrightarrow \sC:=\cC\times_B \sB
\end{equation}
where $\cO_{\sB,\sigma_\cU(\mtf{n})}^{\sh}=\cO_{B,\sigma_\cU(\mtf{n})}^{\sh}[r]/(r^2-f)$, and $\bmu_2$ acts by $(-1)*r := -r$.

\begin{notation}
    \textup{We will denote by $\sC$ (resp. $\sC'$) the pull-back $\cC\times_B\sB$ (resp. $\cC\times_B\sB$). Let also $\sC''\rightarrow \sC'$ be the blow-up along the locus $\bV(x,y,r)$.}
\end{notation}
 Observe that $\sC''$ is intrinsically defined, as $(x,y,r)$ is the sum of the ideals $(x,y)$ and $(r)$, which are intrinsically defined as $(x,y)$ is the first fitting ideal of $\Omega^1_{\sC'/\sB}$ and $r$ is intrinsically defined from \Cref{lemma_nodal_locus_is_global}.
A quick computation shows that the blow-up along $(x,y,r)$ of $\spec(\cO_{\sB,\sigma_\cU(\mtf{n})}^{\sh}[x,y]/(xy-r^2))$ is covered by the following three charts:
    \begin{itemize}
        \item[(i)] $\spec(\cO_{\sB,\sigma_\cU(\mtf{n})}^{\sh}[u,v]/(uv-1))\cong \spec(\cO_{\sB,\sigma_\cU(\mtf{n})}^{\sh})\times \bG_m$ with $x=ur$ and $y=vr$;
        \item[(ii)]$\spec(\cO_{\sB,\sigma_\cU(\mtf{n})}^{\sh}[w,y]/(r-wy)$ with $x=w^2y$;
        \item [(iii)] $\spec(\cO_{\sB,\sigma_\cU(\mtf{n})}^{\sh}[z,x]/(r-zx))$ with $y=z^2x$.
    \end{itemize}
 
 It is straightforward to check that:
 \begin{enumerate}
     \item this blow-up replaces the singular locus consisting of a chain of two (stacky) rational curves with three (stacky) rational curves (see \Cref{fig:birational transformations of family}); and
     \item the relative coarse space $C''_{\sB}$ of $\sC''\rightarrow \sB$ is projective over $\sB$.
 \end{enumerate}
Observe that these procedures only change $\sC'$ over $\sU:=\cU\times_B\sB$. In particular, we can pull back $\cX_{\cC'}$ along these transformations. We set $\cX_{\sC'}:=\cX_{\cC'}\times_{\cC'}\sC'$ and $\cX_{\sC''}:=\cX_{\cC'}\times_{\cC'}\sC''$. The subscripts $\cC'$, $\sC'$ and $\sC''$ denote the family of curves, over which $\cX_{\cC'}$, $\cX_{\sC'}$ and $\cX_{\sC''}$ are fibered respectively. Denote by $Y\to \sB$ the relative coarse moduli space of $\cX_{\sC''}\to \sB$. See \Cref{diagram_root_stack} and \Cref{fig:birational transformations of family} for a diagram including all these transformations.

The same modification goes with the divisor $(\frac{1}{2}+\epsilon)D'$, and we denote the resulting divisor by $(\frac{1}{2}+\epsilon)D_Y$ on $Y$.
Observe that the surfaces on the fibers of $Y\to \sB$ are obtained by introducing an isotrivial family on the nodal locus.
\begin{equation}\label{diagram_root_stack}
    \begin{tikzcd}[ampersand replacement=\&]
	\&\&\&\& {\mathcal{X}_{\mathscr{C}''}} \&\&\& {X_{C''}=:Y} \\
	\&\&\& {\mathscr{C}''} \& {} \&\& {C''} \\
	\&\&\&\& {\mathcal{X}_{\mathscr{C}'}} \\
	{\mathscr{U}'} \&\&\& {\mathscr{C}'} \& {} \&\& {\mathscr{B}} \\
	\&\&\&\& {\mathcal{X}_{\cC'}} \\
	{\mathcal{U}'} \&\&\& {\mathcal{C}'} \&\&\& B
	\arrow["{\textup{relative cms}}", from=1-5, to=1-8]
	\arrow[from=1-5, to=2-4]
	\arrow[no head, from=1-5, to=2-5]
	\arrow[from=1-8, to=2-7]
	\arrow[from=1-8, to=4-7]
	\arrow["{\textup{relative cms}}"{description}, from=2-4, to=2-7]
	\arrow["{\textup{blow-up}}"{description}, from=2-4, to=4-4]
	\arrow[from=2-5, to=3-5]
	\arrow[from=2-7, to=4-7]
	\arrow[from=3-5, to=4-4]
	\arrow[no head, from=3-5, to=4-5]
	\arrow[hook, from=4-1, to=4-4]
	\arrow[from=4-1, to=6-1]
	\arrow[from=4-4, to=4-7]
	\arrow[from=4-4, to=6-4]
	\arrow[from=4-5, to=5-5]
	\arrow["{\mu_2-\textup{root}}"{description}, from=4-7, to=6-7]
	\arrow[from=5-5, to=6-4]
	\arrow["{\textup{open neighborhood}}", hook, from=6-1, to=6-4]
	\arrow[from=6-4, to=6-7]
\end{tikzcd}
\end{equation}

\begin{figure}
    \centering
    \includegraphics[width=.7\linewidth]{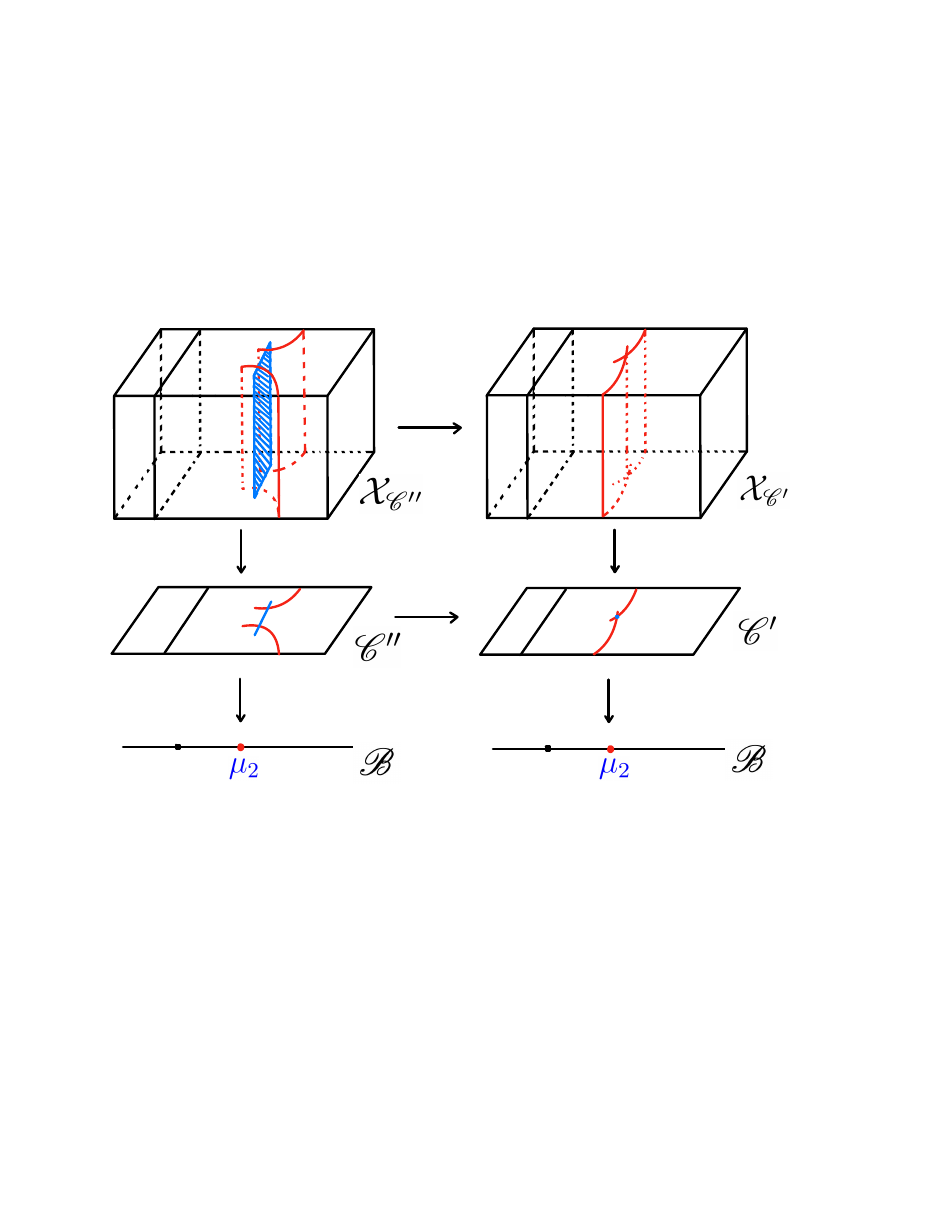}
    \caption{Birational modification performed in \Cref{diagram_root_stack}. The curve in blue is the exceptional divisor for $\sC''\to \sC$, whereas $\cX_{\sC''}$ is the pull-back of $\cX_{\sC'}\to \sC'$ via $\sC''\to \sC'$.}
    \label{fig:birational transformations of family}
\end{figure}

\begin{remark}
    \textup{We make a few comments on the diagram in  \Cref{diagram_root_stack}.
    \begin{enumerate}
        \item All squares are Cartesian except those with an arrow labeled by \emph{relative cms}, which stands for relative coarse moduli space.
        \item The most efficient way to read the diagram is to first ignore the family of surfaces, and focus on the family of curves, as all the action happens on $\cC'$.
        \item We summarize the modifications for the reader's convenience.
        \begin{itemize}
            \item We first perform a root stack $\sB\rightarrow B$ to get $\sC'$ by taking fiber product with $\cC'\rightarrow B$, and then we perform a blow-up $\sC''\rightarrow \sC'$.
          \item The family of surfaces $\cX_{\sC''}$ is the pull-back $\cX\times_{\cC'}\sC''$.
        \item Take the relative coarse moduli space of $\cX_{\sC''}\to \sB$, denoted by $Y\rightarrow \sB$. 
        \end{itemize}
    \item The step of root stack guarantees that the fiber of $\cX_{\sC''}\rightarrow \sB$ is reduced: indeed, blowing up a node on the fiber, which is a smooth point of the total space, leads to a non-reduced fiber on the new family.
     \end{enumerate}}
\end{remark}

\subsubsection{Step 3}
Finally, we perform some birational transformations for the family $$\textstyle \big(Y,(\frac{1}{2}+\epsilon)D_Y\big) \ \longrightarrow \ \sB$$ to construct the morphism $\sB^n\to \cM^{\GIT}$ from the normalization $\sB^n\to \sB$. As we will only work with the normalization of $\sB$, we will abuse notation and denote $\sB^n$ by $\sB$.

We will perform a birational transformation which (possibly after a sequence of flips) contracts the irreducible components of the singular fibers of $Y\to \sB$ which lie over rational tails (ref. red components in \Cref{fig:birational transformations of family}). To this end, we first replace $\sB$ with an \'etale cover $\cV\to \sB$, where we can add an appropriate ample divisor $H$ on $Y|_\cV$, and perform the desired birational contraction on $Y|_\cV$ via a relative MMP over $\cV$.
Then we show that the birational contraction above glues.

Let $C''\to \sB$ be the family of nodal curves as above. We denote by $C_\cV$ the pull-back of $C''$ to an \'etale cover $\cV\to \sB$ so that there are 9 disjoint sections, which we denote by $\{\sigma_i\colon \cV\to C_\cV\}_{i=1}^9$. Assume that each irreducible component of a singular fiber of $C_\cV\to \cV$ has three markings. Set $Y_\cV:= Y\times_{\sB}\cV$ and $\pi_\cV\colon Y_\cV\to C_\cV$ We will denote by $D_{Y_{\cV}}$ the pull-back of $D_Y$ to $Y_\cV$. Then \[H\ :=\ \pi_\cV^{-1}(\sigma_1(\cV) + \ldots +\sigma_9(\cV))+\epsilon D_{Y_{\cV}}\] is ample over $\cV$, and the family
$$\textstyle (Y_\cV,\frac{1}{2}D_{Y_{\cV}} + H)\ \longrightarrow \ \cV$$
is KSBA-stable. By \cite[Theorem 1.5]{meng2023mmp}, one can run an MMP for $\tau\colon (Y_\cV,\frac{1}{2}D_{Y_{\cV}})\ \longrightarrow \ \cV$ with a scaling by $H$. Recall that, if one denotes by $\cM^{\KSBA}_{I}$ the KSBA-moduli space of surface pairs with coefficients in a set $I$, one can obtain the end result of this MMP by composing the morphism $\cV\to \cM^{\KSBA}_{\frac{1}{2},1}$ induced by $\tau$, with the wall-crossing morphism  \[\cN^{\KSBA}_{\frac{1}{2},1} \ \longrightarrow \  \cM^{\KSBA}_{\frac{1}{2},t_0}\] for a certain $0<t_0<1$, where:
\begin{enumerate}
    \item we reduce the coefficients on $H$, and
    \item $\cN^{\KSBA}_{\frac{1}{2},1}$ is the normalization of the closure of the image of $\cV\to \cM^{\KSBA}_{\frac{1}{2},1}$.
\end{enumerate} Let
$(Y'_\cV, \frac{1}{2}D'_Y+t_0H')\to \cV$ be the family induced by $\cV\to \cM^{\KSBA}_{\frac{1}{2},t_0}$.

For every DVR $R$ and every morphism $\spec R\to \cV$ which sends the generic point to the locus parametrizing log canonical pairs, the family $$\textstyle (Y'_\cV, \frac{1}{2}D'_Y+t_0H')|_{\spec R} \ \longrightarrow \ \Spec R$$ is obtained by running an MMP on $(Y_\cV, \frac{1}{2}D_Y+t_0H)|_{\spec R}$.
Apply the results of \Cref{section_stable_reduction} to run this MMP: from how $H$ is defined, the MMP contracts the irreducible components of the singular fibers of $Y_\cV\to \cV$ which map to rational tails of $C_\cV$.

\begin{prop}
    The family $(Y_\cV',\frac{1}{2}D_Y')\to \cV$ descends to a family over $\sB$. 
\end{prop}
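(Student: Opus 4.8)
The plan is to realise $(Y_\cV',\tfrac12 D_Y')\to\cV$ as the pullback along the \'etale cover $\cV\to\sB$ of a family defined over $\sB$, and then to conclude by faithfully flat descent. To begin, one observes that apart from the auxiliary divisors $H$ and $H'$, all of the ingredients used to build $(Y_\cV',\tfrac12 D_Y')$ are already present over $\sB$: the family $(Y,\tfrac12 D_Y)\to\sB$, the family of nodal curves $C''\to\sB$ over which $Y$ is fibered, and the generalized pair $(C'',B_{C''}+\bfM)$ furnished by the canonical bundle formula; their pullbacks along $\cV\to\sB$ are $(Y_\cV,\tfrac12 D_{Y_\cV})$, $C_\cV$, and $(C_\cV,B_{C_\cV}+\bfM_\cV)$. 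Since $H$ and $H'$ depend only on the sections chosen \emph{after} the base change, the point is to see that $(Y_\cV',\tfrac12 D_Y')$ itself is independent of that choice and is pulled back from $\sB$.

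The key step is an intrinsic description of $(Y_\cV',\tfrac12 D_Y')$. By the discussion preceding the proposition, together with \Cref{prop_can_run_mmp_upstairs_to_follow_the_one_downstairs}, \Cref{cor_if_lc_div_on_the_curve_is_ample_its_pull_back_plus_epsilon+D_is_ample} and \Cref{cor_mmp_does_not_contract_divisors}, running the MMP with scaling by $H$ amounts to: first contracting the rational tails of $C_\cV\to\cV$, producing a family of nodal curves $\overline{C}_\cV\to\cV$; and then taking $(Y_\cV',(\tfrac12+\epsilon)D_Y')$ to be the relative canonical model of $(Y_\cV,(\tfrac12+\epsilon)D_{Y_\cV})$ over $\overline{C}_\cV$, with respect to the induced morphism $Y_\cV\to C_\cV\to\overline{C}_\cV$. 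This is genuinely a canonical model: $Y_\cV\dashrightarrow Y_\cV'$ is a birational contraction, $D_Y'$ is ample over $\overline{C}_\cV$, and $K_{Y_\cV'}+(\tfrac12+\epsilon)D_Y'$ is $\bQ$-linearly equivalent to $\epsilon D_Y'$ plus the pullback of $K_{\overline{C}_\cV}+B_{C_\cV}+\bfM_\cV$, hence relatively ample over $\overline{C}_\cV$; in particular, by uniqueness of relative canonical models it does not depend on $H$. Now $\overline{C}_\cV$ is the pullback along $\cV\to\sB$ of the analogous contraction $\overline{C}''\to\sB$ of the rational tails of $C''\to\sB$ (this contraction is governed by the MMP for the generalized pair on the curve family and hence commutes with base change), and the relative canonical model is $\Proj_{\overline{C}_\cV}$ of the relative canonical algebra of $(Y_\cV,(\tfrac12+\epsilon)D_{Y_\cV})$ over $\overline{C}_\cV$; this algebra is the pullback of the corresponding sheaf of algebras over $\overline{C}''$, and its finite generation over $\overline{C}''$ may be checked after the faithfully flat base change to $\cV$, where it follows from the results of \Cref{section_stable_reduction}. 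Since $\Proj$ of a sheaf of algebras commutes with flat base change, setting $(Y',\tfrac12 D_Y'):=\Proj_{\overline{C}''}$ of this algebra over $\sB$, with $D_Y'$ the proper transform of $D_Y$ (which also commutes with this base change), produces a family $(Y',\tfrac12 D_Y')\to\overline{C}''\to\sB$ whose pullback along $\cV\to\sB$ is $(Y_\cV',\tfrac12 D_Y')$, as required. Equivalently, one may finish by \'etale descent: uniqueness of the relative canonical model supplies a canonical isomorphism between the two pullbacks of $(Y_\cV',\tfrac12 D_Y')$ to $\cV\times_\sB\cV$, the cocycle condition on $\cV\times_\sB\cV\times_\sB\cV$ is automatic by the same uniqueness, and the descent datum is effective since $Y_\cV'$ is projective over $\sB$ (being projective over the curve family $\overline{C}_\cV$).

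The step I expect to be the real obstacle is the $H$-independent characterisation above: one must check that, although the MMP with scaling by $H$ depends a priori on the cover $\cV$, on the nine sections, and on the coefficient $t_0$, it nonetheless always terminates at the relative canonical model over $\overline{C}_\cV$, and in particular that the flips it may perform — as opposed to the divisorial contractions of the fiber components lying over rational tails — do not affect the outcome. This is precisely what the machinery of \Cref{section_stable_reduction}, together with the separatedness of the KSBA moduli stack (which pins the end result down to the canonical model), is designed to provide, via \Cref{prop_can_run_mmp_upstairs_to_follow_the_one_downstairs}, \Cref{thm_you_can_take_ruled_model_in_ksba_moduli}, and the setup of \Cref{setup_for_existence_of_ruled_model}.
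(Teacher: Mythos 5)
Your proof is essentially correct but takes a genuinely different route from the paper. The paper proves descent directly: it observes that the two pullbacks of $Y'_\cV$ to $\cV\times_\sB\cV$ are obtained from the same family $Y|_{\cV\times_\sB\cV}$ by contracting the same divisors, so they agree in codimension one; since $D'_Y$ is relatively ample and the total space is $S_2$, both are the $\Proj$ of the same graded algebra, hence isomorphic. It then descends the polarization $\cO(m(K+\tfrac12 D'_Y+t_0H'))$ — arguing that this is canonical even though $H'$ alone is not — and applies descent for polarized schemes (Stacks Project Tag 0D1L), plus descent for closed subschemes to get the divisor. You instead give an intrinsic construction over $\sB$: you identify $(Y'_\cV,\tfrac12 D'_Y)$ as the relative canonical model of $(Y_\cV,(\tfrac12+\epsilon)D_{Y_\cV})$ over the rational-tail contraction $\overline{C}_\cV$, argue that $\overline{C}_\cV$ and the relative canonical algebra descend to $\sB$, and take $\Proj$ over $\sB$, exploiting that this commutes with flat base change. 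This avoids the descent machinery altogether (your fallback \'etale-descent argument is close in spirit to the paper's), at the cost of having to justify that the contraction of rational tails and the canonical algebra are intrinsic and base-change-compatible — a step you handle plausibly but that the paper sidesteps. One caveat on your alternative \'etale-descent route: effectiveness of descent for schemes is not automatic from projectivity; you would still need a descending relatively ample line bundle, which is exactly what the paper spends a paragraph producing. Your primary (direct) construction does not have this gap, so the overall argument stands.
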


\begin{proof}
We use descent for polarized schemes. Observe that although $H'$ may not descend, the polarization $\cO_{Y'_\cV}(m(K_{Y'_\cV}+\frac{1}{2}D_Y'+t_0H'))$ does for $m\gg0$. In other terms, if we consider the two pull-backs of $Y'_\cV$ over the fiber product 
\[
\pi_1,\pi_2\colon \cV^{\times_\sB ^2}:=\cV\times_\sB\cV \ \longrightarrow \  \cV,
\]
we claim that there is an isomorphism \[Y'\times_{\cV,\pi_1} \cV^{\times_\sB ^2} \ \stackrel{\sim}{\longrightarrow} Y'\times_{\cV,\pi_2} \cV^{\times_\sB ^2}\]
which sends the polarization to itself. It is clear that there is such an isomorphism as both varieties $Y'\times_{\cV,\pi_i} \cV^{\times_\sB ^2}$ are obtained from $Y|_{ \cV^{\times_\sB ^2}}$ by contracting the same divisors, so they agree in codimension one. Moreover, they agree as pairs in codimension one, i.e. 
\[(Y'\times_{\cV,\pi_1} \cV^{\times_\sB ^2},D_\cV\times_{\cV,\pi_1} \cV^{\times_\sB ^2})\cong_{\text{codim. 1}}  (Y'\times_{\cV,\pi_2} \cV^{\times_\sB ^2},D_\cV\times_{\cV,\pi_2} \cV^{\times_\sB ^2}).\]
As $D_\cV\times_{\cV,\pi_i} \cV^{\times_\sB ^2}$ is ample over $\cV^{\times_\sB ^2}$ and $Y'\times_{\cV,\pi_1} \cV^{\times_\sB ^2}$ is $S_2$, the two pairs are isomorphic. Indeed, they are projectivizations of the same graded algebra over $\cV^{\times_\sB ^2}$.

This isomorphism preserves the polarization $H'$ as $H'$ is the sum of $\epsilon D'_Y$ and the pull-back of a $\bQ$-line bundle of fixed degree on a family of $\bP^1$s over $\cV$. Thus, by descent for polarized schemes
(ref. \cite[\href{https://stacks.math.columbia.edu/tag/0D1L}{Tag 0D1L}]{stacks-project}), there is a scheme $Y_{\sB}\to \sB$ as desired. Finally, descent for closed subschemes gives the family of divisors $D_{Y_{\sB}}$.
\end{proof}

Denote the resulting family over $\sB$ by $$\textstyle (Y_{\sB},\frac{1}{2}D_{Y_{\sB}}) \ \longrightarrow \ C_{\sB}  \ \longrightarrow \  \sB.$$ We conclude this subsection by proving \Cref{prop:existence of a morphism}.

\begin{proof}[Proof of \Cref{prop:existence of a morphism}]
   We first show that the fibers of $(Y_{\sB},D_{Y_{\sB}})\rightarrow \sB$ are GIT semistable. As $Y_{\sB}\rightarrow \sB$ is a family of $\bP^1\times \bP^1$, it suffices to check that every fiber of $D_{Y_{\sB}}\rightarrow \sB$ is a GIT semistable $(1,4)$-divisor. As the last step of MMP contracts all the degree $3$ tails of singular fibers of $C''\rightarrow \sB$, which contribute to the boundary part of the new fibers by $\frac{1}{4}$. Let $0\in \sB$ be a point such that $C''_0$ is such a fiber. Then $$\textstyle (Y_{\sB},\frac{1}{2}D_{Y_{\sB}})|_{0} \ \longrightarrow \ C_{\sB}|_{0}\simeq \bP^1$$ has exactly two fibers which contribute to the boundary part in the canonical bundle formula by $\frac{1}{4}$. By the classification of log canonical thresholds in the Proof of Lemma \ref{lct}, the local equations of the two fibers are both $y-x^4=0$, and hence one has $$(Y_{\sB},D_{Y_{\sB}})|_{0} \ \simeq \ (\bP^1\times\bP^1,\bV(x_0y_0^4-x_1y_1^4)),$$ which is GIT semistable.
\end{proof}

\subsection{Twisted stable reduction}\label{sec:twisted stable reduction}

In this section, we study the birational map $\mtc{M}^{\GIT}_{(1,4)}\dashrightarrow \cQ$. Although this is not a birational contraction, we understand the stable reduction: given a one-parameter family of GIT semistable, or equivalently K-semistable, pairs $$(\bP^1\times\bP^1\times T,\mts{C})\ \longrightarrow \ (0\in T)$$ such that all but the central fiber are twisted stable, one can perform an explicit birational modification to get a new twisted stable filling of the punctured family $$(\bP^1\times\bP^1\times T^{\circ},\mts{C}|_{T^{\circ}}) \ \longrightarrow  \ T^{\circ},$$ where $T^{\circ}= T\setminus\{0\}$. This is an analogue of DM stable reduction of one-parameter family of curves.

Before understanding the stable reduction of a one parameter family, we look at the geometry of the surface.

Let $X$ be the quadric surface $\bP^1_{[x,y]}\times\bP^1_{[u,v]}$, and $p=\bV(x,u)$, $q=\bV(y,v)$ be two points on $X$. Denote the ruling $\bV(x)$ (resp. $\bV(y)$) by $\sigma_p$ (resp. $\sigma_q$), and the ruling $\bV(u)$ (resp. $\bV(v)$) by $\ell_p$ (resp. $\ell_q$). Take the weighted blow-up $$\phi\ :\ \wt{X}\ \longrightarrow \ X$$ at $p$ and $q$ such that the weights of $(x,u)$ and $(y,v)$ are both $(1,4)$. Denote by $E_p$ (resp. $E_q$) the exceptional divisor over $p$ (resp. $q$), and by $\wt{\sigma}_p$ (resp. $\wt{\sigma}_q$, $\wt{\ell}_p$, $\wt{\ell}_q$) the strict transform of $\sigma_p$ (resp. $\sigma_q$, ${\ell}_p$, ${\ell}_q$). One has the intersection numbers $$\textstyle (E_p^2) \ =\ (E_q^2)\ =\ -\frac{1}{4}, \ \ \ \ \  (\wt{\sigma}_p^2) \ =\ (\wt{\sigma}_q^2)\ =\ -\frac{1}{4}, \ \ \ \ \ (\wt{\ell}_p^2) \ =\ (\wt{\ell}_q^2)\ =\ -4.$$ Contracting $\wt{\ell}_p$ and $\wt{\ell}_q$, one obtains a surface $\psi:\wt{X}\rightarrow \ove{X}$. 
\[\begin{tikzcd}
	{\bP^1\times\bP^1} && {\wt{X}} && {\ove{X}} \\
	&& {\bP^1}
	\arrow["\pi"{description}, from=1-1, to=2-3]
	\arrow["\phi"', from=1-3, to=1-1]
	\arrow["\psi", from=1-3, to=1-5]
	\arrow["{\wt{\pi}}"{description}, from=1-3, to=2-3]
	\arrow["{\ove{\pi}}"{description}, from=1-5, to=2-3]
\end{tikzcd}\]

Denote by $\ove{E_p}$ (resp. $\ove{E}_q$, $\ove{\sigma}_p$, $\ove{\sigma}_q$) the strict transform of $E_p$ (resp. $E_q$, $\wt{\sigma}_p$, $\wt{\sigma}_q$) on $\ove{X}$. Then $\ove{X}$ has two $\frac{1}{4}(1,1)$-singularities at $\ove{E}_p\cap \ove{\sigma}_q$ and $\ove{E}_q\cap \ove{\sigma}_p$; and has two $A_3$-singularities at $\ove{E}_p\cap \ove{\sigma}_p$ and $\ove{E}_q\cap \ove{\sigma}_q$. Moreover, the space $\NS(\ove{X})_{\bQ}$ is $2$-dimensional, with two generators $\ove{\sigma}_p$ and $\ove{E}_p$, which are both nef and effective $\bQ$-Cartier divisors with self-intersection numbers $0$. Moreover, the canonical divisor $-K_{\ove{X}}$ is ample, and hence a sufficiently large multiple of $\ove{E}_p$ is globally generated, which induces a contraction $\ove{X}\rightarrow \bP^1$ preserving all the $\ove{\pi}$-fibers.

\begin{figure}
    \centering
    \includegraphics[width=1.0\linewidth]{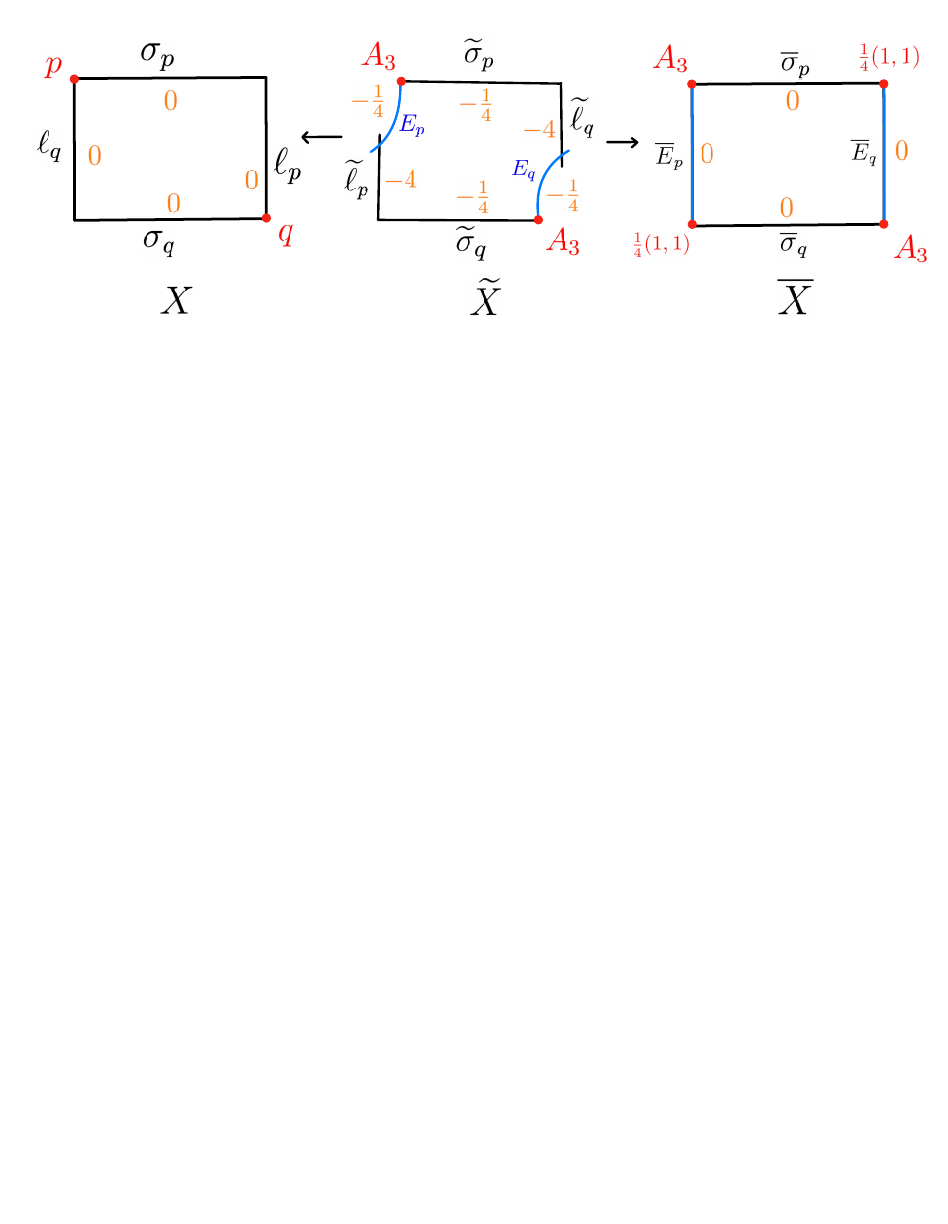}
    \caption{Birational modification of $\bP^1\times\bP^1$\\ orange = self-intersection number; red = singularity type}
    \label{fig:P1*P^1}
\end{figure}

Let $\bP(1,1,4)$ be the weighted projective plane with inhomogeneous coordinate $[s,w,z]$. Consider the linear series $\{az-bw^4\}_{[a,b]\in \bP^1}$, which has a base point at $p:=\bV(z,w)$. Take the weighted blow-up $\eta:Y\rightarrow \bP(1,1,4)$ at $p$ of $(z,w)$-weight $(1,4)$. Note that $Y$ has an $A_3$-singularity and a $\frac{1}{4}(1,1)$-singularity, and the strict transform of the linear series $\{az-bw^4\}_{[a,b]\in \bP^1}$ on $Y$ is base-point-free. Thus, there is a morphism $Y\rightarrow \bP^1$, such that the two singularities are contained in the same fiber.

\begin{figure}
    \centering
    \includegraphics[width=0.7\linewidth]{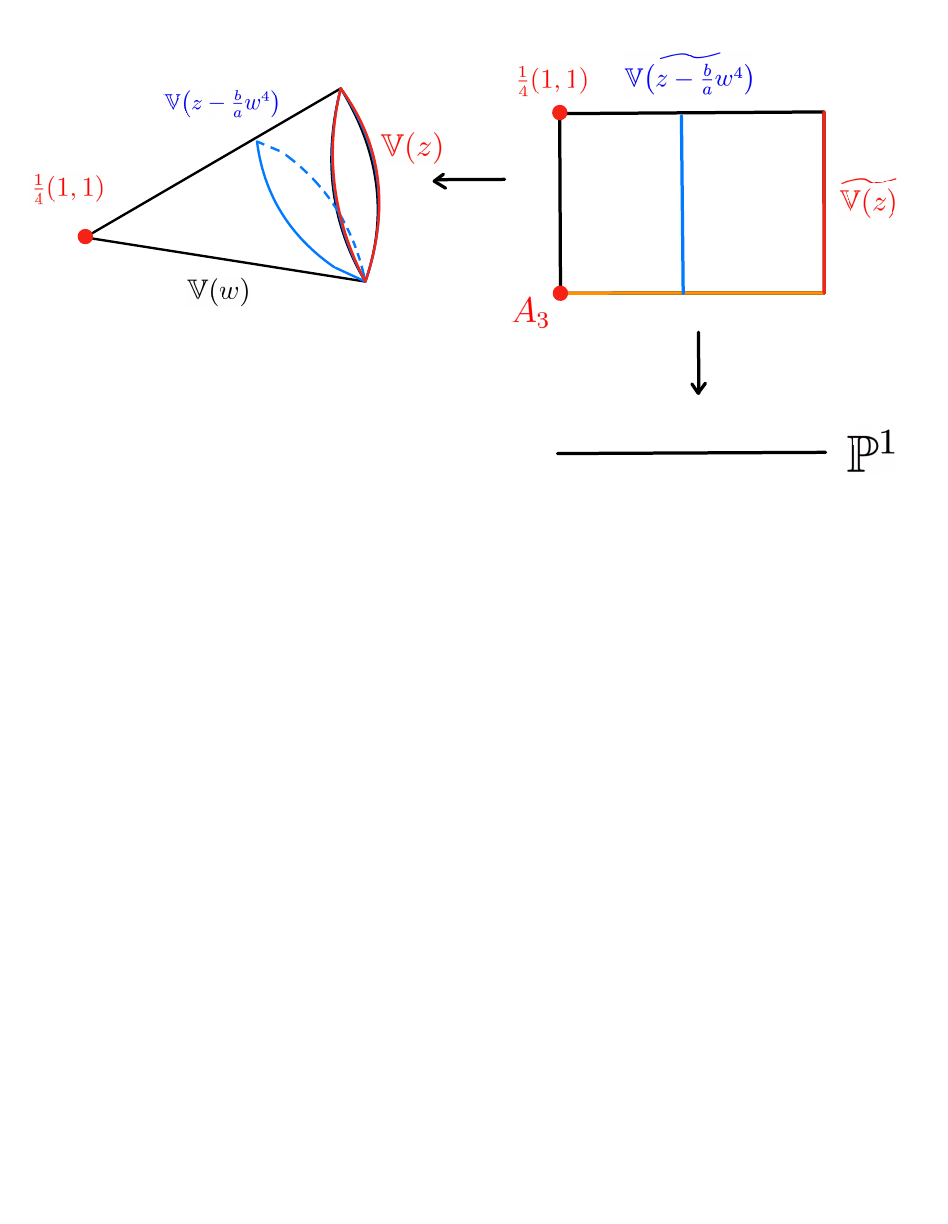}
    \caption{Birational modification of $\bP(1,1,4)$}
    \label{fig:P(1,1,4)}
\end{figure}

Now, let $C=\bV(uy^4-vx^4)$ be the curve on $\bP^1_{[x,y]}\times\bP^1_{[u,v]}$. Take a family $$\mts{Z}:=(\bP^1\times\bP^1\times T,\mts{C})\ \longrightarrow\ (0\in T)$$ such that $\mts{C}_0\simeq C$ and $\mts{C}_t$ is a curve such that $\mts{C}_t\rightarrow \bP^1$. Let $t$ be the local coordinate of $T$ near $0$, and take the weighted blow-up $\mts{X}\rightarrow \mts{Z}$ at $p:=\bV(u,x,t)$ and $q:=\bV(v,y,t)$ such that the weights of $(u,x,t)$ and $(v,y,t)$ are both $(4,1,1)$. Then the two exceptional divisors $S_p$ and $S_q$ are both isomorphic to $\bP(1,1,4)$. The central fiber $\mts{X}_0$ is the union $S_p\cup_{E_p} \wt{X} \cup_{E_q} S_q$. Then one can contract $\wt{\ell}_p$ (resp. $\wt{\ell}_q$) to the point $p'$ (resp. $q'$) in the threefold $\mts{X}$, and flip out two curves on $S_1$ and $S_2$. Finally, contracting the strict transform of $\wt{X}$ horizontally, one obtains a family $\ove{\mts{X}}\rightarrow T$. The covering stack $\mtc{X}$ of $\ove{\mts{X}}$ admits a $\bP^1$-fibration to the nodal twisted rational curve $\mtc{C}_1\cup_{\mtf{n}} \mtc{C}_2$ with a stabilizer $\mu_4$ at the node $\mtf{n}$, where $\mtc{C}_1$ and $\mtc{C}_2$ are two smooth stacky curves.

\subsection{K-moduli compactification}

In this section, we are aiming to show the following.

\begin{theorem}\label{thm:K=GIT}
    For any $0<c<\frac{1}{2}$, there is an isomorphism between stacks $$\mtc{M}^K_{(1,4)}(c) \ \simeq \ \bigg[\big|\mtc{O}_{\bP^1\times\bP^1}(1,4)\big|^{\sst}/ \PGL(2)\times\PGL(2)\bigg] \ =: \ \mtc{M}^{\GIT}_{(1,4)},$$ which descends to an isomorphism between their good moduli spaces $$\ove{M}^K_{(1,4)}(c) \ \simeq \ \big|\mtc{O}_{\bP^1\times\bP^1}(1,4)\big|^{\sst}\sslash \PGL(2)\times\PGL(2)\ =: \ \ove{M}^{\GIT}_{(1,4)}.$$
\end{theorem}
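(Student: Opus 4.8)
The plan is to compare the two moduli stacks by exhibiting a common family and showing each stack receives a morphism to the other which are mutually inverse. First I would recall that for $(1,4)$-curves $C$ on $\mathbb{P}^1\times\mathbb{P}^1$, a curve is singular if and only if it is reducible (as already noted in the proof of \Cref{prop:classification of GIT}), and the GIT stability picture is completely controlled by the Newton polytope of $C$ relative to its barycenter. On the K-side, since $c<\tfrac{1}{2}$, the pair $(\mathbb{P}^1\times\mathbb{P}^1, cC)$ is klt whenever $C$ has at worst ordinary nodes, and more generally the K-(semi/poly)stability of $(\mathbb{P}^1\times\mathbb{P}^1, cC)$ for small $c$ should coincide with GIT-(semi/poly)stability of $C$; this is the expected ``K-moduli equals GIT for small coefficient'' phenomenon. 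The key input is that for $0<c\ll 1$ the CM line bundle on the parameter space $|\mathcal{O}(1,4)|$ is proportional (with positive constant) to the natural $\mathrm{PGL}_2\times\mathrm{PGL}_2$-linearization used in \Cref{prop:classification of GIT}, so Hilbert–Mumford numerical criteria match, and one must then check that this persists for all $c\in(0,\tfrac12)$ by a wall-crossing argument showing no wall occurs in that range.

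The main steps, in order, are: (1) Show that $\big(\mathbb{P}^1\times\mathbb{P}^1, cC\big)$ is a klt log Fano pair for any $(1,4)$-curve $C$ with at worst nodes and any $0<c<\tfrac12$, and that the only curves for which it fails to be lc (hence fail K-semistability by a direct destabilization) are exactly those forced out by the Newton-polytope criterion; (2) construct the universal family of GIT-semistable $(1,4)$-curves over $[\,|\mathcal{O}(1,4)|^{\mathrm{ss}}/G\,]$, verify it is a family of K-semistable Fano pairs for $c<\tfrac12$ using (1) together with the classification of strictly polystable curves $C_0, C_1$ and a check that these two special pairs are K-polystable (e.g.\ via their large automorphism groups $\mathbb{G}_m$-actions and the fact that they are the only strictly-polystable degenerations), thus obtaining a morphism $\mathcal{M}^{\mathrm{GIT}}_{(1,4)}\to\mathcal{M}^K_{(1,4)}(c)$; (3) conversely, produce the universal family over the K-moduli stack and show every K-semistable $(\mathbb{P}^1\times\mathbb{P}^1, cC)$ has $C$ a GIT-semistable $(1,4)$-curve — here one uses that K-semistability for $c<\tfrac12$ forces the surface to stay $\mathbb{P}^1\times\mathbb{P}^1$ (no degeneration of the surface, since the CM-volume computation leaves no room for the quadric cone or other Fano degenerations when $c$ is small, and one rules out the intermediate values by the same wall-crossing/convexity argument), so the K-degeneration is a degeneration of the divisor only, landing in the GIT-semistable locus; (4) check the two morphisms are inverse to each other on objects and hence an isomorphism of stacks, and that taking good moduli spaces gives the stated isomorphism on the coarse level, with the ``no wall-crossing'' statement being an immediate consequence.

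The hard part will be Step (2)–(3): proving that the K-semistable locus does not change as $c$ varies over all of $(0,\tfrac12)$, i.e.\ that there is genuinely no K-moduli wall in this open interval. The clean way to do this is to show the CM line bundle $\lambda_{\mathrm{CM},c}$ on the parameter stack is, up to a positive rational multiple, \emph{independent of $c$} in the range $c<\tfrac12$ (because $-K_{\mathbb{P}^1\times\mathbb{P}^1}-cC = (2-c,2-4c)$ stays ample and proportional considerations on $\mathrm{Pic}$ of the fixed surface make the CM class move along a single ray in the relevant cone of the parameter space), so that GIT stability with respect to it is constant; combined with the estimate showing the surface cannot degenerate, this forces $\mathcal{M}^K_{(1,4)}(c)$ to be literally the GIT quotient for every such $c$. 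One also needs to handle the two strictly polystable orbits carefully: verifying $(\mathbb{P}^1\times\mathbb{P}^1, cC_0)$ and $(\mathbb{P}^1\times\mathbb{P}^1, cC_1)$ are K-polystable (not merely K-semistable) amounts to checking no $\mathbb{G}_m$-degeneration other than trivial and isotrivial ones exists, which follows from their toric/near-toric structure and a Futaki-invariant computation, and this is where the bulk of the explicit work lies.
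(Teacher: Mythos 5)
Your overall architecture matches the paper's: produce a morphism in each direction, check they are inverse, and show no K-moduli wall crossing occurs in $(0,\tfrac12)$. However, your proposed mechanism for the "no wall" step differs from the paper's and, as stated, has a genuine gap.

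You propose to argue that the CM line bundle $\lambda_{\mathrm{CM},c}$ on $|\mathcal{O}(1,4)|$ is independent of $c$ up to a positive rational multiple, hence GIT with respect to it is $c$-independent, and then conclude that $\mathcal{M}^K_{(1,4)}(c)$ is constant. The issue is that K-semistability and CM-semistability are not equivalent without further input: K-semistability implies GIT semistability of the relevant Chow/Hilbert point with respect to CM, but the converse requires properness and boundedness arguments and a comparison morphism you do not construct. So constancy of the CM class (which, granted, does hold here since $\mathrm{Pic}(|\mathcal{O}(1,4)|)$ has rank one and $\mathrm{PGL}_2\times\mathrm{PGL}_2$ has no characters) does not by itself exclude a K-wall: a pair could cease to be K-semistable at some $c_0<\tfrac12$ and degenerate to a new K-polystable pair on a \emph{different surface}, with the CM picture on the fixed parameter space unable to see this. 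Your remark that one must separately ``show the surface cannot degenerate'' is exactly the content that needs proving, and the CM-independence heuristic does not supply it.

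What the paper actually does is more concrete and does not go through CM at all for the wall-crossing step. It first proves (Lemmas \ref{lem:polystable 1}, \ref{lem:polystable 2}, \ref{lem:polystable 3}) that GIT-polystable (resp.\ stable) $(1,4)$-curves give K-polystable (resp.\ K-stable) pairs $(\mathbb{P}^1\times\mathbb{P}^1,cC)$ for \emph{every} $c\in(0,\tfrac12)$, via explicit $\beta$-invariant computations using the complexity-one torus criterion of \cite{ACC23} and the Abban--Zhuang flag method of \cite{AZ22}, not merely for small $c$. With that in hand, the proof of \Cref{thm:K=GIT} is short: establish the isomorphism for $0<c=\epsilon\ll1$ following the pattern of \cite[Theorem 5.2]{ADL21} (including the construction of the reverse morphism via representability over $\mathcal{B}G$ and descent, which your step (4) glosses over), and then observe that if there were a first wall $c_0$ some previously K-semistable pair would become K-unstable — contradicting the fact that every GIT-semistable pair remains K-semistable across the whole interval. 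Your Futaki-computation suggestion for the two strictly polystable orbits points toward the right kind of explicit work, but the paper's $\beta$-invariant calculations must be carried out for all $c$ in the interval, not just at a single value, for the no-wall argument to close.
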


Let us start with lemmas proving that GIT (poly)stability implies K-(poly)stability.

\begin{lemma}\label{lem:polystable 1}
    Let $C_0:=\bV(x_0^4y_0-x_1^4y_1)$ be the GIT-polystable curve on $\bP^1_{[x_0:x_1]}\times\bP^1_{[y_0,y_1]}$. Then $(\bP^1\times\bP^1,cC_0)$ is K-polystable for any $0\leq c<\frac{1}{2}$.
\end{lemma}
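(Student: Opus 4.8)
To prove that $(\bP^1\times\bP^1, cC_0)$ is K-polystable for $0\le c<\frac{1}{2}$, where $C_0=\bV(x_0^4y_0-x_1^4y_1)$, I would combine two ingredients: (i) the fact that $(\bP^1\times\bP^1, cC_0)$ has a large automorphism group containing a torus, which reduces K-polystability to checking $\beta$-invariant positivity on torus-equivariant divisorial valuations (plus the polystable refinement, i.e. equality $\beta=0$ forces the degeneration to be a product-type degeneration preserved by automorphisms); and (ii) an explicit estimate that $\beta(E)>0$ for all non-$\mathbb{G}_m$-equivariant valuations and $\beta(E)=0$ only for the valuations coming from the two coordinate $\mathbb{G}_m$-actions, which fix $C_0$. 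The key structural observation is that $C_0$ is invariant under the $\mathbb{G}_m$-action $\lambda\cdot([x_0:x_1],[y_0:y_1])=([x_0:\lambda x_1],[y_0:\lambda^4 y_1])$ as well as the involution swapping $0$ and $\infty$ on both factors, so $\Aut(\bP^1\times\bP^1, cC_0)$ contains a positive-dimensional reductive subgroup; this is exactly the situation where one expects K-polystability of a GIT-polystable point, and in fact this is the content of the general principle that, in the small-coefficient regime, K-stability of $(\bP^1\times\bP^1, cD)$ agrees with GIT stability of $D$ (this should be what the authors invoke, perhaps citing the results on K-moduli of del Pezzo pairs or plane-curve-type analyses).

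First I would set up the deformation-to-the-normal-cone / valuative criterion: by the work of Li--Xu, Fujita, and Blum--Xu, $(\bP^1\times\bP^1, cC_0)$ is K-semistable iff $\beta(E):=A_{(\bP^1\times\bP^1,cC_0)}(E)-S_{-K-cC_0}(E)\ge 0$ for every prime divisor $E$ over $\bP^1\times\bP^1$, and K-polystable iff moreover $\beta(E)=0$ implies $E$ is induced by a one-parameter subgroup of $\Aut^0(\bP^1\times\bP^1, cC_0)$. Since $c<\frac12$, the anticanonical-type divisor $-K_{\bP^1\times\bP^1}-cC_0 = (2-c)H_1+(2-4c)H_2$ is ample (using $2-4c>0$), so the relevant moduli problem is that of log Fano pairs. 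Then I would use that $(\bP^1\times\bP^1, cC_0)$ is toric-like along the $\mathbb{G}_m$ fixing $C_0$: by Blum--Xu's equivariant reduction it suffices to test $\mathbb{G}_m$-equivariant valuations. For $c$ small, $S(E)$ is a small perturbation of $S_{-K}(E)$ on $\bP^1\times\bP^1$ (which is K-polystable), and $A_{(\bP^1\times\bP^1,cC_0)}(E)=A(E)-c\cdot\ord_E(C_0)$; a direct estimate shows $\beta(E)>0$ for all $E$ except the two toric boundary valuations corresponding to the rulings through the two fixed points, and those have $\beta=0$ and are automorphism-induced. This gives K-semistability and identifies the closed orbit.

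For the polystability refinement I would argue that any K-semistable degeneration of $(\bP^1\times\bP^1, cC_0)$ with the same Hilbert polynomial must be $\mathbb{G}_m$-equivariant (by the general structure of the closed point in a K-moduli stack, the central fiber of an optimal degeneration is K-polystable and admits the degenerating $\mathbb{G}_m$ as automorphisms), and then classify such degenerations: the possible degenerations of $\bP^1\times\bP^1$ with a $(1,4)$-divisor that remain Fano are itself, the surfaces $\bF_n$ appearing earlier (ruled out for $n\ge 1$ by the self-intersection argument used in the proof of Proposition~\ref{prop:classfication_of_quasimaps_1_4}), and mildly singular toric degenerations; checking $\beta=0$ forces the divisor to stay GIT-polystable, hence in the orbit of $C_0$, so the central fiber is $(\bP^1\times\bP^1, cC_0)$ itself and the pair is K-polystable. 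I would likely phrase the classification step by invoking that $-K_{\bP^1\times\bP^1}-cC_0$ being ample with the divisor staying a $(1,4)$-curve pins down the surface to $\bP^1\times\bP^1$ (again via the Néron--Severi / self-intersection obstruction already used in the excerpt).

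**Main obstacle.** The hard part will be the polystability (not just semistability) assertion: ruling out that $C_0$ could degenerate $\mathbb{G}_m$-equivariantly to a \emph{different} K-polystable pair with $\beta\equiv 0$ on the degeneration direction. This requires either a clean classification of all K-semistable $(1,4)$-pairs on surfaces degenerating $\bP^1\times\bP^1$ — which overlaps with the GIT classification in Proposition~\ref{prop:classification of GIT} and the quasimap classification — or a more hands-on computation of the Futaki invariant / $\beta$ on each candidate special fiber. I expect the authors handle this by first establishing (as they presumably do in a companion lemma) that K-semistability of $(\bP^1\times\bP^1,cD)$ for $c<\frac12$ forces the ambient surface to be $\bP^1\times\bP^1$ and $D$ to be GIT-semistable, after which polystability of $C_0$ reduces to the elementary observation that its GIT orbit is closed and its stabilizer is reductive; the genuinely K-theoretic input is then just the torus-equivariant $\beta$-estimate sketched above.
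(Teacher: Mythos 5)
Your high-level strategy (exploit the $\mathbb{G}_m$-action fixing $C_0$ and compute $\beta$-invariants) is the right one, but there are several concrete gaps and one misidentification that would sink the proof as written.

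\textbf{Wrong valuation with $\beta=0$.} You claim the valuations with $\beta=0$ are the ``two toric boundary valuations corresponding to the rulings through the two fixed points.'' In the paper's computation the rulings $f_1=\mathbb{V}(x_0)$ and $f_2=\mathbb{V}(y_1)$ through $p=([0,1],[1,0])$ have $\beta_{X,cC_0}(f_1)=c/2>0$ and $\beta_{X,cC_0}(f_2)=2c>0$. The divisor that actually realizes $\beta=0$ is the exceptional divisor $E$ of the $(1,4)$-weighted blow-up at $p$ (with weight $(1,4)$ on $(x_0,y_1)$), which is the divisorial valuation induced by the one-parameter subgroup $\lambda\cdot([x_0:x_1],[y_0:y_1])=([x_0:\lambda x_1],[y_0:\lambda^4 y_1])$ preserving $C_0$. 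Proving $\beta(E)=0$ requires a Zariski-decomposition computation of the volume function on the weighted blow-up, with two wall crossings in $t$; this is the substantive calculation of the proof and it is entirely absent from your sketch.

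\textbf{The perturbation only covers small $c$.} You argue ``for $c$ small, $S(E)$ is a small perturbation of $S_{-K}(E)$.'' But the claim is for all $0\le c<\frac{1}{2}$, and the endpoint behavior is delicate since $-K-cC_0\sim (2-c)f_1+(2-4c)f_2$ degenerates as $c\to\frac12$ (the coefficient $2-4c\to 0$). A perturbation estimate cannot decide the whole range; the paper handles this by computing $\beta(E)$, $\beta(D)$, $\beta(f_1)$, $\beta(f_2)$ in closed form as functions of $c$.

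\textbf{The reduction to a finite check and the polystability step.} Rather than invoking a general equivariant reduction followed by a classification of degenerations, the paper uses the criterion for complexity-one $\mathbb{T}$-pairs from \cite[Theorem~1.31]{ACC23}: since $(\bP^1\times\bP^1,cC_0)$ is a $\mathbb{T}$-pair of complexity one, K-polystability reduces to checking $\beta\ge 0$ for the horizontal divisor $E$ (above) together with $\beta>0$ for the vertical divisors $f_1,f_2,D$, and no case analysis of degenerations is required. Your alternative route --- classify all $\mathbb{G}_m$-equivariant degenerations of $(\bP^1\times\bP^1,\ (1,4)\text{-curve})$ and rule out non-product ones --- would need to be carried out in full and is significantly more involved. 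You also suggest deriving this lemma from the K$\,=\,$GIT comparison, but that comparison is Theorem~\ref{thm:K=GIT}, and this lemma is one of its three ingredients; using it here would be circular.

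\textbf{What survives.} Your observation that $\Aut(\bP^1\times\bP^1,cC_0)$ contains a $\mathbb{G}_m$ and that this is what makes the computation feasible is correct, and you correctly flag polystability as the hard part. The remark after the proof in the paper even notes an alternative proof via Zhuang's criterion using the $\mu_2\rtimes\mathbb{G}_m$-action, which is closer in spirit to what you sketch, but that also requires the explicit $\beta$ computation you omitted.
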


\begin{proof}
    For simplicity, we denote by $(X,cD)$ the pair $(\bP^1\times\bP^1,cC_0)$. As $(X,cD)$ is a $\bT$-pair with complexity one, then we can apply criterion \cite[Theorem 1.31]{ACC23}. 

    Let $E$ be the exceptional divisor of the $(1,4)$-blowup $\pi:Y\rightarrow X$ at $p=([0,1],[1,0])$, where the weight of $(x_0,y_1)$ is $(1,4)$. Then one has $A_{X,cD}(E)=5-4c$. Let $f_1,f_2$ be the two rulings of $\bP^1\times\bP^2$ which pass through $p$ so that $D\sim f_1+4f_2$, and $\wt{f}_1,\wt{f}_2$ be their strict transforms respectively. Then $\wt{f}_1\sim \pi^*f_1-4E$ and $\wt{f}_2\sim \pi^*f_-E$ satisfy that $$(\wt{f}_1^2)=-4, \ \ \ \textup{and} \ \ \ (\wt{f}_2^2)=-\frac{1}{4}.$$ Write the Zariski decomposition of $-K_X-cD-tE$ as $$P(t)+N(t) \ = \  -K_X-cD-tE \ \sim_{\bR} \ (2-c)f_1+(2-4c)f_2-tE ,$$ where $P(t)$ and $N(t)$ are the positive and negative parts respectively. Then the effective threshold $T_{X,cD}(E)$ is $10-8c$, and  \begin{equation}\label{eq:zariski 1}
        P(t) \ = \ \begin{cases}
            -K_X-cD-tE & 0\leq t\leq 2-4c \\
            -K_X-cD-tE-\frac{1}{4}(t-2+4c)\wt{f}_1 & 2-4c\leq t\leq 8-4c \\
            -K_X-cD-tE-\frac{1}{4}(t-2+4c)\wt{f}_1-(t-8+4c)\wt{f}_2 & 8-4c\leq t\leq 10-8c           
        \end{cases}.
    \end{equation}
Setting $u:=t-(2-4c)$ and $v:=t-(8-4c)$, the Equation (\ref{eq:zariski 1}) simplifies as \begin{equation}\nonumber
        P(t) \ = \ \begin{cases}
            (2-c)f_1+(2-4c)f_2-tE & 0\leq t\leq 2-4c \\
            (2-c-\frac{1}{4}u)f_1+(2-4c)f_2-(2-4c)E  & 0\leq u\leq 6 \\
            (2-4c-v)(\frac{1}{4}f_1+f_2-E) & 0\leq v\leq 2-4c          
        \end{cases}.
    \end{equation}
It follows that $$S_{X,cD}(E)\ =\ \frac{1}{2(2-c)(2-4c)}\left( \int_0^{2-4c}P(t)dt+ \int_0^{6}P(t)du+\int_0^{2-4c}P(t)dv\right),$$ where 
\begin{equation}\nonumber
\begin{split}
        \int_0^{2-4c}P(t)dt & \ =\     \int_0^{2-4c}2(2-c)(2-4c)-\frac{1}{4}t^2dt \\
        & \ =\ 8(2-c)(1-2c)^2-\frac{2}{3}(1-2c)^3;  
\end{split}
\end{equation}
\begin{equation}\nonumber
\begin{split}
        \int_0^{6}P(t)du & \ =\     \int_0^{6}2(2-c-\frac{1}{4}u)(2-4c)-\frac{1}{4}(2-4c)^2du \\
        & \ =\ 24(2-c)(1-2c)-18(1-2c)-6(1-2c)^2;  
\end{split}
\end{equation}
\begin{equation}\nonumber
\begin{split}
        \int_0^{2-4c}P(t)dv & \ =\     \int_0^{2-4c}\frac{1}{4}(2-4c-v)^2dv \ = \ \frac{2}{3}(1-2c)^3.  
\end{split}
\end{equation}
Therefore, one has that 
\begin{equation}\nonumber
    \begin{split}
        S_{X,cD}(E)& \ =\ \frac{1}{2(2-c)}\big( 4(2-c)(1-2c)+12(2-c)-9-3(1-2c) \big)\\
        & \ = \ 2(1-2c)+6-3 \ = \ 5-4c, 
    \end{split}
\end{equation}
and hence $\beta_{X,cD}(E)=0$. Now it suffices to check that $\beta_{X,cD}(F)>0$ for any vertical divisor $F$ (ref. \cite[Definition 1.26]{ACC23}) on $\bP^1\times\bP^1$ for any $0\leq c<\frac{1}{2}$. To do so, we only need to show that $\beta_{X,cD}(f_1),\beta_{X,cD}(f_2)$ and $\beta_{X,cD}(D)$ are all positive. One has that $A_{X,cD}(D)=1-c$ and \begin{equation}\nonumber
    \begin{split}
        S_{X,cD}(D)& \ =\ \frac{1}{2(2-c)(2-4c)}\int_0^{\frac{1}{2}-c}2(2-c-t)(2-4c-4t)dt\\
        & \ = \ \frac{(1-2c)(13-4c)}{12(2-c)},
    \end{split}
\end{equation}
and hence $\beta_{X,cD}(D)>0$ if and only if $4c^2-6c+11>0$, which holds for any $c>0$. Similarly, one has $A_{X,cD}(f_1)=1$ and 
\begin{equation}\nonumber
    \begin{split}
        S_{X,cD}(f_1)& \ =\ \frac{1}{2(2-c)(2-4c)}\int_0^{2-c}2(2-c-t)(2-4c)dt\\
        & \ = \ 1-\frac{c}{2},
    \end{split}
\end{equation}
and hence $\beta_{X,cD}(f_1)>0$ for any $c>0$; one has $A_{X,cD}(f_2)=1$ and 
\begin{equation}\nonumber
    \begin{split}
        S_{X,cD}(f_2)& \ =\ \frac{1}{2(2-c)(2-4c)}\int_0^{2-4c}2(2-c)(2-4c-t)dt\\
        & \ = \ 1-2c,
    \end{split}
\end{equation}
and hence $\beta_{X,cD}(f_2)>0$ for any $c>0$.
\end{proof}

\begin{remark}
    \textup{Alternatively, one can apply \cite{Zhu21} to prove the K-polystability of $(X,cD)$. Notice that there is an action of $\mu_2 \rtimes \bG_m$, where $\mu_2$ acts by swapping $x_0$ with $x_1$ and swapping $y_0$ with $y_1$. This action has no fixed point, and the only fixed curve is $D$.} 
\end{remark}

The same argument proves the following.

\begin{lemma}\label{lem:polystable 2}
     Let $C_1:=\bV(x_0x_1(x_0^2y_0-x_1^2y_1))$ be the GIT-polystable curve on $\bP^1_{[x_0:x_1]}\times\bP^1_{[y_0,y_1]}$. Then $(\bP^1\times\bP^1,cC_1)$ is K-polystable for any $0\leq c<\frac{1}{2}$.
\end{lemma}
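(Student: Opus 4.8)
The plan is to imitate the proof of \Cref{lem:polystable 1} almost verbatim, replacing the single weighted blowup by the appropriate configuration of divisorial valuations adapted to the two nodes of $C_1$. Recall that $C_1 = \bV\big(x_0x_1(x_0^2y_0 - x_1^2y_1)\big)$ is the union of the two rulings $\bV(x_0)$, $\bV(x_1)$ and the $(2,1)$-curve $\bV(x_0^2y_0 - x_1^2y_1)$, which meet at the two torus-fixed points $p_0 = ([1,0],[0,1])$ and $p_1 = ([0,1],[1,0])$. As in the previous proof, $(X,cC_1) = (\bP^1\times\bP^1, cC_1)$ is a $\bT$-pair of complexity one (the residual $\bG_m$-action scaling one factor fixes $C_1$), so we may apply the criterion \cite[Theorem 1.31]{ACC23}: it suffices to check $\beta_{X,cC_1}(E) \ge 0$ for $E$ running over (a) the finitely many $\bG_m$-equivariant divisorial valuations with nontrivial center, and (b) all \emph{vertical} divisors in the sense of \cite[Definition 1.26]{ACC23}, with equality exactly on the polystable degenerations.

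First I would identify the relevant equivariant special test configurations. At each node $p_i$ the divisor $C_1$ has multiplicity $2$ with an ordinary node of $C_1$ there together with the third branch; the natural valuation to test is the weighted blowup $\pi_i\colon Y_i \to X$ at $p_i$ with suitable weights on the two coordinates — by the symmetry $x_0 \leftrightarrow x_1$, $y_0 \leftrightarrow y_1$ the two nodes are exchanged, so it is enough to treat one of them, say $p_1$, with the $(1,2)$ or $(2,1)$ weighting dictated by the local equation $x_1(\text{smooth})\cdot(\text{double})$. Then I would run the same Zariski-decomposition computation as in \Cref{lem:polystable 1}: write $-K_X - cC_1 - tE \sim_{\bR} (2-c')f_1 + (2-c'')f_2 - tE$ in the basis of rulings through $p_1$, record the self-intersections of the strict transforms of those rulings after the weighted blowup, break the pseudoeffective interval $[0, T_{X,cC_1}(E)]$ into the two or three subintervals where the negative part picks up each ruling, integrate $\vol(P(t))$ piecewise, and check that $A_{X,cC_1}(E) = S_{X,cC_1}(E)$, i.e. $\beta = 0$. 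For the vertical divisors one checks $\beta > 0$ on the three obvious ones: the two rulings $f_1, f_2$ and the components of $C_1$ (equivalently, it suffices by \cite[Theorem 1.31]{ACC23} and linearity to check a generating set of $\bT$-invariant vertical divisors), and one finds the same kind of polynomial inequalities in $c$ valid for all $0 \le c < \tfrac12$, exactly as in the $C_0$ case.

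The only genuinely new bookkeeping relative to \Cref{lem:polystable 1} is that $C_1$ is reducible with \emph{two} singular points rather than one, so the set of equivariant test configurations to verify is slightly larger: besides the weighted blowups at $p_0$ and $p_1$ one should also consider the valuations centered on the components of $C_1$ themselves (the ruling $\bV(x_0)$, the ruling $\bV(x_1)$, and the $(2,1)$-curve) and on the two rulings of the \emph{other} factor, and confirm that the Zariski chambers in these cases produce no new walls. By the $\mu_2 \rtimes \bG_m$-symmetry swapping the two factors' special coordinates, the data at $p_0$ and $p_1$ are isomorphic, which cuts the work in half and explains why "the same argument" applies. The main obstacle — and it is a computational rather than a conceptual one — is correctly handling the Zariski decomposition on the weighted blowup when the two strict-transformed rulings have the negative self-intersections $-4$ and $-\tfrac14$ (or the analogues forced by the $(2,1)$-curve), so that $N(t)$ is supported on the right curves on each subinterval; this is precisely the step that made the $C_0$ computation delicate, and I expect the $C_1$ version to require the same care, with the payoff that $\beta_{X,cC_1}(E)$ vanishes exactly on $E$ whose associated degeneration is $(\bP^1\times\bP^1, cC_1)$ itself, giving K-polystability for every $0 \le c < \tfrac12$ as claimed. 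As in \Cref{lem:polystable 1}, one may alternatively invoke \cite{Zhu21}: the $\mu_2 \rtimes \bG_m$-action on $(X, cC_1)$ has no fixed point and $C_1$ is its unique fixed curve, which also yields the statement.
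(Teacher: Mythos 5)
Your proposal takes the same approach as the paper, which literally states ``The same argument proves the following'' before \Cref{lem:polystable 2}, deferring to the $\beta$-invariant/Zariski-decomposition computation of \Cref{lem:polystable 1} adapted to the local geometry of $C_1$ at its two nodes. Your elaboration of what that adaptation entails (equivariant weighted blowups at the nodes, vertical divisors, the complexity-one criterion of \cite[Theorem 1.31]{ACC23}, and the alternative via \cite{Zhu21}) is consistent with the paper's intent, though note that the weights and resulting self-intersections of the strict transforms will differ from the $(1,4)$-case since the $(2,1)$-branch of $C_1$ has only second-order contact with the ruling at each node.
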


\begin{lemma}\label{lem:polystable 3}
    Let $C$ be a GIT stable $(1,4)$-curve on $\bP^1\times\bP^1$. Then $(\bP^1\times\bP^1,cC)$ is K-stable for any $0< c<\frac{1}{2}$.
\end{lemma}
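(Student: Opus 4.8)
The statement to prove is \Cref{lem:polystable 3}: for a GIT stable $(1,4)$-curve $C$ on $\bP^1\times\bP^1$ and any $0<c<\tfrac12$, the pair $(\bP^1\times\bP^1,cC)$ is K-stable. The natural strategy is to deduce K-stability of the stable locus from the K-(poly)stability statements already proved for the polystable representatives \Cref{lem:polystable 1} and \Cref{lem:polystable 2}, using openness of K-(semi)stability in families together with properness/separatedness of K-moduli. Concretely, I would proceed as follows. First, assemble the family: since GIT semistable $(1,4)$-curves form an open substack of $|\cO_{\bP^1\times\bP^1}(1,4)|$ carrying a $\PGL(2)\times\PGL(2)$-action, there is a family $(\bP^1\times\bP^1,cD)$ over this base all of whose fibers are klt log Fano pairs with $-(K+cD)$ ample (this needs $c<\tfrac12$, so that $(\bP^1\times\bP^1,cC)$ has klt singularities even for the most degenerate reducible stable curves; one checks the worst local equations are of the form $x(y-x^3)$ etc., all with lc threshold $>c$). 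By \cite{BLX22}/\cite{Xu21} (openness of K-semistability) the K-semistable locus in this family is open; by \Cref{lem:polystable 1} and \Cref{lem:polystable 2} it contains the two polystable orbits, and by GIT semistability of all fibers (an open dense condition containing those orbits in its closure) I claim it is everything, i.e. every GIT semistable $(1,4)$-curve $C$ gives a K-semistable pair $(\bP^1\times\bP^1,cC)$.

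Next, to upgrade ``K-semistable'' to ``K-stable'' for the GIT \emph{stable} curves, I would use the fact that a K-semistable log Fano pair degenerates to a unique K-polystable one (the K-moduli separatedness, \cite{LWX21,BX19}), and that its automorphism group is then finite if and only if it is K-stable. For a GIT stable curve $C$, the $\PGL(2)\times\PGL(2)$-stabilizer of $[C]$ is finite; since $\Aut(\bP^1\times\bP^1,cC)$ is contained in (a finite extension of) this stabilizer — automorphisms of the pair preserve both rulings, except possibly swapping them, which a $(1,4)$-curve cannot allow — we get $\Aut(\bP^1\times\bP^1,cC)$ finite. A K-semistable log Fano pair with finite automorphism group and which is not \emph{strictly} K-semistable is K-stable; the degeneration of $(\bP^1\times\bP^1,cC)$ to its K-polystable limit is an isotrivial-to-non-isotrivial specialization only if $[C]$ is not GIT stable (the K-polystable limit of a GIT stable pair must be the pair itself, since the corresponding orbit is closed in the semistable locus). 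So for GIT stable $C$ the pair $(\bP^1\times\bP^1,cC)$ is its own K-polystable limit, has finite automorphisms, hence is K-stable.

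Alternatively — and this may be cleaner to write — one can run the same explicit valuative computation as in the proof of \Cref{lem:polystable 1}: for a stable curve $C$, one shows $\beta_{X,cC}(E)>0$ for every prime divisor $E$ over $X=\bP^1\times\bP^1$. By \cite[Theorem 1.31]{ACC23} (or its non-$\bT$-variety refinement) it suffices to bound $\beta$ on vertical divisors $f_1,f_2$ (the two ruling classes through any point) and on components of $C$; the relevant $S$-invariants $S_{X,cC}(f_i)$ and $S_{X,cC}(\text{component})$ are bounded above by the values already computed in \Cref{lem:polystable 1} for the worst case $C_0$, with \emph{strict} inequality as soon as $C$ is not isotrivially degenerating to $C_0$ or $C_1$, i.e. when $C$ is GIT stable. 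The strictness of $\beta>0$ then gives K-stability directly.

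\textbf{Main obstacle.} The technically delicate point is the second step: making precise why $\Aut(\bP^1\times\bP^1,cC)$ is finite for GIT stable $C$ and, more importantly, why a GIT stable pair cannot strictly degenerate (isotrivially or otherwise) to a different K-polystable pair. This requires knowing that the GIT-stable orbit is closed inside the K-semistable locus of the family, which one extracts by comparing the two Harder--Narasimhan-type stratifications — GIT stability via the barycenter criterion of \Cref{prop:classification of GIT} versus K-stability via $\beta$. If instead one takes the purely valuative route, the obstacle shifts to verifying that every prime divisor over $X$ (not just the toric/vertical ones) is controlled: one must invoke the complexity-one $\bT$-variety criterion of \cite[Theorem 1.31]{ACC23} carefully, since a general GIT stable $C$ is not $\bT$-invariant, so one needs the version of the criterion that reduces K-stability of $(X,cC)$ to $\bT$-invariant and ``vertical'' divisors only after fixing the $\bG_m$ coming from the ruling, or one argues via the interpolation/inversion of adjunction from the $K=\text{GIT}$ identification at $c$ small. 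I would present the openness-plus-separatedness argument as primary and the valuative computation as a remark.
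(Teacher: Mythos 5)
The paper's proof is a direct Abban--Zhuang admissible-flag computation (cf.\ \cite[Theorem~3.4]{AZ22}): for each point $x \in X := \bP^1\times\bP^1$ one takes the flag $x \subset f_2 \subset X$, where $f_2$ is the ruling through $x$ with $C\cdot f_2 = 1$ (and $f_2 \not\subset C$, since a GIT stable $(1,4)$-curve is smooth, hence irreducible, by \Cref{prop:classification of GIT}), computes the two ratios $A(f_2)/S(f_2)=1/(1-2c)$ and $A_{f_2,cC|_{f_2}}(x)/S(W_{\bullet,\bullet},x)=4(1-c)/(2-c)$, and notes that both exceed $1$ on $(0,\tfrac12)$, so $\delta_x(X,cC)>1$ and the pair is K-stable. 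This route is different from both of yours and it bypasses the obstacles you flag.

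Your primary argument has two genuine gaps. First, openness of K-semistability together with \Cref{lem:polystable 1} and \Cref{lem:polystable 2} shows only that the K-semistable locus is an open subset of $|\cO_{\bP^1\times\bP^1}(1,4)|^{\sst}$ containing the two polystable orbits; openness of a subset containing two closed orbits does not force that subset to be everything, and the propagation you would need (via the ADL-type comparison at small $c$ together with a wall-crossing estimate) is essentially the content of \Cref{thm:K=GIT}, which these lemmas are supposed to feed into, so one must guard against circularity. Second, and more seriously, the upgrade from K-semistable to K-stable relies on the claim that because $[C]$ has closed $\PGL(2)\times\PGL(2)$-orbit in the GIT semistable locus, $(\bP^1\times\bP^1,cC)$ is its own K-polystable degeneration. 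But test configurations are strictly more general than orbit closures inside $|\cO_{\bP^1\times\bP^1}(1,4)|^{\sst}$: the special fiber may have an ambient surface degenerating away from $\bP^1\times\bP^1$, which GIT on the linear system never sees. Finiteness of $\Aut(X,cC)$ plus K-semistability does not by itself preclude a strictly K-semistable degeneration of this kind, so the upgrade is unjustified without an additional enumeration of possible K-polystable limits. Your alternative route via \cite[Theorem~1.31]{ACC23} is blocked for exactly the reason you note: that complexity-one criterion requires the boundary to be $\bT$-invariant, and a general GIT stable $(1,4)$-curve is not; moreover the monotonicity of $S$-invariants under degeneration that you invoke is not a clean fact. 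This is precisely why the paper switches to Abban--Zhuang for this lemma while using the $\bT$-variety criterion for the two polystable lemmas: the flag computation is local at each $x$ and does not need a torus action on the pair.
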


We will apply \cite[Theorem~3.4]{AZ22} to verify K-(semi/poly)stability via admissible flags. We do not reproduce the necessary preliminaries here, as they are presented in full detail in \cite[Section~1.7]{ACC23} and \cite{Xu25}, to which we refer the reader.
 
\begin{proof}
   For any point $x\in \bP^1\times\bP^1$, denote by $f_1$ and $f_2$ the two rulings of $\bP^1\times\bP^1$ so that $D\sim f_1+4f_2$. Choose the flag $$x\ \subseteq \ f_2 \ \subseteq \ \bP^1\times\bP^1.$$ It is easy to see that $A_{X,cC}(f_2)=1$ and $S_{X,cC}(f_2)=1-2c$, and thus $\beta_{X,cC}(f_2)>0$. We may assume that $x\in C$. Then $A_{f_2,cC|_{f_2}}=1-c$ and $$S_{f_2,cx}(W_{\bullet,\bullet},x)=\frac{1}{2(2-c)(2-4c)}\int_0^{2-4c}dt\int_0^{2-c}(2-c-u)du \ = \ \frac{2-c}{4}.$$ Therefore, one has $\delta_x(X,cC)>1$ and $(X,cC)$ is K-stable for any $0<c<\frac{1}{2}$.
\end{proof}

\begin{proof}[Proof of Theorem \ref{thm:K=GIT}]
    The proof proceeds in two steps: to prove the isomorphism for $0<c=\epsilon\ll1$ (cf. \cite[Theorem 1.11]{LZ24}) and to show that there are no wall crossings when varying the coefficient $c$. For the wall-crossing of K-moduli spaces where the boundary divisors and canonical divisors are not proportional, see \cite{LZ24b,LZ24}. 
    
    We may assume that $\epsilon$ is a rational number. By exactly the same argument as in \cite[Theorem 5.2]{ADL21}, one can show that for any $(X,\epsilon D)\in \mtc{M}^K_{(1,4)}(\epsilon)$, $X$ is isomorphic to $\bP^1\times\bP^1$, and $D$ is a $(1,4)$-curve; moreover, the K-(semi/poly)stability of $(X,\epsilon D)$ is equivalent to the GIT (semi/poly)stability of $[D]$. In particular, by the universality of the K-moduli stack, there exists a morphism $$f:\mtc{M}^{\GIT}_{(1,4)}\ \longrightarrow \ \mtc{M}^K_{(1,4)}(\epsilon),$$ which induces a bijection on their good moduli spaces $$h:\ove{M}^{\GIT}_{(1,4)}\ \longrightarrow \ \ove{M}^K_{(1,4)}(\epsilon).$$ To show that $f$ is an isomorphism, let us construct its inverse. Let $(\mts{X},\epsilon\mts{D})\rightarrow \mtc{M}^K_{(1,4)}(\epsilon)$ be the universal family, and $\mtc{M}^K_{(1,4)}(\epsilon)\rightarrow \cB G$ be the morphism given by $$[(\mts{X}_S,\mts{D}_S)\rightarrow S]\ \mapsto\  [\mts{X}_S \rightarrow S],$$ which is representable since any automorphism of $\bP^1\times \bP^1$ preserving $D$ is contained in $\PGL(2)\times\PGL(2)$. Let $Z:=\mtc{M}^K_{(1,4)}(\epsilon)\times_{\cB G}\Spec \bC$ be the algebraic space, and the pull-back family $\mts{X}_Z\rightarrow Z$ is a trivial fibration by $\bP^1\times\bP^1$. Then the family $(\mts{X}_Z,\mts{D}_Z)\rightarrow Z$ induces a morphism $g:Z\rightarrow |\mtc{O}_{\bP^1\times\bP^1}(1,4)|^{\sst}$, which is $G$-equivariant. Therefore, $g$ descends to a morphism $$\mtc{M}^K_{(1,4)}(\epsilon)\ \longrightarrow \ \mtc{M}^{\GIT}_{(1,4)},$$ which is an inverse of $f$.
    
    The second step follows from Lemma \ref{lem:polystable 1}, Lemma \ref{lem:polystable 2} and Lemma \ref{lem:polystable 3} immediately: indeed, if there were a wall $c=c_0$ (may assume to be the first wall), then there exists a pair $(\bP^1\times\bP^1,cD)$ which is K-semistable for $c<c_0$ and K-unstable for $c>c_0$; however, every K-semistable pair $(X,\epsilon D)$ satisfies that $(X,cD)$ is K-semistable for any $0<c<\frac{1}{2}$.
\end{proof}

For curves of larger bidegree $(d_1,d_2)$, the GIT moduli is not always isomorphic to the K-moduli spaces, and there do exist wall crossings for the K-moduli spaces, where the walls are usually irrational (but algebraic) numbers. Moreover, there are K-polystable log Fano pairs $(X,cD)$ parametrized by the K-moduli space $\ove{M}^K_{(d_1,d_2)}(c)$ such that $X$ itself is not Fano. We illustrate this by the following simple example.

Let $C_0:=4Q+\ell_1+\ell_2$ be a curve on $\bP^1\times\bP^1$ of bidegree $(4,6)$, where $Q$ is a smooth conic of bidegree $(1,1)$, and $\ell_1,\ell_2$ are two distinct lines. Then $(\bP^1\times\bP^1,cC_0)$ is a $\bT$-pair of complexity one.

\begin{lemma}
    The log pair $(\bP^1\times\bP^1,cC_0)$ is K-semistable for $0<c\ll 1$.
\end{lemma}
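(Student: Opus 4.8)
The plan is to verify K-semistability of $(\bP^1\times\bP^1,cC_0)$ for $0<c\ll1$ by exploiting the fact that it is a $\bT$-pair of complexity one and applying the Abban--Zhuang type criterion (cf. \cite[Theorem 1.31]{ACC23}, as was already used in the proof of \Cref{lem:polystable 1}). First I would set up coordinates: write $Q=\bV(x_0y_0-x_1y_1)$ (or any smooth $(1,1)$-conic) and choose the lines $\ell_1,\ell_2$ to be general members of the rulings, say $\ell_1\in|f_1|$ and $\ell_2\in|f_2|$, so that $C_0\sim 4Q+\ell_1+\ell_2$ has class $(4+1, 4+1)=(5,5)$ — wait, one must be careful with the bidegree: $Q$ has bidegree $(1,1)$, so $4Q$ has bidegree $(4,4)$, and to reach $(4,6)$ we need $\ell_1+\ell_2$ of bidegree $(0,2)$, i.e. both $\ell_1,\ell_2$ are fibers of the \emph{same} ruling $\pi_1\colon\bP^1\times\bP^1\to\bP^1$. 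I would take $\ell_i=\bV(x_0-a_ix_1)$ for distinct $a_1,a_2$, chosen generically with respect to $Q$ (so each $\ell_i$ meets $Q$ transversally at one point).

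The key steps, in order: (1) Identify the maximal torus $\bT\cong\bG_m$ acting on $(\bP^1\times\bP^1,cC_0)$ and check the pair has complexity one — this holds because $Q\cup\ell_1\cup\ell_2$ is preserved by a one-parameter subgroup (acting on the second ruling fixing the two points $Q\cap\pi_1^{-1}(\infty)$-type configuration, or more simply the $\bG_m$ scaling $y_0/y_1$ which fixes $Q$ only if we pick $Q=\bV(y_0-y_1)$-type; I would instead observe that any smooth $(1,1)$-curve together with two fibers of one ruling has a $\bG_m$ of automorphisms). (2) By \cite[Theorem 1.31]{ACC23}, it suffices to check $\beta_{X,cC_0}(E)\ge 0$ for all $\bT$-invariant prime divisors $E$ over $X$ and, on the quotient/complexity-one reduction, to verify a finite list of $\delta$-type inequalities. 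For $0<c\ll1$ the pair is a small perturbation of $(\bP^1\times\bP^1,0)$, which is K-polystable (indeed K-semistable with $\delta=1$); the relevant $\beta$-invariants depend continuously (in fact linearly) on $c$, so it is enough to check that the \emph{finitely many} divisorial valuations that compute $\delta$ at $c=0$ — the vertical divisors $f_1,f_2$ and the horizontal fibers — continue to give $\beta_{X,cC_0}\ge0$ for small $c>0$, together with the divisors extracted from the singular points of $C_0$ (the nodes $Q\cap\ell_i$ and the tangency-free intersections). (3) Compute $A_{X,cC_0}$ and $S_{X,cC_0}$ for each such valuation via the Zariski decomposition of $-K_X-cC_0-tE$, exactly as in the computations done for \Cref{lem:polystable 1}; each yields an inequality of the form (polynomial in $c$) $\ge0$ which holds for $c$ in a neighbourhood of $0$.

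I would organize the bookkeeping by first reducing, via the complexity-one criterion, to valuations with center on the $\bT$-fixed locus and on the strict transforms of $Q,\ell_1,\ell_2$. Concretely: the $\bG_m$-fixed points are the four points where the two fixed fibers of the first ruling meet the two sections $\bV(y_0),\bV(y_1)$ (suitably chosen so $Q$ is invariant), and the divisors to test are $f_1$, $f_2$, the component $Q$ itself, $\ell_1$, $\ell_2$, and the exceptional divisors of ordinary blow-ups at the finitely many $\bT$-fixed points lying on $\mathrm{Supp}(C_0)$. For each, $A_{X,cC_0}(E)=A_X(E)-c\,\mathrm{ord}_E(C_0)$ is affine-linear in $c$, and $S_{X,cC_0}(E)$ is a rational function of $c$ that is real-analytic near $c=0$ with $S_{X,0}(E)=S_X(E)\le A_X(E)$ (strict unless $E$ is one of the toric boundary divisors, where equality holds and one must look at the next-order term / the refined $\delta$ on the flag). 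This last point is the main obstacle: for the toric divisors $f_1,f_2$ one has $\beta_{X,0}=0$, so one needs the \emph{linear in $c$} term of $\beta_{X,cC_0}(f_i)$ to be $\ge0$; I expect $S_{X,cC_0}(f_1)=1-\tfrac{c}{2}\cdot(\text{something})$ and $A_{X,cC_0}(f_1)=1-c\cdot(\text{mult of }C_0\text{ along }f_1)$, and the sign works out because $C_0$ meets a general fiber in $4$ or $6$ points with multiplicity $<$ the log canonical bound, so the perturbation pushes $\beta$ strictly positive — but pinning down these coefficients and the analogous ones on the complexity-one flag is where the real work lies. Once all these finitely many inequalities are checked to hold for $0<c\ll1$, \cite[Theorem 1.31]{ACC23} (equivalently \cite[Theorem 3.4]{AZ22}) gives K-semistability.

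A cleaner alternative I would run in parallel, and possibly present instead: since $(\bP^1\times\bP^1, 0)$ is K-polystable and K-semistability is an open condition in $c$ for a \emph{fixed} pair (this follows because $\delta_c(X,cD)$ is a continuous, indeed concave, function of $c$ on $[0,\tfrac12)$ — see \cite[Section 1.7]{ACC23}, \cite{Xu25}), and since $\delta_0(\bP^1\times\bP^1)=1$, one might hope for $\delta_0(X,0\cdot C_0)=1$ to degrade only continuously. But $\delta_0=1$ is borderline, so openness alone does not suffice — hence the need for the explicit complexity-one computation above to confirm the \emph{first derivative} at $c=0$ does not send $\beta$ negative. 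I would therefore commit to the $\bT$-equivariant route, present the Zariski decompositions for $f_1,f_2$ and the two node-exceptional divisors in detail (mirroring the style of \Cref{lem:polystable 1}), and note that all other valuations are handled by the complexity-one reduction together with the small-$c$ perturbation argument.
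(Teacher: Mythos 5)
Your proposal commits to the complexity-one route, which is one of the two routes the paper gestures at in its one-line proof (``Either use the criterion for complexity one pairs, or use the GIT stability''); the surrounding lemmas (\Cref{lem: necessary condition for K-ss} and those following) carry out precisely this style of $\beta$-computation, so your plan matches the paper's intended method. Your setup is sound: you correctly force $\ell_1,\ell_2$ into a single ruling to get bidegree $(4,6)$, identify the $\bG_m$-action (after a change of coordinates $Q$ is the diagonal and $\ell_1,\ell_2$ are the invariant fibers, so the pair has complexity one), and isolate the genuine subtlety — that $\beta_{X,0}=0$ for the toric boundary divisors, so a first-order-in-$c$ check is needed. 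The computation does come out positive (one finds $\beta$ of the two rulings is $2c$ and $3c$, and $\beta(Q)=\tfrac13+O(c)$), though you stop short of carrying it out, which is consistent with the paper also not carrying it out.

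Two corrections are worth flagging. First, the ``GIT stability'' route the paper alludes to is not the openness-in-$c$ argument you sketch and dismiss: it is the standard wall-crossing fact (the same one invoked via \cite[Theorem 5.2]{ADL21} in the proof of \Cref{thm:K=GIT}) that for $0<c\ll1$ the K-moduli stack of pairs $(\bP^1\times\bP^1,cD)$ with $D$ of fixed bidegree is isomorphic to the GIT quotient stack, so the claim reduces to checking GIT-semistability of $4Q+\ell_1+\ell_2$ — a finite Newton-polytope verification with no $c$-dependent estimates. This is the cleaner of the two routes. Second, you state that K-semistability is an open condition in $c$ for a fixed pair; it is the other way around — $\delta\geq 1$ is closed and $\delta>1$ is open (interpolation makes the K-semistable locus an interval $[0,c^{\max}]$). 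Your conclusion that a bare continuity argument is insufficient is correct, but for the opposite reason to the one you give, since closedness at $c=0$ says nothing about $c>0$.
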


\begin{proof}
    There are two ways to see it. Either use the criterion for complexity one pairs, or use the GIT stability.
\end{proof}

\begin{lemma}\label{lem: necessary condition for K-ss}
    The log pair $(\bP^1\times\bP^1,cC_0)$ is K-unstable for any $c>c_0:=\frac{9-\sqrt{21}}{30}$.
\end{lemma}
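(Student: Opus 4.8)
The plan is to exhibit a single valuation whose $\beta$-invariant turns negative exactly for $c > c_0 = \frac{9-\sqrt{21}}{30}$, thereby destabilizing $(\bP^1\times\bP^1, cC_0)$. Since $(\bP^1\times\bP^1,cC_0)$ is a $\bT$-pair of complexity one, the natural candidates for destabilizing valuations are the toric-like divisors, in particular the divisorial valuations associated to the $\bG_m$-action or to blow-ups at the intersection points of $Q$, $\ell_1$, $\ell_2$. The most efficient route is to test the vertical divisor $F = \ell_1$ (or the ruling containing it), or better, a divisorial valuation $E$ obtained by blowing up a point where $\ell_1$ meets $Q$; I would first compute $A_{X,cC_0}(E)$ and the Zariski decomposition of $-K_X - cC_0 - tE$ exactly as in the proof of \Cref{lem:polystable 1}, then extract $S_{X,cC_0}(E)$ as a quadratic polynomial in $c$.

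The key steps, in order: (1) Set up coordinates so that $Q = \bV(x_0y_1 - x_1y_0)$, and $\ell_1,\ell_2$ are two fibers of one of the projections, so that $C_0 \sim 4Q + 2f$ where $f$ is the class of a ruling; note $-K_X - cC_0 \sim (2-4c)f_1 + (2-4c-2c)f_2$ has the right positivity for $0 < c < \tfrac12$ (adjusting which projection $\ell_i$ belong to). (2) Choose the destabilizing divisor $E$ — most likely $E = \ell_1$ itself viewed as a prime divisor, since $A_{X,cC_0}(\ell_1) = 1 - c$ (as $\ell_1$ appears in $C_0$ with coefficient $1$), and compute $S_{X,cC_0}(\ell_1) = \frac{1}{(-K_X-cC_0)^2}\int_0^\infty \mathrm{vol}(-K_X-cC_0 - t\ell_1)\,dt$ via a Zariski decomposition that will break into two or three linear regimes in $t$. (3) Solve $\beta_{X,cC_0}(\ell_1) = A_{X,cC_0}(\ell_1) - S_{X,cC_0}(\ell_1) < 0$; the resulting inequality should be a quadratic in $c$ whose relevant root is $c_0 = \frac{9-\sqrt{21}}{30}$, i.e. $30c^2 - 18c + 3 \cdot(\text{something}) $ factoring to give $15c^2 - 9c + \ldots$, matching the stated value since $\frac{9 \pm \sqrt{81 - 60}}{30} = \frac{9\pm\sqrt{21}}{30}$. (4) Invoke the valuative criterion (Fujita--Li, as used via \cite[Theorem 3.4]{AZ22} elsewhere in the paper) to conclude that $\beta < 0$ implies K-instability.

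I would double-check step (3) by matching the discriminant: $c_0$ being a root of $15c^2 - 9c + k = 0$ forces $15 k = \frac{81 - 21}{4}\cdot\frac{15}{\ldots}$ — rather, $c_0 = \frac{9-\sqrt{21}}{30}$ are the roots of $15c^2 - 9c + 1 = 0$ since $\frac{9 \pm \sqrt{81-60}}{30} = \frac{9\pm\sqrt{21}}{30}$. So the target inequality is $\beta_{X,cC_0}(E) < 0 \iff 15c^2 - 9c + 1 < 0 \iff c \in (c_0', c_0)$ with $c_0' = \frac{9-\sqrt{21}}{30}$... I should be careful that the correct endpoint is the larger root $\frac{9+\sqrt{21}}{30}$ versus the smaller one; the statement says "for any $c > c_0 = \frac{9-\sqrt{21}}{30}$", so presumably $\beta$ becomes negative past the smaller root and stays negative (possibly the quadratic should be read as the numerator of $\beta$ being an affine or concave function). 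This sign bookkeeping, and identifying the precise divisor $E$ that gives the sharp bound $c_0$ rather than a weaker one, is the main obstacle: several candidate valuations ($\ell_1$, the ruling $f_2$, the $(1,1)$-weighted blow-up at $Q \cap \ell_1$, the $\bG_m$-invariant divisor at infinity) all give bounds, and one must verify that the blow-up of $Q \cap \ell_1 \cap \ell_2$-type flag — most likely the weighted blow-up of $X$ at the point $Q \cap \ell_1$ — yields exactly $15c^2 - 9c + 1$. Once the right $E$ is pinned down, the remaining computation is a routine (if lengthy) Zariski-decomposition integral entirely parallel to \Cref{lem:polystable 1}.
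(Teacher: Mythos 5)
Your proposal misses the destabilizing valuation the paper actually uses, and the one you favor would fail outright. The paper tests the prime divisor $Q$ itself, the $(1,1)$-conic appearing with multiplicity $4$ in $C_0 = 4Q + \ell_1 + \ell_2$. Because of that multiplicity, $A_{X,cC_0}(Q) = 1 - 4c$ drops four times as fast as the log discrepancy of a line, and this is precisely what generates the sharp threshold. Working out the Zariski decomposition for $-K_X - cC_0 - tQ \sim (2-4c-t,\,2-6c-t)$ on $\bP^1\times\bP^1$ (pseudo-effective exactly for $t \le 2-6c$, with no negative part needed) gives $S_{X,cC_0}(Q) = \frac{(2-6c)(4-6c)}{6(2-4c)}$, hence
\[
\beta_{X,cC_0}(Q) \;=\; (1-4c) - \frac{(2-6c)(4-6c)}{6(2-4c)} \;=\; \frac{2(15c^2 - 9c + 1)}{3(2-4c)},
\]
which is negative precisely for $\frac{9-\sqrt{21}}{30} < c < \frac{9+\sqrt{21}}{30}$; the upper root $\approx 0.453$ exceeds $\frac14$, beyond which $(X,cC_0)$ is no longer klt, so the lemma follows.

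By contrast, your primary candidate $E = \ell_1$ gives $A(\ell_1) = 1-c$ and $S(\ell_1) = 1-3c$ (a one-line integral over $[0,2-6c]$ with no wall, since the line is a ruling), so $\beta_{X,cC_0}(\ell_1) = 2c > 0$ for all $c>0$ — it never destabilizes. Similarly, the rulings $f_1, f_2$ give $\beta = 2c$ and $3c$ respectively. The fallback you sketch — a weighted blow-up at $Q \cap \ell_1$ — is not needed, would be strictly harder to compute, and there is no reason to expect it to reproduce the sharp quadratic $15c^2-9c+1$. The conceptual point you did not exploit is that in a degenerate boundary like $4Q + \ell_1 + \ell_2$, the component of highest multiplicity is the obvious destabilizing divisor: testing it is both the simplest and the sharpest choice, and on $\bP^1\times\bP^1$ the requisite volume integral is elementary.
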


\begin{proof}
    We compute the $\beta$-invariant with respect to the divisor $Q$. It is easy to see that $A_{\bP^1\times\bP^1,cC_0}(Q)=1-4c$ and \begin{equation}\nonumber
   \begin{split}
    S_{\bP^1\times\bP^1,cC_0}(Q)&=\frac{1}{\mtc{O}_{\bP^1\times\bP^1}(2-4c,2-6c)^2}\int_0^{2-6c}\mtc{O}_{\bP^1\times\bP^1}(2-4c-t,2-6c-t)^2dt\\
    &=\frac{1}{2-4c}\left(\frac{1}{3}(2-6c)^2-\frac{1}{2}(4-10c)(2-6c)+(2-4c)(2-6c)\right).
    \end{split}
    \end{equation}
    Therefore, $\beta_{\bP^1\times\bP^1,cC_0}(Q)\geq0$ is equivalent to $15c^2-9c+1\geq0$, which implies that $$c\ \geq\ c_0\ :=\ \frac{9-\sqrt{21}}{30}\ \approx\  0.147.$$
\end{proof}

\begin{lemma}
    There is an isotrivial degeneration from $(\bP^1\times\bP^1,C_0)$ to $(\bF_2,D_0)$ with $$D_0 \ := \ 4\mtf{e}_{\infty}+2\mtf{e}_0+\mtf{f}_1+\mtf{f}_2,$$ where $\mtf{e}_0$ is the negative section, $\mtf{e}_{\infty}$ is the infinity section, and $\mtf{f}_i$ are two distinct fibers.
\end{lemma}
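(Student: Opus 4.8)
The plan is to write down an explicit $\bG_m$-equivariant one-parameter family realizing the degeneration, and then to identify its central fibre by a Néron--Severi specialization argument. Fix coordinates $[z_0:z_1:z_2:z_3]$ on $\bP^3$ and, after the linear change $z_0=p+q$, $z_3=p-q$, $z_1=u+v$, $z_2=u-v$, identify $\bP^1\times\bP^1$ with the smooth quadric $\bV(p^2-q^2-u^2+v^2)$. Consider the $\bG_m$-action on $\bP^3\times\bA^1_t$ given by $s\cdot(p,q,u,v,t)=(p,q,u,sv,s^{-1}t)$ and the invariant hypersurface
\[
\mathcal{X}\ :=\ \bV\bigl(p^2-q^2-u^2+t^2v^2\bigr)\ \subseteq\ \bP^3\times\bA^1_t .
\]
Its fibre over $t\neq 0$ is $\bP^1\times\bP^1$ and its fibre over $t=0$ is the quadric cone $\bV(p^2-q^2-u^2)$ with vertex $[0:0:0:1]$, at which $\mathcal{X}$ has a single ordinary threefold node; writing the equation as $(p-u)(p+u)=(q-tv)(q+tv)$ exhibits the $\bG_m$-equivariant small resolution $\widetilde{\mathcal{X}}:=\Bl_{\bV(p-u,\,q-tv)}\mathcal{X}$. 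Then $\widetilde{\mathcal{X}}\to\bA^1_t$ is a smooth $\bG_m$-equivariant family whose general fibre is $\bP^1\times\bP^1$ and whose central fibre is the minimal resolution $\bF_2$ of the cone; the resolution contracts $\bV(p-u,q-tv)|_{t=0}$, a ruling line of the cone through the vertex, so the exceptional curve restricts on $\bF_2$ to the negative section $\mtf{e}_0$, and the ruling of $\bP^1\times\bP^1$ containing that line extends over $\widetilde{\mathcal{X}}$ to the fibre class $\mtf{f}$ of $\bF_2\to\bP^1$.

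Next I would place the divisor. Take $Q:=\bV(v)\cap(\bP^1\times\bP^1)$, a smooth conic of bidegree $(1,1)$, and take $\ell_1,\ell_2$ to be two general members of the other ruling (of bidegree $(0,1)$), so that $C_0=4Q+\ell_1+\ell_2$ is the $(4,6)$-curve of the statement. Define $\mathcal{D}\subseteq\widetilde{\mathcal{X}}$ to be the closure of $\{(s\cdot c,\,s^{-1}):c\in C_0,\ s\in\bG_m\}$; this is a flat $\bG_m$-invariant family of curves over $\bA^1_t$ whose fibres over $t\neq 0$ are isomorphic to $(\bP^1\times\bP^1,C_0)$, so $(\widetilde{\mathcal{X}},\mathcal{D})\to\bA^1$ is an isotrivial degeneration of $(\bP^1\times\bP^1,C_0)$, and it remains to compute $\mathcal{D}_0\subseteq\bF_2$. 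For the $4Q$-part this is immediate: $4Q$ extends over $\widetilde{\mathcal{X}}$ as the $\bG_m$-invariant divisor $\bV(v^4)$, which restricts to $\bF_2$ as $4\bigl(\bV(v)\cap\bF_2\bigr)$; since $\bV(v)$ misses the vertex of the cone, $\bV(v)\cap\bF_2$ is a section disjoint from $\mtf{e}_0$, that is $\mtf{e}_\infty=\mtf{e}_0+2\mtf{f}$, so the contribution of $4Q$ to $\mathcal{D}_0$ is $4\mtf{e}_\infty$.

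Finally I would handle $\ell_1,\ell_2$. The specialization homomorphism $\NS(\bP^1\times\bP^1)\to\NS(\bF_2)$ sends the class $(1,0)$ to $\mtf{f}$ (that ruling extends over $\widetilde{\mathcal{X}}$) and sends $Q$'s class $(1,1)$ to $\mtf{e}_\infty=\mtf{e}_0+2\mtf{f}$, hence sends $(0,1)$ to $\mtf{e}_0+\mtf{f}$; thus the flat limit of each $\ell_i$ is an effective divisor on $\bF_2$ of class $\mtf{e}_0+\mtf{f}$. Since $\mtf{e}_0$ is a fixed component of $|\mtf{e}_0+\mtf{f}|$, every such divisor has the form $\mtf{e}_0+\mtf{f}_i$ for a fibre $\mtf{f}_i$, and for general $\ell_1,\ell_2$ the fibres $\mtf{f}_1,\mtf{f}_2$ are distinct. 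Adding the three contributions gives $\mathcal{D}_0=4\mtf{e}_\infty+2\mtf{e}_0+\mtf{f}_1+\mtf{f}_2=D_0$, as desired. The one point that needs care is exactly this last step --- that the flat limit of each vertical line $\ell_i$ acquires the negative section $\mtf{e}_0$ with multiplicity precisely one; the numerical class together with the base-locus structure of $|\mtf{e}_0+\mtf{f}|$ forces it, but one can alternatively confirm it by a direct computation in the affine charts of the small resolution $\widetilde{\mathcal{X}}$ near the node.
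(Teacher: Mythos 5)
Your approach — degenerating the smooth quadric $\bP^1\times\bP^1$ to the quadric cone inside a $\bG_m$-invariant hypersurface in $\bP^3\times\bA^1_t$, small-resolving the resulting threefold node, and then computing the flat limit of the divisor via the Néron--Severi specialization — is a genuinely different and more explicit route than the paper's one-line ``take the trivial family and perform blow-up and blow-down on the central fiber.'' Both constructions realize the standard isotrivial degeneration $\bF_0\rightsquigarrow\bF_2$, but yours has the virtue of pinning down the limit curve by concrete intersection theory rather than by appealing to the standard elementary-transformation picture.

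However, there is a sign error at the heart of your specialization computation. You claim that ``the ruling of $\bP^1\times\bP^1$ containing $\bV(p-u,q-tv)$ extends over $\widetilde{\mathcal X}$ to the fibre class $\mtf{f}$,'' i.e.\ that the center of the small resolution specializes to $\mtf{f}$. This is backwards. Write $a=p-u$, $b=p+u$, $c'=q-tv$, $d'=q+tv$, so $\mathcal X=\bV(ab-c'd')$. On $\widetilde{\mathcal X}=\Bl_{\bV(a,c')}\mathcal X$ the proper transform $\widetilde D_1$ of the Weil divisor $\bV(a,c')$ becomes Cartier and \emph{contains} the exceptional curve $E=\mtf{e}_0$; one computes in the chart $\beta=a/c'$ that $\widetilde D_1=\bV(c')$ and the central fiber is $\{c'=\beta b\}$, so $\widetilde D_1|_{\bF_2}=\divisor(\beta b)=\mtf{e}_0+\mtf{f}$. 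Equivalently, $\widetilde D_1\cdot E=-1$, which forces $\widetilde D_1|_{\bF_2}=\mtf{e}_0+\mtf{f}$ rather than $\mtf{f}$ (which would have intersection $+1$ with $\mtf{e}_0$). Thus under your chosen resolution the ruling containing $\bV(p-u,q-tv)$ goes to $\mtf{e}_0+\mtf{f}$, and it is the \emph{other} ruling that goes to $\mtf{f}$. With your stated placement of $\ell_1,\ell_2$ in ``the other ruling,'' you would therefore obtain the flat limit $4\mtf{e}_\infty+\mtf{f}_1+\mtf{f}_2$ (of class $4\mtf{e}_0+10\mtf{f}$), which is not $D_0$. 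The fix is immediate: either flop to the other small resolution, $\widetilde{\mathcal X}:=\Bl_{\bV(p-u,\,q+tv)}\mathcal X$, or equivalently take $\ell_1,\ell_2$ to lie in the same ruling as the blow-up center. With that correction your specialization map does send the $\ell_i$-class to $\mtf{e}_0+\mtf{f}$, and the rest of the argument — including the nice observation that $|\mtf{e}_0+\mtf{f}|$ has $\mtf{e}_0$ as a fixed component, forcing the limit to be $\mtf{e}_0+\mtf{f}_i$ with multiplicity exactly one — goes through and produces $D_0$ as required.
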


\begin{proof}
    This is standard: taking the trivial family of $(\bP^1\times\bP^1,C_0)$ over $\bA^1$ and performing blow-up and blow-down on the central fiber.
\end{proof}

\begin{lemma}
    The pair $(\bF_2,cD_0)$ is K-semistable if and only if $c=c_0$. Moreover, $(\bF_2,c_0D_0)$ is a K-polystable toric log Fano pair.
\end{lemma}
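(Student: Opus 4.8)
The plan is to put $(\bF_2,cD_0)$ in toric form and apply the toric K-stability criterion. First I would choose $\mtf{f}_1,\mtf{f}_2$ to be the two torus-invariant fibers of $\bF_2\to\bP^1$; since $\mtf{e}_0$ and $\mtf{e}_\infty$ are automatically invariant, $D_0=4\mtf{e}_\infty+2\mtf{e}_0+\mtf{f}_1+\mtf{f}_2$ is then an effective torus-invariant divisor, so $(\bF_2,cD_0)$ is a toric pair. Using $\mtf{e}_\infty\sim\mtf{e}_0+2\mtf{f}$ and $-K_{\bF_2}\sim 2\mtf{e}_0+4\mtf{f}$ one computes $-(K_{\bF_2}+cD_0)\sim(2-6c)\mtf{e}_0+(4-10c)\mtf{f}$, which is ample exactly for $0<c<\tfrac13$ and defines a klt pair for $c<\tfrac14$ (the largest coefficient of $cD_0$ is $4c$). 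As $c_0=\tfrac{9-\sqrt{21}}{30}<\tfrac14$, this settles the clause that $(\bF_2,c_0D_0)$ is a toric log Fano pair (and, as pointed out above, $\bF_2$ itself is not Fano).

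Next I would invoke the toric K-stability criterion: a toric log Fano pair $(X,\Delta)$ is K-semistable if and only if it is K-polystable, and this holds precisely when the barycenter (for Lebesgue measure) of the moment polytope $P_{-(K_X+\Delta)}\subset M_\bR$ attached to the canonical torus-invariant representative of $-(K_X+\Delta)$ is the origin; see \cite{ACC23,Xu25}, noting that by $\bT$-equivariance only toric test configurations need to be considered. Taking the fan of $\bF_2$ with ray generators $v_1=(1,0)$, $v_2=(0,1)$, $v_3=(-1,2)$, $v_4=(0,-1)$, so that $v_1,v_3$ give the fibers and $v_2,v_4$ the sections $\mtf{e}_0,\mtf{e}_\infty$, we have $-(K_{\bF_2}+cD_0)=(1-c)D_1+(1-2c)D_2+(1-c)D_3+(1-4c)D_4$, and hence
\[
P_c=\bigl\{(m_1,m_2)\in\bR^2:\ m_1\ge c-1,\ \ 2c-1\le m_2\le 1-4c,\ \ m_1\le 2m_2+1-c\bigr\},
\]
which is a trapezoid for $0<c<\tfrac25$.

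The computation is streamlined by the symmetry exchanging $\mtf{f}_1$ and $\mtf{f}_2$: it is induced by the lattice involution $A=\bigl(\begin{smallmatrix}-1&0\\2&1\end{smallmatrix}\bigr)$ of $N$ swapping $v_1\leftrightarrow v_3$ and fixing $v_2,v_4$, hence preserving $(\bF_2,cD_0)$, and dually it acts on $M_\bR$ by $(m_1,m_2)\mapsto(-m_1+2m_2,m_2)$, a map which one checks preserves $P_c$ (it exchanges the first and third defining inequalities and fixes the others). Therefore the barycenter of $P_c$ lies on the fixed line $\{m_1=m_2\}$, so $\mathrm{bc}(P_c)=0$ if and only if $\int_{P_c}m_2\,dm=0$. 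Slicing $P_c$ at height $m_2=y$ (width $2y+2-2c$) and integrating gives
\[
\int_{P_c}m_2\,dm=\int_{2c-1}^{1-4c}y(2y+2-2c)\,dy=\tfrac23(2-6c)(15c^2-9c+1),
\]
which, since $2-6c\ne0$ for $c<\tfrac13$, vanishes exactly when $15c^2-9c+1=0$, i.e. $c=\tfrac{9\pm\sqrt{21}}{30}$. Only the root $c_0=\tfrac{9-\sqrt{21}}{30}$ lies in the log Fano range $(0,\tfrac13)$, the other exceeding $\tfrac13$ (this also recovers the bound of \Cref{lem: necessary condition for K-ss}). Hence $\mathrm{bc}(P_c)=0$ if and only if $c=c_0$, which by the toric criterion yields both assertions.

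The main obstacle — essentially the only nontrivial point — is to state and cite correctly the precise form of the toric K-stability criterion: the canonical normalization of the moment polytope associated to the $\bQ$-Cartier class $-(K_X+\Delta)$, so that "barycenter at the origin" is exactly the right condition, together with the fact that for toric log Fano pairs there is no gap between K-semistability and K-polystability. Once that input is in place, the rest is a routine polytope computation made painless by the $\mtf{f}_1\leftrightarrow\mtf{f}_2$ symmetry.
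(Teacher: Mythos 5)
Your proof is correct and reaches the same quadratic $15c^2 - 9c + 1 = 0$, but via a genuinely different packaging of the toric K-stability criterion. The paper works with the $\beta$-invariant directly: it asserts that for a toric pair it suffices to check $\beta_{\bF_2,cD_0}(\mtf{e}_\infty) = \beta_{\bF_2,cD_0}(\mtf{f}_1) = 0$ (two torus-invariant divisors whose rays span $N_\bR$, so this is vanishing of the Futaki character by linearity) and computes each $\beta$ via a Zariski decomposition of $-K - cD_0 - tE$. You instead state the barycenter form of the criterion — K-ss $=$ K-ps $\Leftrightarrow$ $\mathrm{bc}(P_c) = 0$ — and compute. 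These are the same condition, but your version buys a clean reduction: the lattice involution $A=\bigl(\begin{smallmatrix}-1&0\\2&1\end{smallmatrix}\bigr)$ swapping $\mtf{f}_1\leftrightarrow\mtf{f}_2$ puts the barycenter on the line $m_1=m_2$, so only the single integral $\int_{P_c} m_2\,dm$ has to vanish, rather than two separate $S$-computations. I checked the identity $\int_{P_c} m_2\,dm = \tfrac{2}{3}(2-6c)(15c^2-9c+1)$ by expansion and it is correct, and your identification of which root lies in the log Fano window $(0,\tfrac14)$ (klt needs the largest coefficient $4c<1$) is also right.

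One small imprecision, harmless for the conclusion: you say $P_c$ is a trapezoid for $0<c<\tfrac{2}{5}$, but the two horizontal constraints $2c-1\le m_2\le 1-4c$ already force $c\le\tfrac13$ for $P_c$ to be nonempty; the $\tfrac{2}{5}$ bound comes only from the upper width $4-10c$. Since the whole discussion lives in $(0,\tfrac14)$, this does not matter. The one thing you should make sure to cite precisely, as you yourself flag, is the normalization of the moment polytope so that "barycenter at origin" is exactly equivalent to $\mathrm{Fut}=0$ — the standard reference point here is the Wang–Zhu/Berman circle of results as presented in \cite{Xu25}.
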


\begin{proof}
    As $(\bF_2,cD_0)$ is a toric pair, it suffices to show that $$\beta_{\bF_2,cD_0}(\mtf{e}_{\infty})\ =\ \beta_{\bF_2,cD_0}(\mtf{f}_1)\ =\ 0$$ if and only if $c=c_0$.
    The computation of $\beta_{\bF_2,c_0D_0}(\mtf{e}_{\infty})=0$ is almost the same as Lemma \ref{lem: necessary condition for K-ss}. On the other hand, we have that $A_{\bF_2,cD_0}(\mtf{f}_1)=1-c$ and \begin{equation}\nonumber
   \begin{split}
    S_{\bF_2,cD_0}(\mtf{f}_1)&=\frac{1}{\big((2-6c)\mtf{e}+(4-10c)\mtf{f}\big)^2}\bigg(\int_0^{2c}\big((2-6c)\mtf{e}+(4-10c-t)\mtf{f}\big)^2dt\\
    & \ \ \ \ \  + \int_0^{4-12c}\big((2-6c-{\textstyle \frac{t}{2}})(\mtf{e}+2\mtf{f})\big)^2dt\bigg) \\
    &=\frac{1}{1-2c}\left(\frac{1}{3}(2-6c)^2-c(2-6c)+c(4-10c)-c^2 \right).
    \end{split}
    \end{equation}
    Therefore, $\beta_{\bF_2,cD_0}(\mtf{f}_1)=0$ is equivalent to $15c^2-9c+1=0$.
\end{proof}

\begin{corollary}
    The number $c_0=\frac{9-\sqrt{21}}{30}$ is a wall for the K-moduli space $\ove{M}^K_{(4,6)}(c)$, and $[(\bF_2,c_0D_0)]\in \ove{M}^K_{(4,6)}(c_0)$. 
\end{corollary}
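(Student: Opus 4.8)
The plan is to derive the corollary by combining the three preceding lemmas with the wall-crossing formalism for K-moduli of log Fano pairs with varying coefficient, as in \cite{LZ24,LZ24b} (compare \cite{ADL21}). Recall that this formalism produces a finite wall-and-chamber decomposition of $(0,\tfrac12)$ by walls $0<c_1<\cdots<c_N<\tfrac12$ --- in general algebraic, not necessarily rational --- such that the set of K-semistable pairs $(X,cD)$ of the relevant deformation and numerical type is constant on each open chamber $(c_i,c_{i+1})$, with wall-crossing morphisms $\mtc{M}^K_{(4,6)}(c_i-\epsilon)\to\mtc{M}^K_{(4,6)}(c_i)\leftarrow\mtc{M}^K_{(4,6)}(c_i+\epsilon)$ at each $c_i$.

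First I would show that $c_0$ is one of the walls. By the last lemma, $(\bF_2,cD_0)$ is K-semistable exactly when $c=c_0$; hence $c_0$ cannot lie in the interior of any chamber, since otherwise constancy of the K-semistable locus there would force $(\bF_2,cD_0)$ to be K-semistable for all $c$ near $c_0$, a contradiction. Equivalently, one can argue on the $\bP^1\times\bP^1$ side: $(\bP^1\times\bP^1,cC_0)$ is K-semistable for $0<c\ll1$ but K-unstable for every $c>c_0$, and the computation $\beta_{\bP^1\times\bP^1,cC_0}(Q)\ge0\Leftrightarrow 15c^2-9c+1\ge0$ of Lemma \ref{lem: necessary condition for K-ss} pins the transition exactly at $c_0$, so a wall must sit there.

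Next I would locate the pair $(\bF_2,c_0D_0)$ inside the K-moduli space at $c_0$. By the last lemma it is a K-polystable toric log Fano pair, hence its S-equivalence class is a closed point of the good moduli space at coefficient $c_0$; one then checks it lies on the component $\ove{M}^K_{(4,6)}(c_0)$ using the isotrivial degeneration $(\bP^1\times\bP^1,C_0)\rightsquigarrow(\bF_2,D_0)$. Indeed, this degeneration comes from a test configuration whose general fibre is $(\bP^1\times\bP^1,c_0C_0)$ and whose central fibre is $(\bF_2,c_0D_0)$, so the two have the same CM line bundle and numerical invariants --- those of a $(4,6)$-curve pair on $\bP^1\times\bP^1$ --- and their classes lie in the same irreducible component; since $(\bP^1\times\bP^1,c_0C_0)$ is K-semistable (it is K-semistable for $c\ll1$, hence also at the endpoint $c_0$ as the K-semistable locus in $c$ is a closed interval, and strictly semistable there because $\beta_{\bP^1\times\bP^1,c_0C_0}(Q)=0$), uniqueness of the polystable degeneration identifies $(\bF_2,c_0D_0)$ as its polystable representative, so $[(\bF_2,c_0D_0)]=[(\bP^1\times\bP^1,c_0C_0)]\in\ove{M}^K_{(4,6)}(c_0)$.

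The mathematical substance is already contained in the lemmas, so the hardest part is just making sure $c_0$ is a \emph{genuine} wall rather than a fake one, i.e.\ that $\mtc{M}^K_{(4,6)}(c_0-\epsilon)$ and $\mtc{M}^K_{(4,6)}(c_0+\epsilon)$ are not isomorphic. This follows from the previous two points together: the K-polystable pair $(\bF_2,c_0D_0)$, whose underlying surface is the weak del Pezzo $\bF_2$ rather than the generic $\bP^1\times\bP^1$, occurs as a point of the good moduli space only at $c=c_0$, so the good moduli space genuinely changes there; equivalently, $(\bP^1\times\bP^1,cC_0)$ is K-semistable below $c_0$ and K-unstable above it. It is also worth recording that $(\bF_2,c_0D_0)$ carries a two-dimensional torus action and is therefore only K-polystable, never K-stable, consistent with its being the new strictly semistable orbit that produces the wall.
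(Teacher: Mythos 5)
Your proof is correct and is essentially the argument the paper leaves implicit: the corollary is stated with no separate proof because it is meant to follow directly from the three preceding lemmas, which is exactly what you spell out. The two pillars of your argument --- that $(\bF_2,cD_0)$ being K-semistable \emph{only} at $c=c_0$ rules out $c_0$ lying in the interior of a chamber, and that the isotrivial degeneration from $(\bP^1\times\bP^1,C_0)$ places the K-polystable pair $(\bF_2,c_0D_0)$ on the component $\ove{M}^K_{(4,6)}(c_0)$ --- are precisely the two things that need saying.

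One small remark: the step where you invoke closedness of $\{c : (\bP^1\times\bP^1,cC_0)\text{ is K-ss}\}$ to conclude $(\bP^1\times\bP^1,c_0C_0)$ is K-semistable, and then use uniqueness of the polystable degeneration, is a slight detour. It is enough to note that $(\bF_2,c_0D_0)$ is K-polystable (third lemma) and smoothable through $(\bP^1\times\bP^1,C_0)$ to generic smooth $(4,6)$-curve pairs on an irreducible parameter space; since the K-semistable locus there is open and non-empty, the polystable pair $(\bF_2,c_0D_0)$ already lies in $\ove{M}^K_{(4,6)}(c_0)$ without needing to decide whether $(\bP^1\times\bP^1,c_0C_0)$ itself is K-semistable. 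Either route is fine.
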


\bibliographystyle{alpha}
\bibliography{v10}

\end{document}